\documentclass[11pt,letterpaper]{amsart}
\usepackage[foot]{amsaddr}

\usepackage{mathtools}
\usepackage{amsmath}
\usepackage[T1]{fontenc}
\usepackage[latin9]{inputenc}
\usepackage{geometry}
\geometry{verbose,letterpaper,tmargin= 1in,bmargin=1in,lmargin=1in,rmargin=1in}
\usepackage{wrapfig}
\usepackage{multicol}
\usepackage{graphicx}
\usepackage{soul}
\usepackage{xcolor}
\usepackage{amssymb}
\usepackage{placeins}
\usepackage{bbm}
\setcounter{tocdepth}{1}
\usepackage{cite}
\usepackage{caption}
\usepackage{enumerate}
\usepackage{afterpage}
\usepackage{enumitem}
\usepackage{bmpsize}
\usepackage{hyperref}
\usepackage{tabu}
\usepackage{enumitem}
\numberwithin{equation}{section}
\usepackage{stmaryrd}
\usepackage{tikz}
\usetikzlibrary{matrix,graphs,arrows,positioning,calc,decorations.markings,shapes.symbols}
\usepackage{ifthen}
\usetikzlibrary{math}
\usetikzlibrary{matrix,graphs,arrows,positioning,calc,decorations.markings,shapes.symbols}

\makeatletter
\renewcommand{\email}[2][]{%
  \ifx\emails\@empty\relax\else{\g@addto@macro\emails{,\space}}\fi%
  \@ifnotempty{#1}{\g@addto@macro\emails{\textrm{(#1)}\space}}%
  \g@addto@macro\emails{#2}%
}
\makeatother


\newtheorem{theorem}{Theorem}[section]
\newtheorem{lemma}[theorem]{Lemma}
\newtheorem{proposition}[theorem]{Proposition}

{ \theoremstyle{definition}
\newtheorem{definition}[theorem]{Definition}}
{ \theoremstyle{remark}
\newtheorem{remark}[theorem]{Remark}}

\newcommand{\weyl}{W^\circ}
\newcommand{\weylc}{\overline{W}}

\newcommand{\im}{\mathsf{i}}
\newcommand{\Real}{\mathrm{Re}\hspace{0.5mm}}

\newcommand{\cev}[1]{\reflectbox{\ensuremath{\vec{\reflectbox{\ensuremath{#1}}}}}}

\newcommand{\hk}{p}
\newcommand{\kcr}{K^{\mathrm{hs}; \varpi}}
\newcommand{\kgse}{K^{\mathrm{GSE}}}
\newcommand{\icr}{I^{\mathrm{hs}; \varpi}}
\newcommand{\rcr}{R^{\mathrm{hs}; \varpi}}
\newcommand{\hsa}{\mathcal{A}^{\mathrm{hs}; \varpi}}
\newcommand{\hsai}{\mathcal{A}^{\mathrm{hs}; \infty}}
\newcommand{\hsail}{\mathcal{L}^{\mathrm{hs}; \infty}}

\newcommand{\hsm}{M^{\mathsf{S};\mathrm{hs};\varpi}}
\newcommand{\hsmi}{M^{\mathsf{S}; \mathrm{hs}; \infty}}
\newcommand{\hsk}{K^{\varpi}}
\newcommand{\hsi}{I^{\varpi}}
\newcommand{\hsr}{R^{\varpi}}

\newcommand{\hski}{K^{\mathrm{hs};\infty}}
\newcommand{\hsii}{I^{\mathrm{hs};\infty}}
\newcommand{\hsri}{R^{\mathrm{hs};\infty}}

\newcommand{\hsl}{\mathcal{L}^{\mathrm{hs}; \varpi}}
\newcommand{\hsli}{\mathcal{L}^{\mathrm{hs}; \infty}}

\newcommand{\ap}{\mathsf{a}}

\newcommand{\pfbm}{\mathbb{P}_{\operatorname{free}}}
\newcommand{\efbm}{\mathbb{E}_{\operatorname{free}}}
\newcommand{\pabm}{\mathbb{P}_{\operatorname{avoid}}}
\newcommand{\eabm}{\mathbb{E}_{\operatorname{avoid}}}

\usepackage{ifthen}

\title{The pinned half-space Airy line ensemble}
\date{\today}
\author{Evgeni Dimitrov} 
\author{Christian Serio}
\author{Zongrui Yang} 

\begin{document}

\begin{abstract}
Half-space models in the Kardar-Parisi-Zhang (KPZ) universality class exhibit rich boundary phenomena that alter the asymptotic behavior familiar from their full-space counterparts. A distinguishing feature of these systems is the presence of a boundary parameter that governs a transition between subcritical, critical, and supercritical regimes, characterized by different scaling exponents and fluctuation statistics. 

In this paper we construct the {\em pinned half-space Airy line ensemble} $\hsai$ on $[0,\infty)$ -- a natural half-space analogue of the Airy line ensemble -- expected to arise as the universal scaling limit of supercritical half-space KPZ models. The ensemble $\hsai$ is obtained as the weak limit of the critical half-space Airy line ensembles $\hsa$ introduced in \cite{DY25} as the boundary parameter $\varpi$ tends to infinity.

We show that $\hsai$ has a Pfaffian point process structure with an explicit correlation kernel and that, after a parabolic shift, it satisfies a one-sided Brownian Gibbs property described by pairwise pinned Brownian motions. Far from the origin, $\hsai$ converges to the standard Airy line ensemble, while at the origin its distribution coincides with that of the ordered eigenvalues (with doubled multiplicity) of the stochastic Airy operator with $\beta = 4$.
\end{abstract}


\maketitle

\tableofcontents

%
%
\section{Introduction and main results}\label{Section1}
Over the past two decades, there has been substantial interest in the asymptotic analysis of {\em half-space} models within the {\em Kardar-Parisi-Zhang (KPZ)} universality class. The earliest studies in this direction are due to Baik and Rains, who analyzed the asymptotics of the longest increasing subsequence of random involutions and of symmetrized last passage percolation (LPP) with geometric weights \cite{BR01a, BR01b, BR01c}. Other half-space models that have been extensively investigated include the polynuclear growth model \cite{SI04}, Schur processes \cite{BR05, BBNV}, LPP with exponential weights \cite{BBCS}, the facilitated (totally) asymmetric simple exclusion process, or (T)ASEP \cite{BBCS2}, the KPZ equation studied via ASEP \cite{CS18, Par19} and through directed polymers \cite{BC23,DasSerio25,Wu20}, the stochastic six-vertex model \cite{BBCW18}, Macdonald processes \cite{BBC20}, and the log-gamma polymer \cite{IMS22, BCD24}.

The local behavior of the half-space KPZ models resembles that of their full-space counterparts, except that the origin -- which should be interpreted as the left boundary of the interval $[0, \infty)$ on which the models are defined -- introduces a boundary effect that nontrivially influences their asymptotics. For instance, in interacting particle systems, the origin acts as a reservoir that stochastically injects or absorbs particles, while in LPP models, the weights at the origin differ from those elsewhere. A central goal in this area is to understand how such boundary effects manifest across various settings, and, in some cases, to show that the asymptotic behavior remains consistent across models. In other words, one aims to demonstrate that these systems exhibit universal behavior characteristic of a half-space KPZ universality class.

To formally define this class, one must identify suitable analogues of the universal scaling limits known from the full-space KPZ setting. A significant step in this direction was taken recently in \cite{DY25}, where the authors introduced a one-parameter family of {\em half-space Airy line ensembles} $\hsa = \{\hsa_i\}_{i \geq 1}$, which serve as (critical) half-space counterparts of the standard Airy line ensemble $\mathcal{A} = \{\mathcal{A}_{i}\}_{i \geq 1}$ on $\mathbb{R}$, see Figure \ref{Fig.HSA}. We briefly elaborate on the term ``critical''. 

In many half-space models, the strength of the boundary effect is controlled by a parameter $\alpha$, and there exists a critical value $\alpha_c$ such that:
\begin{itemize}
\item When $\alpha < \alpha_c$ (the {\em subcritical regime}), the model exhibits a fluctuation scaling exponent of $1/2$ and Gaussian fluctuations.
\item When $\alpha  > \alpha_c$ (the {\em supercritical regime}), the fluctuation and transversal scaling exponents are $1/3$ and $2/3$, respectively.
\item When $\alpha$ is tuned as $\alpha = \alpha_c + \varpi N^{-1/3}$ with the system size $N$, the model again has $1/3$ fluctuation and $2/3$ transversal scaling exponents, but the limiting distribution now depends on $\varpi$ and differs from the supercritical regime.
\end{itemize}
We note that in the literature, the terminology for the subcritical and supercritical regimes is sometimes interchanged. 
\begin{figure}[ht]
    \begin{center}
      \begin{tikzpicture}
        \node[anchor=south west, inner sep=0] (img1) at (0,0) 
            {\includegraphics[width=0.9\textwidth]{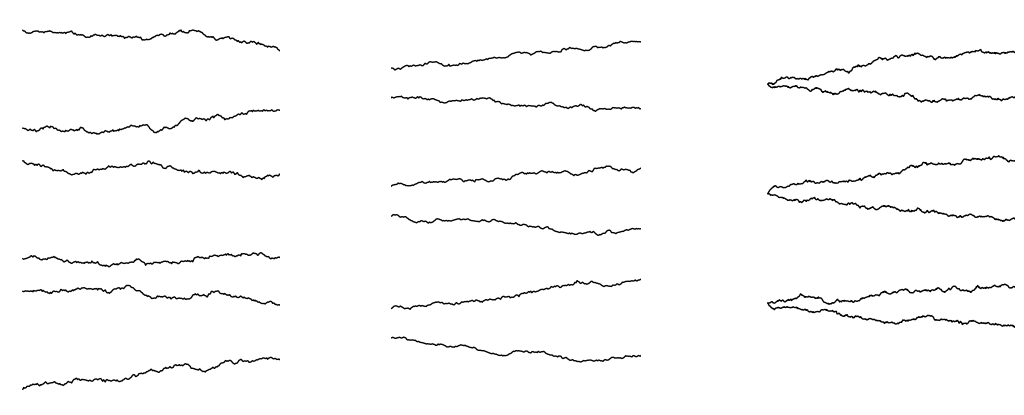}};

        \draw[->][gray] (0.3,0.29) -- (4.5,0.29);
        \draw[->][gray] (0.3,0.29) -- (0.3,6.5);
        \fill (0.3, 0.29) circle (1.5pt) node[below=0pt] {$0$};

        \draw (4.5, 0.05) node{$t$};
        \draw (4, 5.8) node{$\hsa_1$};
        \draw (4, 4.8) node{$\hsa_2$};
        \draw (4, 3.8) node{$\hsa_3$};
        \draw (4, 2.6) node{$\hsa_4$};
        \draw (4, 1.95) node{$\hsa_5$};
        \draw (4, 1.15) node{$\hsa_6$};

        \draw[->][gray] (5.67,0.29) -- (9.5,0.29);
        \draw[->][gray] (5.67,0.29) -- (5.67,6.5);
        \fill (5.67, 0.29) circle (1.5pt) node[below=0pt] {$0$};

        \draw (9.5, 0.05) node{$t$};
        \draw (9.37, 5.8) node{$\hsa_1$};
        \draw (9.37, 4.8) node{$\hsa_2$};
        \draw (9.37, 3.9) node{$\hsa_3$};
        \draw (9.37, 3) node{$\hsa_4$};
        \draw (9.37, 2.27) node{$\hsa_5$};
        \draw (9.37, 1.18) node{$\hsa_6$};

        \draw[->][gray] (11.15,0.29) -- (15,0.29);
        \draw[->][gray] (11.15,0.29) -- (11.15,6.5);
        \fill (11.15, 0.29) circle (1.5pt) node[below=0pt] {$0$};

        \draw (15, 0.05) node{$t$};
        \draw (15, 5.6) node{$\hsai_1$};
        \draw (15, 4.92) node{$\hsai_2$};
        \draw (15, 4.05) node{$\hsai_3$};
        \draw (15, 3.15) node{$\hsai_4$};
        \draw (15, 2.17) node{$\hsai_5$};
        \draw (15, 1.58) node{$\hsai_6$};

\end{tikzpicture}

    \end{center}
    \caption{The figure depicts $\hsa$ near the origin for $\varpi \in (-\infty, 0)$ (left), $\varpi \in (0, \infty)$ (middle) and $\varpi = \infty$ (right). At the origin, the curves $\hsa_{2i-1}$ and $\hsa_{2i}$ repel more for $\varpi \in (-\infty,0)$, repel less for $\varpi \in (0, \infty)$, and collide for $\varpi = \infty$.}
    \label{Fig.HSA}
\end{figure}

The line ensembles $\hsa$ constructed in \cite{DY25} arise as weak limits of certain discrete path ensembles associated with Pfaffian Schur processes --- or, equivalently, with symmetrized geometric LPP --- under critical parameter scaling. For background on Pfaffian Schur processes, we refer the reader to \cite{BR05,BBNV}, and to \cite{DY25b} for a concise explanation of their connection to LPP.\\

The {\bf main goal of the present paper is to construct, in a rigorous way, the {\em pinned half-space Airy line ensemble}} $\hsai = \{\hsai_i\}_{i \geq 1}$, which is the half-space counterpart of the Airy line ensemble in the supercritical regime described above. We obtain $\hsai$ as the weak $\varpi \rightarrow \infty$ limit of the ensembles $\hsa$ from \cite{DY25}, and we expect it to be the canonical universal scaling limit of supercritical models in the half-space KPZ universality class. For example, this model should arise as the supercritical large-time limit of the half-space KPZ line ensembles, see \cite[Section 1.1.1]{DasSerio25}, and it will be shown to be the scaling limit of supercritical geometric LPP in the forthcoming work \cite{DDY26}. 

Apart from constructing $\hsai$, we provide a description of its finite-dimensional distributions away from the origin, where the ensemble has a natural structure of a Pfaffian point process with an explicit correlation kernel. We show that (after a parabolic shift) the ensemble $\hsai$ satisfies a one-sided Brownian Gibbs property, described by ordered pairwise pinned Brownian motions, which was recently identified in \cite{DasSerio25}. As $T \rightarrow \infty$ (i.e. as one moves away from the boundary), the ensemble $\hsai(\cdot + T)$ behaves like a full-space model and converges to the standard Airy line ensemble $\mathcal{A}$. Lastly, we show that the distribution of $\hsai$ at the origin is described by the ordered eigenvalues (with doubled multiplicity) of the stochastic Airy operator $\mathcal{H}_{\beta}$ from \cite{RRV11} with $\beta =4$.

%
%
\subsection{The full-space and critical half-space Airy line ensembles}\label{Section1.1} 

The \emph{(full-space) Airy line ensemble} $\mathcal{A} = \{\mathcal{A}_{i}\}_{i \geq 1}$ is a sequence of strictly ordered, continuous, real-valued functions on $\mathbb{R}$. It arises as an edge-scaling limit in various probabilistic, combinatorial, and statistical mechanics models, including Wigner matrices~\cite{Sod15}, lozenge tilings~\cite{AH21}, and systems of non-intersecting Brownian bridges (also known as \emph{Brownian watermelons})~\cite{CorHamA}. It also emerges in a range of integrable models of non-intersecting paths and last passage percolation~\cite{DNV19}. The class of models related to $\mathcal{A}$ expands further if one includes those that converge to its various projections, such as $(\mathcal{A}_{1}(t): t \in \mathbb{R})$---the \emph{Airy process}, $(\mathcal{A}_{i}(t_0): i \geq 1)$ for fixed $t_0 \in \mathbb{R}$---the \emph{Airy point process}, and $\mathcal{A}_{1}(t_0)$ for fixed $t_0$---the \emph{Tracy--Widom distribution}. Because of its universality as a scaling limit and its fundamental role in the definition of the \emph{Airy sheet}~\cite{DOV22}, the Airy line ensemble occupies a central position within the KPZ universality class~\cite{CU2}. 

One of the salient features of the Airy line ensemble is that it has the structure of a {\em determinantal point process}. To define this precisely, we first introduce some notation.

\begin{definition}\label{Def.S1Contours} For a fixed $z \in \mathbb{C}$ and $\varphi \in (0, \pi)$, we denote by $\mathcal{C}_{z}^{\varphi}=\{z+|s|e^{\mathrm{sgn}(s)\im\varphi}: s\in \mathbb{R}\}$ the infinite contour oriented from $z+\infty e^{-\im\varphi}$ to $z+\infty e^{\im\varphi}$. 
\end{definition}

\vspace{-1.5mm}
\begin{definition}\label{Def.Measures} For a finite set $\mathsf{S} = \{s_1, \dots, s_m\} \subset \mathbb{R}$, we let $\mu_{\mathsf{S}}$ denote the counting measure on $\mathbb{R}$, defined by $\mu_{\mathsf{S}}(A) = |A \cap \mathsf{S}|$. We also let $\mathrm{Leb}$ be the usual Lebesgue measure on $\mathbb{R}$ and $\mu_{\mathsf{S}} \times \mathrm{Leb}$ the product measure on $\mathbb{R}^2$.
\end{definition}

\vspace{-1.5mm}
\begin{definition}\label{Def.Ordered} Let $\mathcal{L} = \{\mathcal{L}_{i}\}_{i \geq 1}$ be a line ensemble on an interval $\Lambda \subseteq \mathbb{R}$. We say that $\mathcal{L}$ is {\em non-intersecting} if almost surely $\mathcal{L}_i(t) > \mathcal{L}_j(t)$ for all $t \in \Lambda$ and $1 \leq i < j$. We say that $\mathcal{L}$ is {\em ordered} if almost surely $\mathcal{L}_i(t) \geq \mathcal{L}_j(t)$ for all $t \in \Lambda$ and $1 \leq i < j$.   
\end{definition}

\vspace{-1.5mm}
\begin{definition}\label{Def.AiryLE} The (full-space) Airy line ensemble $\mathcal{A} = \{\mathcal{A}_i\}_{i \geq 1}$ is a sequence of real-valued, random continuous functions on $\mathbb{R}$. It is uniquely characterized by the following properties. The ensemble is non-intersecting in the sense of Definition \ref{Def.Ordered}. In addition, for any finite set $\mathsf{S} = \{s_1, \dots, s_m\} \subset \mathbb{R}$ with $s_1 < \cdots < s_m$, the random measure on $\mathbb{R}^2$ given by
\begin{equation}\label{Eq.RMS1}
M^{\mathsf{S}; \mathcal{A}}(A) = \sum_{i \geq 1} \sum_{j = 1}^m {\bf 1} \left\{\left(s_j, \mathcal{A}_{i}(s_j) \right) \in A \right\},
\end{equation}
is a determinantal point process with reference measure $\mu_{\mathsf{S}} \times \mathrm{Leb}$ as in Definition \ref{Def.Measures}, and with correlation kernel given by the {\em extended Airy kernel}, defined for $x_1, x_2 \in \mathbb{R}$ and $t_1, t_2 \in \mathsf{S}$ by 
\begin{equation}\label{Eq.S1AiryKer}
\begin{split}
&K^{\mathrm{Airy}}(s,x; t,y) =  R(s,x;t,y) + \frac{1}{(2\pi \im)^2} \int_{\mathcal{C}_{\alpha}^{\pi/3}} d z \int_{\mathcal{C}_{\beta}^{\pi/3}} dw \frac{e^{z^3/3 +w^3/3 -xz - yw}}{z + s + w - t}, \mbox{ where }\\
&R(s,x;t,y) = - \frac{{\bf 1}\{s < t\} }{\sqrt{4\pi (t-s)}} \cdot \exp \left(\frac{- (s-t)^4 + 6 (x+ y)(s-t)^2 + 3 (x-y)^2}{12 (s-t)} \right).
\end{split}
\end{equation}
In (\ref{Eq.S1AiryKer}), $\alpha, \beta \in \mathbb{R}$ are arbitrary subject to $\alpha + s + \beta - t > 0$, and $\mathcal{C}_{z}^{\varphi}$ are as in Definition \ref{Def.S1Contours}. 
\end{definition}
\begin{remark}\label{Rem.AiryLE1}
There are various formulas for the extended Airy kernel, and the one in (\ref{Eq.S1AiryKer}) comes from \cite[Proposition 4.7 and (11)]{BK08} under the change of variables $u \rightarrow z + s$ and $w \rightarrow -w + t$. 
\end{remark}
\begin{remark}\label{Rem.AiryLE2} The introduction of the extended Airy kernel is often attributed to \cite{Spohn}, where it arises in the context of the polynuclear growth model, although it appeared earlier in \cite{FNH99} and \cite{Mac94}. The {\em existence} of a line ensemble satisfying the conditions of Definition \ref{Def.AiryLE} was established in \cite{CorHamA}, while the fact that these conditions {\em uniquely} determine the law of $\mathcal{A}$ is well known and follows, for example, from \cite[Proposition 2.13(3)]{dimitrov2024airy}, \cite[Corollary 2.20]{dimitrov2024airy} and \cite[Lemma 3.1]{DimMat}.
\end{remark}

The Airy line ensemble $\mathcal{A}$ was constructed in \cite{CorHamA} as the edge limit of Brownian watermelons by utilizing their locally avoiding Brownian bridge structure. This local description of the Brownian watermelons is now referred to as the {\em Brownian Gibbs property}, and it is enjoyed by the {\em parabolic Airy line ensemble} $\mathcal{L}^{\mathrm{Airy}}$ whose relationship to $\mathcal{A}$ is explicitly given by
\begin{equation}\label{PALE}
\sqrt{2} \cdot \mathcal{L}^{\mathrm{Airy}}_i(t) =  \mathcal{A}_i(t) - t^2 \mbox{ for $i \geq 1$ and $t \in \mathbb{R}$}.
\end{equation}
We note that the Brownian Gibbs property is not merely a cosmetic feature of the parabolic Airy line ensemble, but a fundamentally defining one. Specifically, \cite{AH23} characterizes $\mathcal{L}^{\mathrm{Airy}}$ as essentially the unique line ensemble that both possesses the Brownian Gibbs property and has a top curve that is globally parabolic.\\

In \cite{DY25} the first and third authors introduced a one-parameter family of {\em (critical) half-space Airy line ensembles} $\hsa = \{\hsa_i\}_{i \geq 1}$, indexed by $\varpi \in \mathbb{R}$. As mentioned earlier, these ensembles were obtained as weak limits of certain discrete path ensembles associated with Pfaffian Schur processes, and to formally introduce them we require some more notation. 
\begin{definition}\label{Def.kcr}
Fix $\varpi \in \mathbb{R}$, $m \in \mathbb{N}$ and $\mathsf{S} = \{s_1, \dots, s_m\}$, where $0 \leq s_1 < s_2 < \cdots < s_m$, and set $\ap_i = |\varpi| + 3i$ for $i = 1,2,3$. For $s,t \in \mathsf{S}$ and $x, y \in \mathbb{R}$, we define the kernel
\begin{equation}\label{Eq.S1DefKcross}
\begin{split}
&\kcr(s,x; t,y) = \begin{bmatrix}
    \kcr_{11}(s,x;t,y) & \kcr_{12}(s,x;t,y)\\
    \kcr_{21}(s,x;t,y) & \kcr_{22}(s,x;t,y) 
\end{bmatrix} \\
&= \begin{bmatrix}
    \icr_{11}(s,x;t,y) & \icr_{12}(s,x;t,y) + \rcr_{12}(s,x;t,y) \\
    -\icr_{12}(t,y;s,x) - \rcr_{12}(t,y;s,x) & \icr_{22}(s,x;t,y) + \rcr_{22}(s,x;t,y) 
\end{bmatrix} ,
\end{split}
\end{equation}
where the kernels $\icr_{ij}, \rcr_{ij}$ are defined as follows. We have
\begin{equation}\label{Eq.S1DefIcross}
\begin{split}
\icr_{11}(s,x;t,y) = &\frac{1}{(2\pi \im)^2} \int_{\mathcal{C}_{\ap_1 - s}^{\pi/3}}dz \int_{\mathcal{C}_{\ap_3 - t}^{\pi/3}} dw \frac{z + s - w - t}{(z + s + w + t)(z+s)(w+ t)} \\
& \times (z + s+ \varpi )(w + t + \varpi) \cdot e^{z^3/3 + w^3/3 - xz - y w}, \\
\icr_{12}(s,x;t,y) = &\frac{1}{(2\pi \im)^2} \int_{\mathcal{C}_{\ap_3 - s}^{\pi/3}}dz \int_{\mathcal{C}_{\ap_1 - t}^{2\pi/3}} dw \frac{z + s + w + t}{2(z+ s)(z + s - w -t)} \\
& \times \frac{z + \varpi + s}{w + \varpi + t} \cdot e^{z^3/3 - w^3/3 - xz + y w},\\
\icr_{22}(s,x;t,y) = &\frac{1}{(2\pi \im)^2} \int_{\mathcal{C}_{-\ap_2 - s}^{2\pi/3}}dz \int_{\mathcal{C}_{-\ap_2 - t}^{2\pi/3}} dw \frac{z + s - w - t}{4 (z + s + w + t)} \\
& \times \frac{1}{(z + s  + \varpi )(w + t + \varpi)} \cdot e^{-z^3/3 - w^3/3 + xz + y w},
\end{split}
\end{equation}
with contours as in Definition \ref{Def.S1Contours}. We also have $\rcr_{12} = R$ as in (\ref{Eq.S1AiryKer}), $\rcr_{22}(s,x; t,y) = - \rcr_{22}(t,y; s,x) $ and when $x- s^2 > y -t^2$ we have
\begin{equation}\label{Eq.S1DefRcross2}
\begin{split}
\rcr_{22}(s,x;t,y) = & \frac{1}{2\pi \im} \int_{\mathcal{C}_{\ap_1 }^{2\pi/3}}dz \frac{e^{(-z + s)^3/3 + (\varpi + t)^3/3 - x (-z+s) - y (\varpi + t)}}{4(z - \varpi)}\\
&- \frac{1}{2\pi \im}\int_{\mathcal{C}_{-\ap_2 }^{2\pi/3}}dw \frac{e^{(-w + t)^3/3 + (\varpi + s)^3/3 - y (-w+t) - x (\varpi + s)}}{4(w - \varpi)}\\
& + \frac{{\bf 1}\{s + t > 0\} }{2\pi \im} \int_{\mathcal{C}^{2\pi/3}_{-\ap_{2}}} dw \frac{we^{ (-w + t)^3/3 + (w + s)^3/3   - y (-w + t) - x (w+s)}   }{2(w- \varpi)(w + \varpi) }.
\end{split}
\end{equation}
\end{definition}
\begin{remark}\label{S1Correction} The formula for $\kcr$ in Definition \ref{Def.kcr} was originally obtained in \cite[Section 5.1]{BBCS2} and \cite[Section 2.5]{BBCS}, where it is denoted by $K^{\mathrm{cross}}$. The formulas in \cite{BBCS,BBCS2} have a few typos, which have since been corrected in the arXiv versions of these papers -- see \cite[Section 2.5]{BBCSArxiv} and \cite[Section 5.1]{BBCS2Arxiv}. We mention that our formulas for $\icr_{11}, \icr_{12}$ and $\rcr_{12}$ completely agree (after a simple change of variables) with $I_{11}^{\mathrm{cross}}, I_{12}^{\mathrm{cross}}$ and $R_{12}^{\mathrm{cross}}$ in \cite[Section 2.5]{BBCSArxiv}. As the authors chose slightly different contours in the definitions of $I^{\mathrm{cross}}_{22}$ and $R^{\mathrm{cross}}_{22}$ than ours, these terms do not match; however, their sum (which is precisely $K^{\mathrm{cross}}_{22}$) is the same as $\kcr_{22}$. In other words, $\kcr$ matches $K^{\mathrm{cross}}$ in \cite[Section 2.5]{BBCSArxiv}. We also mention that in \cite[Equation (4.10)]{BBNV} the authors introduce a kernel $K^v$, which also matches $\kcr$ after several (tedious) changes of variables, including setting $\varpi = 2 v$, $s = u_a$, $t = u_b$, $x = \xi - u_a^2$, $y = \xi' - u_b^2$, $z = Z -s$, $w = W-t$, and conjugating the kernel.
\end{remark}
\begin{remark}\label{Rem.kcr} We mention that due to the cubic terms in the exponentials, there is considerable freedom in deforming the contours that appear in the kernels $\icr_{ij}, \rcr_{ij}$ without affecting the values of the integrals. In addition, for some values of $\varpi,s,t$ one could cross some simple poles and obtain substantial cancellations. Our contour choice in Definition \ref{Def.kcr} is made after \cite{DY25}, and one advantage is that the formulas hold for all values of $\varpi \in \mathbb{R}$ and $s,t \in [0, \infty)$. Later, when we investigate the $\varpi \rightarrow \infty$ limit, we will find (arguably) simpler formulas that are more suitable for asymptotic analysis --- see Definition \ref{Def.NewKernel} and Lemma \ref{Lem.KernelMatch}.  
\end{remark}

With the above notation in place, we can state the main result from \cite{DY25}.
\begin{proposition}\label{Prop.HSAiry} Fix $\varpi \in \mathbb{R}$. There exists a line ensemble $\hsa = \{\hsa_i\}_{i \geq 1}$ on $[0, \infty)$ that satisfies the following properties. The ensemble is non-intersecting in the sense of Definition \ref{Def.Ordered}. In addition, for any finite set $\mathsf{S} = \{s_1, \dots, s_m\} \subset [0,\infty)$ with $s_1 < s_2 < \cdots < s_m$, the random measure on $\mathbb{R}^2$ given by
\begin{equation}\label{eq:HSA point process}
\hsm(\omega, A) = \sum_{i \geq 1} \sum_{j = 1}^m {\bf 1}\{(s_j, \hsa_i(s_j,\omega) ) \in A\}
\end{equation}
is a Pfaffian point process with correlation kernel $\kcr$ as in Definition \ref{Def.kcr} and reference measure $\mu_{\mathsf{S}} \times \mathrm{Leb}$ as in Definition \ref{Def.Measures}. 

Moreover, if $\hsl = \{\hsl_i\}_{i \geq 1}$ is the line ensemble defined through 
\begin{equation}\label{eq:Parabolic HSA}
\sqrt{2} \cdot \hsl_i(t) + t^2 = \hsa_i(t) \mbox{ for } i \geq 1, t \in [0,\infty),
\end{equation}
then $\hsl$ satisfies the half-space Brownian Gibbs property from Definition \ref{Def.BGP} with parameters $\mu_i = (-1)^{i} \sqrt{2} \cdot \varpi$ (see also Remark \ref{Rem.GibbsPropertyHSA} and Figure \ref{Fig.HSA}).
\end{proposition}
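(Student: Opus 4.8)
The plan is to realize $\hsa$ as the $N\to\infty$ limit of an explicit discrete line ensemble carrying a Pfaffian structure, and then to transfer both the correlation kernel and the Gibbs property across that limit. Concretely, I would start from symmetrized geometric last passage percolation (equivalently, the ascending Pfaffian Schur process), whose associated array of ``eigenvalue coordinates'' $\{L^N_i\}_{i\ge1}$ forms a line ensemble on a discretized half-line: the top curve is the LPP time, and the lower curves come from the successive rows of the RSK output. Two structural inputs are available at the discrete level. First, by the work of Baik--Rains and Borodin--Rains, in the forms recorded in \cite{BBCS,BBCS2,BBNV}, the multi-time point process of $\{L^N_i\}$ is Pfaffian with an explicit correlation kernel $K^N$ built from contour integrals of the same shape as those in Definition \ref{Def.kcr}. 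Second, conditionally on the values of $L^N_i$ outside a space interval together with $L^N_{i-1},L^N_{i+1}$ inside it, the curve $L^N_i$ is a geometric random walk bridge conditioned to stay between its neighbours, \emph{except} that the interaction with the boundary column at the origin contributes an extra multiplicative Radon--Nikodym factor depending on the boundary parameter $\alpha$; this is the discrete half-space Gibbs property.

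The second step is the scaling. Centering the curves by the deterministic LPP shape, rescaling fluctuations by $N^{1/3}$ and the spatial variable by $N^{2/3}$, and tuning $\alpha=\alpha_c+\varpi N^{-1/3}$, one expects $K^N$ to converge to $\kcr$. Establishing this is the analytic heart of the argument: it needs a steepest-descent analysis of the double contour integrals defining $K^N$ (and of the separate ``$R$'' pieces), uniform for $s,t$ in compacts of $[0,\infty)$ and $x,y$ in compacts of $\mathbb R$, together with enough Gaussian-type decay in $x,y$ to upgrade pointwise kernel convergence to convergence of Fredholm Pfaffians, hence of all finite-dimensional distributions of $\hsm$. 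The contributions $\icr_{11},\icr_{12},\rcr_{12}$ come from the ``bulk'' double integrals, while the delicate piece is $\rcr_{22}$ --- the term carrying the $\mathbf 1\{s+t>0\}$ indicator and the $w\mp\varpi$ poles --- which encodes residues picked up when the relevant critical point collides with a pole as $s+t\to0$; in the prelimit I would track these pole crossings carefully so that the limiting indicator and residue structure emerge correctly.

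In parallel I would prove tightness of the rescaled curves $\{L^N_i\}$ as random elements of the space of continuous line ensembles on $[0,\infty)$. The standard mechanism (following \cite{CorHamA} and its half-space adaptations) combines one-point tail bounds for individual curves --- read off from the explicit kernel --- with the discrete Gibbs property and monotone-coupling / strong-Markov arguments to get modulus-of-continuity control on compact sets. The new feature relative to the full-space case is that these bounds must be uniform up to the origin, where the boundary weight factor is most influential, so one needs an a priori comparison dominating that factor uniformly for $\varpi$ in a bounded set. Combining tightness with the fdd convergence from the previous step produces a limiting line ensemble $\hsa$ whose multi-time marginals form the Pfaffian point process with kernel $\kcr$ and reference measure $\mu_{\mathsf S}\times\mathrm{Leb}$; non-intersection passes from the strict ordering of the $\{L^N_i\}$ to the limit.

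Finally, for the Brownian Gibbs assertion about $\hsl$, I would push the discrete Gibbs property through the limit. Under the scaling a geometric random walk bridge converges to a Brownian bridge, a family conditioned to stay ordered converges to non-intersecting Brownian bridges, and --- the crucial computation --- the boundary Radon--Nikodym factor, after inserting $\alpha=\alpha_c+\varpi N^{-1/3}$ and applying the parabolic shift in \eqref{eq:Parabolic HSA}, converges to exactly the weight that reweights the bottom curve of the $k$-th pair toward a Brownian motion pinned at the origin with parameter $(-1)^{i}\sqrt2\,\varpi$, i.e.\ the half-space Brownian Gibbs property of Definition \ref{Def.BGP}. The identification uses the resampling invariance of the prelimit together with the already-established convergence of laws and the tightness estimates near $0$. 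I expect the main obstacle throughout to be the uniform asymptotic analysis --- both of the kernel $K^N$ (especially the $\rcr_{22}$ pole-crossing bookkeeping as $s+t\to0$) and of the boundary factor near the origin --- since the fdd convergence, the tightness near the boundary, and the precise form of the limiting Gibbs weight all rest on getting those estimates right uniformly in the critical scaling window.
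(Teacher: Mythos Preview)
The paper does not prove Proposition~\ref{Prop.HSAiry}: it is stated as ``the main result from \cite{DY25}'' and is used as a black box throughout. There is therefore no in-paper proof to compare against. Your sketch is, however, an accurate high-level description of the strategy actually carried out in \cite{DY25}: one starts from the discrete line ensemble associated with the Pfaffian Schur process (equivalently, symmetrized geometric LPP), establishes kernel convergence to $\kcr$ by steepest descent, proves tightness using the discrete Gibbs property and monotone coupling, and passes the Gibbs property to the limit.

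One minor correction: in your final paragraph you describe the limiting boundary interaction as ``pinning at the origin with parameter $(-1)^i\sqrt{2}\,\varpi$''. For finite $\varpi$ there is no pinning; the half-space Brownian Gibbs property of Definition~\ref{Def.BGP} says the resampled curves are reverse Brownian motions with \emph{drifts} $(-1)^i\sqrt{2}\,\varpi$, conditioned to stay ordered. Pinning (Definition~\ref{Def.PinnedBGP}) only arises in the $\varpi\to\infty$ limit treated in the present paper, not in the statement of Proposition~\ref{Prop.HSAiry} itself.
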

\begin{remark}\label{Rem.UniquenessHSA}
The conditions in Proposition \ref{Prop.HSAiry} uniquely determine the law of $\hsa$ in view of \cite[Proposition 5.8(3)]{DY25}, \cite[Corollary 2.20]{dimitrov2024airy} and \cite[Lemma 3.1]{DimMat}.
\end{remark}
\begin{remark}\label{Rem.GibbsPropertyHSA} Informally, a line ensemble $\{\mathcal{L}_i\}_{i\geq 1}$ satisfies the half-space Brownian Gibbs property from Definition \ref{Def.BGP} with parameters $\{\mu_i\}_{i \geq 1}$ if the following holds. For each $m \in \mathbb{N}$ and $T > 0$, the law of the curves $\{\mathcal{L}_{i}\}_{i = 1}^{m}$ on the interval $[0,T]$ conditioned on $\{\mathcal{L}_{i}(T)\}_{i = 1}^{m}$ and $\{\mathcal{L}_{m+1}(t): t \in [0,T]\}$ is that of $m$ independent reverse Brownian motions $\{\cev{B}_{i} \}_{i=1}^{m}$ with drifts $\{\mu_{i}\}_{i = 1}^{m}$, started from $\cev{B}_i(T) = \mathcal{L}_i(T)$, that are conditioned to not intersect:
$$\cev{B}_1(t) > \cev{B}_2(t) > \cdots > \cev{B}_{m}(t) > \mathcal{L}_{m+1}(t)  \mbox{ for } t\in [0,T].$$
Here, we say that $\cev{B}$ is a {\em reverse Brownian motion with drift $\mu$ from $\cev{B}(T) = y$} if
$$\cev{B}(t) = y + W_{T-t} + \mu (T-t) \mbox{ for } 0 \leq t \leq T,$$
where $(W_t:t\geq 0)$ is a standard Brownian motion, started from zero.
\end{remark}

%
%
\subsection{Main results}\label{Section1.2} In this section we continue with the notation from Section \ref{Section1.1}, and present the main results of the paper. Our first main result shows that the critical half-space Airy line ensembles $\hsa$ converge weakly as $\varpi \rightarrow \infty$.
\begin{theorem}\label{Thm.Convergence} Let $\hsa = \{\hsa_i\}_{i \geq 1}$ be as in Proposition \ref{Prop.HSAiry}. Then, the line ensembles $\hsa$ converge weakly to an ordered line ensemble $\hsai$ as $\varpi \rightarrow \infty$. 
\end{theorem}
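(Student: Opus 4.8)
The plan is to establish tightness of the family $\{\hsa\}_{\varpi}$ in the space of line ensembles (equipped with the topology of uniform-on-compacts convergence of each curve), and then to identify the limit by showing convergence of the finite-dimensional distributions of the associated Pfaffian point processes. For the latter, the natural route is to prove that the correlation kernels $\kcr$ from Definition \ref{Def.kcr} converge, as $\varpi \to \infty$ and in a suitable locally uniform sense, to an explicit limiting kernel; this is exactly the kind of task foreshadowed in Remark \ref{Rem.kcr}, where the authors promise ``simpler formulas that are more suitable for asymptotic analysis'' (Definition \ref{Def.NewKernel} and Lemma \ref{Lem.KernelMatch}). Convergence of the $2\times 2$ matrix kernels, together with adequate Gaussian-type decay bounds that are uniform in $\varpi$, upgrades to convergence of the Pfaffian correlation functions and hence of the finite-dimensional laws of the random measures $\hsm$. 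Since the prelimiting ensembles are non-intersecting, the limiting point configurations will be ordered, which accounts for the word ``ordered'' in the statement.

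Concretely, I would proceed in the following steps. First, rewrite the kernels $\icr_{ij}$, $\rcr_{ij}$ in a form where the $\varpi$-dependence is isolated: after appropriate contour deformations (permissible because of the cubic exponential decay, as noted in Remark \ref{Rem.kcr}) and changes of variables recentering the contours, the poles at $z+s = -\varpi$ etc.\ are pushed away or reorganized, and one takes $\varpi \to \infty$ to obtain the limiting kernel $\kcr|_{\infty}$; several of the rational prefactors like $(z+s+\varpi)(w+t+\varpi)$ and $1/((z+s+\varpi)(w+t+\varpi))$ will either cancel against each other or, after rescaling, produce finite limits. Second, derive bounds of the form $|\kcr(s,x;t,y)| \le C(s,t)\, e^{-c(x+y)}$ for $x,y$ bounded below, with constants uniform in $\varpi$ large, so that the Fredholm Pfaffian expansions converge and pass to the limit. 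Third, combine these with the Pfaffian structure to conclude convergence of finite-dimensional distributions of $\hsm$ to those of a limiting Pfaffian point process, which we name $\hsmi$; this defines the candidate $\hsai$ on any fixed finite set $\mathsf{S}$. Fourth, prove tightness of $\{\hsa\}_\varpi$ as a sequence of line ensembles: one-point tightness follows from the uniform tail bounds on the top curve $\hsa_1$ coming from the kernel estimates, and modulus-of-continuity control follows either from the (half-space) Brownian Gibbs property of $\hsl$ in Proposition \ref{Prop.HSAiry}---using that the drifts $\mu_i = (-1)^i \sqrt 2\,\varpi$ only improve the one-sided non-intersection constraint---or directly from kernel bounds on pair correlations at nearby times. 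Tightness plus convergence of finite-dimensional distributions then yields a weak limit $\hsai$, and the finite-dimensional laws identify it uniquely.

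The main obstacle I anticipate is Step 1--2: controlling the contour integrals uniformly as $\varpi \to \infty$. The difficulty is genuinely delicate because, as Figure \ref{Fig.HSA} and the surrounding discussion emphasize, $\varpi \to \infty$ is the regime in which the pairs of curves $\hsa_{2i-1}$ and $\hsa_{2i}$ collide at the origin, so the limiting object is degenerate at $t=0$ and the kernel must encode this merging. In the integral formulas, this manifests through poles at $z+s+w+t = 0$ (relevant near $s=t=0$) interacting with the $\varpi$-dependent poles at $z+s+\varpi = 0$; one has to track residue contributions carefully when $s,t$ are small versus bounded away from zero, and the parameters $\ap_i = |\varpi| + 3i$ defining the contours themselves diverge with $\varpi$, so the contours must be re-anchored before taking the limit. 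Ensuring the estimates are uniform near $t=0$ (rather than merely locally uniform for $t$ in compact subsets of $(0,\infty)$) is what makes the tightness argument at the boundary subtle, and it is where I expect the bulk of the technical work to lie. A secondary, more routine obstacle is verifying that the limiting $2\times 2$ kernel is itself a valid Pfaffian correlation kernel (antisymmetry of the diagonal blocks, correct conjugation-gauge), which should follow by passing these algebraic identities through the limit.
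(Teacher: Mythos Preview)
Your overall architecture---kernel convergence $\Rightarrow$ point-process convergence, combined with tightness from the Gibbs property---matches the paper's. However, there are two genuine gaps where your proposal either omits a necessary idea or misidentifies the difficulty.

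\textbf{First gap: from point-process convergence to finite-dimensional convergence of the curves.} You write that convergence of the Pfaffian point processes $\hsm$ ``defines the candidate $\hsai$ on any fixed finite set $\mathsf{S}$.'' This step hides a real obstacle. Knowing that $\hsm \Rightarrow M^{\mathsf{S};\infty}$ does not by itself imply that the ordered coordinates $(\hsa_i(s_j))$ converge: one needs, in addition, (i) tightness from above of $\hsa_1(s_j)$ and (ii) that the limiting point process $M^{s_j;\infty}$ has infinitely many atoms almost surely. Otherwise nothing prevents $\hsa_i(s_j)$ from escaping to $-\infty$ for some $i$. Point (i) is your ``uniform tail bounds on the top curve,'' but point (ii) you do not mention, and it is nontrivial. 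The paper proves it (Lemma~\ref{Lem.InfinitelyManyAtoms}) by a monotonicity-in-time argument that uses the Gibbs property to propagate the ``infinitely many atoms'' property backward from large $t$ (where the process is close to the Airy point process) to each fixed $t>0$; this requires a quantitative lower bound (Lemma~\ref{Lem.NoLowMin}) showing that if the $2k$-th curve is not too low at time $s_{n+1}$, it is not too low at $s_n$ either.

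\textbf{Second gap: the modulus-of-continuity argument.} Your claim that ``the drifts $\mu_i = (-1)^i\sqrt{2}\,\varpi$ only improve the one-sided non-intersection constraint'' is the opposite of the truth. The diverging \emph{alternating} drifts are precisely what force $\hsa_{2i-1}$ and $\hsa_{2i}$ to collide at $t=0$, and they make a naive modulus-of-continuity bound \emph{fail} uniformly in $\varpi$: a reverse Brownian motion with drift $\pm\sqrt{2}\,\varpi$ moves by order $\varpi\delta$ over a time interval of length $\delta$. The paper's actual mechanism (Lemma~\ref{Lem.MOC}) is that one first proves the curves are \emph{well-separated at a fixed positive time $b$} (this uses the already-established finite-dimensional convergence and the strict ordering in the limit), and then shows that avoiding reverse Brownian motions with drifts $(-1)^i\varpi$ and well-separated endpoints at $b$ converge, as $\varpi\to\infty$, to pairwise-pinned Brownian motions (Lemma~\ref{Lem.BrownianEnsembleConv}). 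Tightness of the limiting pinned ensemble then transfers back to uniform modulus-of-continuity control for large $\varpi$. Your alternative---``kernel bounds on pair correlations at nearby times''---would run into exactly the singularity at $t=0$ that you yourself flag.

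A smaller but essential point on the kernel side: the entries of $\kcr$ do not all converge individually as $\varpi\to\infty$; in fact $\kcr_{11}$ grows like $\varpi^{2}$ and $\kcr_{22}$ decays like $\varpi^{-2}$. The paper handles this via the Pfaffian gauge freedom $K_{11}\mapsto K_{11}/(4\varpi^2)$, $K_{22}\mapsto 4\varpi^2 K_{22}$ before taking the limit (Lemma~\ref{Lem.KernelLimit}). Your phrase ``after rescaling, produce finite limits'' may be gesturing at this, but it should be made explicit.
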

\begin{remark}\label{Rem.Convergence} The convergence in Theorem \ref{Thm.Convergence} is of random elements in $C(\mathbb{N} \times [0,\infty))$, where the latter space is endowed with the topology of uniform convergence over compact sets.
\end{remark}
\begin{remark}\label{Rem.Convergence2} We mention that while $\hsa$ is non-intersecting for each $\varpi \in \mathbb{R}$, the limiting ensemble $\hsai$, henceforth referred to as the {\em pinned half-space Airy line ensemble}, is only ordered, cf. Definition \ref{Def.Ordered}. As alluded to in Figure \ref{Fig.HSA}, and rigorously established in Theorem \ref{Thm.GibbsProperty} (see also Theorem \ref{Thm.OriginDescription}), the ensemble $\hsai$ is non-intersecting on $(0,\infty)$ but at the origin satisfies 
\begin{equation}\label{Eq.OriginCollision}
\hsai_1(0) = \hsai_2(0) > \hsai_3(0) = \hsai_4(0) > \hsai_5(0) = \hsai_6(0) > \cdots.
\end{equation}
\end{remark}

In the remainder of this section we summarize various properties of the pinned half-space Airy line ensemble $\hsai$ from Theorem \ref{Thm.Convergence} in a sequence of theorems. The first of these shows that $\hsai$ has a Pfaffian point process structure with an explicit correlation kernel. 
\begin{theorem}\label{Thm.PfaffianStructure} Let $\hsai$ be as in Theorem \ref{Thm.Convergence}, and $\hsmi$ be as in (\ref{eq:HSA point process}) with $\varpi = \infty$ for $0< s_1 < s_{2} < \cdots < s_m$. Then, $\hsmi$ is a Pfaffian point process on $\mathbb{R}^2$ with reference measure $\mu_{\mathsf{S}} \times \mathrm{Leb}$ as in Definition \ref{Def.Measures} and correlation kernel
\begin{equation}\label{Eq.DefK}
\begin{split}
&\hski(s,x; t,y) = \begin{bmatrix}
    \hski_{11}(s,x;t,y) & \hski_{12}(s,x;t,y)\\
    \hski_{21}(s,x;t,y) & \hski_{22}(s,x;t,y) 
\end{bmatrix} \\
&= \begin{bmatrix}
    \hsii_{11}(s,x;t,y) & \hsii_{12}(s,x;t,y) + \hsri_{12}(s,x;t,y) \\
    -\hsii_{12}(t,y;s,x) - \hsri_{12}(t,y;s,x) & \hsii_{22}(s,x;t,y) + \hsri_{22}(s,x;t,y) 
\end{bmatrix},
\end{split}
\end{equation}
where the kernels $\hsii_{ij}, \hsri_{ij}$ are defined as follows. We have
\begin{equation}\label{Eq.DefII}
\begin{split}
\hsii_{11}(s,x;t,y) = &\frac{1}{(2\pi \im)^2} \int_{\mathcal{C}_{1+s}^{\pi/3}}dz \int_{\mathcal{C}_{1+t}^{\pi/3}} dw \frac{(z + s - w - t)H(z,x;w,y)}{4(z + s + w + t)(z+s)(w+ t)}, \\
\hsii_{12}(s,x;t,y) = &\frac{1}{(2\pi \im)^2} \int_{\mathcal{C}_{1+s}^{\pi/3}}dz \int_{\mathcal{C}_{1+t}^{\pi/3}} dw \frac{(z + s - w + t) H(z,x;w,y)}{2(z+ s)(z + s + w -t)}, \\
\hsii_{22}(s,x;t,y) = &\frac{1}{(2\pi \im)^2} \int_{\mathcal{C}_{1+ s}^{\pi/3}}dz \int_{\mathcal{C}_{1+t}^{\pi/3}} dw \frac{(z - s - w + t)H(z,x;w,y)}{z - s + w - t},
\end{split}
\end{equation}
where we have set 
\begin{equation}\label{Eq.DefH}
H(z,x;w,y) = e^{z^3/3 + w^3/3 - xz - y w},
\end{equation}
and the contours $\mathcal{C}_{z}^{\varphi}$ are as in Definition \ref{Def.S1Contours}. We also have that $\hsri_{12} = R$ is as in (\ref{Eq.S1AiryKer}) and
\begin{equation}\label{Eq.DefR22I}
\begin{split}
\hsri_{22}(s,x;t,y) = \frac{H(s,x;t,y) (y-t^2 -x + s^2) }{2\pi^{1/2} (t+s)^{3/2} } \cdot \exp \left(-\frac{(y - t^2 - x +s^2)^2}{4(t+s)}\right).
\end{split}
\end{equation}
\end{theorem}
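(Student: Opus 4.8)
The plan is to obtain Theorem \ref{Thm.PfaffianStructure} by taking the $\varpi \to \infty$ limit in Proposition \ref{Prop.HSAiry}, using the weak convergence from Theorem \ref{Thm.Convergence} together with a convergence statement for the correlation kernels $\kcr \to \hski$. The Pfaffian point process structure is not an open-set property under weak convergence, so the cleanest route is: (i) show that for fixed $0 < s_1 < \cdots < s_m$ and fixed $x,y$ in a bounded box, the entries of $\kcr$ converge, after the usual rescaling (setting $\ap_i = \varpi + 3i$ and tracking the parabolic shift), to the entries of $\hski$; (ii) upgrade this to convergence of correlation functions $\rho_k^{\varpi} \to \rho_k^{\infty}$, where $\rho_k^{\varpi} = \mathrm{Pf}[\kcr(\cdot,\cdot)]$ is the $k$-point correlation function of $\hsm$ and $\rho_k^\infty = \mathrm{Pf}[\hski(\cdot,\cdot)]$; and (iii) match these limiting correlation functions against the correlation functions of $\hsmi$ (which, by Theorem \ref{Thm.Convergence}, is the weak limit of $\hsm$), thereby identifying $\hsmi$ as the Pfaffian point process with kernel $\hski$. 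Step (iii) requires knowing that the weak limit of $\hsm$ has $k$-point correlation functions equal to the limits of the $k$-point correlation functions of $\hsm$; this follows from tightness plus uniform integrability / domination of the correlation functions, which one gets from Gaussian-type decay bounds on $\kcr$ uniform in $\varpi$ large.

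The analytic heart is step (i): the contour-integral asymptotics. Here I would use the freedom to deform contours (Remark \ref{Rem.kcr}): in $\icr_{11}, \icr_{12}, \icr_{22}$ the contours $\mathcal{C}^{\varphi}_{\pm \ap_j - s}$ are anchored at points that drift to $\pm\infty$ with $\varpi$, so one first recenters by substituting $z \to z + c\varpi$-type shifts (or, following the hint, performs the substitution that lands the contours on $\mathcal{C}^{\pi/3}_{1+s}$, etc.), absorbing the resulting Gaussian pieces $e^{\varpi(\cdots)}$ into the parabolic shift $\mathcal{A}^{\mathrm{hs};\varpi}_i(t) \rightsquigarrow \mathcal{A}^{\mathrm{hs};\varpi}_i(t) + (\text{shift})$ that is already built into the statement of Theorem \ref{Thm.Convergence}. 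After recentering, the rational prefactors simplify: factors like $(z+s+\varpi)/(w+t+\varpi) \to 1$, the pole at $z+s = \varpi$ or $w - \varpi$ either escapes to infinity or collides, and the ``$\varpi$'' in denominators disappears, leaving exactly the prefactors $\tfrac{z+s-w-t}{4(z+s+w+t)(z+s)(w+t)}$, etc., in \eqref{Eq.DefII}. The $\icr_{22}$ term deforms from $\mathcal{C}^{2\pi/3}$-contours to $\mathcal{C}^{\pi/3}$-contours (reflecting $z \to -z$), which is why $\hsii_{22}$ has $\mathcal{C}^{\pi/3}_{1+s}$ contours and the exponent $H$ rather than its conjugate. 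For $\rcr_{22}$, which is a one-dimensional residue-type expression, the limit is a genuine computation: as $\varpi \to \infty$ the first two line integrals in \eqref{Eq.S1DefRcross2} should vanish (the contour $\mathcal{C}^{2\pi/3}_{\ap_1}$ escapes, the pole $1/(z-\varpi)$ is subdominant), while the third integral — with the indicator $\mathbf{1}\{s+t>0\}$, which is automatic since $s,t>0$ — is evaluated by a Laplace/steepest-descent argument or by explicitly residue-computing to produce the Gaussian in \eqref{Eq.DefR22I}; the factor $(y-t^2-x+s^2)$ and the $(t+s)^{3/2}$ arise from a derivative of a Gaussian, i.e.\ from the $w$ in the numerator $w\, e^{(\cdots)}/(2(w-\varpi)(w+\varpi))$ after the quadratic completion. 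The term $\hsri_{12} = R$ is unchanged since $R$ in \eqref{Eq.S1AiryKer} has no $\varpi$-dependence.

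For the domination needed in steps (ii)–(iii), I would establish: there exist constants $C, c>0$ and $\varpi_0$ such that for all $\varpi \ge \varpi_0$ and all $s,t$ in the fixed finite set and $x,y \in \mathbb{R}$,
\begin{equation*}
|\kcr_{ab}(s,x;t,y)| \le C \exp\bigl(-c(\max(x,0)^{3/2} + \max(y,0)^{3/2})\bigr) \cdot \bigl(\text{polynomial / Gaussian in } (x-y)\bigr),
\end{equation*}
with an analogous bound for $\hski$. Such bounds come from shifting each contour in the steepest-descent direction: on $\mathcal{C}^{\pi/3}_{c}$ one has $\Real(z^3/3 - xz) \le -c'|z|^3 - c''x|z| + C'$ for $x \ge 0$, giving the cubic-exponent decay, while for $x<0$ one shifts the contour to pass through the critical point $\sqrt{-x}$ and picks up the boundedness. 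These are the standard Airy-kernel estimates; the only extra care is uniformity in $\varpi$, which the recentering arranges because after recentering the contours and rational factors no longer see $\varpi$ except through terms that are $\le 1$ in modulus. Given such domination, $\mathrm{Pf}[\kcr] \le$ (product of these bounds) $\in L^1(\mu_{\mathsf{S}} \times \mathrm{Leb})^{\otimes k}$ uniformly, so dominated convergence passes the correlation functions to the limit and Fatou/uniform integrability identifies them with those of $\hsmi$.

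I expect the main obstacle to be the $\rcr_{22} \to \hsri_{22}$ computation and, more subtly, verifying that the two slightly different contour choices in $\icr_{22} + \rcr_{22}$ versus $\hsii_{22} + \hsri_{22}$ really produce the same kernel in the limit — i.e.\ that the pole crossings one performs while deforming $\icr_{22}$'s $2\pi/3$-contours to the $\pi/3$-contours of $\hsii_{22}$ contribute exactly the residue terms that, together with the limit of $\rcr_{22}$, reassemble into \eqref{Eq.DefR22I}. This bookkeeping is reminiscent of Remark \ref{S1Correction}, where the $22$-block only matches ``as a sum,'' and it is the place where signs, orientation conventions, and the $\mathbf{1}\{s+t>0\}$ indicator all have to be tracked with care. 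A convenient sanity check is to specialize to $s = t$ (still positive) and to compare the resulting one-time kernel with the known GSE/$\beta=4$ hard-edge kernel, which — via the connection to the stochastic Airy operator advertised in the introduction — should reproduce the correct point process at a single time.
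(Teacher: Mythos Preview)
Your overall architecture --- kernel convergence, then point-process convergence, then identification with the weak limit from Theorem \ref{Thm.Convergence} --- matches the paper's. However, your step (i) has a genuine gap: the individual entries of $\kcr$ do \emph{not} converge to those of $\hski$ as $\varpi\to\infty$. Once the contours are deformed to $\mathcal{C}^{\pi/3}_{1+s}$, $\mathcal{C}^{\pi/3}_{1+t}$ (this is Lemma \ref{Lem.KernelMatch}, yielding the equivalent kernel $\hsk$ of Definition \ref{Def.NewKernel}), the $(1,1)$ entry carries a factor $(\varpi+z+s)(\varpi+w+t)$ in the numerator and blows up like $\varpi^2$, while the $(2,2)$ entry has $(\varpi-z+s)^{-1}(\varpi-w+t)^{-1}$ and vanishes like $\varpi^{-2}$. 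The correct statement (Lemma \ref{Lem.KernelLimit}) is that after the \emph{conjugation}
\[
\hsk \longmapsto \begin{bmatrix} \tfrac{1}{4\varpi^2}\,\hsk_{11} & \hsk_{12} \\[2pt] \hsk_{21} & 4\varpi^2\,\hsk_{22} \end{bmatrix},
\]
which preserves the Pfaffian point process by \cite[Proposition 5.8(4)]{DY25} with $f\equiv 1/(2\varpi)$, each entry converges to the corresponding entry of $\hski$. Your proposed mechanism --- ``$z\to z+c\varpi$'' shifts and absorbing $e^{\varpi(\cdots)}$ into a parabolic shift --- is not what happens; no such substitution is made, and there is no parabolic shift in the kernel.

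Your treatment of the $22$-block is also off: the first two line integrals in \eqref{Eq.S1DefRcross2} do not simply vanish (they carry factors like $e^{(\varpi+t)^3/3}$). The paper instead performs, at \emph{fixed} $\varpi$, a contour-deformation/residue computation (Lemma \ref{Lem.KernelMatch}) in which residue contributions from $\icr_{22}$ cancel in pairs against pieces of $\rcr_{22}$, leaving the cleaner $\hsk_{22}=\hsi_{22}+\hsr_{22}$; only then does one let $\varpi\to\infty$ (after the $4\varpi^2$ conjugation), and $\hsr_{22}$ yields the Gaussian derivative \eqref{Eq.DefR22I} via an explicit Fourier-transform identity rather than steepest descent. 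For step (iii), the paper avoids your uniform-integrability route: kernel convergence plus \cite[Proposition 5.10]{DY25} gives $\hsm\Rightarrow M^{\mathsf{S};\infty}$ directly, and the identification with $\hsmi$ goes through the finite-dimensional convergence machinery (Lemmas \ref{Lem.PointProcessConvergenceVarpi} and \ref{Lem.FDConvVarpi}) together with \cite[Corollary 2.20]{dimitrov2024airy}.
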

\begin{remark}\label{Rem.PfaffianStructure0} The kernel $\hski$ appeared earlier in the context of the symplectic-unitary transition \cite{FNH99}, and half-space exponential LPP \cite[Section 2.5]{BBCS}. Specifically, our kernel formulas agree with $K^{\mathrm{SU}}$ from \cite[Section 2.5]{BBCS}, except that there is a small typo in $R_{22}^{\mathrm{SU}}$ in that paper, which should be multiplied by $4$. 
\end{remark}
\begin{remark}\label{Rem.PfaffianStructure} We mention that in Theorem \ref{Thm.PfaffianStructure}, unlike Proposition \ref{Prop.HSAiry}, it is crucial that $\mathsf{S} \subset (0,\infty)$. In other words, the Pfaffian point process structure of $\hsai$ does {\em not} extend to the origin. This is expected, as Pfaffian point processes are almost surely simple (see \cite[Proposition 5.8]{DY25}), while the point process on $\mathbb{R}$ formed by $\{\hsai_i(0)\}_{i \geq 1}$ is not in view of (\ref{Eq.OriginCollision}). 
\end{remark}
\begin{remark}\label{Rem.PfaffianStructure2} From \cite[Proposition 5.8(3)]{DY25}, \cite[Corollary 2.20]{dimitrov2024airy} and \cite[Lemma 3.1]{DimMat}, there is at most one ordered line ensemble that satisfies the conditions of Theorem \ref{Thm.PfaffianStructure}. Consequently, one can define $\hsai$ as the unique ordered ensemble satisfying these conditions, whose existence is guaranteed by Theorems \ref{Thm.Convergence} and \ref{Thm.PfaffianStructure}.
\end{remark}

The next result shows that $\hsai$ has a local description in terms of avoiding Brownian motions/bridges with special {\em pinning} at the origin.
\begin{theorem}\label{Thm.GibbsProperty} Let $\hsai$ be as in Theorem \ref{Thm.Convergence}, and define $\hsail = \{\hsail_{i}\}_{i \geq 1}$ through
\begin{equation}\label{Eq.ParabolicHSALE}
\sqrt{2} \cdot \hsail_i(t) + t^2 = \hsai_i(t) \mbox{ for } i \geq 1, t \in [0, \infty).
\end{equation}
Then the following hold.
\begin{enumerate}
\item[(a)] The restriction of $\hsail$ to $(0,\infty)$, viewed as an $\mathbb{N}$-indexed line ensemble on $(0,\infty)$, satisfies the Brownian Gibbs property in Definition \ref{Def.BGPVanilla}. In particular, on $(0,\infty)$ the line ensemble $\hsai$ is non-intersecting as in Definition \ref{Def.Ordered}.
\item[(b)] On $[0,\infty)$ the ensemble $\hsail$ satisfies the pinned half-space Brownian Gibbs property from Definition \ref{Def.PinnedBGP}.
\end{enumerate}
\end{theorem}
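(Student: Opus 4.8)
The plan is to deduce Theorem \ref{Thm.GibbsProperty} from the convergence statement in Theorem \ref{Thm.Convergence} together with the known Brownian Gibbs properties of the prelimit ensembles $\hsl$ from Proposition \ref{Prop.HSAiry}. The key point is that the (half-space) Brownian Gibbs property is, up to care at the boundary, a closed condition under weak convergence: if $\hsl \Rightarrow \hsail$ in $C(\mathbb{N} \times [0,\infty))$ as $\varpi \to \infty$, and each $\hsl$ resamples on $[0,T]$ as avoiding reverse Brownian motions with drifts $\mu_i = (-1)^i\sqrt{2}\,\varpi$, then one expects the limit to resample as avoiding reverse Brownian motions in which, crucially, the drifts diverge in a way that forces the paired curves $\hsail_{2i-1}, \hsail_{2i}$ to coalesce at the origin --- this is exactly the ``pinned'' boundary behavior encoded in Definition \ref{Def.PinnedBGP}.

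For part (a), I would argue as follows. Fix $0 < a < b$ and an index set $\{1,\dots,k\}$. On the compact interval $[a,b]$, which stays away from the boundary, the half-space Brownian Gibbs property of $\hsl$ restricted to $[a,b]$ is (after re-anchoring the reverse Brownian motions at $t=b$) precisely an ordinary bridge-type Brownian Gibbs resampling, and --- this is the content I would check --- the drifts $\mu_i$, though large, are immaterial once one conditions on the endpoint values $\hsl_i(a),\hsl_i(b)$: conditioning a Brownian motion with drift on both endpoints yields a Brownian bridge with no drift. Hence on $[a,b]$ the law of $\hsl_1,\dots,\hsl_k$ given $\hsl_i(a), \hsl_i(b)$ and $\hsl_{k+1}|_{[a,b]}$ is that of $k$ avoiding Brownian bridges, uniformly in $\varpi$. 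Passing to the limit using Theorem \ref{Thm.Convergence}, continuity of the avoiding-bridge law in its boundary data, and the standard argument (as in \cite{CorHamA}) that weak limits preserve the Gibbs property, gives the Brownian Gibbs property of $\hsail|_{(0,\infty)}$ in the sense of Definition \ref{Def.BGPVanilla}. Non-intersection on $(0,\infty)$ then follows since avoiding Brownian bridges are a.s.\ strictly ordered on the open interval, combined with a countable union over rational $0<a<b$ and all $k$.

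For part (b), the interval $[0,T]$ now includes the boundary, and the relevant resampling is the reverse-Brownian-motion version anchored at $t=T$ with drifts $\mu_i = (-1)^i\sqrt2\,\varpi \to \pm\infty$. Here the drift does not disappear, because the endpoint at $t=0$ is \emph{not} conditioned on. I would analyze the $\varpi\to\infty$ behavior of $k$ non-intersecting reverse Brownian motions started at $\cev B_i(T) = \hsl_i(T)$ with alternating drifts $\pm\sqrt2\,\varpi$: the odd-indexed curves are pushed up (drift $-\varpi$ in the reverse-time variable $T-t$, so they decrease less as $t$ decreases, i.e.\ rise as $t\to 0$) and even-indexed curves pushed down, and the non-intersection constraint $\cev B_1 > \cev B_2 > \cdots$ squeezes consecutive pairs $(\cev B_{2i-1}, \cev B_{2i})$ together as $t\to 0$, with the gap shrinking like $O(\varpi^{-1})$ near the origin; in the limit the pair is forced to meet exactly at $t=0$. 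Identifying the limiting law of these squeezed avoiding motions with the ``pinned'' Brownian motions of Definition \ref{Def.PinnedBGP} is the crux; I would do this by an explicit Radon--Nikodym / Girsanov computation for the finite-$k$ avoiding ensemble, showing the density against the pinned reference measure converges, and then invoke Theorem \ref{Thm.Convergence} to transfer the Gibbs property to $\hsail$.

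The main obstacle is precisely this last identification at the boundary: one must show that the weak limit of the conditioned prelimit measures is \emph{exactly} the pinned-Brownian-motion measure of Definition \ref{Def.PinnedBGP}, and not some other degenerate object, and simultaneously that the conditioning and the $\varpi\to\infty$ limit can be interchanged. This requires (i) uniform-in-$\varpi$ tightness and non-degeneracy estimates on the conditioned curves near the origin --- e.g.\ that the coalescing pairs do not run off to $\pm\infty$ and that the limiting ``pinned'' point at $0$ has the right (non-atomic, correctly spread) law, which is where the stochastic Airy operator description of Theorem \ref{Thm.OriginDescription} ultimately has to be consistent --- and (ii) a careful treatment of the fact that $\hsai$ is only \emph{ordered} rather than non-intersecting at $0$, so that the Gibbs resampling must be stated with a limiting (coalescing) boundary condition. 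I expect the proof to route these difficulties through the Pfaffian-kernel formulas of Theorem \ref{Thm.PfaffianStructure} (which control one-point and gap probabilities near the origin) to supply the needed a priori bounds, and through the general machinery for Brownian Gibbs properties under limits (e.g.\ \cite{CorHamA}, and the pinned framework of \cite{DasSerio25}) to package the resampling identity.
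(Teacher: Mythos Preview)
Your overall architecture matches the paper's: both parts follow from the weak convergence $\hsl \Rightarrow \hsail$ (Theorem \ref{Thm.Convergence}) by showing that the relevant Gibbs resampling measures converge, and then passing the Gibbs identity to the limit via a monotone-class argument (this is Step 4 of Proposition \ref{Prop.Tightness}). For part (a) you are exactly right that conditioning a Brownian motion with drift on both endpoints kills the drift, so on $[a,b]\subset(0,\infty)$ the resampling is drift-free avoiding Brownian bridges uniformly in $\varpi$; this is precisely \cite[Lemma 2.9]{DY25}, and the paper passes to the limit using the continuity statement in Lemma \ref{Lem.BridgeEnsemblesCty}.

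For part (b), however, your proposed method for the key identification --- an ``explicit Radon--Nikodym / Girsanov computation \ldots\ showing the density against the pinned reference measure converges'' --- does not work as stated: for each finite $\varpi$ the avoiding measure assigns probability one to $\{\mathcal{Q}_{2i-1}(0) > \mathcal{Q}_{2i}(0)\}$, while the pinned measure assigns probability one to $\{\mathcal{Q}_{2i-1}(0) = \mathcal{Q}_{2i}(0)\}$, so the two are mutually singular and there is no density to take a limit of. The paper's Lemma \ref{Lem.BrownianEnsembleConv} instead uses the sum/difference decomposition from \cite{DasSerio25}: for a single pair $(\mathcal{Q}_1,\mathcal{Q}_2)$ with no floor, set $U = 2^{-1/2}(\mathcal{Q}_1+\mathcal{Q}_2)$ and $V = 2^{-1/2}(\mathcal{Q}_1-\mathcal{Q}_2)$. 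Then $U$ is a reverse Brownian motion with zero drift (the alternating drifts cancel), while $V$ is an \emph{independent} reverse Brownian motion with drift $-\sqrt{2}\,\varpi$ conditioned to stay positive. The limit of $V$ as $\varpi\to\infty$ is identified as a 3D Bessel bridge via explicit convergence of finite-dimensional densities (using the transition formulas of \cite{IO19}) together with a modulus-of-continuity bound; recombining $U$ and $V$ yields exactly the law $\mathbb{P}_{\mathrm{pin}}^{b,\vec{y}}$ of Definition \ref{Def.PinnedBM}. The general-$k$-with-floor case follows by writing the avoiding law as a product of independent pairs conditioned on a crossing event and invoking Lemma \ref{Lem.NoTouchPinned}. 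Finally, you will not need the Pfaffian kernels of Theorem \ref{Thm.PfaffianStructure} or the stochastic Airy operator of Theorem \ref{Thm.OriginDescription} here; once Lemma \ref{Lem.BrownianEnsembleConv} is in hand, the Gibbs-property argument is entirely soft.
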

\begin{remark}\label{Rem.GibbsProperty} Theorem \ref{Thm.GibbsProperty}(a) states that for each $m \in \mathbb{N}$ and $b > a > 0$, the law of $\{\hsail_{i}\}_{i = 1}^{m}$ on the interval $[a,b]$, conditioned on $\{\hsail_{i}(a)\}_{i = 1}^{m}$, $\{\hsail_{i}(b)\}_{i = 1}^{m}$ and $\{\hsail_{m+1}(t): t \in [a,b]\}$, is that of $m$ independent Brownian bridges $\{{B}_{i} \}_{i=1}^{m}$ from ${B}_i(a) = \hsail_i(a)$ to ${B}_i(b) = \hsail_i(b)$ that are conditioned to not intersect:
$$B_1(t) > B_2(t) > \cdots > B_{m}(t) > \hsail_{m+1}(t)  \mbox{ for } t\in [a,b].$$

Theorem \ref{Thm.GibbsProperty}(b) states that for each $m \in \mathbb{N}$ and $T > 0$, the law of the curves $\{\mathcal{L}_{i}\}_{i = 1}^{2m}$ on the interval $[0,T]$ conditioned on $\{\mathcal{L}_{i}(T)\}_{i = 1}^{2m}$ and $\{\mathcal{L}_{2m+1}(t): t \in [0,T]\}$ is that of $2m$ independent reverse Brownian motions $\{\cev{B}_{i} \}_{i=1}^{2m}$ started from $\cev{B}_i(T) = \mathcal{L}_i(T)$ that are conditioned to stay ordered
$$\cev{B}_1(t) \geq \cev{B}_2(t) \geq \cdots \geq \cev{B}_{2m}(t) \geq \mathcal{L}_{2m+1}(t)  \mbox{ for } t\in [0,T],$$
and with pairwise pinning at the origin:
$$\cev{B}_{2i-1}(0) = \cev{B}_{2i}(0) \mbox{ for } i = 1, 2, \dots, m.$$
\end{remark}
\begin{remark}\label{Rem.GibbsProperty2} Recall from Remark \ref{Rem.GibbsPropertyHSA} that near the origin the odd curves $\hsa_{2i-1}$ have a drift $-\sqrt{2} \varpi$, while the even ones $\hsa_{2i}$ have a drift $\sqrt{2} \varpi$ (when run backwards). As $\varpi \rightarrow \infty$ this causes $\hsa_{2i-1}$ to collide with $\hsa_{2i}$ for $i \geq 1$, as the ensemble needs to remain ordered. Consequently, the pinned half-space Brownian Gibbs property of $\hsai$ arises as a natural $\varpi \rightarrow \infty$ limit of the half-space Brownian Gibbs property enjoyed by $\hsa$.
\end{remark}

The next result explains the behavior of $\hsai$ far from the origin. As one moves away from the zero boundary, its effect diminishes; $\hsai$ starts to behave like a full-space model, and one recovers the full-space Airy line ensemble in the limit. 
\begin{theorem}\label{Thm.ConvergenceToAiryLE} Let $\hsai$ be as in Theorem \ref{Thm.Convergence} and fix a sequence $t_n \in (0, \infty)$ with $t_n \uparrow \infty$. For each $n \in \mathbb{N}$ define the line ensemble $\mathcal{A}^n = \{\mathcal{A}^n_{i}\}_{i \geq 1}$ on $\mathbb{R}$ by setting for each $i \geq 1$
\begin{equation}\label{Eq.ShiftedLE}
\mathcal{A}^n_i(t) = \begin{cases} \hsai_i(t+ t_n) &\mbox{ if } t \geq - t_n, \\ \hsai_i(0) &\mbox{ if } t < -t_n. \end{cases}
\end{equation}
Then, $\mathcal{A}^n \Rightarrow \mathcal{A}$, where $\mathcal{A}$ is the full-space Airy line ensemble from Definition \ref{Def.AiryLE}.
\end{theorem}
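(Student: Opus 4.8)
The plan is to prove $\mathcal{A}^n \Rightarrow \mathcal{A}$ by combining tightness of $\{\mathcal{A}^n\}$ with an identification of all subsequential limits via the uniqueness characterization of the Airy line ensemble recalled in Definition \ref{Def.AiryLE} and Remark \ref{Rem.AiryLE2}. Concretely, the key inputs are: (i) the explicit Pfaffian correlation kernel $\hski$ from Theorem \ref{Thm.PfaffianStructure}, which gives exact control of the finite-dimensional point-process statistics of $\hsai$ on any $\mathsf{S} \subset (0,\infty)$; and (ii) the Brownian Gibbs property of $\hsail$ on $(0,\infty)$ from Theorem \ref{Thm.GibbsProperty}(a), which (after the shift $t \mapsto t + t_n$, which pushes the boundary out to $-t_n \to -\infty$) yields uniform regularity estimates for $\mathcal{A}^n$ on any fixed compact set.

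First I would establish tightness of $(\mathcal{A}^n)_{n\ge 1}$ in $C(\mathbb{N}\times\mathbb{R})$ with the topology of uniform convergence on compacts. By the Gibbs property in Theorem \ref{Thm.GibbsProperty}(a), for $T$ fixed and $n$ large enough that $t_n > T+1$, the restriction of $\mathcal{A}^n$ to $[-T,T]$ is, conditionally on boundary data, a collection of non-intersecting Brownian bridges lying above the $(m{+}1)$-st curve. Standard Brownian-Gibbs machinery then reduces tightness to one-point tightness of $\mathcal{A}^n_i(0) = \hsai_i(t_n)$ for each fixed $i$, together with a lower bound on $\mathcal{A}^n_{i+1}(0)$ to control the barrier. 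These one-point bounds come directly from the kernel $\hski$: a standard steepest-descent / Fredholm-Pfaffian estimate on the gap probabilities $\mathbb{P}(\hsmi([T-M,M]\times\{t_n\}) = 0$-type events) gives that $\hsai_i(t_n) - t_n^2$ (equivalently $\sqrt{2}\,\hsail_i(t_n)$) is tight uniformly in $n$; here one uses that, after the parabolic shift, the top of $\hsai$ at location $t_n$ behaves to leading order like that of $\mathcal{A}$, and the boundary correction term $\hsri_{22}$ decays because $(t+s)^{-3/2}$-type factors and the Gaussian in $(y-t^2-x+s^2)$ suppress it when $s,t$ are of order $t_n \to \infty$.

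Next I would identify the subsequential limits. Fix any subsequence along which $\mathcal{A}^n$ converges weakly to some line ensemble $\mathcal{A}^\infty$; by tightness and the Brownian Gibbs property (which is preserved under such limits, see the standard arguments, e.g.\ in \cite{CorHamA}), $\mathcal{A}^\infty$ is a non-intersecting line ensemble on $\mathbb{R}$ whose parabolically-shifted version satisfies the Brownian Gibbs property. To pin it down as $\mathcal{A}$, I would show that the finite-dimensional point-process $M^{\mathsf{S};\mathcal{A}^\infty}$ on any $\mathsf{S}=\{s_1<\cdots<s_m\}\subset\mathbb{R}$ is determinantal with the extended Airy kernel $K^{\mathrm{Airy}}$. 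For this, translate by $t_n$: the point process $\hsmi$ for the set $\mathsf{S}+t_n = \{s_j+t_n\}$ has kernel $\hski((s_i{+}t_n),x;(s_j{+}t_n),y)$, and I would perform an asymptotic analysis of this kernel as $t_n\to\infty$. After the parabolic recentering $x \mapsto x + t_n^2 + \cdots$ (matching the shift in \eqref{Eq.ShiftedLE} built into the definition of $\mathcal{A}^n$) and an appropriate rescaling/conjugation of the kernel, the diagonal $(2,2)$ and $(1,1)$ Pfaffian entries should collapse and the $\hsii_{12}$/$\hsri_{12}$ block should converge (via contour deformation and steepest descent on the $H$-integrals, with the contours $\mathcal{C}^{\pi/3}$ already in the right position) to $K^{\mathrm{Airy}}$, while $\hsri_{22}$ and the off-diagonal pieces become negligible — i.e.\ the Pfaffian structure degenerates to a determinantal one in the $t_n\to\infty$ limit. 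A Fredholm-Pfaffian-to-Fredholm-determinant convergence argument then gives convergence of all gap probabilities, hence convergence of the finite-dimensional distributions of $M^{\mathsf{S};\mathcal{A}^n}$ to those of $M^{\mathsf{S};\mathcal{A}}$, which identifies $\mathcal{A}^\infty \overset{d}{=} \mathcal{A}$ by Remark \ref{Rem.AiryLE2}. Since every subsequential limit equals $\mathcal{A}$, the full sequence converges.

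The main obstacle I expect is the kernel asymptotics: showing rigorously that the Pfaffian $2\times 2$ kernel $\hski$ at shifted points $s_i+t_n$, after the correct parabolic centering and conjugation, converges (entrywise, with sufficient decay to pass to Fredholm Pfaffians) to a kernel whose Fredholm Pfaffian equals the Fredholm determinant of $K^{\mathrm{Airy}}$. The delicate points are (a) choosing the conjugating factors and the exact recentering so that the exponential factors $e^{z^3/3+w^3/3-xz-yw}$ in $H$ produce a stable saddle at the Airy scale while the rational prefactors like $(z+s+w+t)^{-1}$ — which involve the \emph{sum} of spectral variables and reflect the half-space/boundary structure — tend to their full-space counterparts (here $s+t \sim 2t_n \to \infty$ sends $(z{+}s+w{+}t)^{-1}\to 0$, which is exactly what kills the half-space corrections and is the analytic heart of "the boundary effect diminishes"); and (b) obtaining tail bounds on the integrands uniform in $t_n$ to justify dominated convergence and the Fredholm-level passage to the limit. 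This is technically the heaviest step; the tightness and soft-limit arguments are comparatively routine given the tools already developed in \cite{DY25} and \cite{DasSerio25}.
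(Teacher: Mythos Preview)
Your approach is correct and is essentially the paper's: kernel asymptotics show the Pfaffian structure degenerates to the extended Airy determinantal kernel, the Brownian Gibbs property on $(0,\infty)$ supplies tightness, and the characterization of $\mathcal{A}$ identifies the limit. The paper streamlines the second half by directly invoking \cite[Proposition~3.6]{CorHamA}, which packages tightness-plus-identification and reduces the whole problem to (i) the Brownian Gibbs property of the parabolically shifted ensembles (immediate from Theorem~\ref{Thm.GibbsProperty}(a) and affine invariance) and (ii) finite-dimensional convergence (Lemma~\ref{Lem.FDConvT}, obtained from point-process convergence via \cite[Propositions~2.19, 2.21]{dimitrov2024airy}).

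Your anticipated ``main obstacle'' is much lighter than you expect: \emph{no} parabolic recentering of the spatial variables is needed. Lemma~\ref{Lem.ConvToAiryKernel} shows that $\hski_{11}(T{+}s,x;T{+}t,y)\to 0$, $\hski_{22}(T{+}s,x;T{+}t,y)\to 0$, and $\hski_{12}(T{+}s,x;T{+}t,y)\to K^{\mathrm{Airy}}(s,x;t,y)$ hold uniformly on compacts in $x,y$ with no shift --- after deforming the $z,w$ contours to $T$-independent ones of type $\mathcal{C}^{\pi/3}$, the factors $(z{+}s{+}w{+}t{+}2T)^{-1}(z{+}s{+}T)^{-1}(w{+}t{+}T)^{-1}$ in $\hsii_{11}$ give $O(T^{-3})$ directly, $\hsii_{12}$ already has the Airy form up to a harmless $(z{+}s{+}T)^{-1}$ factor, and $\hski_{22}$ is handled by one residue computation that cancels $\hsri_{22}$ against a piece of $\hsii_{22}$ (equation~\eqref{Eq.K22VertCont}). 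The Pfaffian-to-determinantal passage then goes through the general \cite[Lemma~5.9, Proposition~5.10]{DY25} rather than any Fredholm argument.
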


Our last main result about $\hsai$ describes its distribution at the origin. To formulate it precisely, and also establish it later in Section \ref{Section6}, we require the following statement.
\begin{lemma}\label{Lem.SAO} Let $\{\Lambda_k\}_{k \geq 0}$ denote the ordered in ascending order eigenvalues of the stochastic Airy operator $\mathcal{H}_{\beta}$ from \cite{RRV11} with $\beta = 4$. Let $M^{\mathrm{GSE}}$ be the random measure on $\mathbb{R}$, defined through
\begin{equation}\label{Eq.DefMGSE}
M^{\mathrm{GSE}}(A) = \sum_{i = 1}^{\infty} \{ - 2^{2/3}\Lambda_{i-1} \in A\}.
\end{equation}
Then $M^{\mathrm{GSE}}$ is a Pfaffian point process on $\mathbb{R}$ with reference measure $\mathrm{Leb}$ and correlation kernel $\kgse$, given by
\begin{equation}\label{Def.Kgse}
\begin{split}
\kgse_{11}(x,y) = &\frac{1}{(2\pi \im)^2} \int_{\mathcal{C}_{1}^{\pi/3}}dz \int_{\mathcal{C}_{1}^{\pi/3}} dw \frac{(z - w )H(z,x;w,y)}{4(z + w )zw}, \\
\kgse_{12}(x,y) = - \kgse_{21}(y,x) = &\frac{1}{(2\pi \im)^2} \int_{\mathcal{C}_{1}^{\pi/3}}dz \int_{\mathcal{C}_{1}^{\pi/3}} dw \frac{(z - w ) H(z,x;w,y)}{4z(z + w)}, \\
\kgse_{22}(x,y) = &\frac{1}{(2\pi \im)^2} \int_{\mathcal{C}_{1}^{\pi/3}}dz \int_{\mathcal{C}_{1}^{\pi/3}} dw \frac{(z - w)H(z,x;w,y)}{4(z + w)},
\end{split}
\end{equation}
where $H(z,x;w,y)$ is as in (\ref{Eq.DefH}) and the contour $\mathcal{C}_1^{\pi/3}$ is as in Definition \ref{Def.S1Contours}.
\end{lemma}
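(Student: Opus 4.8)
## Proof Proposal for Lemma \ref{Lem.SAO}

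The plan is to identify $M^{\mathrm{GSE}}$ with a known integrable point process. The eigenvalues of the stochastic Airy operator $\mathcal{H}_4$ are, by the main theorem of \cite{RRV11}, the scaling limit of the eigenvalues of the Gaussian Symplectic Ensemble (GSE) at the soft edge; equivalently, they are the edge limit of Laguerre/Wishart-type symplectic ensembles. It is classical (going back to \cite{FNH99}, and recorded in the half-space LPP literature such as \cite[Section 2.5]{BBCS}) that the GSE soft-edge point process is a Pfaffian point process whose correlation kernel is the matrix kernel $K^{\mathrm{SU}}$, which by Remark \ref{Rem.PfaffianStructure0} coincides (up to the typo noted there) with $\hski(s,x;t,y)$ evaluated at $s=t$. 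So the heart of the argument is: (i) express $M^{\mathrm{GSE}}$ as an appropriate deterministic rescaling of the GSE soft-edge process; (ii) quote the Pfaffian structure of the GSE edge process with its known kernel; and (iii) perform the single-point specialization $s=t$ in Theorem \ref{Thm.PfaffianStructure}, checking that the diagonal limit of $\hski$ reduces to the stated $\kgse$.

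First I would pin down the correct normalization. The doubling of multiplicity in \eqref{Eq.DefMGSE} reflects the fact that GSE eigenvalues come in Kramers pairs; in the $\beta=4$ stochastic Airy description one keeps only the distinct eigenvalues $\Lambda_k$, so a genuine GSE spectrum corresponds to listing each $\Lambda_k$ twice — this is exactly what the sum over $i\ge 1$ with $\Lambda_{i-1}$ does. The factor $-2^{2/3}$ is the change of variables relating the conventional Airy-operator normalization of \cite{RRV11} to the normalization of the kernel contours used here (the contour basepoint $1$ and the cubic $z^3/3$ should fix the constant uniquely). I would state the GSE edge Pfaffian result in the form: the point process $\{-2^{2/3}\Lambda_{i-1}\}_{i\ge1}$ is Pfaffian with kernel equal to the $s=t=0$ diagonal of the (corrected) $K^{\mathrm{SU}}$ kernel, citing \cite{FNH99, BBCS}. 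Alternatively — and this is probably the cleanest route — I would \emph{derive} it from Theorem \ref{Thm.ConvergenceToAiryLE} together with the known identification of the one-point marginal of $\hsai$ at the origin, but since Theorem \ref{Thm.ConvergenceToAiryLE} concerns behavior \emph{away} from the origin, the honest route is the direct citation plus the $s=t$ specialization of Theorem \ref{Thm.PfaffianStructure}.

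The key computational step is taking $s=t$ in \eqref{Eq.DefII}--\eqref{Eq.DefR22I}. In $\hsii_{11}$, setting $s=t$ and writing $u = z+s$, $v = w+t$ gives $\frac{u-v}{4(u+v)(z+s)(w+t)}H = \frac{u-v}{4(u+v)uv}H$ after noting $z+s=u$, $w+t=v$ — but one must be careful: the cubic exponential is $z^3/3+w^3/3-xz-yw$, not $u^3/3+v^3/3$. So the substitution does \emph{not} simply move the basepoint to $1$; instead, with $s=t$ the contours $\mathcal{C}_{1+s}^{\pi/3}$ can be shifted back to $\mathcal{C}_1^{\pi/3}$ by deforming and using that the integrand is entire away from the indicated poles (there is freedom in the contours by the cubic decay, cf. Remark \ref{Rem.kcr}), \emph{provided} we simultaneously absorb the shift. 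The honest statement is: $\kgse(x,y)$ is what $\hski(s,x;s,y)$ becomes after the further affine change of variables $x \mapsto x + $ (something involving $s$), i.e. $\kgse$ is the kernel of the point process $\{\hsai_i(0)\}$ only after removing the parabolic shift $-s^2$ and taking $s\downarrow 0$. I would organize this as: (1) apply Theorem \ref{Thm.PfaffianStructure} at a single time $s>0$; (2) conjugate the matrix kernel by $\mathrm{diag}(e^{g}, e^{-g})$ for a suitable $g=g(x)$ (Pfaffian kernels are defined up to such conjugation), to normalize the quadratic-in-$x$ pieces away; (3) send $s\downarrow 0$, checking that $\hsii_{ij}(s,\cdot;s,\cdot)$ converge to the displayed double contour integrals and that $\hsri_{12}=R \to 0$ while $\hsri_{22}(s,x;s,y)$, from \eqref{Eq.DefR22I}, is an odd Gaussian-type function of $x-y$ that — after the conjugation in step (2) and the $s\downarrow0$ limit — merges into $\kgse_{22}$; (4) independently identify the resulting kernel with the GSE edge kernel of \cite{FNH99} and conclude via \cite{RRV11} and the uniqueness of correlation kernels (up to conjugation) for Pfaffian processes.

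The main obstacle is step (3), the degeneration at $s=0$: as $s\downarrow0$ the $R_{22}$-type term \eqref{Eq.DefR22I} has a $(t+s)^{-3/2}$ prefactor that blows up pointwise, so the limit only makes sense distributionally or after recognizing a cancellation with a piece hidden inside $\hsii_{22}$ when the contours are pushed together and a residue at $z+s+w-t=0$ (or $z-s+w-t=0$) is crossed. Concretely, the simple pole $z-s+w-t=0$ in $\hsii_{22}$ becomes $z+w=0$ at $s=t=0$, which is precisely the pole structure visible in $\kgse_{22}$; managing the residue bookkeeping as $s\downarrow 0$ — making sure the singular $R_{22}$ contribution is exactly the residue correction needed so that the sum stays finite and equals the clean double-integral formula — is the delicate part. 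I would handle it by first symmetrizing: write $\hski_{22}(s,x;s,y) = \hsii_{22}(s,x;s,y) + \hsri_{22}(s,x;s,y)$ and use the antisymmetry $\hsri_{22}(s,x;t,y) = -\hsri_{22}(t,y;s,x)$ together with the representation of $\hsri_{22}$ as a single contour integral (analogous to \eqref{Eq.S1DefRcross2}) valid before the Gaussian collapse, so that the $s\downarrow0$ limit of the \emph{sum} is manifestly the double integral in $\kgse_{22}$ without ever encountering the singular Gaussian. Everything else — the convergence of $\hsii_{11},\hsii_{12}$ and the vanishing of $R$ — is routine dominated-convergence on the contours using the cubic exponential decay.
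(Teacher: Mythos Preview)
Your approach has a fundamental error: the $s\downarrow 0$ limit of $\hski(s,\cdot;s,\cdot)$ does \emph{not} produce $\kgse$, and the singular $\hsri_{22}$ term does \emph{not} cancel against a residue from $\hsii_{22}$. As Section~\ref{Section6} of the paper establishes (this is the hard part of proving Theorem~\ref{Thm.OriginDescription}), the singular term $\delta_N(x,y)\sim (y-x)(2t_N)^{-3/2}e^{-(x-y)^2/8t_N}$ acts in the $t_N\downarrow 0$ limit as a distributional derivative, and its effect on the factorial moments is precisely to \emph{double} the point process: the $s\downarrow 0$ limit of $\hsmi$ is $2M^{\mathrm{GSE}}$, not $M^{\mathrm{GSE}}$. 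So your proposed cancellation is false, and the limiting point process is not even Pfaffian (Remark~\ref{Rem.PfaffianStructure}). Relatedly, you misread \eqref{Eq.DefMGSE}: the sum $\sum_{i\ge 1}\mathbf{1}\{-2^{2/3}\Lambda_{i-1}\in A\}$ lists each eigenvalue \emph{once}; the doubling appears only in Theorem~\ref{Thm.OriginDescription}, equation \eqref{Eq.Equality in law}. Finally, routing through $\hsai(0)$ is circular: connecting $\hsai(0)$ to the $\Lambda_k$ is Theorem~\ref{Thm.OriginDescription}, whose proof uses Lemma~\ref{Lem.SAO}.

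The paper's actual proof (Appendix~\ref{SectionA}) bypasses $\hsai$ entirely and works directly with the finite GSE. It starts from the known Pfaffian kernel $K^N$ of the $N$-point GSE written in terms of Hermite polynomials (Forrester~\cite{F10}), applies classical Plancherel--Rotach edge asymptotics to obtain the limiting matrix kernel built from $S_4(x,y)=\tfrac12 K_{\mathrm{Ai}}(x,y)-\tfrac14\mathrm{Ai}(y)\int_x^\infty\mathrm{Ai}$, and then verifies by elementary contour-integral manipulations that this $S_4$-kernel agrees (up to harmless signs absorbed by the Pfaffian) with $\kgse$ in \eqref{Def.Kgse}. This gives weak convergence of the rescaled GSE point processes to a Pfaffian process with kernel $\kgse$. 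Separately, \cite[Theorem~1.1]{RRV11} gives finite-dimensional convergence of the same rescaled GSE eigenvalues to $(-2^{2/3}\Lambda_0,-2^{2/3}\Lambda_1,\dots)$; a short point-process lemma then identifies the two limits. No step involves $\hski$ or any $s\downarrow 0$ degeneration.
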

\begin{remark}\label{Rem.SAO} We mention that Lemma \ref{Lem.SAO} is known to experts, and follows from \cite{RRV11} and the well-studied edge asymptotics of the eigenvalues of the Gaussian Symplectic Ensemble. As we could not find this statement in the literature, we provide a short proof of it in Appendix \ref{SectionA}. 
\end{remark}

The following theorem establishes a distributional equality between the $\hsai$ at the origin and the eigenvalues of the stochastic Airy operator with $\beta = 4$.
\begin{theorem}\label{Thm.OriginDescription} Let $\hsai$ be as in Theorem \ref{Thm.Convergence}, and $\{\Lambda_{k}\}_{k \geq 0}$ as in Lemma \ref{Lem.SAO}. Then we have the following equality in law as random vectors in $\mathbb{R}^{\infty}$
\begin{equation}\label{Eq.Equality in law}
(\hsai_1(0), \hsai_2(0), \hsai_3(0), \hsai_4(0), \dots) \overset{d}{=}  (-2^{2/3}\Lambda_0, -2^{2/3}\Lambda_0, -2^{2/3}\Lambda_1, -2^{2/3}\Lambda_1, \dots).
\end{equation}
\end{theorem}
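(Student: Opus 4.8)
## Proof proposal for Theorem \ref{Thm.OriginDescription}

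\textbf{Overall strategy.}
The plan is to identify the law of the vector $(\hsai_i(0))_{i\geq 1}$ via the explicit Pfaffian kernel $\hski$ of Theorem \ref{Thm.PfaffianStructure} together with the kernel $\kgse$ of Lemma \ref{Lem.SAO}, and then to translate the identity of kernels into the claimed identity of ordered vectors (with doubled multiplicity). The natural route is: (i) take a single slice $s\to 0^+$ in the Pfaffian point process $\hsmi$ and show that its correlation kernel converges to $\kgse$; (ii) deduce convergence of the point process $\{\hsai_i(s)\}_{i\geq 1}$ as $s\downarrow 0$ to the Pfaffian point process $M^{\mathrm{GSE}}$ defined in Lemma \ref{Lem.SAO}; (iii) use continuity of the curves $\hsai_i$ at $0$ together with the pairwise collision structure \eqref{Eq.OriginCollision} to reconstruct the ordered vector on the left-hand side of \eqref{Eq.Equality in law} from the simple point process $M^{\mathrm{GSE}}$ on the right, folding each point of $M^{\mathrm{GSE}}$ into a pair.

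\textbf{Step 1: the $s\to 0^+$ kernel limit.}
First I would fix $m=1$, write $\hski(s,x;s,y)$ using \eqref{Eq.DefK}–\eqref{Eq.DefR22I}, and send $s\downarrow 0$. In the entries $\hsii_{11},\hsii_{12},\hsii_{22}$ the contours $\mathcal{C}_{1+s}^{\pi/3}$ become $\mathcal{C}_{1}^{\pi/3}$ and, setting $t=s$, the rational prefactors collapse exactly to those of $\kgse_{11},\kgse_{12},\kgse_{22}$ in \eqref{Def.Kgse}; this is a direct substitution once one checks the integrals converge uniformly for small $s$ (the cubic exponentials $H$ give Gaussian-type decay on the $\pi/3$-rays, so dominated convergence applies after a routine tail estimate). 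The term $\hsri_{12}=R$ from \eqref{Eq.S1AiryKer} has the indicator $\mathbf 1\{s<t\}$, which vanishes when $s=t$, so it contributes nothing on the diagonal; similarly $\hsri_{22}(s,x;s,y)$ in \eqref{Eq.DefR22I} must be handled with care since $(t+s)^{3/2}=(2s)^{3/2}\to 0$ in the denominator — but the Gaussian factor $\exp(-(y-x)^2/(4\cdot 2s))$ forces this to converge to $0$ in the vague/distributional sense against test functions supported away from the diagonal, and one argues (as is standard for such "R-terms" in Pfaffian kernels) that after conjugation it does not affect the Pfaffians of the $\geq 2$-point correlation functions in the limit. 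Thus $\hski(s,x;s,y)\to \kgse(x,y)$ in the sense needed for convergence of all correlation functions.

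\textbf{Step 2: from kernel convergence to point-process convergence, and Step 3: folding.}
Given the kernel convergence of Step 1, together with the tightness / local finiteness that follows from the explicit Gaussian-type bounds on $\kgse$ (and which is already implicit in Lemma \ref{Lem.SAO}), I would conclude that the point processes $\{\hsai_i(s)\}_{i\geq 1}$ converge as $s\downarrow 0$ to the Pfaffian point process $M^{\mathrm{GSE}}$ with kernel $\kgse$, i.e. to $\{-2^{2/3}\Lambda_{k}\}_{k\geq 0}$ by Lemma \ref{Lem.SAO}. On the other hand, by Theorem \ref{Thm.Convergence} the ensemble $\hsai$ lives in $C(\mathbb N\times[0,\infty))$, so each curve $\hsai_i$ is continuous at $0$ and $\hsai_i(s)\to\hsai_i(0)$ almost surely, uniformly in $i$ on bounded index ranges (using that $\hsai_i(s)\to-\infty$ as $i\to\infty$, uniformly for $s$ in a neighbourhood of $0$, which follows from the one-point tail bounds transferred from $\hsa$). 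Combining the two modes of convergence identifies the multiset $\{\hsai_i(0)\}_{i\geq 1}$ in law with $\{-2^{2/3}\Lambda_k\}_{k\geq 0}$ \emph{as multisets} — but with a subtlety: by the collision relation \eqref{Eq.OriginCollision}, established in Theorem \ref{Thm.GibbsProperty}, the points of $\{\hsai_i(0)\}$ come in equal pairs $\hsai_{2i-1}(0)=\hsai_{2i}(0)$, while $M^{\mathrm{GSE}}$ is almost surely simple (Remark \ref{Rem.PfaffianStructure}). The resolution is that as $s\downarrow 0$ the two curves $\hsai_{2i-1}(s)>\hsai_{2i}(s)$ merge to a single limit point, so $M^{\mathrm{GSE}}$ records each collided pair \emph{once}; hence the $k$-th point $-2^{2/3}\Lambda_k$ of $M^{\mathrm{GSE}}$ equals the common value $\hsai_{2k+1}(0)=\hsai_{2k+2}(0)$, which is exactly \eqref{Eq.Equality in law}. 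To make this rigorous I would track the ordered statistics: show that $\hsai_{2i-1}(s)$ and $\hsai_{2i}(s)$ both converge to the $i$-th largest point of $M^{\mathrm{GSE}}$, using that the Pfaffian point process $\hsmi$ on a slice $s>0$ has, by interlacing/ordering of the ensemble, its $(2i-1)$-th and $(2i)$-th largest points squeezed together as $s\to 0$ (monotonicity of the gap $\hsai_{2i-1}(s)-\hsai_{2i}(s)\to 0$, which can be read off from the $\hsri_{22}$ term governing the two-point function of adjacent curves near the origin).

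\textbf{Main obstacle.}
The delicate point is Step 1's treatment of the singular $R$-terms $\hsri_{12}$ and especially $\hsri_{22}$ as $s\to 0^+$: the denominator $(t+s)^{3/2}$ blows up and one must show that, despite this, the Pfaffian correlation functions (which are $2k\times 2k$ determinants built from $\hski$) converge to those built from $\kgse$. I expect this to require either an exact cancellation identity — rewriting $\hski$ in a gauge-equivalent form where the $s\to 0$ limit is manifestly finite, analogous to the kernel simplifications promised in Remark \ref{Rem.kcr} — or a careful argument that the divergent contribution sits in a conjugation factor $f(s,x)/f(s,y)$ that drops out of every Pfaffian. Establishing this cleanly, and simultaneously justifying the interchange of the $s\to 0$ limit with the $\varpi\to\infty$ limit defining $\hsai$, is where the real work lies; everything downstream (point-process convergence, the folding of pairs) is then comparatively routine given Theorems \ref{Thm.Convergence}, \ref{Thm.PfaffianStructure}, \ref{Thm.GibbsProperty} and Lemma \ref{Lem.SAO}.
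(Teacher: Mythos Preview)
Your proposal has a genuine gap in Step 1 that propagates through Steps 2 and 3. The claim that ``the rational prefactors collapse exactly to those of $\kgse_{11},\kgse_{12},\kgse_{22}$'' is false: setting $t=s$ in \eqref{Eq.DefII} and sending $s\to 0$ gives $\hsii_{12}(s,x;s,y)\to 2\kgse_{12}(x,y)$ and $\hsii_{22}(s,x;s,y)\to 4\kgse_{22}(x,y)$ (only $\hsii_{11}$ matches $\kgse_{11}$). More seriously, the singular term $\hsri_{22}(s,x;s,y)$ does \emph{not} disappear from the Pfaffians: it is, up to a smooth prefactor, $(y-x)(8\pi s)^{-1/2}(4s)^{-1}\exp(-(y-x)^2/8s)$, which converges distributionally to a derivative of a Dirac mass on the diagonal $\{x=y\}$. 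Since the $n$-point correlation functions are integrals over $\mathbb{R}^n$ with no exclusion of diagonals, this singular contribution survives and is essential. There is no conjugation that removes it, because the limiting point process is $2M^{\mathrm{GSE}}$, which is \emph{not simple} and therefore \emph{not Pfaffian}; no pointwise kernel limit can ever produce a non-simple limit. The paper makes exactly this point in the discussion preceding Section \ref{Section6.1}.

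As a consequence your Step 2 conclusion is wrong---$M^N$ converges to $2M^{\mathrm{GSE}}$, not $M^{\mathrm{GSE}}$---and your Step 3 ``folding'' is backwards: for $s>0$ the atoms $\hsai_i(s)$ are all distinct, and as $s\downarrow 0$ pairs of them merge, so the vague limit of the point process acquires atoms of multiplicity two, not one. Once one knows $M^N\Rightarrow 2M^{\mathrm{GSE}}$, the deduction of \eqref{Eq.Equality in law} via continuity of the curves is straightforward (this is the paper's Lemma \ref{Lem.OriginMeasure} and the short argument right after it). The substantive work is therefore proving $M^N\Rightarrow 2M^{\mathrm{GSE}}$, which the paper does by abandoning kernels and computing joint factorial moments directly: one splits $\hski(t_N,\cdot;t_N,\cdot)=K^N+\Delta^N$ as in \eqref{Eq.SplitKernel}--\eqref{Eq.DefDeltaN}, expands $\mathrm{Pf}[K^N+\Delta^N]$ via the Pfaffian sum formula (Lemma \ref{Lem.SumPfaffian}), and uses Lemma \ref{Lem.LimitDerivatives} to show that each integrated product of $\delta_N$'s acts as a differential operator on the smooth $K^N$-Pfaffian. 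The resulting derivative identities \eqref{Eq.DerivativeRlations} then recombine the pieces into the factorial moments of $2M^{\mathrm{GSE}}$ (Lemma \ref{Lem.FactMomentConvergence}). This is where the doubling actually comes from, and your proposal is missing precisely this mechanism.
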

\begin{remark}\label{Rem.OriginDescription} Lemma \ref{Lem.SAO} and Theorem \ref{Thm.OriginDescription} show that the point process formed by $\{\hsai_i(0)\}_{i \geq 1}$ is not itself Pfaffian, but rather a doubling of the Pfaffian point process $M^{\mathrm{GSE}}$. This completely specifies the law of $\{\hsai_i(0)\}_{i \geq 1}$ in view of \cite[Proposition 5.8(3)]{DY25}, \cite[Corollary 2.20]{dimitrov2024airy}.
\end{remark}
\begin{remark}\label{Rem.OriginDescription2} The convergence of the point processes formed by $\{\hsai_i(t)\}_{i \geq 1}$ to the doubled point process $2M^{\mathrm{GSE}}$ as $t \rightarrow 0+$ was predicted in \cite[Remark 5.1]{BBCS}, although the authors did not establish this statement, and to our knowledge Theorem \ref{Thm.OriginDescription} is the first rigorous verification of this fact.
\end{remark}

%
%
\subsection{Ideas behind the proofs and paper outline}\label{Section1.3} In Section \ref{Section2.1}, we derive an alternative representation of the correlation kernel $\kcr$ from Definition \ref{Def.kcr} that is more amenable to asymptotic analysis; see Definition \ref{Def.NewKernel} and Lemma \ref{Lem.KernelMatch}. We denote this new kernel by $\hsk$. In Section \ref{Section2.2}, Lemma \ref{Lem.KernelLimit} analyzes the limit of $\hsk$ as $\varpi \rightarrow \infty$ and relates it to the kernel $\hski$ appearing in Theorem \ref{Thm.PfaffianStructure}. Subsequently, in Section \ref{Section2.3}, we study the large-time limit of $\hski$ and recover the extended Airy kernel; see Lemma \ref{Lem.ConvToAiryKernel}. The results of Section \ref{Section2} rely on appropriate contour deformations in the kernel representations, combined with careful estimates along the relevant contours.

In Section \ref{Section3}, we establish finite-dimensional analogues of Theorem \ref{Thm.Convergence} (Lemma \ref{Lem.FDConvVarpi}) and Theorem \ref{Thm.ConvergenceToAiryLE} (Lemma \ref{Lem.FDConvT}). Our proofs adapt a general framework developed in \cite{DY25} and require the verification of the following properties:
\begin{enumerate}
\item Tightness from above for the top curves of the ensembles. This is proved in Section \ref{Section3.1}; see Lemma \ref{Lem.TightFromAbove}.  
\item Weak convergence of the associated point processes. This is established in Section \ref{Section3.2}; see Lemmas \ref{Lem.PointProcessConvergenceVarpi} and \ref{Lem.PointProcessConvergenceT}, using our kernel convergence results from Lemmas \ref{Lem.KernelLimit} and \ref{Lem.ConvToAiryKernel}.
\item Almost sure infinitude of atoms in the weak limits of the point processes from (2) along vertical slices. 
\end{enumerate}

For the large-time limit, the third property is immediate, since the limiting object is the Airy point process, which is well known to have infinitely many atoms almost surely. The $\varpi \rightarrow \infty$ limit is more delicate, as we do not a priori know the structure of the limiting point processes at different times $t$. The required statement appears as Lemma \ref{Lem.InfinitelyManyAtoms} and is proved in Section \ref{SectionSpecial}. At a high level, the key idea is to exploit the half-space Brownian Gibbs property (satisfied by $\hsl$; see Proposition \ref{Prop.HSAiry}) to transfer information from the behavior of the ensemble at ``infinity'', where the associated point processes coincide with the Airy point process, to the finite-time ensemble. A similar strategy was used in the construction of the half-space Airy line ensemble in \cite{DY25}; we refer the reader to Section \ref{SectionSpecial} for further details. \\

In Section \ref{Section4.1}, we formally introduce the various Brownian Gibbs properties that are satisfied by our ensembles, while Section \ref{Section4.2} establishes several technical properties of finite Brownian line ensembles. Section \ref{Section4.3} contains three key results we require for our main arguments. The first is Lemma \ref{Lem.BrownianEnsembleConv}, which shows that a finite family of (reverse) Brownian motions with alternating drifts $(-1)^i \varpi$ converges, as $\varpi\rightarrow\infty$, to a finite family of pairwise pinned (reverse) Brownian motions; see Definition \ref{Def.PinnedBM}. The proof follows that of a related result due to Das and the second author in \cite{DasSerio25}. The key idea is to consider sums and differences of consecutive pairs of curves in half-space Brownian ensembles and show that these converge instead. The advantage of this reformulation is that the sum and difference of two avoiding Brownian motions are independent, allowing each auxiliary process to be analyzed separately. 

Another central result is Lemma \ref{Lem.MOC}, which roughly states that if a sequence of avoiding Brownian line ensembles on $[0,b]$ with alternating drifts has boundary conditions and a lower-bounding curve that are positively separated in a neighborhood of $b$ and remain within a compact window, then the modulus of continuity is uniformly controlled as the drift parameter tends to infinity. This result plays a key role in upgrading the finite-dimensional convergence of Lemma \ref{Lem.FDConvVarpi} to convergence that is uniform over compact sets, as required for Theorem \ref{Thm.Convergence}. The third key result is Lemma \ref{Lem.NoLowMin}, which feeds, in a nontrivial way, into the proof of the ``infinitely many atoms'' result, Lemma \ref{Lem.InfinitelyManyAtoms}.

In Section \ref{Section5}, we prove Theorems \ref{Thm.Convergence}, \ref{Thm.PfaffianStructure}, \ref{Thm.GibbsProperty}, and \ref{Thm.ConvergenceToAiryLE}. The main technical result is Proposition \ref{Prop.Tightness}, which shows that for any sequence $\varpi_n \rightarrow \infty$, the line ensembles $\mathcal{L}^{\mathrm{hs}; \varpi_n}$ from Proposition \ref{Prop.HSAiry} form a tight sequence. Moreover, any subsequential limit of these ensembles satisfies the regular and pinned half-space Brownian Gibbs properties. 

With finite-dimensional convergence already established, proving tightness reduces to controlling the modulus of continuity of the top $2k$ curves on an interval $[0,b]$, for arbitrary $k\in\mathbb{N}$ and $b>0$. The idea here is to use Lemma \ref{Lem.MOC}, for which we must show that the curves of our ensembles are likely to separate from one another at a fixed positive time. This is achieved by combining the finite-dimensional convergence from Lemma \ref{Lem.FDConvVarpi} with some classical results from \cite{CorHamA}. The fact that all subsequential limits of $\mathcal{L}^{\mathrm{hs}; \varpi_n}$ satisfy the pinned Brownian Gibbs property follows from a standard monotone class argument, with Lemma \ref{Lem.BrownianEnsembleConv} providing the key input. 

Theorems \ref{Thm.Convergence}, \ref{Thm.PfaffianStructure}, and \ref{Thm.GibbsProperty} follow readily from Proposition \ref{Prop.Tightness}, the point process convergence of Lemma \ref{Lem.PointProcessConvergenceVarpi}, and the finite-dimensional convergence of Lemma \ref{Lem.FDConvVarpi}. Finally, Theorem \ref{Thm.ConvergenceToAiryLE} is a consequence of \cite[Proposition 3.6, Theorem 3.8]{CorHamA}, the finite-dimensional convergence of Lemma \ref{Lem.FDConvT}, and the Brownian Gibbs property of the half-space Airy line ensemble established in \cite[Lemma 2.9]{DY25}.\\

In Section \ref{Section6}, we establish Theorem \ref{Thm.OriginDescription}. Although the approach is described in greater detail in the beginning of that section, we note here that the main difficulty lies in showing that the point processes on $\mathbb{R}$ formed by $\{\hsai_i(t_N)\}_{i \geq 1}$ converge, as $t_N \downarrow 0$, to $2M^{\mathrm{GSE}}$ (as defined in Lemma \ref{Lem.SAO}). By Theorem \ref{Thm.PfaffianStructure}, these point processes are Pfaffian for each fixed $t_N > 0$; however, the limit is not itself Pfaffian but a doubled Pfaffian point process. This precludes the use of standard convergence methods based on pointwise convergence of correlation kernels. At the level of formulas, the obstruction arises from the presence of a singular kernel term that admits only a distributional, rather than functional, limit. To overcome this difficulty, we work not with the kernels themselves but with the associated joint factorial moments. These moments can be expressed as integrals involving the kernel, and the additional integration sufficiently smooths out the singular contribution, allowing the asymptotic analysis to be carried out.

Lastly, Appendix \ref{SectionA} contains the proof of Lemma \ref{Lem.SAO}. This proof combines the Pfaffian point process structure of the Gaussian Symplectic Ensemble with its convergence to the stochastic Airy operator from \cite{RRV11}. 

\subsection*{Acknowledgments}
E.D. was partially supported by Simons Foundation International through Simons Award TSM-00014004. C.S.~acknowledges support from the Northern California Chapter of the ARCS Foundation.  Z.Y. was partially supported by Ivan Corwin's NSF grants DMS:1811143, DMS:2246576, Simons Foundation Grant 929852, and the Fernholz Foundation's `Summer Minerva Fellows' program.

%
%
\section{Kernel asymptotics}\label{Section2} The goal of this section is to derive an alternative formula for the kernel $\kcr$ from Definition \ref{Def.kcr} and analyze its $\varpi \rightarrow \infty$ limit (see Lemma \ref{Lem.KernelLimit}) --- this will be used later in the proof of Theorem \ref{Thm.Convergence}. In addition, we analyze the $T\rightarrow \infty$ limit of $\hski(T + \cdot, \cdot; T + \cdot, \cdot)$ from Theorem \ref{Thm.PfaffianStructure} (see Lemma \ref{Lem.ConvToAiryKernel}) --- this will be used later in the proof of Theorem \ref{Thm.ConvergenceToAiryLE}.

%
%
\subsection{Alternative kernel formula}\label{Section2.1} 
In this section we derive an alternative formula for the kernel $\kcr$ from Definition \ref{Def.kcr}, which is suitable for asymptotic analysis. The new formula is contained in the following definition.

\begin{definition}\label{Def.NewKernel} Fix $\varpi > 1$ and $s,t > 0$. With this data we define the kernel $\hsk$ by
\begin{equation}\label{Eq.DefKVarpi}
\begin{split}
&\hsk(s,x; t,y) = \begin{bmatrix}
    \hsk_{11}(s,x;t,y) & \hsk_{12}(s,x;t,y)\\
    \hsk_{21}(s,x;t,y) & \hsk_{22}(s,x;t,y) 
\end{bmatrix} \\
&= \begin{bmatrix}
    \hsi_{11}(s,x;t,y) & \hsi_{12}(s,x;t,y) + \hsr_{12}(s,x;t,y) \\
    -\hsi_{12}(t,y;s,x) - \hsr_{12}(t,y;s,x) & \hsi_{22}(s,x;t,y) + \hsr_{22}(s,x;t,y) 
\end{bmatrix},
\end{split}
\end{equation}
where the kernels $\hsi_{ij}, \hsr_{ij}$ are defined as follows. We have
\begin{equation}\label{Eq.DefI}
\begin{split}
\hsi_{11}(s,x;t,y) = &\frac{1}{(2\pi \im)^2} \int_{\mathcal{C}_{1+s}^{\pi/3}}dz \int_{\mathcal{C}_{1+t}^{\pi/3}} dw \frac{(z + s - w - t)(\varpi + z + s )(\varpi + w + t) H(z,x;w,y)}{(z + s + w + t)(z+s)(w+ t)}, \\
\hsi_{12}(s,x;t,y) = &\frac{1}{(2\pi \im)^2} \int_{\mathcal{C}_{1+s}^{\pi/3}}dz \int_{\mathcal{C}_{1+t}^{\pi/3}} dw \frac{(z + s - w + t)(\varpi + z + s) H(z,x;w,y)}{2(z+ s)(z + s + w -t)(\varpi - w + t)}, \\
\hsi_{22}(s,x;t,y) = &\frac{1}{(2\pi \im)^2} \int_{\mathcal{C}_{1+ s}^{\pi/3}}dz \int_{\mathcal{C}_{1+t}^{\pi/3}} dw \frac{(z - s - w + t)H(z,x;w,y)}{4 (z - s + w - t)(\varpi - z + s  )(\varpi - w + t)},
\end{split}
\end{equation}
where we have set $H(z,x;w,y) = e^{z^3/3 + w^3/3 - xz - y w}$ as in (\ref{Eq.DefH}) and the contours $\mathcal{C}_{z}^{\varphi}$ are as in Definition \ref{Def.S1Contours}. We also have $\hsr_{12} = R$ as in (\ref{Eq.S1AiryKer}) and
\begin{equation}\label{Eq.DefR22}
\begin{split}
\hsr_{22}(s,x;t,y) = \frac{e^{t^3/3 + s^3/3 -yt-xs} }{2\pi \im} \int_{\mathcal{C}^{\pi/2}_{0}} dw \frac{we^{ w^2(t+s) -w (t^2-s^2) + yw - xw }   }{2(w- \varpi)(w + \varpi) }.
\end{split}
\end{equation}
\end{definition}

\begin{lemma}\label{Lem.KernelMatch} Assume the same notation as in Definitions \ref{Def.kcr} and \ref{Def.NewKernel} for fixed $\varpi > 1$, and $s, t > 0$. Then, for Lebesgue a.e. $(x,y) \in \mathbb{R}^2$
\begin{equation}\label{Eq.KernelMatch}
\kcr_{ij}(s,x;t,y) = \hsk_{ij}(s,x;t,y) \mbox{ for } i,j = 1,2.
\end{equation}
\end{lemma}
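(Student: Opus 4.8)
The plan is to prove the identity $\kcr_{ij} = \hsk_{ij}$ entry by entry, in each case transforming the contour-integral representation from Definition \ref{Def.kcr} into the one from Definition \ref{Def.NewKernel} by a sequence of contour deformations, picking up residues at the simple poles that are crossed. The key observation, already flagged in Remark \ref{Rem.kcr}, is that the cubic exponential factors $e^{\pm z^3/3 \pm w^3/3}$ guarantee absolute convergence and decay along any rays with the appropriate opening angles, so Cauchy's theorem applies freely provided we keep track of which poles lie between the old and new contours. Since the claimed equality is only required for Lebesgue-a.e.\ $(x,y)$, we are also free to ignore measure-zero exceptional sets arising, e.g., from the indicator $\mathbf 1\{s<t\}$ or from boundary cases $x-s^2 = y-t^2$.

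I would organize the argument as follows. First, for $\hsk_{11}$: the only difference between $\icr_{11}$ in \eqref{Eq.S1DefIcross} and $\hsi_{11}$ in \eqref{Eq.DefI} is the location of the contours — $\mathcal{C}_{\ap_1-s}^{\pi/3}, \mathcal{C}_{\ap_3-t}^{\pi/3}$ versus $\mathcal{C}_{1+s}^{\pi/3}, \mathcal{C}_{1+t}^{\pi/3}$ — together with a rearrangement of the rational prefactor $(z+s+\varpi)(w+t+\varpi)/[(z+s)(w+t)]$, which is manifestly the same function. Here $\ap_i = |\varpi| + 3i = \varpi + 3i$ since $\varpi>1$, so both contours have their vertices on the real axis to the right of all poles of the integrand (the poles sit at $z+s=0$, $w+t=0$, and $z+s+w+t=0$); as we slide the $z$- and $w$-vertices from $\varpi+3$ (resp.\ $\varpi+9$) down to $1$, we cross no poles, and the integral is unchanged. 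This gives $\kcr_{11} = \hsk_{11}$ directly.

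For $\hsk_{12}$: the term $\rcr_{12} = R = \hsr_{12}$ matches by definition, so it remains to show $\icr_{12} = \hsi_{12}$. The $z$-contour deformation from $\mathcal{C}_{\ap_3-s}^{\pi/3}$ to $\mathcal{C}_{1+s}^{\pi/3}$ is again pole-free. The real work is the $w$-contour: in $\icr_{12}$ it is $\mathcal{C}_{\ap_1-t}^{2\pi/3}$ (opening leftward, vertex to the left of $0$ since $\ap_1 - t$ could be large), while in $\hsi_{12}$ it is $\mathcal{C}_{1+t}^{\pi/3}$ (opening rightward); reversing the orientation/opening of the $w$-contour forces us to pick up residues. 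Comparing \eqref{Eq.S1DefIcross} and \eqref{Eq.DefI}, the $w$-dependent rational factor changes from $1/(w+\varpi+t)$ (pole at $w = -\varpi - t$) to $1/[(z+s+w-t)(\varpi-w+t)]$ together with the sign flip $w \mapsto -w$ hidden in the exponent $-w^3/3 + yw$ versus $+w^3/3 - yw$. I expect the correct bookkeeping is: substitute $w \mapsto -w$ in $\icr_{12}$ to align the exponentials, which turns $\mathcal{C}_{\ap_1-t}^{2\pi/3}$ into a $\pi/3$-type contour with vertex at $-(1+t) < 0$ [wait—at $-(\ap_1 - t)$], then deform this $w$-contour rightward to $\mathcal{C}_{1+t}^{\pi/3}$, crossing the pole at $w = t-z-s$ and possibly $w = \varpi+t$; the residue contributions should reproduce precisely the discrepancy in the rational prefactors. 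An analogous, slightly longer computation handles $\icr_{22} + \rcr_{22}$ versus $\hsi_{22} + \hsr_{22}$ — note in particular that the antisymmetry $\kcr_{22}(s,x;t,y) = -\kcr_{22}(t,y;s,x)$ and $\hsk_{22}(s,x;t,y) = -\hsk_{22}(t,y;s,x)$ let us assume WLOG $x - s^2 > y - t^2$, which is exactly the regime where \eqref{Eq.S1DefRcross2} and \eqref{Eq.DefR22} are given in closed form; the three-term structure of $\rcr_{22}$ should emerge from collecting the residues crossed when both the $z$- and $w$-contours in $\icr_{22}$ (opening leftward, vertices far in the negative direction) are rotated to the $\pi/3$ contours of $\hsi_{22}$, with the Gaussian integral in \eqref{Eq.DefR22} coming from carrying out one of the (now residue-localized) single-variable integrals explicitly.

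The main obstacle, by far, is the $\hsk_{22}$ entry: it requires simultaneously deforming a double contour integral, tracking a two-parameter family of residues (residues at $z+s+w+t=0$ interacting with residues at $z\pm\varpi$, $w\pm\varpi$), and matching the result against the genuinely different-looking three-term formula \eqref{Eq.S1DefRcross2} plus the single Gaussian integral \eqref{Eq.DefR22}. I would manage this by first isolating the "pure" double integral $\icr_{22}$ versus $\hsi_{22}$ (whose prefactors differ by the rational identity relating $1/[(z+s+\varpi)(w+t+\varpi)]$ to $1/[4(z-s+w-t)(\varpi-z+s)(\varpi-w+t)]$ up to residues), then separately verifying that the accumulated residue terms sum to $\rcr_{22} - \hsr_{22}$; the $\mathbf 1\{s+t>0\}$ indicator in \eqref{Eq.S1DefRcross2} (automatically $1$ here since $s,t>0$) and the explicit Gaussian in \eqref{Eq.DefR22} are reconciled by noting that $\int_{\mathcal{C}_0^{\pi/2}} dw\, \frac{w\, e^{w^2(t+s) - w(t^2-s^2) + (y-x)w}}{2(w-\varpi)(w+\varpi)}$ is exactly a Gaussian contour integral with two simple poles, whose evaluation via residues-at-$\pm\varpi$ plus a completed-square Gaussian reproduces the three terms of \eqref{Eq.S1DefRcross2}. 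Throughout, the exponential decay from the cubics (or, after the explicit integration, from the Gaussian) justifies every interchange of integration and contour movement, and the final a.e.\ qualifier absorbs the indicator discontinuities.
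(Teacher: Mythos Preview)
Your overall strategy is the paper's, and your $K_{11}$ paragraph is correct. The $K_{12}$ and $K_{22}$ plans, however, contain real misunderstandings.

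For $K_{12}$: once you substitute $w\mapsto -w$ in $\icr_{12}$, the integrand becomes \emph{identical} to that of $\hsi_{12}$ --- check it term by term. There is no ``discrepancy in the rational prefactors'' to be absorbed by residues. What remains is a pure contour shift, and if done in the right order (move the $w$-contour from $\mathcal{C}_{\ap_1-t}^{2\pi/3}$ to $\mathcal{C}_{-1-t}^{2\pi/3}$ \emph{before} touching the $z$-contour, then substitute) you cross nothing at all. Your proposal to deform $z$ first actually \emph{does} cross the pole $z=w+t-s$ for some $w$, so even the claim that the $z$-move is pole-free depends on the order.

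For $K_{22}$: the integrands of $\icr_{22}$ and $\hsi_{22}$ are again related by the bare substitution $(z,w)\mapsto(-z,-w)$, not by any rational identity. More seriously, your plan to recover the three single integrals of $\rcr_{22}$ by evaluating the imaginary-axis integral $\hsr_{22}$ ``via residues at $\pm\varpi$'' cannot work: the factor $e^{w^2(s+t)}$ explodes along the real axis, so the contour in \eqref{Eq.DefR22} cannot be closed to capture those poles. The actual mechanism runs the other way. Deforming the $\icr_{22}$ contours to $\mathcal{C}_{-1-s}^{2\pi/3}\times\mathcal{C}_{-1-t}^{2\pi/3}$ crosses $z=-\varpi-s$ and $w=-\varpi-t$ and yields two single-integral residues $A,B$. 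Separately, deforming each of the three single integrals in \eqref{Eq.S1DefRcross2} to $\mathcal{C}_{-1}^{2\pi/3}$ produces two constants $E,F$ (from crossing $z=\varpi$ and $w=-\varpi$) and three integrals $C,D$ and a third on the new contour. One then checks by change of variables that $A+C=0$, $B+D=0$, $E+F=0$; the surviving double integral is $\hsi_{22}$ after $(z,w)\mapsto(-z,-w)$, and the surviving single integral, after expanding the cubic exponent and sliding to $\mathcal{C}_0^{\pi/2}$, is exactly $\hsr_{22}$.
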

\begin{proof} Notice that we can deform the contours $\mathcal{C}_{\ap_1 - s}^{\pi/3}$ and $\mathcal{C}_{\ap_3 - t}^{\pi/3}$ in the definition of $\icr_{11}$ to $\mathcal{C}_{1+s}^{\pi/3}$ and $\mathcal{C}_{1+t}^{\pi/3}$, respectively, without crossing any poles of the integrand, and hence without affecting the value of the integral by Cauchy's theorem. We mention that the decay necessary to deform the contours near infinity comes from the cubic terms in the exponential. When we perform the deformation we arrive at $\hsi_{11}$, which proves (\ref{Eq.KernelMatch}) when $i = j = 1$. 

Since $\kcr_{12}(s,x;t,y) = - \kcr_{21}(t,y; s,x)$ and $\rcr_{12} = \hsr_{12}$ by definition, we see that to prove (\ref{Eq.KernelMatch}) when $i \neq j$, it suffices to show that 
\begin{equation}\label{Eq.MatchI12}
\icr_{12}(s,x;t,y) = \hsi_{12}(s,x;t,y).
\end{equation}
As before, we deform the contours $\mathcal{C}_{\ap_3 - s}^{\pi/3}$ and $\mathcal{C}_{\ap_1 - t}^{2\pi/3}$ in the definition of $\icr_{12}$ to $\mathcal{C}_{1+s}^{\pi/3}$ and $\mathcal{C}_{-1-t}^{2\pi/3}$, respectively, without crossing any poles. By Cauchy's theorem, the value of the integral stays the same. Applying subsequently the change of variables $w \rightarrow -w$ leads us to (\ref{Eq.MatchI12}).\\

In the remainder of the proof we verify (\ref{Eq.KernelMatch}) for $i = j = 2$. Directly from the definitions, one observes that 
$$K(s,x;t,y) = - K(t,y;s,x) \mbox{ for } K = \icr_{22}, \rcr_{22}, \hsi_{22}, \hsr_{22}.$$
Consequently, it suffices to prove (\ref{Eq.KernelMatch}) when $x-s^2 > y - t^2$, which we assume in the sequel.

We start by deforming the contours $\mathcal{C}_{-\ap_2 - s}^{2\pi/3}$ and $\mathcal{C}_{-\ap_2 - t}^{2\pi/3}$ in the definition of $\icr_{22}$ to $\mathcal{C}_{-1 - s}^{2\pi/3}$ and $\mathcal{C}_{-1 - t}^{2\pi/3}$, respectively. In the process of deformation, we cross the simple poles at $z =  -\varpi-s$ and $w = -\varpi - t $, and so by the residue theorem
$$\icr_{22}(s,x;t,y) = \frac{1}{(2\pi \im)^2} \int_{\mathcal{C}_{-1 - s}^{2\pi/3}}dz \int_{\mathcal{C}_{-1 - t}^{2\pi/3}} dw \frac{(z + s - w - t)H(-z,x; -w, y)}{4 (z + s + w + t)(z + s  + \varpi )(w + t + \varpi)}  + A + B,$$
where 
$$A = - \frac{1}{2\pi \im} \int_{\mathcal{C}_{-1 - s}^{2\pi/3}}dz \frac{1 }{4 (z + s - \varpi)} \cdot e^{-z^3/3 + (\varpi + t)^3/3 + xz - y (\varpi + t)},$$
$$B = \frac{1}{2\pi \im} \int_{\mathcal{C}_{-1 - t}^{2\pi/3}} dw \frac{1}{4 ( w + t - \varpi )}  \cdot e^{(\varpi + s)^3/3 - w^3/3 - x(\varpi +s) + y w}.$$
In addition, we deform the contours $\mathcal{C}_{\ap_1 }^{2\pi/3}$ and $\mathcal{C}_{-\ap_2}^{2\pi/3}$ in the definition of $\rcr_{22}$ to $\mathcal{C}_{-1 }^{2\pi/3}$. In the process of deformation we cross the simple pole at $z = \varpi$ on the first line of (\ref{Eq.S1DefRcross2}), we do not cross any poles for the second line, and cross the simple pole at $w = -\varpi$ on the third line. By the residue theorem we conclude
\begin{equation*}
\begin{split}
&\rcr_{22}(s,x;t,y) = \frac{1}{2\pi \im} \int_{\mathcal{C}^{2\pi/3}_{-1}} dw \frac{we^{ (-w + t)^3/3 + (w + s)^3/3   - y (-w + t) - x (w+s)}   }{2(w- \varpi)(w + \varpi) } + C+ D+ E+F,
\end{split}
\end{equation*}
where
\begin{equation*}
\begin{split} 
&C =  \frac{1}{2\pi \im} \int_{\mathcal{C}_{-1 }^{2\pi/3}}dz \frac{e^{(-z + s)^3/3 + (\varpi + t)^3/3 - x (-z+s) - y (\varpi + t)}}{4(z - \varpi)} \\
&D = - \frac{1}{2\pi \im}\int_{\mathcal{C}_{-1 }^{2\pi/3}}dw \frac{e^{(-w + t)^3/3 + (\varpi + s)^3/3 - y (-w+t) - x (\varpi + s)}}{4(w - \varpi)}, \\
&E =  \frac{e^{(-\varpi + s)^3/3 + (\varpi + t)^3/3 - x (-\varpi+s) - y (\varpi + t)}}{4},\\
&F =  -  \frac{e^{ (\varpi + t)^3/3 + (-\varpi + s)^3/3   - y (\varpi + t) - x (-\varpi+s)}   }{4 }.
\end{split}
\end{equation*}
Using straightforward changes of variables we get
$$A + C = 0 , \hspace{2mm}  B+ D = 0 , \hspace{2mm} E + F = 0.$$ 
Combining the last several equations, we obtain
\begin{equation}\label{Eq.K22New}
\begin{split} 
\kcr_{22}(s,x;t,y) =  &\frac{1}{(2\pi \im)^2} \int_{\mathcal{C}_{-1 - s}^{2\pi/3}}dz \int_{\mathcal{C}_{-1 - t}^{2\pi/3}} dw \frac{(z + s - w - t)H(-z,x; -w, y)}{4 (z + s + w + t)(z + s  + \varpi )(w + t + \varpi)}  \\
& + \frac{1}{2\pi \im} \int_{\mathcal{C}^{2\pi/3}_{-1}} dw \frac{we^{ (-w + t)^3/3 + (w + s)^3/3   - y (-w + t) - x (w+s)}   }{2(w- \varpi)(w + \varpi) } .
\end{split}
\end{equation}
The first line in (\ref{Eq.K22New}) agrees with $\hsi_{22}(s,x;t,y)$ upon changing variables $z \rightarrow -z$ and $w \rightarrow -w$. In addition, the second line in (\ref{Eq.K22New}) agrees with $\hsr_{22}(s,x;t,y)$ once we simplify the polynomial in the exponential and deform $\mathcal{C}^{2\pi/3}_{-1}$ to $\mathcal{C}^{\pi/2}_{0}$ without crossing any poles. Consequently, (\ref{Eq.K22New}) verifies (\ref{Eq.KernelMatch}) for $i = j =2$, which completes the proof of the lemma.
\end{proof}

%
%
\subsection{Kernel asymptotics for $\varpi \rightarrow \infty$}\label{Section2.2} The goal of this section is to obtain the $\varpi \rightarrow \infty$ limit of the kernel $\hsk$ from Definition \ref{Def.NewKernel}. The precise statement is contained in the following lemma.

\begin{lemma}\label{Lem.KernelLimit} Fix $A,s,t > 0$, and let $\hski$, $\hsk$ be as in Theorem \ref{Thm.PfaffianStructure} and Definition \ref{Def.NewKernel}, respectively. Then,
\begin{equation}\label{Eq.KernelLimit}
\begin{split}
&\lim_{\varpi \rightarrow \infty} \frac{1}{4\varpi^2} \hsk_{11}(s,x;t,y) = \hski_{11}(s,x;t,y), \hspace{2mm} \lim_{\varpi \rightarrow \infty}\hsk_{12}(s,x;t,y) = \hski_{12}(s,x;t,y), \\
&\lim_{\varpi \rightarrow \infty}\hsk_{21}(s,x;t,y) = \hski_{21}(s,x;t,y), \hspace{2mm} \lim_{\varpi \rightarrow \infty} 4\varpi^2 \hsk_{22}(s,x;t,y) = \hski_{22}(s,x;t,y),
\end{split}
\end{equation}
where the convergence is uniform over $x, y \in [-A,A]$.  
\end{lemma}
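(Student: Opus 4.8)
The plan is to analyze each of the four kernel entries $\hsk_{ij}$ separately by tracking how the factors involving $\varpi$ behave as $\varpi \to \infty$. The governing principle is simple: in each double contour integral defining $\hsi_{ij}$, the $\varpi$-dependence enters only through rational prefactors of the form $(\varpi + z + s)$, $(\varpi + w + t)$ in the numerator or $(\varpi - w + t)$, $(\varpi - z + s)$ in the denominator; since the contours $\mathcal{C}_{1+s}^{\pi/3}$, $\mathcal{C}_{1+t}^{\pi/3}$ are fixed and the cubic terms $e^{z^3/3 + w^3/3}$ in $H(z,x;w,y)$ provide Gaussian-type decay along them, we expect to pass the limit inside the integral by dominated convergence after extracting the correct power of $\varpi$. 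Concretely: for $\hsi_{11}$ the numerator carries $(\varpi + z + s)(\varpi + w + t) = \varpi^2 (1 + O(\varpi^{-1}))$, so $\varpi^{-2} \hsi_{11} \to \frac14 \cdot$ (the integral with these two factors replaced by $1$, and with the extra $\tfrac14$ coming from matching the normalization in $\hsii_{11}$ — note $\hsi_{11}$ has no $\tfrac14$ but $\hsii_{11}$ does, so one should double-check the constant; more likely the $\tfrac14$ in $\hski_{11}$ comes from the $\tfrac14$ factor that is not present in $\hsi_{11}$, and the statement $\frac{1}{4\varpi^2}\hsk_{11} \to \hski_{11}$ is exactly this bookkeeping). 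For $\hsi_{12}$ the prefactor is $(\varpi + z + s)/(\varpi - w + t) = 1 + O(\varpi^{-1})$, so $\hsi_{12} \to$ the integral with this ratio replaced by $1$, giving $\hsii_{12}$ directly. For $\hsi_{22}$ the prefactor $1/[(\varpi - z + s)(\varpi - w + t)] = \varpi^{-2}(1 + O(\varpi^{-1}))$, so $4\varpi^2 \hsi_{22} \to$ the integral with these factors removed and the $\tfrac14$ absorbed, matching $\hsii_{22}$.

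The second ingredient is to handle the $R$-term pieces. Since $\hsr_{12} = R = \hsri_{12}$ exactly and $R$ has no $\varpi$-dependence, the $(1,2)$ and $(2,1)$ entries follow immediately once the $\hsi_{12}$ asymptotics are in hand (recalling $\hsk_{21}(s,x;t,y) = -\hsk_{12}(t,y;s,x) - \hsr_{12}(t,y;s,x)$, and the same antisymmetric relation for $\hski_{21}$). The genuinely different piece is $\hsr_{22}$ from (\ref{Eq.DefR22}): here the integrand has a factor $w/[(w-\varpi)(w+\varpi)] = w/(w^2 - \varpi^2) = -w\varpi^{-2}(1 + O(\varpi^{-2}))$ on the fixed contour $\mathcal{C}_0^{\pi/2}$, and the exponent $w^2(t+s) - w(t^2 - s^2) + (y-x)w$ is quadratic in $w$ with $\Real(w^2) < 0$ on $\mathcal{C}_0^{\pi/2}$, giving Gaussian decay. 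So $4\varpi^2 \hsr_{22} \to -e^{t^3/3 + s^3/3 - yt - xs} \cdot \frac{1}{2\pi\im}\int_{\mathcal{C}_0^{\pi/2}} w\, e^{w^2(t+s) - w(t^2 - s^2 + x - y)}\, dw \cdot \tfrac12$, and one evaluates this Gaussian integral explicitly (it is a derivative of a Gaussian, yielding a factor linear in $(y - t^2 - x + s^2)$) to recover exactly the closed-form expression (\ref{Eq.DefR22I}) for $\hsri_{22}$. The $4\varpi^2$-scaled limit of $\hsk_{22} = \hsi_{22} + \hsr_{22}$ is then the sum $\hsii_{22} + \hsri_{22} = \hski_{22}$, as desired.

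The main obstacle — and the step that requires the most care — is producing the uniform dominating function for dominated convergence that works for \emph{all} $\varpi > 1$ simultaneously, and in particular uniformly over $x, y \in [-A, A]$. For the $(1,1)$ and $(1,2)$ entries this means bounding $|\varpi + z + s|/\varpi \le 1 + (|z| + s)/\varpi \le 1 + |z| + s$ and $|\varpi + z + s|/|\varpi - w + t|$; the denominator needs a lower bound uniform in $\varpi > 1$, which holds because on $\mathcal{C}_{1+t}^{\pi/3}$ one has $\Real(w) \ge 1 + t > 0$... wait, one must check: $\varpi - w + t$ with $\Real(w)$ bounded below and $\varpi > 1$ large is fine away from $\varpi \approx \Real(w)$, but since $\Real(w)$ on $\mathcal{C}_{1+t}^{\pi/3}$ ranges over $[1+t, \infty)$, there could be a near-cancellation; however the cubic decay $e^{-|w|^3/\text{const}}$ dominates any polynomial growth in $|w|$, so the contribution from the region $|w| \sim \varpi$ is super-exponentially small and causes no problem — one splits the contour into $|w| \le \varpi/2$ (where $|\varpi - w + t| \ge \varpi/2 - t \gtrsim \varpi$) and $|w| > \varpi/2$ (where the cubic kills everything). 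One then combines these bounds with the $x,y$-uniform bound $|e^{-xz - yw}| \le e^{A(|{\rm Re}\, z| + |{\rm Re}\, w|)}$, which is absorbed by the cubic. The $\hsr_{22}$ integral requires an analogous but easier split of $\mathcal{C}_0^{\pi/2}$. Once the dominating functions are established, the pointwise convergence of integrands is elementary and the conclusion follows; assembling the pieces via the block structure (\ref{Eq.DefKVarpi}) and (\ref{Eq.DefK}) completes the proof.
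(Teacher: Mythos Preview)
Your proposal is correct and takes essentially the same approach as the paper: extract the appropriate power of $\varpi$, establish a $\varpi$-independent dominating function via the cubic decay of $H$ along the fixed $\pi/3$-contours, apply dominated convergence to each $\hsi_{ij}$ and to $\hsr_{22}$, and finish by evaluating the limiting Gaussian integral for $\hsr_{22}$ explicitly. One simplification you missed: your contour-splitting for the denominator $|\varpi - w + t|$ is unnecessary, because on $\mathcal{C}_{1+t}^{\pi/3}$ the imaginary part of $w$ grows with its real part, so $|\varpi - w + t| \geq \sqrt{3}/2$ uniformly in $\varpi$ and $w$; the triangle inequality then gives $|\varpi + z + s|/|\varpi - w + t| \leq 1 + \tfrac{2}{\sqrt{3}}|z + w + s - t|$ directly, which is the bound the paper uses.
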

\begin{proof} Throughout the proof $C_1$ is a large constant, which depends on $s,t,A$, and whose value changes from line to line.

By analyzing the real part of $z^3/3 + w^3/3 -xz - yw$, we can find $C_1 > 0$, such that for $x,y \in [-A,A]$, $\varpi \geq 2$, $z \in \mathcal{C}_{1+s}^{\pi/3}$, and $w \in \mathcal{C}_{1+t}^{\pi/3}$, we have
\begin{equation}\label{Eq.HBound}
|H(z,x;w,y)| \leq \exp(-|z|^3/3 - |w|^3/3 + C_1|z|^2 + C_1|w|^2 + C_1).
\end{equation}
In addition, using that the contours $\mathcal{C}^{\pi/3}_{1 +s}$ and $\mathcal{C}^{\pi/3}_{1+t}$ are well-separated from the corresponding poles, we can find $C_1 > 0$, such that for $\varpi \geq 2$, $z \in \mathcal{C}_{1+s}^{\pi/3}$, and $w \in \mathcal{C}_{1+t}^{\pi/3}$, we have
\begin{equation}\label{Eq.BoundRatFunsI11}
\begin{split}
&\left|\frac{(z + s - w - t)(\varpi + z + s )(\varpi + w + t)}{4\varpi^2(z + s + w + t)(z+s)(w+ t)}\right| \leq C_1 (1+|z|+|w|)^3,
\end{split}
\end{equation} 
\begin{equation}\label{Eq.BoundRatFunsI12}
\begin{split}
&\left|\frac{(z + s - w + t)(\varpi + z + s)}{2(z+ s)(z + s + w -t)(\varpi - w + t)}\right| \leq C_1 (1+|z|+|w|)^2,
\end{split}
\end{equation} 
\begin{equation}\label{Eq.BoundRatFunsI22}
\begin{split}
&\left|\frac{\varpi^2(z - s - w + t)}{(z - s + w - t)(\varpi - z + s  )(\varpi - w + t)}\right| \leq C_1 (1+|z|+|w|).
\end{split}
\end{equation} 

From Definition \ref{Def.NewKernel}
\[
\frac{1}{4\varpi^2}\hsi_{11}(s,x;t,y) = \frac{1}{(2\pi \im)^2} \int_{\mathcal{C}_{1+s}^{\pi/3}}dz \int_{\mathcal{C}_{1+t}^{\pi/3}} dw \frac{(z + s - w - t)(\varpi + z + s )(\varpi + w + t) H(z,x;w,y)}{4\varpi^2(z + s + w + t)(z+s)(w+ t)},
\]
and the integrand converges pointwise to that of $\hsii_{11}(s,x;t,y)$ from (\ref{Eq.DefII}). By the dominated convergence theorem, we conclude
$$\lim_{\varpi \rightarrow \infty} \frac{1}{4\varpi^2}\hsi_{11}(s,x;t,y)  = \hsii_{11}(s,x;t,y).$$
We mention that the application of the dominated convergence theorem is justified by the bounds in (\ref{Eq.HBound}) and (\ref{Eq.BoundRatFunsI11}). Applying the dominated convergence theorem twice more, using (\ref{Eq.HBound}), (\ref{Eq.BoundRatFunsI12}), and (\ref{Eq.BoundRatFunsI22}), we conclude
$$\lim_{\varpi \rightarrow \infty} \hsi_{12}(s,x;t,y)  = \hsii_{12}(s,x;t,y) \mbox{ and } \lim_{\varpi \rightarrow \infty} 4\varpi^2 \hsi_{22}(s,x;t,y)  = \hsii_{22}(s,x;t,y).$$

From the last two displayed equations, the fact that $\hsr_{12}=R=\hsri_{12}$ as in (\ref{Eq.S1AiryKer}), and the definition of $\hsr_{22}$, we see that to prove (\ref{Eq.KernelLimit}) it remains to show that 
\begin{equation}\label{Eq.ConvR22Varpi}
\lim_{\varpi \rightarrow \infty} \frac{H(s,x;t,y)}{ \pi \im} \int_{\mathcal{C}^{\pi/2}_{0}} dw \frac{w\varpi^2e^{ w^2(t+s) -w (t^2-s^2) + yw - xw }   }{ (w- \varpi)(w + \varpi) } = \hsri_{22}(s,x;t,y).
\end{equation}

By analyzing the real part of the exponent, we can find $C_1 > 0$, such that for $\varpi \geq 2$ and $w \in \mathcal{C}_0^{\pi/2}$, we have
$$\left|\frac{w\varpi^2e^{ w^2(t+s) -w (t^2-s^2) + yw - xw }   }{ (w- \varpi)(w + \varpi) } \right| \leq (1+|w|)\cdot \exp(C_1 + C_1|w| - (s+t)|w|^2).$$
By the dominated convergence theorem, we conclude
\begin{equation*}
\begin{split}
&\lim_{\varpi \rightarrow \infty} \frac{H(s,x;t,y)}{ \pi \im} \int_{\mathcal{C}^{\pi/2}_{0}} dw \frac{w\varpi^2e^{ w^2(t+s) -w (t^2-s^2) + yw - xw }   }{ (w- \varpi)(w + \varpi) } \\
&= -\frac{H(s,x;t,y)}{ \pi \im} \int_{\mathcal{C}^{\pi/2}_{0}} dw \cdot we^{ w^2(t+s) -w (t^2-s^2) + yw - xw }. 
\end{split}
\end{equation*}
Consequently, to establish (\ref{Eq.ConvR22Varpi}), it remains to show that 
\begin{equation}\label{Eq.AltFormulaR22}
\begin{split}
 \hsri_{22}(s,x;t,y) = -\frac{H(s,x;t,y)}{ \pi \im} \int_{\mathcal{C}^{\pi/2}_{0}} dw \cdot we^{ w^2(t+s) -w (t^2-s^2) + yw - xw }. 
\end{split}
\end{equation}

If we change variables $w = \frac{\im \sqrt{\pi}}{\sqrt{t+s}} \cdot u$, we get
\begin{equation*}
\begin{split}
&\int_{\mathcal{C}^{\pi/2}_{0}} we^{ w^2(t+s) -w (t^2-s^2) + yw - xw } dw = \frac{- \pi}{t+s}\int_{\mathbb{R}} u \exp\left( -\pi u^2 - 2\pi \im u \left[ \frac{t^2-s^2}{2\sqrt{\pi(t+s)}} + \frac{x-y}{2\sqrt{\pi(t+s)}} \right]  \right) \\
& = \frac{1}{2 \im (t+s)} \cdot \frac{d}{d\xi} F(\xi)\vert_{\xi = \xi_0}, \mbox{ where } F(\xi) = e^{-\pi \xi^2} \mbox{ and } \xi_0 = \frac{t^2-s^2 + x - y}{2\sqrt{\pi(t+s)}}. 
\end{split}
\end{equation*}
In going from the first to the second line we used basic properties of the Fourier transform, see Proposition 1.2(v) and Theorem 1.4 in \cite[Chapter 5]{SteinFourier}.

The last displayed equation and the definition of $\hsri_{22}$ in (\ref{Eq.DefR22I}) give (\ref{Eq.AltFormulaR22}).
\end{proof}

%
%
\subsection{Kernel asymptotics for $T \rightarrow \infty$}\label{Section2.3} The goal of this section is to obtain the $T \rightarrow \infty$ limit of the kernel $\hski(T + \cdot, \cdot; T + \cdot, \cdot)$ from Theorem \ref{Thm.PfaffianStructure}. The precise statement is contained in the following lemma.

\begin{lemma}\label{Lem.ConvToAiryKernel} Fix $A > 0$, $s, t \in \mathbb{R}$, and let $\hski$ be as in Theorem \ref{Thm.PfaffianStructure}. Then, the following limits all hold uniformly over $x,y \in [-A,A]$
\begin{equation}\label{Eq.LimToAiry11}
\begin{split}
\lim_{T\rightarrow\infty} \hski_{11} ( T + s ,x; T + t, y)= 0,
\end{split}
\end{equation}
\begin{equation}\label{Eq.LimToAiry12}
\lim_{T \rightarrow \infty} \hski_{12}(T+ s,x;T+ t,y) =K^{\mathrm{Airy}}(s,x; t,y),
\end{equation}
\begin{equation}\label{Eq.LimToAiry22}
\lim_{T \rightarrow \infty} \hski_{22}(T+s,x;T+t,y)  = 0,
\end{equation}
where we recall that $K^{\mathrm{Airy}}$ is as in (\ref{Eq.S1AiryKer}).
\end{lemma}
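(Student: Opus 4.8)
The plan is to analyze each entry of $\hski(T+s,x;T+t,y)$ separately by performing the change of variables that shifts the contours back to a bounded region, extracting the Gaussian-type decay in $T$, and then invoking dominated convergence. Concretely, for the $(1,1)$, $(1,2)$ and $(2,2)$ entries of $\hsii$ (as in \eqref{Eq.DefII}), I would substitute $z \mapsto z - s$ shifted further by $T$ — more precisely, write the $z$-contour $\mathcal{C}_{1+T+s}^{\pi/3}$ as $T + \mathcal{C}_{1+s}^{\pi/3}$ (allowed by Cauchy's theorem and the cubic decay, since no poles are crossed once $T$ is large), and then substitute $z \to z + T$, $w \to w + T$ so that the new contours are $\mathcal{C}_{1+s}^{\pi/3}$ and $\mathcal{C}_{1+t}^{\pi/3}$, independent of $T$. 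The exponent $H$ transforms via $(z+T)^3/3 = z^3/3 + Tz^2 + T^2 z + T^3/3$, and after also substituting $x \to x + \text{(shift)}$ appropriately one isolates a factor carrying the $T$-dependence. The key point is that the rational prefactors in $\hsii_{11}$ and $\hsii_{22}$ contain denominators $z+s+w+t \mapsto z+w+s+t+2T$ and $z-s+w-t+ \cdots$, which after the shift decay like $T^{-1}$; combined with the fact that the exponential contribution $e^{T^2(z+w) + \cdots}$ along the rotated contours has negative real part growing in $|z|,|w|$, dominated convergence forces \eqref{Eq.LimToAiry11} and \eqref{Eq.LimToAiry22}. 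For $\hsii_{12}$ the analogous denominator is $z+s+w-t \mapsto z + w + s - t$, which does \emph{not} pick up a factor of $2T$, so no decay occurs there — instead one must show the shifted integrand converges to the integrand of $K^{\mathrm{Airy}}$ in \eqref{Eq.S1AiryKer}. After the substitution, the rational factor $\frac{(z+s-w+t)(\varpi+\cdots)}{2(z+s)(z+s+w-t)}$ degenerates (in the $T\to\infty$ scaling the polynomial prefactor of degree zero survives and the $(z+s)$ pole moves off to infinity after the appropriate $x,y$ recentering), leaving precisely $\frac{1}{z + s + w - t}$ after matching conventions, which is the double-contour term of the extended Airy kernel; a contour identification (Remark \ref{Rem.AiryLE1}) completes this.

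The residual terms $\hsri_{12} = R$ and $\hsri_{22}$ must also be handled. The function $R(T+s,x;T+t,y)$ from \eqref{Eq.S1AiryKer} is manifestly translation-invariant in its time arguments when one simultaneously recenters the space variables by the parabola $x \mapsto x - (T+s)^2$ — and indeed the correct statement of Lemma \ref{Lem.ConvToAiryKernel} implicitly uses the parabolic recentering coming from \eqref{Eq.ParabolicHSALE} and \eqref{PALE}, so I would first make explicit the substitution $x \to x + c T + \cdots$, $y \to y + cT + \cdots$ (with the precise affine shift dictated by the parabola) under which $R$ is exactly invariant; then $\hsri_{12}$ contributes $R(s,x;t,y)$, the first summand of $K^{\mathrm{Airy}}$. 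For $\hsri_{22}$ from \eqref{Eq.DefR22I}, the prefactor $\frac{H(T+s,x;T+t,y)(y - (T+t)^2 - x + (T+s)^2)}{2\pi^{1/2}(2T+s+t)^{3/2}}\exp(-\cdots/(4(2T+s+t)))$ has a $(2T)^{-3/2}$ factor up front and, after the parabolic recentering, a bounded numerator and a Gaussian that tends to $1$; hence $\hsri_{22}(T+s,\cdot;T+t,\cdot)\to 0$, consistent with \eqref{Eq.LimToAiry22}.

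The main obstacle I anticipate is \textbf{bookkeeping the parabolic/affine recentering correctly and uniformly}: the lemma as stated has $x,y$ ranging over a fixed compact $[-A,A]$, but the natural scaling that produces the Airy kernel involves shifting $x,y$ by a $T$-dependent amount (the parabola $(T+\cdot)^2$), so one must verify that with the conventions actually used in the paper the bare statement \eqref{Eq.LimToAiry12} holds as written — this likely means the contour substitution absorbs the parabolic shift into a redefinition of the integration variables ($z \to z + T$, etc.) in exactly such a way that the $T^2 z$ and $T^3/3$ terms cancel against the recentering, leaving a clean limit. Getting these cancellations to line up, and then producing the domination bound that is uniform in $x,y\in[-A,A]$ and valid for all large $T$ (controlling $|H|$ along the rotated contours via $\mathrm{Re}(z^3) \sim -|z|^3/2$ on $\mathcal{C}^{\pi/3}$, exactly as in the proof of Lemma \ref{Lem.KernelLimit}, equation \eqref{Eq.HBound}), is the technical heart of the argument. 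The rest — pointwise convergence of integrands, identification with $K^{\mathrm{Airy}}$ via the change of variables in Remark \ref{Rem.AiryLE1} — is routine once the scaling is pinned down.
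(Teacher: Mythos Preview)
Your proposal has two genuine gaps.

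First, no parabolic recentering of $x,y$ is needed; this is a red herring. The function $R(s,x;t,y)$ in \eqref{Eq.S1AiryKer} depends on its time arguments only through $s-t$, so $R(T+s,x;T+t,y)=R(s,x;t,y)$ identically with $x,y$ fixed in $[-A,A]$. For the $\hsii$ terms the correct operation is a \emph{contour deformation}, not your change of variables $z\to z+T$: the paper simply deforms $\mathcal{C}_{1+T+s}^{\pi/3}$ back to a $T$-independent contour such as $\mathcal{C}_{1+s}^{\pi/3}$, which keeps $H(z,x;w,y)$ bounded via \eqref{Eq.HBound} and places all $T$-dependence in the rational prefactor, yielding $O(T^{-3})$ for $\hsii_{11}$ and a convergent limit for $\hsii_{12}$. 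Your substitution would instead leave $H(z+T,x;w+T,y)\sim e^{2T^3/3}$ in the integrand, and the claim that $e^{T^2(z+w)}$ has negative real part growing in $|z|,|w|$ along $\mathcal{C}^{\pi/3}$ is false (on $\mathcal{C}_{1+s}^{\pi/3}$ one has $\Real z\ge 1+s$, increasing along the contour).

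Second, and more seriously, your treatment of the $(2,2)$ entry fails: $\hsri_{22}(T+s,x;T+t,y)$ does \emph{not} decay like $(2T)^{-3/2}$, because it carries the factor $H(T+s,x;T+t,y)\sim e^{2T^3/3}$, which blows up. In fact $\hsii_{22}(T+s,x;T+t,y)$ and $\hsri_{22}(T+s,x;T+t,y)$ each diverge as $T\to\infty$; only their sum $\hski_{22}$ tends to zero. The paper captures this cancellation by first rotating the $\hsii_{22}$ contours to vertical lines $\mathcal{C}^{\pi/2}$, then deforming the $z$-contour from $\mathcal{C}_{1+s+T}^{\pi/2}$ to $\mathcal{C}_1^{\pi/2}$, which crosses the simple pole at $z=2T+s+t-w$. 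The resulting residue, after a change of variables in $w$, is exactly $-\hsri_{22}(T+s,x;T+t,y)$; what remains is a double integral over $\mathcal{C}_1^{\pi/2}\times\mathcal{C}_1^{\pi/2}$ with denominator $z-s+w-t-2T$, giving the required $O(T^{-1})$ bound. Your outline misses this cancellation entirely.
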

\begin{proof} Throughout the proof $C_2$ is a large constant, which depends on $s,t,A$, and whose value changes from line to line.

We first prove \eqref{Eq.LimToAiry11}. From (\ref{Eq.DefK}), we have $\hski_{11} =\hsii_{11}$, and so from (\ref{Eq.DefII}), we have
\begin{equation*}
    \begin{split}
         \hski_{11} ( T + s ,x; T + t, y) =\frac{1}{(2\pi \im)^2} \int_{\mathcal{C}_{1+s}^{\pi/3}}dz \int_{\mathcal{C}_{1 + t}^{\pi/3}} dw \frac{(z + s - w - t)H(z,x;w,y)}{4(z + s + w + t+2T)(z+s+T)(w+ t+T)},
    \end{split}
\end{equation*} 
where we deformed the contours $\mathcal{C}_{1+s+T}^{\pi/3}$, $\mathcal{C}_{1+t+T}^{\pi/3}$ to $\mathcal{C}_{1+s}^{\pi/3}$, $\mathcal{C}_{1+t}^{\pi/3}$, respectively, without crossing any poles of the integrand, and hence without affecting the value of the integral by Cauchy's theorem.

Notice that for $z \in \mathcal{C}_{1+s}^{\pi/3}$, $w \in \mathcal{C}_{1+t}^{\pi/3}$, we have
$$\left|\frac{(z + s - w - t)}{4(z + s + w + t+2T)(z+s+T)(w+ t+T)}\right| \leq \frac{|z| + |w| + |s| + |t|}{4T^3}.$$
The latter and (\ref{Eq.HBound}) imply for some $C_2 > 0$ 
\begin{equation*}
\begin{split}
&\left| \hski_{11} ( T + s ,x; T + t, y) \right| \leq \frac{1}{4T^3} \cdot \int_{\mathcal{C}_{1+s}^{\pi/3}}|dz| \int_{\mathcal{C}_{1 + t}^{\pi/3}} |dw| (|z| + |w| + |s| + |t|) \\
& \times e^{-|z|^3/3 - |w|^3/3 + C_1|z|^2 + C_1|w|^2 + C_1} \leq C_2 \cdot T^{-3},
\end{split}
\end{equation*}
which implies \eqref{Eq.LimToAiry11}. We mention that in the last integral $|dz|$ and $|dw|$ denote integration with respect to arc-length.\\

We next prove \eqref{Eq.LimToAiry12}. In view of the definition of $K^{\mathrm{Airy}}$ and the equality $\hsri_{12}(T+s,x;T+t,y)=R(s,x;t,y)$, see (\ref{Eq.S1AiryKer}), we only need to prove   that uniformly for $x, y \in [-A,A]$,  
\begin{equation}\label{Eq.LimToI12Only} 
\lim_{T \rightarrow \infty} \hsii_{12}(T+ s,x;T+ t,y)=\frac{1}{(2\pi \im)^2} \int_{\mathcal{C}_{\alpha}^{\pi/3}} d z \int_{\mathcal{C}_{\beta}^{\pi/3}} dw \frac{H(z,x;w,y)}{z + s + w - t},
\end{equation}
where $\alpha + s + \beta - t > 0$. For concreteness, put $\alpha = 1- s$ and $\beta = t + 1$.

In (\ref{Eq.DefII}) we deform $\mathcal{C}_{1+s+T}^{\pi/3}$, $\mathcal{C}_{1+t+T}^{\pi/3}$ to $\mathcal{C}_{\alpha}^{\pi/3}$, $\mathcal{C}_{\beta}^{\pi/3}$, respectively, without crossing any poles, to get
 \begin{equation*}
\begin{split}
\hsii_{12}(T+ s,x;T+ t,y)=\frac{1}{(2\pi \im)^2} \int_{\mathcal{C}_{\alpha}^{\pi/3}} d z \int_{\mathcal{C}_{\beta}^{\pi/3}} dw \frac{(z + s - w + t+2T) H(z,x;w,y)}{2(z+ s+T)(z + s + w -t)}.
\end{split}
\end{equation*} 

Note that the above integrand converges pointwise to the one on the right side of (\ref{Eq.LimToI12Only}). In addition, using (\ref{Eq.HBound}), we conclude for some $C_2 > 0$ and all $z \in \mathcal{C}_{\alpha}^{\pi/3}, w \in  \mathcal{C}_{\beta}^{\pi/3}$
$$\left|\frac{(z + s - w + t+2T) H(z,x;w,y)}{2(z+ s+T)(z + s + w -t)}\right| \leq C_2 (1 + |z| + |w|) \cdot e^{-|z|^3/3 - |w|^3/3 + C_1|z|^2 + C_1|w|^2 + C_1}.$$
By the dominated convergence theorem, we conclude (\ref{Eq.LimToI12Only}).\\

We finally prove \eqref{Eq.LimToAiry22}. We begin by proving an alternative formula for $\hski_{22}$, see (\ref{Eq.K22VertCont}), and then take the $T \rightarrow \infty$ limit in this new formula. 

In what follows we assume $T$ is large so that $T > -s$, $T > -t$, $T > 2- s$, $2T > 2- s - t$. In (\ref{Eq.DefII}) we deform $\mathcal{C}_{1+s+T}^{\pi/3}$, $\mathcal{C}_{1+t+T}^{\pi/3}$ to $\mathcal{C}_{1 + s + T}^{\pi/2}$, $\mathcal{C}_{1 + t + T}^{\pi/2}$, respectively, without crossing any poles, to get
\begin{equation}\label{Eq.I22VertCont}
\hsii_{22}(s + T ,x;t + T,y) = \frac{1}{(2\pi \im)^2} \int_{\mathcal{C}_{1+ s + T}^{\pi/2}}dz \int_{\mathcal{C}_{1+t + T}^{\pi/2}} dw \frac{(z - s - w + t)H(z,x;w,y)}{z - s + w - t -2T}.
\end{equation}
We mention that the deformation near infinity is justified, since for $\zeta = c + d e^{\im \phi}$ with $c > 0$, $d \geq 0$ and $\phi \in [\pi/3, \pi/2] \cup [-\pi/2, -\pi/3]$, we have 
\begin{equation}\label{Eq.CubicBoundVert}
\begin{split}
 &\left| e^{\zeta^3/3}\right| = e^{\Real[\zeta^3/3]} = e^{c^3/3 + [d^3/3]\cos(\phi) [\cos^2(\phi) - 3 \sin^2(\phi)] + cd^2(\cos^2(\phi) - \sin^2(\phi)) + c^2d \cos(\phi)} \\
 & \leq e^{c^3/3 - cd^2/2 + c^2d}.
\end{split}
\end{equation}

We now proceed to deform the $z$-contour $\mathcal{C}_{1+ s + T}^{\pi/2}$ in (\ref{Eq.I22VertCont}) to $\mathcal{C}_{1}^{\pi/2}$. In the process of deformation we cross the simple pole at $z = 2T + s + t - w$ (here we used $T > 2-s$). Afterwards we deform the $w$-contour $\mathcal{C}_{1+ t + T}^{\pi/2}$ to $\mathcal{C}_{1}^{\pi/2}$ without crossing any poles (here we used $2T > 2 - s - t$). By the residue theorem, we conclude 
\begin{equation}\label{Eq.I22VertCont2}
\begin{split}
&\hsii_{22}(s + T ,x;t + T,y) = \frac{1}{(2\pi \im)^2} \int_{\mathcal{C}_{1}^{\pi/2}}dz \int_{\mathcal{C}_{1}^{\pi/2}} dw \frac{(z - s - w + t)H(z,x;w,y)}{z - s + w - t -2T} \\
& + \frac{1}{2\pi \im} \int_{\mathcal{C}_{1 + t + T}^{\pi/2}} dw (2T - 2w + 2t)H(s + t + 2T -w,x;w,y).
\end{split}
\end{equation}

Changing variables $w \rightarrow -w$ in (\ref{Eq.AltFormulaR22}) and deforming the contour $\mathcal{C}^{\pi/2}_{0}$ to $\mathcal{C}^{\pi/2}_{1}$, we obtain 
\begin{equation}\label{Eq.R22VertCont2}
\begin{split}
 &\hsri_{22}(s + T,x;t + T,y) = \frac{H(s + T,x;t + T,y)}{ \pi \im} \\
 & \times \int_{\mathcal{C}^{\pi/2}_{1}} dw \cdot we^{ w^2(t+s + 2T) + w (t^2 - s^2 + 2Tt - 2Ts) - yw + xw }. 
\end{split}
\end{equation}

We now observe (upon changing variables $w \rightarrow w + T + t$) that the second line in (\ref{Eq.I22VertCont2}) is precisely $-\hsri_{22}(s + T,x;t + T,y)$. Consequently, if we add (\ref{Eq.I22VertCont2}) and (\ref{Eq.R22VertCont2}), we conclude 
\begin{equation}\label{Eq.K22VertCont}
\begin{split}
&\hski_{22}(s + T ,x;t + T,y) = \frac{1}{(2\pi \im)^2} \int_{\mathcal{C}_{1}^{\pi/2}}dz \int_{\mathcal{C}_{1}^{\pi/2}} dw \frac{(z - s - w + t)H(z,x;w,y)}{z - s + w - t -2T}.
\end{split}
\end{equation}
This is our desired formula, and below we take $T \rightarrow \infty$ in it.\\

From (\ref{Eq.K22VertCont}) and (\ref{Eq.CubicBoundVert}) with $c = 1$, we have for some $C_2 > 0$ and all large $T$
\begin{equation*}
\begin{split}
&\left|\hski_{22}(s + T ,x;t + T,y) \right| \leq  \frac{1}{T}\int_{\mathcal{C}_{1}^{\pi/2}}|dz| \int_{\mathcal{C}_{1}^{\pi/2}} |dw| (|z| + |w| + |s| + |t|) \\
& \times e^{C_2-|z-1|^2/2 - |w-1|^2/2 + C_2|z| + C_2|w|},
\end{split}
\end{equation*}
which implies (\ref{Eq.LimToAiry22}).
\end{proof}

%
%
\section{Finite-dimensional convergence}\label{Section3} In this section we establish the finite-dimensional analogues of Theorem \ref{Thm.Convergence} (see Lemma \ref{Lem.FDConvVarpi}) and Theorem \ref{Thm.ConvergenceToAiryLE} (see Lemma \ref{Lem.FDConvT}).

%
%
\subsection{Tightness from above}\label{Section3.1} The goal of this section is to show that for each $t > 0$ the variables $\{\hsa_{1}(t): \varpi \geq 2\}$ are tight from above. The precise statement is contained in the following lemma.

\begin{lemma}\label{Lem.TightFromAbove} Let $\hsa$ be as in Proposition \ref{Prop.HSAiry}. Then we have
\begin{equation}\label{Eq.UpperTailFixedT}
\lim_{a \rightarrow \infty} \sup_{t > 0} \sup_{\varpi \geq 2} \mathbb{P}\left( \hsa_1(t) \geq a \right) = 0.
\end{equation}
\end{lemma}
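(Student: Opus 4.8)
The plan is to bound the upper tail of $\hsa_1(t)$ uniformly in $t > 0$ and $\varpi \geq 2$ by using the Pfaffian point process structure from Proposition \ref{Prop.HSAiry}. Since $\hsa$ is non-intersecting, the event $\{\hsa_1(t) \geq a\}$ equals the event that the point process $\hsm$ restricted to the vertical slice $\{t\} \times \mathbb{R}$ has at least one atom in $[a, \infty)$. By the standard union/Markov-type bound for point processes, $\mathbb{P}(\hsa_1(t) \geq a) \leq \mathbb{E}[\#\{i : \hsa_i(t) \geq a\}] = \int_a^\infty \rho^{\varpi}_1(t, x)\, dx$, where $\rho^{\varpi}_1(t,x) = \mathrm{Pf}\, \kcr(t,x;t,x)$ is the first intensity of the Pfaffian point process with $\mathsf{S} = \{t\}$. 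So it suffices to show that $\int_a^\infty \sup_{t>0}\sup_{\varpi \geq 2} \mathrm{Pf}\,\kcr(t,x;t,x)\, dx \to 0$ as $a \to \infty$.

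The first step is to rewrite the $1$-point intensity in the more tractable form. For $s = t$ and $x = y$, the Pfaffian of the $2\times 2$ antisymmetric-ified kernel block reduces to a single entry — concretely $\mathrm{Pf}\,\kcr(t,x;t,x) = \kcr_{12}(t,x;t,x)$ (the diagonal-in-$(s,x)$ values of $\kcr_{11}$ and $\kcr_{22}$, being antisymmetric, vanish). Thus I need a uniform upper bound on $\kcr_{12}(t,x;t,x) = \icr_{12}(t,x;t,x) + R(t,x;t,x)$; note $R(s,x;t,y)$ carries an indicator $\mathbf{1}\{s<t\}$ and so $R(t,x;t,x) = 0$. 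Hence the whole task is to bound $\icr_{12}(t,x;t,x)$, and it is cleaner to use the alternative representation: by Lemma \ref{Lem.KernelMatch}, $\icr_{12} = \hsi_{12}$ for $\varpi > 1$, so I can work with the contour integral in \eqref{Eq.DefI} over the fixed contours $\mathcal{C}_{1+t}^{\pi/3}$ (independent of $\varpi$).

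The second and main step is the asymptotic/uniform estimate. On the contours $z \in \mathcal{C}_{1+t}^{\pi/3}$, $w \in \mathcal{C}_{1+t}^{\pi/3}$, the cubic exponential gives a bound of the form $|H(z,x;w,x)| \leq \exp(-|z|^3/3 - |w|^3/3 + C|z|^2 + C|w|^2 + C - (\Real z + \Real w)x)$ as in \eqref{Eq.HBound} of the excerpt, and since $\Real z, \Real w \geq 1+t \geq 1$ along these contours, integrating against $x \geq a$ produces a factor decaying like $e^{-2a}$ (roughly). The rational prefactor $\frac{(z+s-w+t)(\varpi+z+s)}{2(z+s)(z+s+w-t)(\varpi-w+t)}$ is, for $\varpi \geq 2$ and on these well-separated contours, bounded by $C(1+|z|+|w|)^2$ uniformly in $\varpi$ (this is essentially \eqref{Eq.BoundRatFunsI12}, which is already proved in the excerpt). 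There is a subtlety: the constants $C_1$ in \eqref{Eq.HBound} and \eqref{Eq.BoundRatFunsI12} depend on $t$ (equivalently $s=t$), and here I need uniformity over \emph{all} $t > 0$, not just fixed $t$. For large $t$ this is favorable — the contour sits far to the right, the real parts are large, and everything decays faster — so the bound only needs care for $t$ in a compact set, where the dependence on $t$ is continuous and hence bounded; but I should spell out a bound whose $t$-dependence is manifestly controlled (e.g. absorbing the translation $z \mapsto z - t$ so the contour becomes $\mathcal{C}_1^{\pi/3}$ and the $t$-dependence moves into the exponent as a term $\leq 0$ for $x \geq a$ large). Combining, $\sup_{t>0}\sup_{\varpi\geq 2}|\icr_{12}(t,x;t,x)| \leq C e^{-c x}$ for $x \geq x_0$ and absolute constants $C, c, x_0 > 0$, and integrating over $x \in [a,\infty)$ gives a bound $\to 0$ as $a \to \infty$, which yields \eqref{Eq.UpperTailFixedT}.

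\textbf{Main obstacle.} The delicate point is the \emph{uniformity in $t > 0$}: the estimates in Section \ref{Section2} are all stated for fixed $s, t$ with $t$-dependent constants, whereas \eqref{Eq.UpperTailFixedT} demands a supremum over all $t > 0$. The resolution is to exploit that increasing $t$ only helps (it pushes the contours rightward and strengthens the Gaussian-type decay in both $x$ and the contour variables), so one reduces to $t$ in a compact interval $[0, T_0]$ and then uses continuity/compactness, or — cleaner — performs the shift $z \to z+t$, $w\to w+t$ to land on the $t$-free contour $\mathcal{C}_1^{\pi/3}$ and checks by inspection that every $t$-dependent term in the resulting exponent and rational function has the favorable sign for $x$ large. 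A secondary (but routine) point is justifying that $\mathbb{P}(\hsa_1(t)\geq a)$ really is controlled by the first intensity — this is immediate from $\hsa_1(t) = \max_i \hsa_i(t)$ (non-intersection) and Markov's inequality applied to the counting function, using that the first correlation function of the Pfaffian process is $\mathrm{Pf}\,\kcr(t,\cdot;t,\cdot)$.
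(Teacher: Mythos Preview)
Your proposal is correct and follows essentially the same route as the paper: bound $\mathbb{P}(\hsa_1(t)\geq a)$ by the expected number of atoms in $[a,\infty)$, reduce the one-point intensity to $\hsi_{12}(t,x;t,x)$ via Lemma \ref{Lem.KernelMatch}, deform both contours to the $t$-independent $\mathcal{C}_1^{\pi/3}$ (your ``shift $z\to z+t$''), and then exploit $\Real(z+w)\geq 2$ to extract $e^{-2a}$ decay while bounding the rational prefactor uniformly in $t>0$ and $\varpi\geq 2$. The only cosmetic difference is that the paper integrates in $x$ first (Fubini) to obtain $e^{-(z+w)a}/(z+w)$ and then bounds, whereas you bound the integrand pointwise in $x$ before integrating; both yield the same conclusion.
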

\begin{proof} 
For $t > 0$ and $\varpi \in \mathbb{R}$, define the random measure on $\mathbb{R}$ by
\begin{equation}\label{Eq.RMSlice}
M^{t; \mathrm{hs};\varpi}(A) = \sum_{ i \geq 1} {\bf 1}\{\hsa_i(t) \in A\}.
\end{equation}
By Proposition \ref{Prop.HSAiry}, Lemma \ref{Lem.KernelMatch} and \cite[Lemma 5.13]{DY25}, for all $\varpi > 1$, $M^{t; \mathrm{hs};\varpi}$ is a Pfaffian point process on $\mathbb{R}$ with reference measure $\mathrm{Leb}$ and correlation kernel $K^{\varpi} (t,x;t,y)$.
Therefore
\begin{equation}\label{eq.in proof the upper tail number of atoms}
\sum_{i\geq1}\mathbb{P}\left( \hsa_i(t) \geq a \right)=\mathbb{E}\left[\sum_{i\geq1} \mathbf{1} \{\hsa_i(t)\in[a,\infty) \} \right]=\mathbb{E}\left[M^{t; \mathrm{hs};\varpi}([a,\infty))\right].
\end{equation}
From \cite[(2.13)]{dimitrov2024airy} and \cite[(5.12)]{DY25}, we obtain
\begin{equation*}
    \begin{split}
        \mathbb{E}\left[M^{t; \mathrm{hs};\varpi}([a,\infty))\right]&=\int_a^{\infty} K_{12}^{\varpi} (t,x;t,x)dx\\
&=\frac{1}{(2\pi \im)^2}\int_a^{\infty}dx \int_{\mathcal{C}_{1+t}^{\pi/3}}dz \int_{\mathcal{C}_{1+t}^{\pi/3}} dw \frac{(z  - w+ 2t  )(\varpi + z + t) H(z,x;w,x)}{2(z+ t)(z  + w  )(\varpi - w + t)}\\
&=\frac{1}{(2\pi \im)^2}\int_a^{\infty}dx \int_{\mathcal{C}_{1 }^{\pi/3}}dz \int_{\mathcal{C}_{1 }^{\pi/3}} dw \frac{(z  - w+ 2t  )(\varpi + z + t) H(z,x;w,x)}{2(z+ t)(z  + w  )(\varpi - w + t)}\\
&=\frac{1}{(2\pi \im)^2}  \int_{\mathcal{C}_{1 }^{\pi/3}}dz \int_{\mathcal{C}_{1 }^{\pi/3}} dw \frac{(z  - w+ 2t  )(\varpi + z + t)\cdot e^{z^3/3+w^3/3}  }{2(z+ t)(z  + w  )(\varpi - w + t)}\cdot \int_a^{\infty}e^{-(z+w)x}dx\\
&=\frac{1}{(2\pi \im)^2}  \int_{\mathcal{C}_{1 }^{\pi/3}}dz \int_{\mathcal{C}_{1 }^{\pi/3}} dw \frac{(z  - w+ 2t  )(\varpi + z + t)\cdot e^{z^3/3+w^3/3}  }{2(z+ t)(z  + w  )^2(\varpi - w + t)}\cdot e^{-(z+w)a},
    \end{split}
\end{equation*}
where in the second step we used the definition of $K_{12}^{\varpi}$ and notice that $R_{12}^{\varpi}=R=0$ at $(t,x;t,x)$; in the third step we deformed the contour $\mathcal{C}_{1+t}^{\pi/3}$ for both $z$ and $w$ to $\mathcal{C}_{1 }^{\pi/3}$ without crossing any poles, since $\varpi>1$; in the fourth step we swapped the order of the integrals, using Fubini's theorem, justified by the cubic decay in the term $ e^{z^3/3+w^3/3} $ and the fact that $|e^{-(z+w)x}|\leq e^{-2x}$ for $x>0$ and $z,w\in\mathcal{C}_{1 }^{\pi/3}$; in the last step we evaluated the integral with respect to $x$. 

We next bound the absolute value of the integrand. For any $z,w\in\mathcal{C}_{1 }^{\pi/3}$, $t>0$ and $\varpi\geq2$, 
\begin{align*}
    &\left|\frac{z  - w+ 2t }{2(z+ t)}\right|\leq1+\frac{|w+z|}{|2(z+t)|}\leq1+\frac{1}{2}|w+z| \leq 1 + 2|z| + 2|w|,\\
    &\left|\frac{\varpi + z + t}{\varpi - w + t}\right|\leq1+\frac{|z+w|}{|\varpi - w + t|}\leq1+\frac{2}{\sqrt{3}}|z+w| \leq 1 + 2|z| + 2|w|.
\end{align*}
In the first line we used $|2(z+t)|\geq2$, and the second we used $|\varpi - w + t|\geq \sqrt{3}/2$. We also notice that $|e^{-(z+w)a}|\leq e^{-2a}$ for $a>0$. Therefore, for $a>0$ we have
\[
\sup_{t > 0} \sup_{\varpi \geq 2}\mathbb{E}\left[M^{t; \mathrm{hs};\varpi}([a,\infty))\right]\leq\frac{e^{-2a}}{4\pi^2}  \int_{\mathcal{C}_{1 }^{\pi/3}}|dz| \int_{\mathcal{C}_{1 }^{\pi/3}} |dw|\left(1+2|z| + 2|w|\right)^2\cdot\left|\frac{e^{z^3/3+w^3/3}}{(z+w)^2}\right| ,
\]
which converges to $0$ as $a\rightarrow\infty$, since the integral is finite. Using \eqref{eq.in proof the upper tail number of atoms}, we conclude the proof. 
\end{proof}

%
%
\subsection{Point process convergence}\label{Section3.2} In this section we show that the point processes $\hsm$ from Proposition \ref{Prop.HSAiry} converge weakly to a Pfaffian point process $M^{\mathsf{S};\infty}$ as $\varpi \rightarrow \infty$. Subsequently, we show that, when appropriately shifted, $M^{T + \mathsf{S};\infty}$ converges to the extended Airy point process $M^{\mathsf{S}; \mathcal{A}}$ from (\ref{Eq.RMS1}) as $T \rightarrow \infty$. Our proofs rely on combining our kernel convergence results from Sections \ref{Section2.2} and \ref{Section2.3} with a few statements from \cite{DY25}.

\begin{lemma}\label{Lem.PointProcessConvergenceVarpi} Assume the same notation as in Proposition \ref{Prop.HSAiry} and Theorem \ref{Thm.PfaffianStructure}. Then, the following statements hold.
\begin{enumerate}
\item[(a)] Fix a finite set $\mathsf{S} = \{s_1, \dots, s_m\} \subset (0, \infty)$ with $s_1 < s_2 < \cdots < s_m$. Then $\hsm \Rightarrow M^{\mathsf{S}; \infty}$ as $\varpi \rightarrow \infty$, where $M^{\mathsf{S};\infty}$ is a Pfaffian point process on $\mathbb{R}^2$ with reference measure $\mu_{\mathsf{S}} \times \mathrm{Leb}$ and correlation kernel $\hski$.
\item[(b)] For a fixed $t > 0$ and $\varpi \in \mathbb{R}$, let $M^{t; \mathrm{hs};\varpi}$ be as in (\ref{Eq.RMSlice}). Then $M^{t; \mathrm{hs};\varpi} \Rightarrow M^{t; \infty}$ as $\varpi \rightarrow \infty$, where $M^{t;\infty}$ is a Pfaffian point process on $\mathbb{R}$ with reference measure $ \mathrm{Leb}$ and correlation kernel $K^{t;\infty}(x,y) = \hski(t,x; t,y)$.
\end{enumerate}
\end{lemma}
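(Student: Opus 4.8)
The plan is to deduce Lemma \ref{Lem.PointProcessConvergenceVarpi} from the kernel convergence already established in Lemma \ref{Lem.KernelLimit}, together with two standard facts: (i) a Pfaffian point process is determined by its correlation kernel only up to conjugation of the $2\times2$ kernel by a diagonal ``gauge'' matrix $\mathrm{diag}(g(x), g(x)^{-1})$ (this leaves all Pfaffian correlation functions invariant), and (ii) weak convergence of Pfaffian point processes follows from local uniform convergence of a suitably gauged correlation kernel, provided one controls the relevant intensity measures so that no mass escapes to infinity. The key point is that the scalings $4\varpi^2$ in front of $\hsk_{11}$ and $\hsk_{22}$ in Lemma \ref{Lem.KernelLimit} are exactly of this gauge form: conjugating $\hsk$ by $\mathrm{diag}((2\varpi)^{-1}, 2\varpi)$ multiplies the $(1,1)$ entry by $(2\varpi)^{-2} = (4\varpi^2)^{-1}$, the $(2,2)$ entry by $4\varpi^2$, and leaves the $(1,2)$ and $(2,1)$ entries unchanged. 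Hence the conjugated kernel $\widetilde{K}^{\varpi} := \mathrm{diag}((2\varpi)^{-1}, 2\varpi)\, \hsk\, \mathrm{diag}((2\varpi)^{-1}, 2\varpi)$ has, by Lemma \ref{Lem.KernelLimit}, entrywise limit $\hski$, uniformly on compacts in $(x,y)$, while $M^{t;\mathrm{hs};\varpi}$ and $\hsm$ are unchanged Pfaffian processes with this conjugated kernel.

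First I would record the gauge-invariance statement and reduce to $\widetilde K^\varpi$, citing \cite[Lemma 5.13]{DY25} (already invoked in the proof of Lemma \ref{Lem.TightFromAbove}) for the fact that the slice process $M^{t;\mathrm{hs};\varpi}$ is Pfaffian with kernel $\hsk(t,\cdot;t,\cdot) = \kcr(t,\cdot;t,\cdot)$, and Proposition \ref{Prop.HSAiry} together with Lemma \ref{Lem.KernelMatch} for the multi-time process $\hsm$ on $\mathsf S \subset (0,\infty)$. Next I would invoke the general weak-convergence criterion for Pfaffian point processes from \cite{DY25} — this is the same machinery used there to construct $\hsa$ as a weak limit — which says: if the correlation kernels converge uniformly on compact sets, and if the first intensities are uniformly integrable near $+\infty$ (equivalently, tight from above in the sense of Lemma \ref{Lem.TightFromAbove}), then the Pfaffian processes converge weakly, and the limit is the Pfaffian process with the limiting kernel provided that kernel defines a genuine point process. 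The uniform-kernel-convergence hypothesis is exactly Lemma \ref{Lem.KernelLimit}; the tightness-from-above input is exactly Lemma \ref{Lem.TightFromAbove} for part (b), and for part (a) one uses the same estimate applied at each of the finitely many times $s_1 < \cdots < s_m$ (or directly bounds $\int_a^\infty \hsk_{12}(s_j,x;s_j,x)\,dx$ uniformly in $\varpi \ge 2$ as in the proof of Lemma \ref{Lem.TightFromAbove}). Finally, the limiting object must be identified: by Theorem \ref{Thm.PfaffianStructure} the kernel $\hski$ on $\mathsf S \subset (0,\infty)$ (resp. $K^{t;\infty} = \hski(t,\cdot;t,\cdot)$) is the correlation kernel of a bona fide Pfaffian point process — indeed of $\hsmi$ — so the limit is $M^{\mathsf S;\infty} = \hsmi$ (resp. $M^{t;\infty}$), which is what is claimed.

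The main obstacle I anticipate is the bookkeeping around the gauge transformation and the place where the singular term $\hsr_{22}$ enters. Although $\hsri_{22}$ is an honest function (see \eqref{Eq.DefR22I}), the convergence $4\varpi^2 \hsr_{22} \to \hsri_{22}$ passes — as the proof of Lemma \ref{Lem.KernelLimit} shows — through a contour integral whose integrand converges only after multiplication by $\varpi^2$ and extraction of a Fourier-transform identity; one must make sure that the Pfaffian-convergence criterion is applied to the already-gauged, already-converged kernel $\widetilde K^\varpi$ and not to $\hsk$ itself, since $\hsk_{22} \to 0$ while $\hsk_{11} \to \infty$. A secondary, more routine obstacle is checking the mild regularity/domination hypotheses of the Pfaffian weak-convergence lemma from \cite{DY25} — namely that the conjugated kernels are uniformly bounded by an integrable envelope on compact $x$-windows and that local $L^1$ convergence of the Pfaffian minors follows from the pointwise uniform convergence; both are immediate from the Gaussian-type bounds \eqref{Eq.HBound}, \eqref{Eq.BoundRatFunsI11}–\eqref{Eq.BoundRatFunsI22} already used in Section \ref{Section2.2}, so this is a matter of assembling references rather than new analysis.
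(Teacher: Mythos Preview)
Your approach is essentially the paper's: gauge-conjugate $\hsk$ by $\mathrm{diag}((2\varpi)^{-1},2\varpi)$ (this is exactly \cite[Proposition 5.8(4)]{DY25} with $f \equiv 1/(2\varpi)$), then invoke the kernel convergence from Lemma~\ref{Lem.KernelLimit} and the weak-convergence criterion \cite[Proposition 5.10]{DY25}. However, there are two issues with your write-up.

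First, and most importantly, your identification step is circular: you appeal to Theorem~\ref{Thm.PfaffianStructure} to certify that $\hski$ is the kernel of a genuine Pfaffian point process, but Theorem~\ref{Thm.PfaffianStructure} is proved in Section~\ref{Section5.2} \emph{using} Lemma~\ref{Lem.PointProcessConvergenceVarpi} (via Lemma~\ref{Lem.FDConvVarpi}). The fix is simply to drop this step: \cite[Proposition 5.10]{DY25} already asserts both the existence of a Pfaffian limit with the limiting kernel and the weak convergence to it, so no separate verification that $\hski$ defines a point process is required. Second, the tightness-from-above input (Lemma~\ref{Lem.TightFromAbove}) is not a hypothesis of \cite[Proposition 5.10]{DY25} and plays no role here; the paper's proof cites only Lemma~\ref{Lem.KernelLimit} together with continuity of $\hski_{ij}(s,x;t,y)$ in $(x,y)$. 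Vague convergence of point processes is tested against compactly supported functions, so no control near $+\infty$ is needed at this stage --- that lemma is used later, in the proof of Lemma~\ref{Lem.FDConvVarpi}, for a different purpose.
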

\begin{proof} From Proposition \ref{Prop.HSAiry}, Lemma \ref{Lem.KernelMatch} and \cite[Proposition 5.8(4)]{DY25} with $f(x) = 1/(2 \varpi)$ we have for all $\varpi > 1$ that $\hsm$ is a Pfaffian point process on $\mathbb{R}^2$ with reference measure $\mu_{\mathsf{S}}\times \mathrm{Leb}$ and correlation kernel 
$$\begin{bmatrix} \frac{1}{4\varpi^2} K^{\varpi}_{11}(s,x;t,y) & K^{\varpi}_{12}(s,x;t,y) \\ K_{21}^{\varpi}(s,x;t,y) & 4 \varpi^2 K_{22}^{\varpi}(s,x;t,y) \end{bmatrix}.$$
Part (a) now follows from \cite[Proposition 5.10]{DY25}, whose conditions are satisfied in view of Lemma \ref{Lem.KernelLimit} and the continuity of $\hski_{ij}(s,x;t,y)$ over $(x,y) \in \mathbb{R}^2$ for fixed $s,t > 0$.\\

From Proposition \ref{Prop.HSAiry}, Lemma \ref{Lem.KernelMatch}, \cite[Lemma 5.13]{DY25}, and \cite[Proposition 5.8(4)]{DY25} with $f(x) = 1/(2 \varpi)$ we have for all $\varpi > 1$ that $M^{t; \mathrm{hs};\varpi}$ is a Pfaffian point process on $\mathbb{R}$ with reference measure $\mathrm{Leb}$ and correlation kernel 
$$\begin{bmatrix} \frac{1}{4\varpi^2} K^{\varpi}_{11}(t,x;t,y) & K^{\varpi}_{12}(t,x;t,y) \\ K_{21}^{\varpi}(t,x;t,y) & 4 \varpi^2 K_{22}^{\varpi}(t,x;t,y) \end{bmatrix}.$$
Part (b) now follows from Lemma \ref{Lem.KernelLimit} and \cite[Proposition 5.10]{DY25}.
\end{proof}

\begin{lemma}\label{Lem.PointProcessConvergenceT} Assume the same notation as in Definition \ref{Def.AiryLE} and Lemma \ref{Lem.PointProcessConvergenceVarpi}. Then, the following statements hold.
\begin{enumerate}
\item[(a)] Fix a finite set $\mathsf{S} = \{s_1, \dots, s_m\} \subset \mathbb{R}$ with $s_1 < s_2 < \cdots < s_m$, and set $T + \mathsf{S} := \{T+s_1, T + s_2,  \dots, T+ s_m\}$. If $\phi_T(s,x) = (s-T, x)$, then $M^{T+ \mathsf{S};\infty}\phi_T^{-1} \Rightarrow M^{\mathsf{S}; \mathcal{A}}$ as $T \rightarrow \infty$.
\item[(b)] For a fixed $t_0 \in \mathbb{R}$ define the random measure on $\mathbb{R}$ through
\begin{equation}
M^{t_0; \mathcal{A}}(A) = \sum_{ i \geq 1} {\bf 1}\{\mathcal{A}_i(t_0) \in A\}.
\end{equation}
Then $M^{T + t_0;\infty} \Rightarrow M^{t_0; \mathcal{A}}$ as $T \rightarrow \infty$.
\end{enumerate}
\end{lemma}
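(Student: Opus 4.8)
The plan is to realize both $M^{T+\mathsf{S};\infty}$ and its limit as Pfaffian point processes and to read off the convergence from the kernel asymptotics in Lemma \ref{Lem.ConvToAiryKernel}, exactly in the spirit of the proof of Lemma \ref{Lem.PointProcessConvergenceVarpi}. For Part (a): since $\mathsf{S}$ is finite, $T + \mathsf{S} \subset (0,\infty)$ for all large $T$, so by Theorem \ref{Thm.PfaffianStructure} the random measure $M^{T+\mathsf{S};\infty}$ is a Pfaffian point process on $\mathbb{R}^2$ with reference measure $\mu_{T+\mathsf{S}}\times\mathrm{Leb}$ and correlation kernel $\hski$. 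The map $\phi_T(s,x) = (s-T,x)$ is a bijection of $\mathbb{R}^2$ that only relabels the first coordinate, carrying $\mu_{T+\mathsf{S}}\times\mathrm{Leb}$ to $\mu_{\mathsf{S}}\times\mathrm{Leb}$; hence $M^{T+\mathsf{S};\infty}\phi_T^{-1}$ is a Pfaffian point process on $\mathbb{R}^2$ with reference measure $\mu_{\mathsf{S}}\times\mathrm{Leb}$ and correlation kernel $(s,x;t,y)\mapsto\hski(T+s,x;T+t,y)$ for $s,t\in\mathsf{S}$.

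Next I would apply Lemma \ref{Lem.ConvToAiryKernel}: as $T\to\infty$, the entries $\hski_{11}(T+s,x;T+t,y)$ and $\hski_{22}(T+s,x;T+t,y)$ tend to $0$ while $\hski_{12}(T+s,x;T+t,y)\to K^{\mathrm{Airy}}(s,x;t,y)$, uniformly over $x,y$ in compacts. Thus the matrix kernels converge locally uniformly to the antidiagonal matrix kernel with upper-right entry $K^{\mathrm{Airy}}(s,x;t,y)$ and lower-left entry $-K^{\mathrm{Airy}}(t,y;s,x)$. The key algebraic observation is that the antisymmetry condition required of a Pfaffian kernel is preserved in the limit, and that the Pfaffian correlation functions built from such a block-antidiagonal kernel collapse to the determinantal correlation functions $\det\bigl[K^{\mathrm{Airy}}(\cdot)\bigr]$; hence the limiting Pfaffian point process is precisely the determinantal process $M^{\mathsf{S};\mathcal{A}}$ of \eqref{Eq.RMS1} (cf.\ Definition \ref{Def.AiryLE}). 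With local uniform kernel convergence in hand and the limit kernel entries continuous, the weak convergence $M^{T+\mathsf{S};\infty}\phi_T^{-1}\Rightarrow M^{\mathsf{S};\mathcal{A}}$ then follows from \cite[Proposition 5.10]{DY25}, just as in Lemma \ref{Lem.PointProcessConvergenceVarpi}(a). Note that, unlike the $\varpi\to\infty$ regime, no conjugation of the kernel is needed here, since the $11$ and $22$ entries vanish in the limit rather than blowing up.

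For Part (b) one runs the same argument with the singleton $\mathsf{S}=\{t_0\}$: for large $T$, $M^{T+t_0;\infty}$ is the Pfaffian point process on $\mathbb{R}$ with kernel $\hski(T+t_0,x;T+t_0,y)$ (Lemma \ref{Lem.PointProcessConvergenceVarpi}(b)), whose matrix kernel converges locally uniformly, by Lemma \ref{Lem.ConvToAiryKernel} with $s=t=t_0$, to the antidiagonal kernel with upper-right entry $K^{\mathrm{Airy}}(t_0,x;t_0,y)$; its Pfaffian correlation functions are the determinantal ones for $K^{\mathrm{Airy}}(t_0,\cdot;t_0,\cdot)$, i.e.\ those of $M^{t_0;\mathcal{A}}$, and \cite[Proposition 5.10]{DY25} again yields the claim. (Equivalently, Part (b) may be deduced from Part (a) by taking $\mathsf{S}=\{t_0\}$ and identifying $\{T+t_0\}\times\mathbb{R}$ with $\mathbb{R}$.)

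I expect the only genuinely nontrivial point to be the identification of the limiting (degenerate) Pfaffian kernel with the Airy determinantal point process — namely checking that the antisymmetry of the matrix kernel survives the limit and that the block-antidiagonal Pfaffian equals the determinant of the $K^{\mathrm{Airy}}$ block — and confirming that the hypotheses of \cite[Proposition 5.10]{DY25} (local uniform convergence of kernels, continuity of the limit kernel, and no escape of mass) are met in this setting. Everything else is a direct transcription of the argument already used for Lemma \ref{Lem.PointProcessConvergenceVarpi}.
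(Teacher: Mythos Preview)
Your approach is essentially identical to the paper's: Pfaffian structure of $M^{T+\mathsf{S};\infty}\phi_T^{-1}$ with the shifted kernel, Lemma~\ref{Lem.ConvToAiryKernel} for kernel convergence, \cite[Proposition~5.10]{DY25} for weak convergence of the Pfaffian processes, and then the identification of the block-antidiagonal Pfaffian limit with the determinantal Airy process (which the paper packages via \cite[Lemma~5.9]{DY25} and \cite[Proposition~2.13(3)]{dimitrov2024airy}). One small but genuine correction: in Part~(a) you invoke Theorem~\ref{Thm.PfaffianStructure} to get the Pfaffian structure of $M^{T+\mathsf{S};\infty}$, but that theorem is proved later and its proof ultimately relies (through Theorem~\ref{Thm.Convergence} $\to$ Proposition~\ref{Prop.Tightness} $\to$ Lemma~\ref{Lem.FDConvVarpi} $\to$ Lemma~\ref{Lem.InfinitelyManyAtoms}) on Lemma~\ref{Lem.PointProcessConvergenceT}(b), so citing it here is circular; you should instead cite Lemma~\ref{Lem.PointProcessConvergenceVarpi}(a) (together with \cite[Proposition~5.8(5)]{DY25} for the pushforward), which already gives exactly the Pfaffian structure you need and is how the paper proceeds.
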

\begin{proof} From Lemma \ref{Lem.PointProcessConvergenceVarpi}(a) and \cite[Proposition 5.8(5)]{DY25}, we have for $T > -s_1$ that $M^{T+ \mathsf{S};\infty}\phi_T^{-1}$ is a Pfaffian point process on $\mathbb{R}^2$ with reference measure $\mu_{\mathsf{S}} \times \mathrm{Leb}$ and correlation kernel
$$ \begin{bmatrix} \hski_{11}(T+s, x; T+t, y) & \hski_{12}(T+s, x; T+t, y) \\ \hski_{21}(T+s, x; T+t, y) & \hski_{22}(T+s, x; T+t, y) \end{bmatrix}.$$
From Lemma \ref{Lem.ConvToAiryKernel} and \cite[Proposition 5.10]{DY25}, we conclude that $M^{T+ \mathsf{S};\infty}\phi_T^{-1}$ converges to a Pfaffian point process on $\mathbb{R}^2$ with reference measure $\mu_{\mathsf{S}} \times \mathrm{Leb}$ and correlation kernel 
$$\begin{bmatrix} 0 & K^{\mathrm{Airy}}(s,x;t,y) \\ -K^{\mathrm{Airy}}(t,y;s,x) & 0 \end{bmatrix}.$$
From \cite[Lemma 5.9]{DY25} and \cite[Proposition 2.13(3)]{dimitrov2024airy} we conclude that this limiting point process has the same law as $M^{\mathsf{S}; \mathcal{A}}$. This proves part (a).\\

From Lemma \ref{Lem.PointProcessConvergenceVarpi}(b), \cite[Lemma 5.13]{DY25} and \cite[Proposition 5.8(5)]{DY25}, we have for $T > -t_0$ that $M^{T + t_0;\infty}$ is a Pfaffian point process on $\mathbb{R}$ with reference measure $\mathrm{Leb}$ and correlation kernel
$$ \begin{bmatrix} \hski_{11}(T+t_0, x; T+t_0, y) & \hski_{12}(T+t_0, x; T+t_0, y) \\ \hski_{21}(T+t_0, x; T+t_0, y) & \hski_{22}(T+t_0, x; T+t_0, y) \end{bmatrix}.$$
From Lemma \ref{Lem.ConvToAiryKernel} and \cite[Proposition 5.10]{DY25}, we conclude that $M^{T+ t_0;\infty}$ converges to a Pfaffian point process on $\mathbb{R}$ with reference measure $\mathrm{Leb}$ and correlation kernel 
$$\begin{bmatrix} 0 & K^{\mathrm{Airy}}(t_0,x;t_0,y) \\ -K^{\mathrm{Airy}}(t_0,y;t_0,x) & 0 \end{bmatrix}.$$
From \cite[Lemma 5.9]{DY25} and \cite[Proposition 2.13(3)]{dimitrov2024airy}, we conclude that this limiting point process has the same law as $M^{t_0; \mathcal{A}}$. This proves part (b).
\end{proof}

%
%
\subsection{Finite-dimensional convergence of $\hsa$}\label{Section3.3} The goal of this section is to prove the finite-dimensional version of Theorem \ref{Thm.Convergence} --- this is Lemma \ref{Lem.FDConvVarpi} below. In order to establish that result we require the following auxiliary lemma whose proof is postponed until Section \ref{SectionSpecial}.
\begin{lemma}\label{Lem.InfinitelyManyAtoms} For a fixed $t > 0$ let $M^{t;\infty}$ be as in Lemma \ref{Lem.PointProcessConvergenceVarpi}(b). Then $\mathbb{P}\left(M^{t;\infty}(\mathbb{R}) = \infty \right) = 1$.
\end{lemma}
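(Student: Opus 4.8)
The plan is to exploit the half-space Brownian Gibbs property of $\hsl$ (as in Proposition~\ref{Prop.HSAiry}) together with the fact that, for \emph{each} fixed $\varpi$, the point process $M^{t;\mathrm{hs};\varpi}$ has infinitely many atoms almost surely (this follows from Proposition~\ref{Prop.HSAiry}, since $\hsa$ is a genuine $\mathbb{N}$-indexed line ensemble, so $\hsa_i(t)$ is finite for every $i$). The obstacle is that infinitude of atoms is \emph{not} preserved under weak convergence of point processes --- mass can escape to $-\infty$ in the limit. So the real content of the lemma is a \emph{uniform lower tail bound}: I would aim to show that for every $k \in \mathbb{N}$,
\begin{equation}\label{Eq.PlanUniformLowerTail}
\lim_{a \to -\infty} \sup_{\varpi \geq 2} \mathbb{P}\bigl( \hsa_k(t) \leq a \bigr) = 0.
\end{equation}
Given \eqref{Eq.PlanUniformLowerTail}, the Portmanteau theorem applied to the weak convergence $M^{t;\mathrm{hs};\varpi} \Rightarrow M^{t;\infty}$ from Lemma~\ref{Lem.PointProcessConvergenceVarpi}(b) yields that $M^{t;\infty}([a,\infty)) \geq k$ with probability close to $1$ uniformly in $k$ once $a$ is sufficiently negative, whence $M^{t;\infty}(\mathbb{R}) \geq k$ a.s. for every $k$, i.e. $M^{t;\infty}(\mathbb{R}) = \infty$ a.s.

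The heart of the argument is therefore establishing \eqref{Eq.PlanUniformLowerTail}, and here I would transfer information from ``time infinity'' down to time $t$ using the half-space Brownian Gibbs property, mirroring the strategy used in \cite{DY25}. The key observation is that far from the origin $\hsa$ looks like the full-space Airy line ensemble (this is the $\varpi$-analogue of Theorem~\ref{Thm.ConvergenceToAiryLE}; concretely, one uses that $M^{T+\mathsf{S};\mathrm{hs};\varpi}$ converges, after centering, to the extended Airy point process as $T \to \infty$, which follows from Lemma~\ref{Lem.KernelMatch} and contour estimates in the spirit of Lemma~\ref{Lem.ConvToAiryKernel}, uniformly in $\varpi$). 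In particular, one can find a large time $T$ and a threshold $\ell$ such that, uniformly in $\varpi \geq 2$, with high probability there are at least $2k$ curves above level $\ell$ at time $T$, and the $(2k+1)$-st curve is bounded below at time $T$; combined with the tightness-from-above estimate of Lemma~\ref{Lem.TightFromAbove} (which controls $\hsa_1$, hence all curves, from above at both times), one gets a compact ``window'' for the top $2k$ curves on $[0,T]$. Then, conditioning on the endpoints at $T$ and on the $(2k+1)$-st curve, the top $2k$ curves are distributed as $2k$ avoiding reverse Brownian motions with alternating drifts $(-1)^i\sqrt{2}\,\varpi$ started from the (high) values at $T$ and conditioned to stay ordered and above the lower curve; a straightforward Brownian estimate --- using that such bridges started from positively-separated endpoints stay above a fixed level with probability bounded below uniformly in the drift magnitude, since large $|\varpi|$ only \emph{helps} keep the curves apart and high --- shows $\hsa_k(t)$ is unlikely to be very negative, uniformly in $\varpi$.

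The main obstacle I anticipate is obtaining the Brownian lower-bound estimate \emph{uniformly as $\varpi \to \infty$}: the drifts $(-1)^i\sqrt{2}\,\varpi$ blow up, so one cannot naively bound the avoiding-bridge measure by the free one. The resolution is the structural trick already invoked in the paper (Lemma~\ref{Lem.BrownianEnsembleConv} and its proof, following \cite{DasSerio25}): pass to sums and differences of consecutive pairs $(\cev B_{2i-1} + \cev B_{2i})$ and $(\cev B_{2i-1} - \cev B_{2i})$, which decouple the large drift into a tame drift on the differences (and no drift on the sums), after which each auxiliary process is a standard Brownian object amenable to uniform estimates; the pairwise-pinning/ordering constraint becomes a constraint that the differences stay nonnegative, which is drift-monotone in a favorable direction. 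A secondary technical point is that the lemma is invoked (in Section~\ref{Section3.3}) as an \emph{input} to the finite-dimensional convergence, so one must be careful that the $T \to \infty$ comparison used here relies only on already-established facts --- the kernel identity of Lemma~\ref{Lem.KernelMatch}, the Pfaffian structure of Proposition~\ref{Prop.HSAiry}, Lemma~\ref{Lem.TightFromAbove}, and the half-space Brownian Gibbs property --- and not on Theorem~\ref{Thm.Convergence} or Lemma~\ref{Lem.FDConvVarpi} themselves, to avoid circularity.
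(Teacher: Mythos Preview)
Your high-level intuition is right: the key is to transfer control from large times $T$ (where the ensemble looks like Airy) back to the fixed time $t$ via the half-space Brownian Gibbs property, and the sum/difference trick you cite is indeed what underlies the needed Brownian estimate (the paper packages this as Lemma~\ref{Lem.NoLowMin}). Your circularity concern is also well-placed, and the paper respects it.

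However, your organization differs from the paper's in a way that creates a real gap. You propose to establish the \emph{uniform-in-$\varpi$} lower tail bound \eqref{Eq.PlanUniformLowerTail} directly, and for this you need that ``uniformly in $\varpi \geq 2$, with high probability there are at least $2k$ curves above level $\ell$ at time $T$.'' This is a lower tail bound on the $2k$-th atom of $M^{T;\mathrm{hs};\varpi}$, and it does \emph{not} follow merely from uniform kernel convergence ``in the spirit of Lemma~\ref{Lem.ConvToAiryKernel}'': kernel convergence controls correlation functions and hence factorial moments, but bounding $\mathbb{P}(M^{T;\mathrm{hs};\varpi}((\ell,\infty)) < 2k)$ from above requires gap-probability control, which is substantially harder (contrast with the upper-tail Lemma~\ref{Lem.TightFromAbove}, which is a simple first-moment bound). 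You would essentially need a uniform-in-$\varpi$ version of the Airy comparison at the level of individual atom locations, which is neither established in the paper nor straightforward.

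The paper sidesteps this entirely by reversing the order of limits. It first passes to the $\varpi \to \infty$ limit (already available from Lemma~\ref{Lem.PointProcessConvergenceVarpi}(b)), obtaining the point processes $M^{s_n;\infty}$ at times $s_n = t + n$, and then applies an abstract coupling lemma (Lemma~\ref{Lem.CouplingToInfinity}): if $M^{s_n;\infty} \Rightarrow$ Airy (which is Lemma~\ref{Lem.PointProcessConvergenceT}(b), no uniformity in $\varpi$ needed) and $p_n^a := \mathbb{P}(M^{s_n;\infty}(\mathbb{R}) \geq a)$ is monotone decreasing in $n$ for each even $a$, then $M^{s_0;\infty} = M^{t;\infty}$ inherits infinitely many atoms. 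The monotonicity $p_n^a \geq p_{n+1}^a$ is then proved by briefly returning to the $\varpi$-ensembles: one shows (via \cite[Lemma~5.3]{DY25} and the half-space Gibbs property with Lemma~\ref{Lem.NoLowMin}) that conditionally on $\hsa_a(s_{n+1}) \geq x_0$, the curve $\hsa_a(s_n)$ is unlikely to be much lower, for all $\varpi$ large. The only uniformity in $\varpi$ required is in this Brownian step, where it comes for free from Lemma~\ref{Lem.NoLowMin}; no uniform kernel asymptotics are needed.
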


\begin{lemma}\label{Lem.FDConvVarpi} Assume the same notation as in Proposition \ref{Prop.HSAiry}. Fix a finite set $\mathsf{S} = \{s_1, \dots, s_m\} \subset (0, \infty)$ with $s_1 < s_2 < \cdots < s_m$. Then, for some $X^{\mathsf{S}} = \left( X^{j,\mathsf{S}}_i: i \geq 1, j = 1, \dots, m\right) \in \mathbb{R}^{\infty}$
$$\left( \hsa_i(s_j): i \geq 1, j = 1, \dots, m\right) \overset{f.d.}{\rightarrow} \left( X^{j,\mathsf{S}}_i: i \geq 1, j = 1, \dots, m\right)$$
 as $\varpi \rightarrow \infty$. In addition, we have that the vector $X^{\mathsf{S}}$ satisfies almost surely
\begin{equation}\label{Eq.OrdX}
X^{j, \mathsf{S}}_1 > X^{j, \mathsf{S}}_2 > \cdots \mbox{ for } j = 1, \dots, m,
\end{equation}
and if we define the random measure $M^{\mathsf{S}; X}$ on $\mathbb{R}^2$ through 
\begin{equation}\label{Eq.MultiSliceFormedX}
M^{\mathsf{S};X}(A) = \sum_{i \geq 1} \sum_{j = 1}^m {\bf 1}\{(s_j, X^{j,\mathsf{S}}_i) \in A \},
\end{equation}
then $M^{\mathsf{S};X}$ is a point process that has the same distribution as $M^{\mathsf{S}; \infty}$ from Lemma \ref{Lem.PointProcessConvergenceVarpi}(a).
\end{lemma}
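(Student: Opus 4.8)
The plan is to deduce the finite-dimensional convergence from the already-established point process convergence (Lemma \ref{Lem.PointProcessConvergenceVarpi}(a)), using the tightness-from-above estimate (Lemma \ref{Lem.TightFromAbove}) together with the ``infinitely many atoms'' input (Lemma \ref{Lem.InfinitelyManyAtoms}) to rule out escape of mass, and the non-intersection property of $\hsa$ to transfer the labeled description to the limit. First I would observe that for each fixed $j$, Lemma \ref{Lem.TightFromAbove} gives that $\{\hsa_1(s_j) : \varpi \geq 2\}$ is tight from above; since $\hsa_1 \geq \hsa_2 \geq \cdots$ by non-intersection, this gives tightness from above for the entire vector $(\hsa_i(s_j) : i \geq 1, j = 1, \dots, m)$ in $\mathbb{R}^\infty$ (with the product topology). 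To get tightness, and hence a subsequential limit $X^{\mathsf{S}}$, one also needs control from below: this is where the point process $M^{\mathsf{S};\infty}$ from Lemma \ref{Lem.PointProcessConvergenceVarpi}(a) enters. The weak convergence $\hsm \Rightarrow M^{\mathsf{S};\infty}$ as random measures, combined with the fact (Lemma \ref{Lem.InfinitelyManyAtoms}) that $M^{\mathsf{S};\infty}$ has infinitely many atoms on each vertical slice $\{s_j\} \times \mathbb{R}$ a.s., forces that for any threshold $-a$, the number of points of $\hsm$ above level $-a$ on slice $s_j$ does not vanish in the limit; quantitatively, $M^{\mathsf{S};\infty}$ assigns positive probability to having at least $k$ atoms above $-a$ on slice $s_j$ for every $k$, once $a$ is large. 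This prevents the $i$-th largest point from drifting to $-\infty$ along a subsequence, giving tightness of each coordinate and hence tightness of $X^{\mathsf{S}}$ in $\mathbb{R}^\infty$.

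Next, along any convergent subsequence $\varpi_n \to \infty$, let $X^{\mathsf{S}} = (X^{j,\mathsf{S}}_i)$ be the limit. The two facts to extract are: (i) $M^{\mathsf{S};X}$ defined in \eqref{Eq.MultiSliceFormedX} has the law of $M^{\mathsf{S};\infty}$, and (ii) the ordering \eqref{Eq.OrdX} holds. For (i): the map sending the ordered vector $(\hsa_i(s_j))_{i,j}$ to the counting measure $\hsm$ is continuous at configurations with no accumulation except at $-\infty$; since $\hsm \Rightarrow M^{\mathsf{S};\infty}$ and $(\hsa_i(s_j)) \to X^{\mathsf{S}}$ along the subsequence, the continuous mapping theorem identifies the pushforward law of $X^{\mathsf{S}}$ under this map as that of $M^{\mathsf{S};\infty}$, i.e. $M^{\mathsf{S};X} \overset{d}{=} M^{\mathsf{S};\infty}$. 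For (ii): $\hsa_i(s_j) > \hsa_{i+1}(s_j)$ for all $\varpi$, so the limit satisfies the weak inequalities $X^{j,\mathsf{S}}_i \geq X^{j,\mathsf{S}}_{i+1}$ automatically; the \emph{strict} inequality comes from the fact that $M^{\mathsf{S};\infty}$ is a Pfaffian point process, hence simple (by \cite[Proposition 5.8]{DY25}), so a.s. no two atoms coincide on a given slice — combined with (i) this upgrades the weak inequalities to strict ones. Finally, since every subsequential limit $X^{\mathsf{S}}$ has the same law (its associated measure is $M^{\mathsf{S};\infty}$, and the ordering plus the measure determine the vector uniquely), the full sequence converges, establishing $\overset{f.d.}{\to}$.

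The main obstacle I anticipate is the tightness-from-below step, i.e. showing no coordinate $\hsa_i(s_j)$ escapes to $-\infty$ as $\varpi \to \infty$. Tightness from above is immediate from Lemma \ref{Lem.TightFromAbove}, but the lower direction genuinely requires knowing that the limiting slice point process does not lose mass, which is exactly the content packaged into Lemma \ref{Lem.InfinitelyManyAtoms} (itself deferred to Section \ref{SectionSpecial} and proved via the half-space Brownian Gibbs property). Concretely, one argues by contradiction: if along some subsequence $\hsa_{i_0}(s_{j_0}) \to -\infty$ in probability for some fixed $i_0$, then with high probability $\hsm$ has fewer than $i_0$ atoms above any fixed level $-a$ on slice $s_{j_0}$, whereas the limit $M^{\mathsf{S};\infty}$ — having infinitely many atoms a.s. — has at least $i_0$ atoms above $-a$ with probability bounded below uniformly in $a$ large, contradicting weak convergence (applied to the open set ``$\geq i_0$ atoms in $(s_{j_0} - \delta, s_{j_0} + \delta) \times (-a, \infty)$''). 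A minor technical care is needed because weak convergence of point processes controls open and closed sets differently; choosing the cutoff level $-a$ generically (a continuity point, avoiding the a.s.-discrete set of atom locations of the limit) handles this cleanly.
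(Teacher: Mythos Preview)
Your proposal is correct and follows essentially the same approach as the paper. The only difference is packaging: the paper invokes \cite[Proposition~2.19]{dimitrov2024airy} and \cite[Proposition~2.21]{dimitrov2024airy} as black boxes (whose hypotheses are exactly Lemma~\ref{Lem.PointProcessConvergenceVarpi}, Lemma~\ref{Lem.InfinitelyManyAtoms}, and Lemma~\ref{Lem.TightFromAbove}), whereas you unpack their content directly --- the tightness-from-below-via-infinitely-many-atoms argument, the continuous mapping to identify $M^{\mathsf{S};X}$, the strict ordering from simplicity of Pfaffian point processes, and the uniqueness of the subsequential limit.
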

\begin{remark}\label{Rem.FDConvVarpi} From \cite[Corollary 2.20]{dimitrov2024airy} we have that (\ref{Eq.OrdX}) and $M^{\mathsf{S};X} \overset{d}{=} M^{\mathsf{S}; \infty}$ uniquely specify the distribution of $X^{\mathsf{S}}$.
\end{remark}
\begin{proof} We only need to prove the result for each sequence $\varpi_k\in\mathbb{R}$, $k=1,2,\dots$ satisfying $ \varpi_k\rightarrow\infty$ as $k\rightarrow\infty$, since the law of the limit vector $X^{\mathsf{S}}$ is the same for each such sequence by Remark \ref{Rem.FDConvVarpi}. 

Suppose that for each $i \geq 1$ and $j = 1, \dots, m$, we know that 
\begin{equation}\label{Eq.AISTIGHT}
\{\mathcal{A}_i^{\mathrm{hs};\varpi_k}(s_j)\}_{k \geq 1} \mbox{ is tight.}
\end{equation}
From Lemma \ref{Lem.PointProcessConvergenceVarpi}(a) we know that $M^{\mathsf{S}; \mathrm{hs};\varpi_k}\Rightarrow M^{\mathsf{S}; \infty}$ as $k\rightarrow\infty$. The latter and (\ref{Eq.AISTIGHT}) verify the conditions of \cite[Proposition 2.19]{dimitrov2024airy}, which implies the statement of the lemma, with the exception that it only ensures
$$X^{j, \mathsf{S}}_1 \geq X^{j, \mathsf{S}}_2 \geq \cdots \mbox{ for } j = 1, \dots, m.$$
Let us briefly explain how to get (\ref{Eq.OrdX}) from here. Our work so far shows $M^{\mathsf{S};X} \overset{d}{=} M^{\mathsf{S}; \infty}$, which by Lemma \ref{Lem.PointProcessConvergenceVarpi}(b) is a Pfaffian point process on $\mathbb{R}^2$. By \cite[Proposition 5.8(1)]{DY25}, we conclude $M^{\mathsf{S}; X}$ is a simple point process, and so almost surely $\{X^{j,\mathsf{S}}_{i}\}_{i \geq 1}$ are all distinct, implying (\ref{Eq.OrdX}).\\

In the remainder of the proof, we establish (\ref{Eq.AISTIGHT}). Fix $j \in \{1, \dots, m\}$. In order to show (\ref{Eq.AISTIGHT}), we seek to apply \cite[Proposition 2.21]{dimitrov2024airy} to the sequence $(\mathcal{A}_i^{\mathrm{hs};\varpi_k}(s_j):i\geq1)$, $k=1,2,\dots$ of random vectors in $\mathbb{R}^{\infty}$.

By Lemma \ref{Lem.PointProcessConvergenceVarpi}(b), the measures $M^{s_j; \mathrm{hs};\varpi_k}$ converge weakly to $M^{s_j;\infty}$ as $k\rightarrow\infty$ , which verifies condition (1) in \cite[Proposition 2.21]{dimitrov2024airy}. In addition, they satisfy condition (2) by Lemma \ref{Lem.InfinitelyManyAtoms} and condition (3) by Lemma \ref{Lem.TightFromAbove}. From \cite[Proposition 2.21]{dimitrov2024airy} we  conclude (\ref{Eq.AISTIGHT}).
\end{proof}

%
%
\subsection{Finite-dimensional convergence of $X^{\mathsf{S}}$}\label{Section3.4} The goal of this section is to prove the finite-dimensional version of Theorem \ref{Thm.ConvergenceToAiryLE}. The precise statement is given in the following lemma.

\begin{lemma}\label{Lem.FDConvT} Fix a finite set $\mathsf{S} = \{s_1, \dots, s_m\} \subset \mathbb{R}$ with $s_1 < s_2 < \cdots < s_m$ and set $T + \mathsf{S} := \{T+ s_1, T+s_2, \dots, T + s_m\}$. For $T > -s_1$ let $X^{T+ \mathsf{S}}$ be as in Lemma \ref{Lem.FDConvVarpi}. Then, $X^{T+ \mathsf{S}} \overset{f.d.}{\rightarrow} \left( \mathcal{A}_i(s_j): i \geq 1, j = 1, \dots, m \right)$ as $T \rightarrow \infty$, where $\mathcal{A}$ is the Airy line ensemble from Definition \ref{Def.AiryLE}.
\end{lemma}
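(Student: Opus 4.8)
The plan is to deduce the finite-dimensional convergence of $X^{T+\mathsf{S}}$ from the point process convergence already established in Lemma \ref{Lem.PointProcessConvergenceT}(a), using the same machinery from \cite{dimitrov2024airy} that was employed in the proof of Lemma \ref{Lem.FDConvVarpi}. Recall from Lemma \ref{Lem.FDConvVarpi} that for each $T > -s_1$, the vector $X^{T+\mathsf{S}} = (X^{j, T+\mathsf{S}}_i : i \geq 1, j = 1, \dots, m)$ has ordered coordinates (strictly decreasing in $i$ for each $j$) and the associated random measure $M^{T+\mathsf{S}; X}$ (defined as in \eqref{Eq.MultiSliceFormedX}) has the law of $M^{T+\mathsf{S}; \infty}$. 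Applying the shift map $\phi_T(s,x) = (s-T, x)$, the measure $M^{T+\mathsf{S};X} \phi_T^{-1}$ encodes the shifted vector $(X^{j, T+\mathsf{S}}_i : i \geq 1, j = 1,\dots, m)$ sitting over the fixed locations $s_1 < \cdots < s_m$, and by Lemma \ref{Lem.PointProcessConvergenceT}(a) it converges weakly to the extended Airy point process $M^{\mathsf{S};\mathcal{A}}$ as $T \to \infty$.

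The remaining ingredient is tightness: I would verify that for each fixed $i \geq 1$ and $j \in \{1,\dots,m\}$ the family $\{X^{j, T+\mathsf{S}}_i : T > -s_1\}$ is tight in $\mathbb{R}$, and then invoke \cite[Proposition 2.19]{dimitrov2024airy} (exactly as in Lemma \ref{Lem.FDConvVarpi}) to conclude that $X^{T+\mathsf{S}}$ converges in finite-dimensional distribution to the unique $\mathbb{R}^\infty$-valued vector whose coordinates are ordered and whose associated random measure equals $M^{\mathsf{S};\mathcal{A}}$ in law; by Definition \ref{Def.AiryLE} this vector is precisely $(\mathcal{A}_i(s_j): i \geq 1, j = 1, \dots, m)$, and its coordinates are strictly ordered because the Airy line ensemble is non-intersecting (so the conclusion upgrades from ``$\geq$'' to ``$>$'' just as before, using that $M^{\mathsf{S};\mathcal{A}}$ is simple). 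Tightness from above is uniform in $T$ because the top curve $\hsai_1$ is globally below a parabola: concretely, $\{X^{1, T+\mathsf{S}}_1 : T\}$ is tight from above by Lemma \ref{Lem.TightFromAbove} (passing the bound on $\P(\hsa_1(t) \geq a)$ through the $\varpi\to\infty$ limit and using $X^{1,T+\mathsf{S}}_1 = \hsai_1(T+s_1)$), and this controls all $X^{j,T+\mathsf{S}}_i$ from above since the ensemble is ordered. Tightness from below follows from the weak convergence $M^{T+\mathsf{S};X}\phi_T^{-1} \Rightarrow M^{\mathsf{S};\mathcal{A}}$ together with the fact that $M^{\mathsf{S};\mathcal{A}}$ has infinitely many atoms in every vertical slice almost surely (a standard property of the Airy point process); indeed this is exactly the structure under which \cite[Proposition 2.21]{dimitrov2024airy} yields tightness of the individual coordinates, and it is how the third bullet point of Section \ref{Section1.3} gets resolved ``for free'' in the large-time regime.

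The main obstacle is essentially bookkeeping rather than substance: one must be careful that $M^{T+\mathsf{S};X}$ genuinely coincides in law with $M^{T+\mathsf{S};\infty}$ for every $T$ in the range (which is the content of Lemma \ref{Lem.FDConvVarpi}, already proved), that the shift map is applied consistently, and that the uniqueness statement \cite[Corollary 2.20]{dimitrov2024airy} identifies the limit as the Airy ensemble slice. No new hard estimate is needed — the kernel asymptotics of Lemma \ref{Lem.ConvToAiryKernel}, which feed into Lemma \ref{Lem.PointProcessConvergenceT}, have already done the analytic work. I would therefore structure the proof as: (i) recall the identification $M^{T+\mathsf{S};X}\overset{d}{=}M^{T+\mathsf{S};\infty}$ from Lemma \ref{Lem.FDConvVarpi}; (ii) invoke Lemma \ref{Lem.PointProcessConvergenceT}(a) for the weak convergence of the shifted measures to $M^{\mathsf{S};\mathcal{A}}$; (iii) establish tightness from above via Lemma \ref{Lem.TightFromAbove} and the ordered structure, and tightness from below via the infinitude of atoms of the Airy point process and \cite[Proposition 2.21]{dimitrov2024airy}; (iv) apply \cite[Proposition 2.19]{dimitrov2024airy} and the uniqueness from \cite[Corollary 2.20]{dimitrov2024airy} to conclude that the limit is $(\mathcal{A}_i(s_j))$, with strict ordering coming from non-intersection of $\mathcal{A}$.
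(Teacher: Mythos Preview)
Your proposal is correct and follows essentially the same route as the paper: identify the shifted measure $M^{T+\mathsf{S};X}\phi_T^{-1}$ with $M^{T+\mathsf{S};\infty}\phi_T^{-1}$ via Lemma \ref{Lem.FDConvVarpi}, invoke Lemma \ref{Lem.PointProcessConvergenceT} for convergence to $M^{\mathsf{S};\mathcal{A}}$, verify tightness of each coordinate through \cite[Proposition 2.21]{dimitrov2024airy} (upper-tail from Lemma \ref{Lem.TightFromAbove} passed through the $\varpi\to\infty$ limit via Portmanteau, infinitude of atoms from the Airy point process), and conclude with \cite[Proposition 2.19]{dimitrov2024airy}. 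One cosmetic point: writing $X^{j,T+\mathsf{S}}_1 = \hsai_1(T+s_j)$ is premature since $\hsai$ is only constructed later in Section \ref{Section5}; the paper works directly with $X_1^{j,T+\mathsf{S}}$ as the weak $\varpi\to\infty$ limit of $\hsa_1(T+s_j)$, which is exactly what your parenthetical describes.
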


\begin{proof} The proof is similar to the proof of Lemma \ref{Lem.FDConvVarpi} above. As before, it suffices to prove the result for each sequence $T_k> -s_1$, $k=1,2,\dots$ satisfying $ T_k\rightarrow\infty$ as $k\rightarrow\infty$. 

Suppose that for each $i \geq 1$ and $j = 1, \dots, m$, we know that 
\begin{equation}\label{Eq.XISTIGHT}
\{X_i^{j;T_k + \mathsf{S}}\}_{k \geq 1} \mbox{ is tight.}
\end{equation}

From Lemma \ref{Lem.FDConvVarpi}, we know that for $T_k > -s_1$ the measures
$$\tilde{M}^{\mathsf{S};k}(A) = \sum_{i \geq 1} \sum_{j = 1}^m{\bf 1}\{(s_j, X^{j,T_k+\mathsf{S}}_i) \in A\}$$
have the same law as $M^{T_k + \mathsf{S};\infty} \phi_{T_k}^{-1}$, where $\phi_{T}(s,x) = (s-T, x)$. From Lemma \ref{Lem.PointProcessConvergenceT}(a), we conclude that $\tilde{M}^{\mathsf{S};k} \Rightarrow  M^{\mathsf{S}; \mathcal{A}}$ as $k \rightarrow \infty$. The latter and (\ref{Eq.XISTIGHT}) verify the conditions of \cite[Proposition 2.19]{dimitrov2024airy}, which implies the statement of the lemma. Thus, we only need to establish (\ref{Eq.XISTIGHT}). \\

Fix $j \in \{1, \dots, m\}$. In order to show (\ref{Eq.XISTIGHT}), we seek to apply \cite[Proposition 2.21]{dimitrov2024airy} to the sequence $(X^{j,T_k+\mathsf{S}}_i:i\geq1)$, $k=1,2,\dots$ of random vectors in $\mathbb{R}^{\infty}$. If we show that all three conditions are satisfied, that proposition would imply (\ref{Eq.XISTIGHT}).

We define the random measures on $\mathbb{R}$
$$\tilde{M}^{s_j;k}(A) = \sum_{i \geq 1} {\bf 1}\{X^{j,T_k+\mathsf{S}}_i \in A\},$$
and note that $\tilde{M}^{s_j;k} \overset{d}{=} M^{T_k + s_j;\infty}$, where the latter is as in Lemma \ref{Lem.PointProcessConvergenceT}(b). One way to see the latter is to use that both measures are Pfaffian point processes on $\mathbb{R}$ with the same reference measure and correlation kernel (in view of Lemma \ref{Lem.PointProcessConvergenceVarpi}(b), Lemma \ref{Lem.FDConvVarpi} and \cite[Lemma 5.13]{DY25}), and so they have the same law by \cite[Proposition 5.8(3)]{DY25}.

From the above observation, and Lemma \ref{Lem.PointProcessConvergenceT}(b), we conclude $\tilde{M}^{s_j;k} \Rightarrow M^{s_j; \mathcal{A}}$, which verifies condition (1) in \cite[Proposition 2.21]{dimitrov2024airy}. Condition (2) follows from the well-known fact that the Airy point process has infinitely many atoms almost surely, see e.g. \cite[Equation (7.11)]{dimitrov2024airy}. 

By Lemma \ref{Lem.FDConvVarpi}, $ \hsa_1(T_k+s_j) \Rightarrow X_1^{j,T_k+\mathsf{S}} $  as $\varpi\rightarrow\infty$. By the Portmanteau Theorem, we conclude for each $a > 0$
$$\mathbb{P}\left(X_1^{j,T_k+\mathsf{S}} > a\right) \leq \liminf_{\varpi \rightarrow \infty} \mathbb{P}\left(\hsa_1(T_k+s_j) > a\right) \leq \sup_{t > 0} \sup_{\varpi \geq 2} \mathbb{P}\left(\hsa_1(t) \geq a\right).$$
Taking $\limsup_{a \rightarrow \infty}$ on both sides and applying Lemma \ref{Lem.TightFromAbove}, we conclude 
$$\limsup_{a \rightarrow \infty} \mathbb{P}\left(X_1^{j,T_k+\mathsf{S}} > a\right) = 0,$$
verifying the last condition (3) in \cite[Proposition 2.21]{dimitrov2024airy}. 
\end{proof}

%
%
\section{Gibbsian line ensembles}\label{Section4} In Proposition \ref{Prop.HSAiry} we mentioned that $\hsl$ satisfies the half-space Brownian Gibbs property, which provides a local description of this ensemble in terms of avoiding reverse Brownian motions with alternating drifts, see Definition \ref{Def.BGP} for a precise statement. The goal of this section is to leverage this property in order to establish three key statements. 

The first is Lemma \ref{Lem.MOC}, which provides estimates for the modulus of continuity of several avoiding reverse Brownian motions. This will be used to establish tightness in the proof of Theorem \ref{Thm.Convergence}, and improve the finite-dimensional convergence result from Lemma \ref{Lem.FDConvVarpi} to a functional limit statement.  

The second is Lemma \ref{Lem.NoLowMin}, which will allow us to transfer certain information from $\hsl(T)$ back to $\hsl(t)$ for each $t \in (0, T)$. The latter will feed (in a non-trivial way) into the proof of the ``infinitely many atoms'' lemma, Lemma \ref{Lem.InfinitelyManyAtoms}, by effectively allowing us to apply the same transfer mechanism from $\hsl(\infty)$ to $\hsl(t)$ for each $t \in (0, \infty)$. 

The third is Lemma \ref{Lem.BrownianEnsembleConv}, which essentially states that the half-space Brownian Gibbs property becomes the pinned half-space Brownian Gibbs property in the $\varpi \rightarrow \infty$ limit. This will be used in the proof of Theorem \ref{Thm.GibbsProperty}.

%
%
\subsection{Definitions and notation for line ensembles}\label{Section4.1}
In this section we recall some basic definitions and notation regarding line ensembles, mostly following \cite[Section 2]{DY25}.

 Given two integers $a \leq b$, we let $\llbracket a, b \rrbracket$ denote the set $\{a, a+1, \dots, b\}$. We also set $\llbracket a,b \rrbracket = \emptyset$ when $a > b$, $\llbracket a, \infty \rrbracket = \{a, a+1, a+2 , \dots \}$, $\llbracket - \infty, b\rrbracket = \{b, b-1, b-2, \dots\}$ and $\llbracket - \infty, \infty \rrbracket = \mathbb{Z}$. Given an interval $\Lambda \subseteq \mathbb{R}$, we endow it with the subspace topology of the usual topology on $\mathbb{R}$. We let $(C(\Lambda), \mathcal{C})$ denote the space of continuous functions $f: \Lambda \rightarrow \mathbb{R}$ with the topology of uniform convergence over compact sets, see \cite[Chapter 7, Section 46]{Munkres}, and Borel $\sigma$-algebra $\mathcal{C}$. Given a set $\Sigma \subseteq \mathbb{Z}$, we endow it with the discrete topology and denote by $\Sigma \times \Lambda$ the set of all pairs $(i,x)$ with $i \in \Sigma$ and $x \in \Lambda$ with the product topology. We also denote by $\left(C (\Sigma \times \Lambda), \mathcal{C}_{\Sigma}\right)$ the space of real-valued continuous functions on $\Sigma \times \Lambda$ with the topology of uniform convergence over compact sets and Borel $\sigma$-algebra $\mathcal{C}_{\Sigma}$. We typically take $\Sigma = \llbracket 1, N \rrbracket$ with $N \in \mathbb{N} \cup \{\infty\}$. The following defines the notion of a line ensemble.
\begin{definition}\label{Def.LineEnsembles}
Let $\Sigma \subseteq \mathbb{Z}$ and $\Lambda \subseteq \mathbb{R}$ be an interval. A {\em $\Sigma$-indexed line ensemble $\mathcal{L}$} is a random variable defined on a probability space $(\Omega, \mathcal{F}, \mathbb{P})$ that takes values in $\left(C (\Sigma \times \Lambda), \mathcal{C}_{\Sigma}\right)$. Intuitively, $\mathcal{L}$ is a collection of random continuous curves (sometimes referred to as {\em lines}), indexed by $\Sigma$, each of which maps $\Lambda$ into $\mathbb{R}$. We will often slightly abuse notation and write $\mathcal{L}: \Sigma \times \Lambda \rightarrow \mathbb{R}$, even though it is not $\mathcal{L}$ which is such a function, but $\mathcal{L}(\omega)$ for every $\omega \in \Omega$. For $i \in \Sigma$ we write $\mathcal{L}_i(\omega) = (\mathcal{L}(\omega))(i, \cdot)$ for the curve of index $i$ and note that the latter is a map $\mathcal{L}_i: \Omega \rightarrow C(\Lambda)$, which is $\mathcal{F}/\mathcal{C}$ measurable. If $a,b \in \Lambda$ satisfy $a \leq b$, we let $\mathcal{L}_i[a,b]$ denote the restriction of $\mathcal{L}_i$ to $[a,b]$. We also recall that the notions of {\em ordered} and {\em non-intersecting} line ensembles were introduced in Definition \ref{Def.Ordered}.
\end{definition}
\begin{remark}\label{Rem.Polish} As shown in \cite[Lemma 2.2]{DEA21}, we have that $C(\Sigma \times \Lambda)$ is a Polish space, and so a line ensemble $\mathcal{L}$ is just a random element in $C(\Sigma \times \Lambda)$ in the sense of \cite[Section 3]{Billing}.
\end{remark}

We let $W_t$ denote a standard one-dimensional Brownian motion, and if $y, \mu \in \mathbb{R}$, we define the {\em Brownian motion with drift $\mu$ from $B(0) = y$} by
\begin{equation}\label{eq:DefBrownianMotionDrift}
B(t) = y + W_t + \mu t.
\end{equation}
If $b \in (0,\infty)$ we also define the {\em reverse Brownian motion with drift $\mu$ from $\cev{B}(b) = y$} by
\begin{equation}\label{eq:RevDefBrownianMotionDrift}
\cev{B}(t) = B(b-t) \mbox{ for } 0 \leq t \leq b.
\end{equation}

For $k\in\mathbb{N}$ we denote by $\weyl_k$ and $\weylc_k$ the open and closed Weyl chambers in $\mathbb{R}^{k}$, i.e.
\begin{equation}\label{Eq.DefWeyl}
\weyl_k=\{\vec{x}\in\mathbb{R}^k: x_1>x_2>\cdots>x_k\}, \hspace{2mm} \weylc_k=\{\vec{x}\in\mathbb{R}^k: x_1\geq x_2 \geq \cdots \geq x_k\}.
\end{equation}
We next define the $g$-avoiding reverse Brownian line ensembles.
\begin{definition}\label{Def.AvoidBLE} 
Suppose $\vec{y},\vec{\mu} \in \mathbb{R}^k$, $b > 0$ and $g:[0,b]\rightarrow[-\infty,\infty)$ is a continuous function (i.e., either $g \in C([0,b])$  or $g= -\infty$ everywhere) such that $g(b) < y_k$. We denote the law of $k$ independent reverse Brownian motions $\{\cev{B}_i\}_{i = 1}^k$ on $[0,b]$, such that $\cev{B}_i$ has drift $\mu_i$ and $\cev{B}_i(b) = y_i$ by $\pfbm^{b, \vec{y}, \vec{\mu}}$, and write $\efbm^{b, \vec{y}, \vec{\mu}}$ for the expectation with respect to this measure. If $\vec{y} \in \weyl_k$, we also let $\pabm^{b, \vec{y}, \vec{\mu}, g}$ be the law $\pfbm^{b, \vec{y}, \vec{\mu}}$, conditioned on the event 
$$E_{\operatorname{avoid}}=\left\{\cev{B}_1(r)>\cev{B}_2(r)>\cdots>\cev{B}_k(r)>g(r)\mbox{ for all } r\in[0,b] \right\},$$
and write $\eabm^{b, \vec{y}, \vec{\mu}, g}$ for the expectation with respect to this measure. When $g=-\infty$ we omit the last superscript.
\end{definition}

\begin{remark}\label{Rem.WellDAvoid} As mentioned in \cite[Remark 2.6]{DY25}, we have that $\pfbm^{b, \vec{y}, \vec{\mu}}(E_{\operatorname{avoid}}) > 0$, and hence $\pabm^{b, \vec{y}, \vec{\mu}, g}$ is well-defined, see \cite[Definition 2.4]{DimMat} for a similar argument.
\end{remark}

We next define the non-intersecting Brownian line ensembles with pairwise pinning at the boundary. This property was introduced by Das and the second author in \cite{DasSerio25}; the following is essentially a restatement of the definition therein with notation closer to that in the present paper. We will first need the definition of a 3D Bessel bridge.

\begin{definition}\label{Def.Bessel}
Suppose $b,y>0$. Let $\{\vec{W}_t\}_{t\ge 0}$ be a standard three-dimensional Brownian motion. The \textit{3D Bessel bridge} on $[0,b]$ to $y$ is the stochastic process $\{V_t = \lVert \vec{W}_t\rVert_2 : t\in[0,b]\}$ conditioned on $V_b = y$. (For a formal definition of this conditioning, see \cite[Section XI.3]{RY}.) 
\end{definition}

\begin{remark}\label{Rem.Bessel}
The 3D Bessel bridge $\{V_t\}_{t\in[0,b]}$ to $y$ is uniquely characterized by its finite-dimensional distributions, which can be found in \cite[p. 464]{RY}, and we recall below. For $t>0$, $x\in\mathbb{R}$, define the standard heat kernel $\hk_t(x,y) = (2\pi t)^{-1/2}e^{-(x-y)^2/2t}$. Fix $0<t_1<t_2<\cdots<t_k < t_{k+1} = b$, $y_1,\dots,y_k>0$, and $y_{k+1}=y$. Then the joint density $f_{({t_1},\dots,{t_k})}$ of $(V_{t_1},\dots,V_{t_k})$ is given by
\begin{equation}\label{Eq.BesselDensity}
f_{({t_1},\dots,{t_k})}(y_1,\dots,y_k) = \frac{b}{t_1} \cdot \frac{y_1}{y} \cdot \frac{\hk_{t_1}(0,y_1)}{\hk_b(0,y)}\prod_{i=1}^k \left[\hk_{t_{i+1}-t_i}(y_i,y_{i+1}) - \hk_{t_{i+1}-t_i}(y_i, -y_{i+1})\right].
\end{equation}
\end{remark}

We next define the $g$-avoiding pinned reverse Brownian line ensembles.
\begin{definition}\label{Def.PinnedBM}
Suppose $\vec{y}\in \weyl_{2k}$, $b>0$, and $g:[0,b]\to[-\infty,\infty)$ is a continuous function with $g(b)<y_{2k}$. Let $\{U_i\}_{i=1}^k$ be independent reverse Brownian motions (with no drift) on $[0,b]$ from $U_i(b) = 2^{-1/2}(y_{2i-1}+y_{2i})$, and let $\{V_i\}_{i=1}^k$ be independent 3D Bessel bridges on $[0,b]$ to $V_i(b) = 2^{-1/2}(y_{2i-1}-y_{2i})$ as in Definition \ref{Def.Bessel}. For $1\leq i\leq k$, define $B_{2i-1} = 2^{-1/2}(U_i+V_i)$ and $B_{2i} = 2^{-1/2}(U_i-V_i)$. Then we let $\mathbb{P}_{\mathrm{pin}}^{b,\vec{y},g}$ denote the law of $\{B_i\}_{i=1}^{2k}$, conditioned on the event
\[
E_{\mathrm{avoid}}^{\mathrm{pin}} = \left\{B_{2i}(r) > B_{2i+1}(r) \mbox{ for all } i\in\llbracket 1,k\rrbracket, \, r\in(0,b)\right\},
\]
where $B_{2k+1} = g$. Expectation with respect to this law is denoted $\mathbb{E}_{\mathrm{pin}}^{b,\vec{y},g}$. When $g=-\infty$ we omit the last superscript.
\end{definition}

\begin{remark}\label{Rem.WellDPinAvoid}
Let us briefly explain why we can condition on $E_{\mathrm{avoid}}^{\mathrm{pin}}$, and hence why $\mathbb{P}_{\mathrm{pin}}^{b,\vec{y},g}$ is well-defined. Firstly, the set of functions satisfying the inequalities in $E_{\mathrm{avoid}}^{\mathrm{pin}}$ is an open set in $C(\llbracket 1, 2k \rrbracket \times [0,b])$, and hence $E_{\mathrm{avoid}}^{\mathrm{pin}}$ is measurable. Thus, it suffices to show that $\mathbb{P}\left(E_{\mathrm{avoid}}^{\mathrm{pin}}\right) > 0$.

When $k = 1$ and $g = -\infty$, the conditions in $E_{\mathrm{avoid}}^{\mathrm{pin}}$ are vacuously satisfied and so $\mathbb{P}(E_{\mathrm{avoid}}^{\mathrm{pin}}) = 1$. Thus, $\mathbb{P}_{\mathrm{pin}}^{b,\vec{y}, -\infty}$ is well-defined for $k = 1$. For general $k \geq 1$ and $g$, we can find $\varepsilon > 0$ and continuous functions $h^{\pm}_i$ for $1 \leq i \leq k$, such that:
\begin{itemize}
\item $\varepsilon < (1/4) \min_{i \in \llbracket 1, 2k \rrbracket} (y_i - y_{i+1})$, where $y_{2k+1} = g(b)$; 
\item $h_i^{-}(t) + \varepsilon < h^+_i(t) - \varepsilon $ for $i \in \llbracket 1, k \rrbracket$ and $t \in [0,b]$;
\item $h_i^-(b) + \varepsilon \leq y_{2i}$ and $h_{i}^+(b) - \varepsilon \geq y_{2i-1} $ for $i \in \llbracket 1, k \rrbracket$;
\item $h_i^-(t)  > h_{i+1}^+(t)$ for $i \in \llbracket 1, k \rrbracket$ and $t \in [0,b]$, where $h_{k+1}^+(t) = g(t)$.
\end{itemize}
In particular, we see
$$E_{\mathrm{avoid}}^{\mathrm{pin}} \supseteq \cap_{i = 1}^k \{h_i^-(t) \leq B_{2i}(t) \leq B_{2i-1}(t) \leq h_i^+(t)  \mbox{ for all } t\in [0,b] \},$$
and so by independence
$$\mathbb{P}\left(E_{\mathrm{avoid}}^{\mathrm{pin}}\right) \geq \prod_{i = 1}^k \mathbb{P}(h_i^-(t)  \leq B_{2i}(t) \leq B_{2i-1}(t) \leq h_i^+(t) \mbox{ for all }t\in [0,b]) > 0,$$
where in the last inequality we used Lemma \ref{Lem.StayInCorridor} with $f(t) = h_i^+(t) - \varepsilon$ and $g(t) = h_i^-(t) + \varepsilon$.
\end{remark}

We next introduce the two main definitions of the section -- the half-space Brownian Gibbs property from \cite[Definition 2.7]{DY25} and the pinned half-space Brownian Gibbs property.
\begin{definition}\label{Def.BGP}
Fix a set $\Sigma = \llbracket 1, N \rrbracket$ with $N \in \mathbb{N} \cup \{\infty\}$, an interval $\Lambda= [0,T]$ or $\Lambda = [0,T)$ with $T \in (0, \infty]$, and $\mu_i \in \mathbb{R}$ for $i \in \llbracket 1, N-1 \rrbracket$. For $k\in\llbracket 1,N-1\rrbracket$ and $b \in \Lambda \cap (0,\infty)$ we write $\vec{\mu}_k = (\mu_1, \dots, \mu_k)$, $D_{\llbracket 1,k\rrbracket,b} = \llbracket 1,k\rrbracket \times [0,b)$ and $D_{\llbracket 1,k\rrbracket,b}^c = (\Sigma \times \Lambda) \setminus D_{\llbracket 1,k\rrbracket,b}$. 
 
A $\Sigma$-indexed line ensemble $\mathcal{L}$ on $\Lambda$ satisfies the {\em half-space Brownian Gibbs property with parameters $\{\mu_i\}_{i \in \llbracket 1, N- 1 \rrbracket}$} if it is non-intersecting and for any $b \in \Lambda \cap (0,\infty)$, any $k\in\llbracket 1,N-1\rrbracket$, and any bounded Borel-measurable function $F: C(\llbracket 1,k\rrbracket \times [0,b]) \rightarrow \mathbb{R}$
\begin{equation}\label{Eq.HSBGP}
\mathbb{E} \left[ F\left(\mathcal{L}|_{\llbracket 1,k\rrbracket \times [0,b]} \right) \,\big|\, \mathcal{F}_{\operatorname{ext}} (\llbracket 1,k\rrbracket \times [0,b))  \right] =\eabm^{b, \vec{y}, \vec{\mu}_k,g} \bigl[ F( \mathcal{Q} ) \bigr],\quad \mathbb{P}\text{-almost surely,} 
\end{equation}
where $\mathcal{L}|_{\llbracket 1,k\rrbracket \times [0,b]} $ is the restriction of $\mathcal{L}$ to $\llbracket 1, k \rrbracket \times [0,b]$, $g = \mathcal{L}_{k + 1}[0,b]$, $\vec y=(\mathcal{L}_1(b), \dots, \mathcal{L}_k(b))$, and
\[\mathcal{F}_{\operatorname{ext}} (\llbracket 1,k\rrbracket \times [0,b)) := \sigma \left ( \mathcal{L}_i(s): (i,s) \in D_{\llbracket 1,k\rrbracket, b}^c \right).
\] 
On the right side of (\ref{Eq.HSBGP}), we have that $\mathcal{Q}$ has law $\pabm^{b, \vec{y}, \vec{\mu}_k,g}$.
\end{definition}

\begin{definition}\label{Def.PinnedBGP} Fix a set $\Sigma = \llbracket 1, N \rrbracket$ with $N \in \mathbb{N} \cup \{\infty\}$, and an interval $\Lambda= [0,T]$ or $\Lambda = [0,T)$ with $T \in (0, \infty]$. A $\Sigma$-indexed line ensemble $\mathcal{L}$ on $\Lambda$ satisfies the {\em pinned half-space Brownian Gibbs property} if its restriction to $\Lambda\cap(0,\infty)$ is non-intersecting and for any $b \in \Lambda \cap (0,\infty)$, any $k\in\llbracket 1,\lfloor(N-1)/2\rfloor\rrbracket$, and any bounded Borel-measurable function $F: C(\llbracket 1,2k\rrbracket \times [0,b]) \rightarrow \mathbb{R}$,
\begin{equation}\label{Eq.HSPinnedBGP}
\mathbb{E} \left[ F\left(\mathcal{L}|_{\llbracket 1,2k\rrbracket \times [0,b]} \right) \,\big|\, \mathcal{F}_{\operatorname{ext}} (\llbracket 1,2k\rrbracket \times [0,b))  \right] =\mathbb{E}_{\mathrm{pin}}^{b,\vec{y}, g} \bigl[ F( \mathcal{Q} ) \bigr],\quad \mathbb{P}\text{-almost surely,} 
\end{equation}
where $g = \mathcal{L}_{2k + 1}[0,b]$, $\vec y=(\mathcal{L}_1(b), \dots, \mathcal{L}_{2k}(b))\in\mathbb{R}^{2k}$ and $\mathcal{F}_{\operatorname{ext}}$ is as above.
\end{definition}

We end this section by recalling the Brownian Gibbs property from \cite[Definition 2.2]{CorHamA}, see also \cite[Definition 2.8]{DEA21}. To state it, we require the notion of an $(f,g)$-avoiding Brownian bridge ensemble from \cite[Definition 2.7]{DEA21}.
\begin{definition}\label{Def.fgAvoidingBE}
Fix $k \in \mathbb{N}$, $\vec{x}, \vec{y} \in \weyl_k$, $a,b \in \mathbb{R}$ with $a < b$, and two continuous functions $f: [a,b] \rightarrow (-\infty, \infty]$ and $g: [a,b] \rightarrow [-\infty,\infty)$. As before, the latter means that either $f \in C([a,b])$ or $f \equiv \infty$, and similarly for $g$. In addition, we assume that $f(t) > g(t)$ for $t \in[a,b]$, $f(a) > x_1$, $f(b) > y_1$, $g(a) < x_k$, $g(b) < y_k$. With this data we let $\mathbb{P}_{\mathrm{avoid};\mathrm{Br}}^{a,b,\vec{x},\vec{y}, f,g}$ denote the law of $k$ independent Brownian bridges $\{B_i:[a,b] \rightarrow \mathbb{R}\}_{i = 1}^k$ from $B_i(a) = x_i$ to $B_i(b) = y_i$, conditioned on the event 
$$E = \{f(t) > B_1(t) > B_2(t) > \cdots > B_k(t) > g(t) \mbox{ for all } t\in [a,b]\}.$$
We note that the law $\mathbb{P}_{\mathrm{avoid};\mathrm{Br}}^{a,b,\vec{x},\vec{y}, f,g}$ on $C(\llbracket 1, k \rrbracket \times [a,b])$ is well-defined; see \cite[Definition 2.7]{DEA21}. The expectation with respect to $\mathbb{P}_{\mathrm{avoid};\mathrm{Br}}^{a,b,\vec{x},\vec{y}, f,g}$ is denoted by $\mathbb{E}_{\mathrm{avoid};\mathrm{Br}}^{a,b,\vec{x},\vec{y}, f,g}$.
\end{definition}

\begin{definition}\label{Def.BGPVanilla}
An $\mathbb{N}$-indexed line ensemble $\mathcal{L} = \{\mathcal{L}_i\}_{i \geq 1}$ on an interval $\Lambda \subseteq \mathbb{R}$ is said to have the {\em Brownian Gibbs property}, if it is non-intersecting, and the following holds for all $[a,b] \subseteq \Lambda$ and $1 \leq k_1 \leq k_2$. If we set $K = \llbracket k_1, k_2\rrbracket$, then for any bounded Borel-measurable function $F: C(K \times [a,b]) \rightarrow \mathbb{R}$, we have $\mathbb{P}$-almost surely
\begin{equation}\label{BGPTower}
\mathbb{E} \left[ F\left(\mathcal{L}|_{K \times [a,b]} \right)  {\big \vert} \mathcal{F}_{\mathrm{ext}} (K \times (a,b))  \right] =\mathbb{E}_{\mathrm{avoid}; \mathrm{Br}}^{a,b, \vec{x}, \vec{y}, f, g} \bigl[ F(\tilde{\mathcal{Q}}) \bigr].
\end{equation}
On the left side of (\ref{BGPTower}), we have that
$$\mathcal{F}_{\mathrm{ext}} (K \times (a,b)) = \sigma \left \{ \mathcal{L}_i(s): (i,s) \in (\mathbb{N} \times \Lambda) \setminus (K \times (a,b)) \right\},$$
and $ \mathcal{L}|_{K \times [a,b]}$ is the restriction of $\mathcal{L}$ to the set $K \times [a,b]$. On the right side of (\ref{BGPTower}), we have $\vec{x} = (\mathcal{L}_{k_1}(a), \dots, \mathcal{L}_{k_2}(a))$, $\vec{y} = (\mathcal{L}_{k_1}(b), \dots, \mathcal{L}_{k_2}(b))$, $f = \mathcal{L}_{k_1 - 1}[a,b]$ with the convention that $f = \infty$ if $k_1 = 1$, and $g = \mathcal{L}_{k_2 +1}[a,b]$. In addition, $\mathcal{Q} = \{\mathcal{Q}_i\}_{i = 1}^{k_2 - k_1 + 1}$ has law $\mathbb{P}_{\mathrm{avoid};\mathrm{Br}}^{a,b, \vec{x}, \vec{y}, f, g}$ as in Definition \ref{Def.fgAvoidingBE}, and $\tilde{\mathcal{Q}} = \{\tilde{\mathcal{Q}}_{i}\}_{i = k_1}^{k_2}$ satisfies $\tilde{\mathcal{Q}}_i = \mathcal{Q}_{i - k_1 + 1}$.
\end{definition}

%
%
\subsection{Properties of half-space Brownian line ensembles}\label{Section4.2} The goal of this section is to establish several basic properties about ensembles with laws $\pabm^{b, \vec{y}, \vec{\mu}, g}$ as in Definition \ref{Def.AvoidBLE}, $\mathbb{P}_{\mathrm{pin}}^{b,\vec{y},g}$ as in Definition \ref{Def.PinnedBM}, and $\mathbb{P}_{\mathrm{avoid};\mathrm{Br}}^{a,b, \vec{x}, \vec{y}, f, g}$ as in Definition \ref{Def.fgAvoidingBE}. 

The following statement shows that, for a fixed drift vector $\vec{\mu}$, the ensembles $\pabm^{b, \vec{y}, \vec{\mu}, g}$ can be coupled monotonically with respect to their boundary data $\vec{y}$ and $g$.
\begin{lemma}\label{Lem.MonotoneCoupling} Fix $k \in \mathbb{N}$, $b > 0$, $\vec{\mu} \in \mathbb{R}^k$, and two continuous functions $g^{\mathrm{b}}, g^{\mathrm{t}}: [0, b] \rightarrow [-\infty, \infty)$ satisfying $g^{\mathrm{b}}(t) \leq g^{\mathrm{t}}(t)$ for $t \in [0,b]$. Let $\vec{y}\,^{\mathrm{b}},\vec{y}\,^{\mathrm{t}} \in \weyl_k$ satisfy $y_i^{\mathrm{b}}\leq y_i^{\mathrm{t}}$ for $i\in\llbracket1,k\rrbracket$, $g^{\mathrm{b}}(b) < y_k^{\mathrm{b}}$, and $g^{\mathrm{t}}(b)< y_k^{\mathrm{t}}$. Then there exists a probability space $(\Omega, \mathcal{F}, \mathbb{P})$, supporting two $\llbracket 1, k \rrbracket$-indexed line ensembles $\mathcal{L}^{\mathrm{b}}$ and $\mathcal{L}^{\mathrm{t}}$ on $[0,b]$, such that:
\begin{itemize}
\item the laws of $\mathcal{L}^{\mathrm{b}}$ and $\mathcal{L}^{\mathrm{t}}$ under $\mathbb{P}$ are given by $\pabm^{b, \vec{y}\,^{\mathrm{b}}, \vec{\mu}, g^{\mathrm{b}}}$ and $\pabm^{b, \vec{y}\,^{\mathrm{t}}, \vec{\mu}, g^{\mathrm{t}}}$, respectively;
\item $\mathbb{P}$-a.s. $\mathcal{L}_i^{\mathrm{b}}(t) \leq \mathcal{L}^{\mathrm{t}}_i(t)$ for $i \in \llbracket 1, k \rrbracket$, $t \in [0,b]$. 
\end{itemize}
\end{lemma}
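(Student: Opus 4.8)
The plan is to build the coupling as a limit of couplings of discretizations of the two ensembles, propagating the coordinatewise ordering through a monotone (attractive) single--site Gibbs sampler. First I would record the structural fact that drives everything, namely the (half--space) Brownian Gibbs structure of $\pabm^{b,\vec y,\vec\mu,g}$: after fixing a fine partition $0=t_0<t_1<\cdots<t_n=b$, the $\pabm^{b,\vec y,\vec\mu,g}$--conditional law of a single grid value $\cev{B}_i(t_j)$ given all the remaining grid values is supported on an interval whose two endpoints are nondecreasing functions of the neighbouring values $\cev{B}_{i\pm 1}(t_j)$, $\cev{B}_i(t_{j\pm 1})$ and of the barrier $g$ (with the conventions $\cev{B}_0\equiv +\infty$, $\cev{B}_{k+1}=g$, and $\cev{B}_i(t_n)=y_i$ fixed), with a density there that varies continuously in these data. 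Here the hypotheses $g^{\mathrm b}(b)<y^{\mathrm b}_k$ and $g^{\mathrm t}(b)<y^{\mathrm t}_k$ guarantee, via Remark \ref{Rem.WellDAvoid}, that both target measures are well defined, and the case $g^{\mathrm b}\equiv -\infty$ (and/or $g^{\mathrm t}\equiv -\infty$) is treated on the same footing, the bottom barrier simply imposing no constraint. From this structure the only genuinely needed input is one--dimensional and elementary: two such conditional laws, corresponding to ordered values of the neighbours and of the barrier, can be coupled so that the two resampled points are ordered.

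Granting this, I would run on the finite set of grid values a single--site Glauber dynamics that repeatedly picks a site $(i,j)$ and resamples $\cev{B}_i(t_j)$ from its $\pabm$--conditional law given the rest; by construction its unique stationary measure is the law of the grid values under $\pabm^{b,\vec y,\vec\mu,g}$. I would then carry two copies of this dynamics, one with boundary data $(\vec{y}\,^{\mathrm b},g^{\mathrm b})$ and one with $(\vec{y}\,^{\mathrm t},g^{\mathrm t})$, started from an arbitrary ordered pair of configurations and updated at every step through the monotone couplings of the conditionals from the previous paragraph. Since $\vec{y}\,^{\mathrm b}\le \vec{y}\,^{\mathrm t}$ componentwise and $g^{\mathrm b}\le g^{\mathrm t}$, and the top curves share the trivial upper envelope $f\equiv +\infty$, the inequality $\cev{B}^{\mathrm b}_i(t_j)\le \cev{B}^{\mathrm t}_i(t_j)$ is preserved by each update; letting the chains run to stationarity produces, for each $n$, a coupling of the two $\pabm$--grid--marginals under which the ensembles are ordered at all grid times.

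Finally I would pass to the continuum. Each $\pabm^{b,\vec y,\vec\mu,g}$ is dominated — e.g.\ by the very coupling just constructed — by the law of $k$ unconditioned reverse Brownian motions with drift, which is tight in $C(\llbracket 1,k\rrbracket\times[0,b])$, so the sequence of grid couplings, extended to full paths (on each subinterval between consecutive grid points one resamples both ensembles from their conditional bridge laws and couples those monotonically as well), forms a tight family. Any subsequential limit is then a coupling of $\pabm^{b,\vec{y}\,^{\mathrm b},\vec\mu,g^{\mathrm b}}$ and $\pabm^{b,\vec{y}\,^{\mathrm t},\vec\mu,g^{\mathrm t}}$ concentrated on the closed set of pairs $(\mathcal{L}^{\mathrm b},\mathcal{L}^{\mathrm t})$ with $\mathcal{L}^{\mathrm b}_i(t)\le \mathcal{L}^{\mathrm t}_i(t)$ at all grid times, hence at every $t\in[0,b]$ by continuity, which gives the lemma (after a Skorokhod representation to realize it on a single probability space). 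I expect the main technical work to lie not in the one--dimensional monotone coupling, which is straightforward, but in two standard but nontrivial points: identifying the stationary measure of the grid Glauber dynamics with the $\pabm$--grid--marginal, i.e.\ verifying that the $\pabm$--conditional of one grid value given the others genuinely has the claimed truncated structure (this uses the Markov property of Brownian motion together with the non--crossing constraints on the intervening subintervals), and controlling the grid--to--continuum passage. Alternatively, if a monotone coupling for $(f,g)$--avoiding Brownian bridge ensembles in their boundary data is taken as known (in the spirit of \cite{DEA21}), one may instead resample each curve of the two ensembles on a common subinterval and invoke that statement directly, which shortens the argument considerably.
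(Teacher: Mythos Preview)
Your approach is genuinely different from the paper's. The paper does not build the continuous coupling from scratch via Glauber dynamics; instead it invokes a pre-existing discrete monotone coupling for interlacing \emph{geometric} line ensembles (\cite[Lemma~2.20]{DY25}), constructs explicit lattice approximations $G_n^\star, \vec{Y}^{n,\star}, \vec{q}^n$ of the data, applies the discrete coupling at level $n$, rescales diffusively, and appeals to \cite[Lemma~2.22]{DY25} for convergence of each marginal to $\pabm^{b,\vec{y}^{\,\star},\vec\mu,g^\star}$. Tightness of the joint law is then immediate from tightness of the marginals, and the ordering passes to the limit by Portmanteau on the closed set $\{\mathcal{L}^{\mathrm b}\le\mathcal{L}^{\mathrm t}\}$. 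The advantage of the paper's route is that all the real work (the monotone Markov chain, its ergodicity, and the preservation of ordering) is outsourced to the discrete lemma; the continuous argument is reduced to a scaling-limit identification.

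Your route is viable in principle and is essentially the Corwin--Hammond strategy, but the description contains an imprecision that would need to be repaired. The $\pabm$-conditional law of a single grid value $\cev{B}_i(t_j)$ given the remaining grid values is \emph{not} merely a truncated distribution on $(\cev{B}_{i+1}(t_j),\cev{B}_{i-1}(t_j))$: it is weighted by the non-crossing probabilities of the $k$ Brownian bridges on the two adjacent subintervals $[t_{j-1},t_j]$ and $[t_j,t_{j+1}]$, which depend on $x=\cev{B}_i(t_j)$ in a nontrivial way. The stochastic monotonicity of these conditionals in the neighbouring data is therefore not ``one-dimensional and elementary''; it requires a TP${}_2$/FKG-type argument (e.g.\ via Karlin--McGregor) or, equivalently, already presupposes the monotone coupling for avoiding bridges that you mention as an alternative at the end. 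If you take that bridge coupling as known (as in \cite{CorHamA} or \cite{DEA21}), the cleanest implementation is to skip the grid entirely and run Glauber on full curves, resampling curve $i$ from its conditional law given the full paths of curves $i\pm 1$; this avoids the two-level grid-then-bridge extension, which as written has the same circularity (filling in curve $i$ on a subinterval requires the full paths of curves $i\pm1$, not just their grid values). You would also need to supply the ergodicity of the continuous-state chain, which is standard but not automatic.
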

\begin{proof} 
A discrete analogue of this monotone coupling lemma was established in \cite[Lemma 2.20]{DY25}. As we show below, the coupling in the present setup arises as a diffusive scaling limit, using \cite[Lemma 2.22]{DY25}, of the coupling in \cite[Lemma 2.20]{DY25}. We recall the notation $\mathbb{P}_{\mathsf{Inter},\mathrm{Geom}}^{B,\vec{Y},\vec{q},G}$ from \cite[Definition 2.12]{DY25}, for the law of the reverse $G$-interlacing geometric line ensemble on the interval $\llbracket 0,B\rrbracket$ with jump parameters $\vec{q}$ and exit data $\vec{Y}$.\\

\noindent\textbf{Step 1.} In this step, we construct the desired probability space $(\Omega,\mathcal{F},\mathbb{P})$ in the statement of the lemma, modulo a few claims. For $n\in\mathbb{N}$, let $B_n = \lceil bn\rceil$, and define functions $g_n^{\mathrm{b}},g_n^{\mathrm{t}} : [0,B_n/n] \to [-\infty,\infty)$ and $G_n^{\mathrm{b}},G_n^{\mathrm{t}} : [0,B_n]\to [-\infty,\infty)$ via
\begin{align*}
g_n^\star(t) &= \inf_{y\in[0,b]} \left\{ g^\star(y) + \sqrt{n/2}\,|t-y| \right\}, \quad G_n^\star(s) = \sqrt{2n}\cdot g_n^\star(s/n) + s, \qquad \star\in\{\mathrm{b},\mathrm{t}\}.
\end{align*}
Additionally, define $\vec{Y}^{n,\mathrm{b}},\vec{Y}^{n,\mathrm{t}} \in \mathbb{Z}^k$ and $\vec{q}^n\in(0,1)^k$ via 
\begin{align*}
    Y_i^{n,\star} = \lceil\sqrt{2n}\cdot y_i^\star + B_n\rceil, \quad q_i^n = \frac12 - \frac{1}{2\sqrt{2}}\mu_i n^{-1/2}, \qquad i\in\llbracket 1,k\rrbracket,\,\star\in\{\mathrm{b},\mathrm{t}\}.
\end{align*}
We claim for $\star\in\{\mathrm{b},\mathrm{t}\}$ and $g^{\star} \neq -\infty$ that:
\begin{enumerate}
\item[1.] $g_n^\star$ is Lipschitz continuous with constant $\sqrt{n/2}$;
\item[2.] $g_n^\star \to g^\star \mbox{ uniformly on }[0,b]$;
\item[3.] $\lim_{n\to\infty}|g_n^\star(B_n/n)-g^\star(b)| = 0$;
\item[4.] the functions $G_n^\star$ are increasing.
\end{enumerate}
We now construct $(\Omega,\mathcal{F},\mathbb{P})$ assuming these claims.\\

Since $G_n^{\mathrm{b}}, G_n^{\mathrm{t}}$ are increasing, $G_n^{\mathrm{b}} \leq G_n^{\mathrm{t}}$, $Y_i^{n,\mathrm{b}} \le Y_i^{n,\mathrm{t}}$ for $i\in\llbracket 1,k\rrbracket$, and $G_n^\star(B_n) \le Y_k^{n,\star}$ for $\star\in\{\mathrm{b},\mathrm{t}\}$ and all large $n$, we can apply \cite[Lemma 2.20]{DY25}. The latter ensures the existence of probability spaces $(\Omega_n,\mathcal{F}_n,\mathbb{P}_n)$ supporting discrete line ensembles $\mathfrak{Q}^{n,\mathrm{b}} = \{Q_i^{n,\mathrm{b}}\}_{i = 1}^k$, $\mathfrak{Q}^{n,\mathrm{t}} = \{Q_i^{n,\mathrm{t}}\}_{i = 1}^k$ that satisfy the following properties. Under $\mathbb{P}_n$ the ensemble $\mathfrak{Q}^{n,\star}$ is distributed according to $\mathbb{P}_{\mathsf{Inter},\mathrm{Geom}}^{B_n,\vec{Y}^{n,\star},\vec{q}^n,G_n^\star}$, and $\mathbb{P}_n$-a.s., $Q_i^{n,\mathrm{b}}(r) \le Q_i^{n,\mathrm{t}}(r)$ for $i\in\llbracket 1,k\rrbracket, r\in\llbracket 0,B_n\rrbracket$. Define the rescaled line ensembles $\mathcal{Q}^{n,\star}$ on $[0,b]$ via
\[
\mathcal{Q}_i^{n,\star}(t) = \frac{1}{\sqrt{2n}} (Q^{n,\star}_i(tn)-tn), \qquad i\in\llbracket 1,k\rrbracket, \, t\in[0,b].
\]
Then $\mathbb{P}_n$-a.s. we have $\mathcal{Q}_i^{n,\mathrm{b}}(t) \le \mathcal{Q}_i^{n,\mathrm{t}}(t)$ for all $i\in\llbracket 1,k\rrbracket$, $t\in[0,b]$. In addition, our construction above ensures that the conditions of \cite[Lemma 2.22]{DY25} are met with $p=1/2$, which proves that $\mathcal{Q}^{n,\star}$ converges in law to $\pabm^{b,\vec{y}^\star,\vec{\mu},g^\star}$ as $n\to\infty$.

Let $\Omega_0 = C(\llbracket 1,k\rrbracket\times[0,b])$, and let $\pi_n$ denote the joint law of $(\mathcal{Q}^{n,\mathrm{b}},\mathcal{Q}^{n,\mathrm{t}})$ under $\mathbb{P}_n$, and $\mu_n^\star$ the marginal law of $\mathcal{Q}^{n,\star}$. As $\mu_n^{\mathrm{b}}, \mu_n^{\mathrm{t}}$ are tight, so is $\pi_n$. Thus we can find a subsequence $\pi_{n_j}$ converging to some probability measure $\mathbb{P}$ on $\Omega := \Omega_0 \times \Omega_0$ with the product topology and corresponding $\sigma$-algebra $\mathcal{F} := \mathcal{C}_{\llbracket 1, k \rrbracket}\otimes \mathcal{C}_{\llbracket 1, k \rrbracket}$. Note the set $A = \{(\mathcal{L}^\mathrm{b},\mathcal{L}^\mathrm{t}) : \mathcal{L}_i^\mathrm{b}(t) \le \mathcal{L}_i^\mathrm{t}(t) \mbox{ for all } i\in\llbracket 1,k\rrbracket, t\in[0,b]\} \subset \Omega$ is closed, and $\pi_n(A) = 1$ for all $n$. By the Portmanteau theorem, $\mathbb{P}(A) \ge \limsup \pi_{n_j}(A) = 1$. The probability space $(\Omega,\mathcal{F},\mathbb{P})$ therefore satisfies the conditions in the statement of the lemma, once we set $(\mathcal{L}^{\mathrm{b}}, \mathcal{L}^{\mathrm{t}}) := (\omega_{\mathrm{b}}, \omega_{\mathrm{t}})$ for $(\omega_{\mathrm{b}}, \omega_{\mathrm{t}}) \in \Omega_0 \times \Omega_0 = \Omega$.\\

\noindent\textbf{Step 2.} In this step we prove the claims 1--4 from Step 1. Fix $s_1,s_2\in[0,B_n]$ with $s_1<s_2$. By the triangle inequality, for any $y\in[0,b]$ we have
\[
g^\star(y) + \sqrt{n/2}\cdot|y-s_2/n| \ge g^\star(y) + \sqrt{n/2}\cdot|y-s_1/n| - \sqrt{n/2}(s_2/n-s_1/n)
\]
Taking infima on both sides yields
\[
g_n^\star(s_2/n) \ge g_n^\star(s_1/n) - \sqrt{n/2}\cdot(s_2/n-s_1/n).
\]
A similar argument yields 
\[
g_n^\star(s_1/n) \ge g_n^\star(s_2/n) - \sqrt{n/2}\cdot(s_2/n-s_1/n),
\]
and in combination these imply claim 1.

To see that $g_n^\star \to g^\star$ uniformly, first note that by taking $y=t$ in the infimum, we have $g_n^\star(t) \le g^\star(t)$. On the other hand, let $w^\star(\delta) = \sup_{|x-y|<\delta} |g^\star(x)-g^\star(y)|$ be the modulus of continuity of $g^\star$. Since $g^\star$ is uniformly continuous, $w^\star(\delta)\to 0$ as $\delta\downarrow 0$. Given $\varepsilon>0$, choose $\delta>0$ so that $w^\star(\delta)<\varepsilon$. Let $\mathrm{osc}(g^\star) = \sup(g^\star) - \inf(g^\star) < \infty$, and fix $N_0$ large enough so that $\mathrm{osc}(g^\star)  \le \delta\sqrt{N_0/2}$. For $n \geq N_0$, we have
\begin{equation*}
    g^\star(y) + \sqrt{n/2}|t-y| \ge \begin{cases}
        g^\star(t) - \mathrm{osc}(g^\star) + \delta\sqrt{n/2} \geq g^\star(t), & |t-y|\ge \delta,\\
        g^\star(t) - w^\star(\delta) \geq g^\star(t) - \varepsilon, & |t-y|<\delta. \end{cases}
\end{equation*}
Thus, for $n \geq N_0$, we have $g^\star(t) \geq g_n^\star(t) \ge g^\star(t) - \varepsilon$ for all $t\in[0,b]$. This proves claim 2. 

From claims 1 and 2, and the fact that $|B_n/n - b| \leq 1/n$, we have 
$$|g_n^\star(B_n/n)-g^\star(b)| \le \sqrt{\frac{n}{2}}\cdot\frac{1}{n} + |g_n^\star(b)-g^\star(b)|  \to 0,$$
establishing claim 3.

Finally, from claim 1, we have for $s_1,s_2\in[0,B_n]$ with $s_1<s_2$, that
\begin{align*}
    G_n^\star(s_2)-G_n^\star(s_1) &= \sqrt{2n}\cdot(g_n^\star(s_2/n)-g_n^\star(s_1/n)) + (s_2-s_1)\\
    &\ge -\sqrt{2n}\cdot\sqrt{n/2}\cdot(s_2/n-s_1/n) + (s_2-s_1) = 0,
\end{align*}
proving claim 4.
\end{proof}

The next lemma states that the probability of a reverse Brownian motion touching but not crossing a fixed continuous curve is $0$. Its proof follows directly from that of \cite[Corollary 2.9]{CorHamA}; see also \cite[Lemma 2.3]{DY25}.
\begin{lemma}\label{Lem.NoTouchFiniteDrift} Fix $b > 0$, $\mu, y \in \mathbb{R}$, and let $\cev{B}$ be as in (\ref{eq:RevDefBrownianMotionDrift}). If $f,g \in C([0,b])$ satisfy $g(b) < y < f(b)$, then 
\begin{equation}\label{Eq.NoTouchFiniteDrift}
\begin{split}
&\mathbb{P}\left( \cev{B}(t) \geq g(t) \mbox{ for all $t \in [0,b]$ and } \min_{s \in [0,b]} |\cev{B}(s)- g(s)| = 0   \right) = 0, \\
&\mathbb{P}\left( \cev{B}(t) \leq f(t) \mbox{ for all $t \in [0,b]$ and } \min_{s \in [0,b]} |\cev{B}(s)- f(s)| = 0   \right) = 0.
\end{split}
\end{equation}
\end{lemma}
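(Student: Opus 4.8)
The plan is to reduce, via a change of measure and a time reversal, to the classical fact that a Brownian motion started strictly above a continuous curve almost surely does not touch it while remaining weakly above it; this is precisely \cite[Corollary 2.9]{CorHamA}, and is also the content of \cite[Lemma 2.3]{DY25}.

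First I would observe that the second identity in (\ref{Eq.NoTouchFiniteDrift}) follows from the first by reflection: applying the first identity to $-\cev{B}$, which is a reverse Brownian motion with drift $-\mu$ satisfying $-\cev{B}(b) = -y$, and to the continuous function $-f$, which satisfies $-f(b) < -y$, yields the second. So it suffices to prove the first identity. Next, by Girsanov's theorem the law of $\cev{B}$ on $C([0,b])$ is mutually absolutely continuous with respect to the law of the driftless reverse Brownian motion from $y$, so the probability in the first identity vanishes for drift $\mu$ if and only if it vanishes for drift $0$; hence I may assume $\mu = 0$. Writing $\cev{B}(t) = y + W_{b-t}$ and setting $X_s := \cev{B}(b-s) = y + W_s$ for $s \in [0,b]$, the process $X$ is a standard Brownian motion with $X_0 = y$, and with the continuous function $\tilde g(s) := g(b-s)$, which satisfies $\tilde g(0) = g(b) < y$, the event in the first identity becomes
\[
\Big\{ X_s \geq \tilde g(s) \text{ for all } s \in [0,b] \Big\} \cap \Big\{ \min_{s \in [0,b]} \big( X_s - \tilde g(s) \big) = 0 \Big\}.
\]

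It remains to show that this event is null, which is the statement referenced above. The mechanism, which I would simply cite, runs as follows. Set $\sigma = \inf\{ s \geq 0 : X_s = \tilde g(s) \}$; continuity of $X$ and $\tilde g$ together with $X_0 > \tilde g(0)$ force $\sigma > 0$, on the event we have $\sigma \leq b$, and since $X_b$ has a density, $\mathbb{P}(\sigma = b) = 0$. Conditioning on $\mathcal{F}_\sigma$ and invoking the strong Markov property, $u \mapsto X_{\sigma + u} - X_\sigma$ is a fresh standard Brownian motion from $0$, which on the event must remain weakly above the continuous function $u \mapsto \tilde g(\sigma + u) - \tilde g(\sigma)$ on $[0, b - \sigma]$; analyzing the Brownian path near $\sigma$ (to the right of $\sigma$, and, via the Hölder regularity of $X$, to its left) shows that this occurs with conditional probability zero. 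The only delicate point is controlling the local behavior of the continuous barrier at the random time $\sigma$, which is exactly what \cite[Corollary 2.9]{CorHamA} and \cite[Lemma 2.3]{DY25} handle; their arguments transfer verbatim, so there is no genuine obstacle here beyond invoking them.
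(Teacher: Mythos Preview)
Your proposal is correct and takes essentially the same approach as the paper: the paper simply states that the proof follows directly from \cite[Corollary 2.9]{CorHamA} (see also \cite[Lemma 2.3]{DY25}), and you have spelled out the natural reductions (reflection for the second identity, Girsanov to remove the drift, time reversal to pass from $\cev{B}$ to a forward Brownian motion) before invoking the same references.
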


The following lemma gives the analogous no-touching result for a pair of pinned reverse Brownian motions.
\begin{lemma}\label{Lem.NoTouchPinned} Fix $b > 0$, $\vec{y} \in \weyl_2$, and let $f,g \in C([0,b])$ be such that $f(b) > y_1 > y_2 > g(b)$. Then 
\begin{align}
&\mathbb{P}_{\mathrm{pin}}^{b, \vec{y}}\left( \mathcal{Q}_2(t) \geq g(t) \mbox{ for all $t \in [0,b]$ and } \min_{s \in [0,b]} |\mathcal{Q}_2(s)- g(s)| = 0   \right) = 0, \label{Eq.NoTouchPinned} \\
&\mathbb{P}_{\mathrm{pin}}^{b, \vec{y}}\left( \mathcal{Q}_1(t) \leq f(t) \mbox{ for all $t \in [0,b]$ and } \min_{s \in [0,b]} |\mathcal{Q}_1(s)- f(s)| = 0   \right) = 0. \label{Eq.NoTouchPinned2}
\end{align}
\end{lemma}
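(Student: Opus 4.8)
The plan is to reduce both claims to the single-curve no-touch result, Lemma~\ref{Lem.NoTouchFiniteDrift}, by conditioning on the Bessel-bridge component appearing in the construction of $\mathbb{P}_{\mathrm{pin}}^{b,\vec y}$ in Definition~\ref{Def.PinnedBM}. Since $\vec y \in \weyl_2$, we are in the case $k=1$, and, as noted in Remark~\ref{Rem.WellDPinAvoid}, the conditioning event $E_{\mathrm{avoid}}^{\mathrm{pin}}$ is then vacuous; hence $\mathbb{P}_{\mathrm{pin}}^{b,\vec y}$ is simply the joint law of $\mathcal{Q}_1 = 2^{-1/2}(U+V)$ and $\mathcal{Q}_2 = 2^{-1/2}(U-V)$, where $U$ is a reverse Brownian motion with zero drift from $U(b) = 2^{-1/2}(y_1+y_2)$ and $V$ is an independent $3$D Bessel bridge on $[0,b]$ to $V(b) = 2^{-1/2}(y_1-y_2)$. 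Both $U$ and $V$ have continuous paths almost surely, and by construction $V(b)$ is deterministic.

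First I would prove \eqref{Eq.NoTouchPinned}. Conditioning on $V = v$: by the independence of $U$ and $V$, the conditional law of $U$ is unchanged, and for the law $\nu$ of $V$, $\nu$-a.e.\ such $v$ belongs to $C([0,b])$ with $v(b) = 2^{-1/2}(y_1-y_2)$. On this event, $\mathcal{Q}_2(t) - g(t) = 2^{-1/2}\bigl(U(t) - \tilde g(t)\bigr)$, where $\tilde g := v + \sqrt 2\, g \in C([0,b])$, so the event inside the probability in \eqref{Eq.NoTouchPinned} coincides with $\{\,U(t) \geq \tilde g(t)$ for all $t\in[0,b]$ and $\min_{s\in[0,b]} |U(s) - \tilde g(s)| = 0\,\}$. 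A direct computation gives $\tilde g(b) = 2^{-1/2}(y_1-y_2) + \sqrt 2\, g(b) < 2^{-1/2}(y_1+y_2) = U(b)$, precisely because $g(b) < y_2$. Applying the first identity of \eqref{Eq.NoTouchFiniteDrift} (with $U$ in place of $\cev B$, with $\tilde g$ as the lower curve, and with any continuous upper curve, e.g.\ a large constant, so that the hypothesis $g(b)<y<f(b)$ holds) shows the conditional probability is $0$; integrating over $\nu$ via Fubini yields \eqref{Eq.NoTouchPinned}.

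The argument for \eqref{Eq.NoTouchPinned2} is identical with $\mathcal{Q}_1 = 2^{-1/2}(U+V)$ in place of $\mathcal{Q}_2$: conditionally on $V = v$ one has $\mathcal{Q}_1(t) - f(t) = 2^{-1/2}\bigl(U(t) - \tilde f(t)\bigr)$ with $\tilde f := \sqrt 2\, f - v \in C([0,b])$, and $\tilde f(b) = \sqrt 2\, f(b) - 2^{-1/2}(y_1-y_2) > 2^{-1/2}(y_1+y_2) = U(b)$ because $f(b) > y_1$. The second identity of \eqref{Eq.NoTouchFiniteDrift} then gives that the conditional probability vanishes, and integrating over $\nu$ finishes the proof.

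I do not anticipate a genuine obstacle. The only points requiring care are (i) checking that conditioning on $V$ is legitimate — which it is, by the independence of $U$ and $V$ built into Definition~\ref{Def.PinnedBM}, together with the a.s.\ path continuity of both processes — and (ii) the bookkeeping of the $2^{-1/2}$ and $\sqrt 2$ factors, so that the strict endpoint inequalities $g(b) < y_2$ and $f(b) > y_1$ translate exactly into the hypothesis $g(b) < y < f(b)$ at the point $b$ required to invoke Lemma~\ref{Lem.NoTouchFiniteDrift}.
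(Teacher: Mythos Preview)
Your proposal is correct and takes essentially the same approach as the paper: condition on the Bessel-bridge component $V$, rewrite the touching event for $\mathcal{Q}_2$ (resp.\ $\mathcal{Q}_1$) as a touching event for the reverse Brownian motion $U$ against the shifted curve $V+\sqrt{2}\,g$ (resp.\ $\sqrt{2}\,f-V$), verify the endpoint inequality from $g(b)<y_2$ (resp.\ $f(b)>y_1$), and apply Lemma~\ref{Lem.NoTouchFiniteDrift}. The paper only spells out the case \eqref{Eq.NoTouchPinned} and declares \eqref{Eq.NoTouchPinned2} analogous, whereas you give both, but the arguments are otherwise identical.
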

\begin{proof}
We prove this as a consequence of Lemma \ref{Lem.NoTouchFiniteDrift}. We only give the proof of \eqref{Eq.NoTouchPinned} --- the proof of \eqref{Eq.NoTouchPinned2} is analogous. 

Recall from Definition \ref{Def.PinnedBM} that $\mathcal{Q}_2 = 2^{-1/2}(\cev{B} - V)$, where $\cev{B}$ is a reverse Brownian motion on $[0,b]$ from $2^{-1/2}(y_1+y_2)$ and $V$ is an independent 3D Bessel bridge on $[0,b]$ to $2^{-1/2}(y_1-y_2)$. In particular, $V \in C([0,b])$ a.s. The event in \eqref{Eq.NoTouchPinned} can be written as 
$$E := \left\{\cev{B}(t) \ge h(t) \mbox{ for all }t\in[0,b] \mbox{ and } \min_{s\in[0,b]} |\cev{B}(s) - h(s)| =0 \right\},$$
where $h(t) := 2^{1/2}g(t) + V(t)$. Conditioned on $V$, $h \in C([0,b])$ a.s., and by assumption $h(b) = 2^{1/2}g(b) + V(b) < 2^{1/2}y_2 + 2^{-1/2}(y_1 - y_2) = 2^{-1/2}(y_1 + y_2)$. It follows from Lemma \ref{Lem.NoTouchFiniteDrift} (with $\mu=0$, $y = 2^{-1/2}(y_1+y_2)$, and $g=h$), that $\mathbb{P}(E \mid V) = 0$ a.s. Integrating over $V$ implies \eqref{Eq.NoTouchPinned}.
\end{proof}

The next lemma shows that a pair of pinned reverse Brownian motions can be confined to a tubular neighborhood of arbitrary fixed continuous functions with positive probability.
\begin{lemma}\label{Lem.StayInCorridor} Fix $b > 0$,  $\vec{y} \in \weyl_2$, and $f,g \in C([0,b])$, such that $f(b) \ge y_1 > y_2 \ge g(b)$, and $f(t) > g(t)$ for $t\in[0,b]$. For any $\varepsilon > 0$, there exists $\delta > 0$, depending on $\varepsilon, b, y_1, y_2 ,f,g$, such that
\begin{align}\label{Eq.StayInCorridor}
\mathbb{P}_{\mathrm{pin}}^{b,\vec{y}}\left(g(t) - \varepsilon \leq \mathcal{Q}_2(t) \leq \mathcal{Q}_1(t) \leq f(t) + \varepsilon \mbox{ for all }t \in [0,b] \right) &\geq \delta.
\end{align}
Now fix $M>0$ and assume in addition that $f(t) = y_1+m(b-t)$ and $g(t) = y_2+m(b-t)$ for some $m\in\mathbb{R}$ with $|m|\le M$. Then the lower bound $\delta$ in \eqref{Eq.StayInCorridor} may be taken to depend only on $\varepsilon,b,M$.
\end{lemma}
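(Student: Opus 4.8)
The plan is to reduce the statement about the pinned pair $(\mathcal{Q}_1,\mathcal{Q}_2)$ to two independent one-dimensional statements by passing to sum and difference coordinates, exactly as in Definition \ref{Def.PinnedBM}. Recall that under $\mathbb{P}_{\mathrm{pin}}^{b,\vec y}$ (with $k=1$, $g=-\infty$, so that the conditioning event is vacuous) we have $\mathcal{Q}_1 = 2^{-1/2}(U+V)$ and $\mathcal{Q}_2 = 2^{-1/2}(U-V)$, where $U$ is a reverse Brownian motion on $[0,b]$ with no drift from $U(b) = 2^{-1/2}(y_1+y_2)$, and $V$ is an independent $3$D Bessel bridge on $[0,b]$ to $V(b) = 2^{-1/2}(y_1-y_2)>0$. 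The desired event
$$
\bigl\{ g(t)-\varepsilon \le \mathcal{Q}_2(t) \le \mathcal{Q}_1(t) \le f(t)+\varepsilon \text{ for all } t\in[0,b]\bigr\}
$$
is implied by the intersection of
$$
E_U := \Bigl\{ \bigl| U(t) - 2^{-1/2}\tfrac{f(t)+g(t)}{2}\cdot\sqrt 2 \bigr| \le \tfrac{\varepsilon}{2} \text{ for all } t\in[0,b]\Bigr\}
$$
— more cleanly, $\{|U(t) - m_f(t)| \le \varepsilon/2\}$ where $m_f := 2^{-1/2}(f+g)$ is the midline — and
$$
E_V := \Bigl\{ 0 \le V(t) \le 2^{-1/2}(f(t)-g(t)) + \tfrac{\varepsilon}{2} \text{ for all } t\in[0,b]\Bigr\},
$$
since on $E_U\cap E_V$ one has $\mathcal{Q}_1 - \mathcal{Q}_2 = 2^{-1/2}\cdot 2^{1/2} V = V\ge 0$ wait — more carefully $\mathcal{Q}_1-\mathcal{Q}_2 = 2^{-1/2}(2V)=2^{1/2}V\ge 0$, and the bounds on $U$ and $V$ combine to give $\mathcal{Q}_2 \ge g-\varepsilon$ and $\mathcal{Q}_1\le f+\varepsilon$ after a short arithmetic check (using $f>g$, so $2^{-1/2}(f-g)>0$, and absorbing the $\varepsilon/2$'s). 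By independence of $U$ and $V$, it suffices to bound $\mathbb{P}(E_U)$ and $\mathbb{P}(E_V)$ below separately.

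For $E_U$: this is the probability that a Brownian motion (run in reverse from a fixed endpoint) stays in a tube of radius $\varepsilon/2$ around the continuous curve $m_f$. Since $|m_f(b) - U(b)| = |2^{-1/2}(f(b)+g(b)) - 2^{-1/2}(y_1+y_2)| \le 2^{-1/2}\bigl((f(b)-y_1) + (y_2-g(b))\bigr)$, which is $0$ under the hypothesis $f(b)\ge y_1>y_2\ge g(b)$ in the borderline case and otherwise strictly positive — in any case the starting point is within the open tube — this probability is strictly positive by standard small-ball estimates for Brownian motion, e.g. via the support theorem or a direct Cameron–Martin / Anderson-type argument. For $E_V$: the $3$D Bessel bridge $V$ is a.s. continuous, strictly positive on $(0,b)$, with $V(0)=0$ and $V(b)=2^{-1/2}(y_1-y_2)$ lying strictly below the right endpoint of the allowed band $2^{-1/2}(f(b)-g(b))+\varepsilon/2$; the upper-confinement probability is positive because one can dominate $V$ by a suitable process (or directly use the explicit finite-dimensional densities in \eqref{Eq.BesselDensity} together with a discretization-plus-interpolation argument), and the lower constraint $V\ge 0$ holds automatically. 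Hence each factor is a positive number depending on $\varepsilon,b,y_1,y_2,f,g$, and their product gives the claimed $\delta$.

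For the final sentence, suppose $f(t)=y_1+m(b-t)$ and $g(t)=y_2+m(b-t)$ with $|m|\le M$. Then the difference band has width $2^{-1/2}(f(t)-g(t)) = 2^{-1/2}(y_1-y_2)$, which is a constant, and $m_f(t) = 2^{-1/2}(y_1+y_2) + 2^{1/2}m(b-t)\cdot 2^{-1}$ — a line through the point $(b, U(b))$ with slope $-2^{-1/2}m\cdot\sqrt2$, i.e. a line ending exactly at the starting point of $U$. After the affine change of variables $\widetilde U(t) := U(t) - m_f(t)$, which turns $U$ into a reverse Brownian motion with drift $\mp$ something bounded by $\sqrt2\,M$ started from $\widetilde U(b)=0$, the event $E_U$ becomes $\{\sup_{t\in[0,b]}|\widetilde U(t)|\le \varepsilon/2\}$, and a standard estimate (e.g. Girsanov to remove the drift, picking up a factor bounded in terms of $M$ and $b$, followed by the reflection-principle small-ball bound for driftless Brownian motion) gives a lower bound depending only on $\varepsilon,b,M$. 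Similarly, after conditioning on $V(b)=2^{-1/2}(y_1-y_2)=: h>0$ and noting $2^{-1/2}(f(t)-g(t))+\varepsilon/2 = h+\varepsilon/2$, the event $E_V$ is $\{V(t)\le h+\varepsilon/2 \text{ for all }t\}$; a $3$D Bessel bridge to $h$ can be stochastically dominated, uniformly over the endpoint by rescaling, so that $\mathbb P(E_V)$ is bounded below by a quantity depending only on $\varepsilon, b$ (the endpoint value $h$ only enters through scaling and can be removed, since a Bessel bridge to $h$ on $[0,b]$ is $h/h'$ times a Bessel bridge to $h'$ after an appropriate time-change — or one simply observes monotonicity in the ceiling). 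Combining, $\delta$ depends only on $\varepsilon,b,M$.

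The step I expect to be the main obstacle is making the $3$D Bessel bridge confinement bound \emph{uniform} in the endpoint $V(b) = 2^{-1/2}(y_1-y_2)$ while keeping the dependence only on $\varepsilon,b,M$: unlike the Brownian sum-process, the Bessel bridge is not translation-invariant, so one cannot simply recenter. I would handle this either by the explicit density formula \eqref{Eq.BesselDensity} — which exhibits the requisite scaling in $y$ — or, more robustly, by the comparison $V_t \le \lVert \vec W_t\rVert_2$ conditioned appropriately, reducing to a small-ball/large-deviation estimate for a $3$D Brownian bridge whose endpoint-dependence is explicit and can be bounded using $|y_1-y_2|\le \mathrm{osc}$, together with the observation that when $f,g$ are the specified affine functions with slope bounded by $M$, the relevant geometry (band width $h$, tube radius $\varepsilon/2$, interval length $b$) is controlled purely by $\varepsilon,b,M$ once one also uses that $y_1-y_2$ can be taken bounded — or, if not bounded, that increasing $y_1-y_2$ only \emph{helps} the upper-confinement event since the ceiling $h+\varepsilon/2$ grows with it, so the infimum over admissible $\vec y$ is attained in a regime controlled by $\varepsilon,b,M$. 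I would organize the write-up so that this monotonicity-in-$h$ remark does most of the work, reducing the uniform bound to the single worst case.
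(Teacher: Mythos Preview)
Your overall approach --- the sum/difference decomposition into $U$ and $V$ and reduction to independent one-dimensional confinement estimates --- is exactly the paper's approach. However, there is a genuine gap in the first part.

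Your event $E_U = \{|U(t) - m_f(t)| \le \varepsilon/2 \text{ for all } t\}$ with $m_f = 2^{-1/2}(f+g)$ can be \emph{empty}. At $t=b$ the value $U(b)=2^{-1/2}(y_1+y_2)$ is deterministic, and $|U(b)-m_f(b)| = 2^{-1/2}\,\bigl|(f(b)-y_1)-(y_2-g(b))\bigr|$ can be arbitrarily large under the hypothesis $f(b)\ge y_1>y_2\ge g(b)$ (take $f(b)=y_1+10$, $g(b)=y_2$). Your claim that ``in any case the starting point is within the open tube'' is therefore false. The paper fixes this by first replacing $f,g$ with continuous $\tilde f,\tilde g$ satisfying $f\ge\tilde f>\tilde g\ge g$ and $\tilde f(b)=y_1$, $\tilde g(b)=y_2$, so that the midline passes exactly through $U(b)$; the confinement event for $(\tilde f,\tilde g)$ then implies the one for $(f,g)$. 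This is a small but necessary adjustment.

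For the second (uniform) claim this issue does not arise, since $f(b)=y_1$ and $g(b)=y_2$ automatically. Your Girsanov/compactness idea for the $U$-factor is fine and essentially equivalent to the paper's continuity-plus-compactness argument over $m\in[-M,M]$. For the $V$-factor, your ``monotonicity in the ceiling'' remark is not quite a proof: both the endpoint $h=2^{-1/2}(y_1-y_2)$ of the Bessel bridge \emph{and} the ceiling $h+\varepsilon/2$ move together, so it is not obvious that the confinement probability is monotone in $h$. The paper sidesteps this by citing \cite[Lemma 7.8]{DasSerio25}, which gives the required lower bound depending only on $\varepsilon,b$; you would need either to invoke that lemma or to supply an explicit argument (e.g.\ via the density \eqref{Eq.BesselDensity}).
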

\begin{proof}
By Definition \ref{Def.PinnedBM}, $\mathcal{Q}_1 = 2^{-1/2}(U+V)$ and $\mathcal{Q}_2=2^{-1/2}(U-V)$, where $U$ is a reverse Brownian motion on $[0,b]$ from $U(b) = 2^{-1/2}(y_1+y_2)$ and $V$ is an independent 3D Bessel bridge on $[0,b]$ to $V(b) = 2^{-1/2}(y_1-y_2)$. 

From our assumptions that $f(b) \ge y_1 > y_2 \ge g(b)$, and $f(t) > g(t)$ for $t\in[0,b]$, we can find $\tilde{f}, \tilde{g} \in C([0,b])$, such that $\tilde{f}(b) = y_1$, $\tilde{g}(b) = y_2$, and $f(t) \geq \tilde{f}(t) > \tilde{g}(t) \geq g(t)$ for $t \in [0,b]$. Combining the latter inequalities with the triangle inequality and the independence of $U$ and $V$, we see that the probability in \eqref{Eq.StayInCorridor} is bounded below by
\begin{equation}\label{Eq.UVcorr}
\begin{split}
&\mathbb{P}\left(\left|U(t) - 2^{-1/2}[\tilde{f}(t) + \tilde{g}(t)] \right| \le 2^{-1/2}\varepsilon \mbox{ for all } t\in[0,b]\right) \\
& \times \mathbb{P}\left(V(t) \le 2^{-1/2}[\tilde{f}(t)-\tilde{g}(t)] + 2^{-1/2}\varepsilon \mbox{ for all } t\in[0,b]\right).
\end{split}
\end{equation}
By Lemmas 7.8 and 7.9 in \cite{DasSerio25} (the notation $\mathbf{P}_{\infty,\infty}$ therein refers to the law of $V$), the two probabilities in (\ref{Eq.UVcorr}) are both strictly positive, implying \eqref{Eq.StayInCorridor}.\\

Now assume $f(t) = \tilde{f}(t) = y_1 + m(b-t)$, $g(t) = \tilde{g}(t) = y_2+m(b-t)$ with $|m|\le M$. The second factor in \eqref{Eq.UVcorr} is then equal to 
\[ \mathbb{P}\left(V(t) \le 2^{-1/2}(y_1-y_2) + 2^{-1/2}\varepsilon \mbox{ for all }t\in[0,b]\right), \]
which is bounded below by a positive constant $\delta_1$ depending only on $\varepsilon,b$ by \cite[Lemma 7.8]{DasSerio25}. The first factor in \eqref{Eq.UVcorr} is equal to
\begin{equation}\label{Eq.Ue}
\mathbb{P}\left(\left|U(t)-2^{-1/2}(y_1+y_2)-2^{1/2}m(b-t) \right| \le 2^{-1/2}\varepsilon \mbox{ for all } t\in[0,b] \right).
\end{equation}
We observe that \eqref{Eq.Ue} is a continuous function of $m$. Indeed, $U(t)-2^{-1/2}(y_1+y_2)-2^{1/2}m(b-t)$ is a reverse Brownian motion on $[0,b]$ from $0$ with drift $-2^{1/2}m$; denote its law by $\mathbb{P}_{-2^{1/2}m}$. For $m_0$ fixed, as $m\to m_0$, $\mathbb{P}_{-2^{1/2}m}\to\mathbb{P}_{-2^{1/2}m_0}$ weakly, and $\{x\in C[0,b] : \sup_{t\in[0,b]}|x(t)|\le 2^{-1/2}\varepsilon\}$ is a continuity set for $\mathbb{P}_{-2^{1/2}m_0}$, so the Portmanteau theorem implies the desired continuity. Therefore \eqref{Eq.Ue} attains a minimum over $m\in[-M,M]$, i.e., is bounded below by some $\delta_2 = \delta_2(\varepsilon,b,M)>0$. Taking $\delta = \delta(\varepsilon,b,M) := \delta_1\delta_2$ as the lower bound in \eqref{Eq.UVcorr} verifies the second claim in the lemma.
\end{proof}

We end this section with a lemma, which shows that a sequence of $(f,g)$-avoiding Brownian bridge ensembles converges weakly if their boundary data converge. 
\begin{lemma}\label{Lem.BridgeEnsemblesCty} Fix $k \in \mathbb{N}$, $\vec{x}, \vec{y} \in \weylc_k$, $a,b \in \mathbb{R}$ with $a < b$, and two continuous functions $f: [a,b] \rightarrow (-\infty, \infty]$ and $g: [a,b] \rightarrow [-\infty,\infty)$. In addition, we assume that $f(t) \geq g(t)$ for $t \in[a,b]$, $f(a) \geq x_1$, $f(b) \geq y_1$, $g(a) \leq x_k$, $g(b) \leq y_k$.

Suppose that $\vec{x}\,^n, \vec{y}\,^n \in \weyl_k$, $f^n: [a,b] \rightarrow (-\infty, \infty]$ and $g^n: [a,b] \rightarrow [-\infty,\infty)$ are continuous functions, such that:
\begin{enumerate}
\item $\lim_{n \rightarrow \infty} \vec{x}\,^n = \vec{x}$, $\lim_{n \rightarrow \infty} \vec{y}\,^n = \vec{y}$;
\item $\lim_{n \rightarrow \infty} f^n = f$, $ \lim_{n \rightarrow \infty} g^n = g$ in the sense that for some $\epsilon_n \rightarrow 0+$, and all $t \in [a,b]$
$$f^n(t) - \epsilon_n \leq f(t) \leq f^n(t) + \epsilon_n, \mbox{ and }g^n(t) - \epsilon_n \leq g(t) \leq g^n(t) + \epsilon_n;$$
\item $f^n(t) > g^n(t)$ for $t \in[a,b]$, $f^n(a) > x^n_1$, $f^n(b) > y^n_1$, $g^n(a) < x^n_k$, $g^n(b) < y^n_k$.
\end{enumerate} 
Then, as $n \rightarrow \infty$, the measures $\mathbb{P}_{\mathrm{avoid};\mathrm{Br}}^{a,b,\vec{x}\,^n,\vec{y}\,^n, f^n,g^n}$ converge weakly to a probability measure on $C(\llbracket 1, k \rrbracket \times [a,b])$, which we denote by $\mathbb{P}_{\mathrm{avoid};\mathrm{Br}}^{a,b,\vec{x},\vec{y}, f,g}$.
\end{lemma}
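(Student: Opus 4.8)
The plan is to realize the avoiding Brownian bridge ensembles as a deterministic functional of underlying free Brownian bridges and then invoke the continuous mapping theorem. First I would fix, on a single probability space, $k$ independent standard Brownian bridges $\{\beta_i : [a,b] \to \mathbb{R}\}_{i=1}^k$ with $\beta_i(a) = \beta_i(b) = 0$. For boundary data $(\vec{x}', \vec{y}')$ (either the $n$-th data or the limiting data), the free bridge ensemble $\mathbb{P}_{\operatorname{free};\mathrm{Br}}^{a,b,\vec{x}',\vec{y}'}$ is obtained by the affine shift $B_i(t) = \beta_i(t) + \frac{b-t}{b-a} x_i' + \frac{t-a}{b-a} y_i'$, which is continuous in $(\vec{x}', \vec{y}')$ uniformly in $\omega$. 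So as $n \to \infty$, the free ensembles with data $(\vec{x}^{\,n}, \vec{y}^{\,n})$ converge almost surely in $C(\llbracket 1,k\rrbracket \times [a,b])$ to the free ensemble with data $(\vec{x}, \vec{y})$, and in particular weakly.

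The second step is to pass from the free ensembles to the conditioned (avoiding) ones. Let $E^n$ be the event $\{f^n(t) > B_1(t) > \cdots > B_k(t) > g^n(t) \text{ for all } t \in [a,b]\}$ computed with the $n$-th free ensemble, and $E$ the analogous event with the limiting data $f, g$ and the limiting free ensemble. By Definition \ref{Def.fgAvoidingBE}, $\mathbb{P}_{\mathrm{avoid};\mathrm{Br}}^{a,b,\vec{x}^{\,n},\vec{y}^{\,n}, f^n,g^n}$ is the law of the free ensemble conditioned on $E^n$. For any bounded continuous $\Phi: C(\llbracket 1,k\rrbracket \times [a,b]) \to \mathbb{R}$ we have
\[
\mathbb{E}_{\mathrm{avoid};\mathrm{Br}}^{a,b,\vec{x}^{\,n},\vec{y}^{\,n}, f^n,g^n}[\Phi] = \frac{\mathbb{E}\left[\Phi(B^n) \mathbf{1}_{E^n}\right]}{\mathbb{P}(E^n)},
\]
where $B^n$ is the $n$-th free ensemble built from the fixed bridges $\{\beta_i\}$. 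The numerator and denominator must each converge to their limiting analogues. Using that $B^n \to B$ uniformly a.s., that $f^n \to f$ and $g^n \to g$ uniformly, and that $\mathbf{1}_{E^n}(\omega) \to \mathbf{1}_E(\omega)$ for every $\omega$ outside the event that $B$ touches but does not cross one of the barriers $f, g$ or that two of its curves touch — an event of probability zero by the standard no-touching lemma for Brownian bridges (e.g.\ the argument behind \cite[Corollary 2.9]{CorHamA}, together with the fact that distinct Brownian bridges a.s.\ do not touch unless they cross) — dominated convergence gives $\mathbb{E}[\Phi(B^n)\mathbf{1}_{E^n}] \to \mathbb{E}[\Phi(B)\mathbf{1}_E]$ and $\mathbb{P}(E^n) \to \mathbb{P}(E)$. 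The only point requiring care in the denominator is that the limit $\mathbb{P}(E)$ is strictly positive; this is where the hypotheses $f \ge g$, $f(a) \ge x_1$, etc.\ (with weak inequalities) enter, and it follows from a corridor-confinement estimate of the type in Lemma \ref{Lem.StayInCorridor}: one squeezes the free ensemble into a thin tube around strictly ordered continuous curves lying strictly between $g$ and $f$, which has positive probability. Thus $\mathbb{E}_{\mathrm{avoid};\mathrm{Br}}^{a,b,\vec{x}^{\,n},\vec{y}^{\,n}, f^n,g^n}[\Phi] \to \mathbb{E}[\Phi(B)\mathbf{1}_E]/\mathbb{P}(E)$, and calling the latter $\mathbb{E}_{\mathrm{avoid};\mathrm{Br}}^{a,b,\vec{x},\vec{y}, f,g}[\Phi]$ defines the claimed limiting measure; that it is indeed a probability measure and agrees with the conditional law when the limiting data happen to be strictly ordered is immediate from the construction.

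I expect the main obstacle to be the boundary-degeneracy issue: unlike the hypotheses on the $n$-th data, the limiting data only satisfy weak inequalities ($\vec{x}, \vec{y} \in \weylc_k$, $f(a) \ge x_1$, $g(b) \le y_k$, $f \ge g$), so a priori the limiting avoiding law is defined by conditioning on an event whose positivity is not automatic, and one must also confirm that $\mathbf{1}_{E^n} \to \mathbf{1}_E$ a.s.\ despite the boundary points potentially being pinned together. Handling this cleanly requires the no-touching lemma applied on the open interval $(a,b)$ together with the observation that coincidences can only occur on a null set even when endpoints coincide, plus a careful positive-probability lower bound for $\mathbb{P}(E)$ via an explicit corridor argument. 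Everything else — the affine construction of free bridges, the continuity in the data, and the dominated convergence — is routine.
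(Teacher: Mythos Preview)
Your approach has a genuine gap precisely at the point you flag as the main obstacle. When the limiting boundary data is degenerate --- say $x_i = x_{i+1}$ for some $i$ --- the event $E$ has probability \emph{zero}, not positive probability. Indeed, with $B_i(a) = B_{i+1}(a)$, the difference $B_i - B_{i+1}$ is (a scalar multiple of) a Brownian bridge started from $0$; by Blumenthal's zero--one law (or a direct law-of-the-iterated-logarithm argument) it takes negative values in every right neighborhood of $a$ almost surely, so $\{B_i(t) > B_{i+1}(t)\text{ for all }t\in(a,b)\}$ fails a.s. No corridor argument can repair this: you cannot confine $B_i$ and $B_{i+1}$ to disjoint tubes when they share an endpoint. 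The same obstruction can arise when $f(a)=x_1$ or $g(a)=x_k$.

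Consequently your ratio $\mathbb{E}[\Phi(B^n)\mathbf{1}_{E^n}]/\mathbb{P}(E^n)$ is of the form $0/0$ in the limit, and dominated convergence tells you nothing about its behavior. The paper avoids this entirely: it replaces the given data $(\vec{x}^{\,n},\vec{y}^{\,n},f^n,g^n)$ by an auxiliary strictly-ordered sequence $(\vec{u}^{\,n},\vec{v}^{\,n},f,\tilde{g}^{\,n})$ with $u_j^n = x_j - j\delta_n$, $\tilde{g}^{\,n} = g - (k+1)\delta_n$, etc., invokes \cite[Lemma 21.1]{AH23} to obtain weak convergence of the auxiliary avoiding laws directly (that lemma already handles degenerate limits), and then uses the monotone coupling \cite[Lemma 4.8(1)]{AH23} to show the auxiliary and original ensembles differ by at most $(k+1)\delta_n$ uniformly. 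The convergence-together theorem \cite[Theorem 3.1]{Billing} then finishes. Your direct-conditioning argument is perfectly fine when $\vec{x},\vec{y}\in\weyl_k$ and all the endpoint inequalities are strict, but the degenerate case genuinely requires either the coupling machinery or a careful $0/0$ analysis, neither of which your plan supplies.
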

\begin{proof} As we explain below, the statement is a quick corollary of \cite[Lemmas 4.8 and 21.1]{AH23}.

From conditions (1) and (2), we can find $\delta_n \downarrow 0$, such that for all $n \geq 1$
$$u_j^n:= x_j - j \delta_n \leq x_j^n \leq x_j + \delta_n, \hspace{2mm} v_j^n:= y_j - j \delta_n \leq y_j^n \leq y_j + \delta_n \mbox{ for } j \in \llbracket 1, k \rrbracket,$$
and for all $t \in [a,b]$,
$$\tilde{g}^n(t):= g(t) - (k+1) \delta_n \leq g^n(t) \leq g(t) + \delta_n, \hspace{2mm} f(t) - \delta_n \leq f^n(t) \leq f(t) + \delta_n.$$

From \cite[Lemma 21.1]{AH23}, we know that $\mathbb{P}_{\mathrm{avoid};\mathrm{Br}}^{a,b,\vec{u}^n,\vec{v}^n, f,\tilde{g}^n}$ converges weakly as $n \rightarrow \infty$. On the other hand, by \cite[Lemma 4.8(1)]{AH23}, we can couple $\{B^n_i\}_{i = 1}^k$ with law $\mathbb{P}_{\mathrm{avoid};\mathrm{Br}}^{a,b,\vec{x}\,^n,\vec{y}\,^n, f^n,g^n}$ and $\{\tilde{B}^n_i\}_{i = 1}^k$ with law $\mathbb{P}_{\mathrm{avoid};\mathrm{Br}}^{a,b,\vec{u}^n,\vec{v}^n, f,\tilde{g}^n}$ on the same probability space so that almost surely
$$|B^n_i(t) - \tilde{B}_i^n(t)| \leq (k+1) \delta_n \mbox{ for all } (i,t) \in \llbracket 1, k \rrbracket \times [a,b].$$
By the convergence together theorem, see \cite[Theorem 3.1]{Billing}, the statement of the lemma follows.
\end{proof}

%
%
\subsection{Three key lemmas}\label{Section4.3} In this section we establish the three key lemmas we discussed in the beginning of Section \ref{Section4}.

Our first result shows that a line ensemble of avoiding reverse Brownian motions with alternating drifts converges to a pinned reverse Brownian line ensemble as the drift parameter tends to infinity.
\begin{lemma}\label{Lem.BrownianEnsembleConv} Fix $k \in \mathbb{N}$, $b > 0$, $\vec{y} \in \weyl_{2k}$ and a continuous function $g: [0,b]\rightarrow [-\infty, \infty)$ with $y_{2k} > g(b)$. Suppose that we are given sequences $\varpi_n \in \mathbb{R}$, $\vec{y}\,^{n} \in \weyl_{2k}$, and continuous functions $g_n: [0,b]\rightarrow [-\infty, \infty)$, such that 
$$\varpi_n \rightarrow \infty, \hspace{2mm} \vec{y}\,^{n} \rightarrow \vec{y}, \hspace{2mm} g_n \rightarrow g \mbox{ as $n \rightarrow \infty$ and } y^{n}_{2k} > g_n(b) \mbox{ for all $n \geq 1$}.$$
The convergence $g_n \rightarrow g$ is uniform on $[0,b]$, and when $g = -\infty$ this means that $g_n = -\infty$ for all large $n$. Finally, suppose that $\mathcal{L}^n$ has law $\pabm^{b, \vec{y}\,^n, \vec{\mu}\,^n, g_n}$ with $\mu_i^n = (-1)^{i} \varpi_n$ for $i \in \llbracket 1, 2k \rrbracket$ and $\mathcal{L}$ has law $\mathbb{P}_{\mathrm{pin}}^{b,\vec{y},g}$. Then $\mathcal{L}^n \Rightarrow \mathcal{L}$ as $n \rightarrow \infty$. 
\end{lemma}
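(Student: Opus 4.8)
### Proof plan for Lemma \ref{Lem.BrownianEnsembleConv}

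The plan is to follow the strategy of Das and the second author in \cite{DasSerio25}, transferring everything to the auxiliary coordinates given by sums and differences of consecutive pairs of curves. Write $\mathcal{L}^n = (\mathcal{L}^n_1, \dots, \mathcal{L}^n_{2k})$ for the avoiding ensemble and, for $1 \le i \le k$, set $U^n_i = 2^{-1/2}(\mathcal{L}^n_{2i-1} + \mathcal{L}^n_{2i})$ and $V^n_i = 2^{-1/2}(\mathcal{L}^n_{2i-1} - \mathcal{L}^n_{2i})$. Under the \emph{free} (unconditioned) measure $\pfbm^{b, \vec{y}^n, \vec{\mu}^n}$, the key algebraic fact is that $\mathcal{L}^n_{2i-1}$ has drift $-\varpi_n$ and $\mathcal{L}^n_{2i}$ has drift $+\varpi_n$, so $U^n_i$ is a driftless reverse Brownian motion from $2^{-1/2}(y^n_{2i-1} + y^n_{2i})$, while $V^n_i$ is a reverse Brownian motion with drift $-2^{1/2}\varpi_n$ (run backwards) from $2^{-1/2}(y^n_{2i-1} - y^n_{2i}) > 0$; moreover $U^n_i$ and $V^n_i$ are independent, and the $k$ pairs are mutually independent. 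The first main step is to show that, as $\varpi_n \to \infty$, the process $V^n_i$ — which is a high-drift reverse Brownian motion constrained to stay positive by the internal avoidance $\mathcal{L}^n_{2i-1} > \mathcal{L}^n_{2i}$ — converges to the 3D Bessel bridge to $2^{-1/2}(y_{2i-1} - y_{2i})$. This is a one-dimensional statement and is precisely the content of the scaling result underlying \cite[Lemmas 7.8, 7.9]{DasSerio25}: conditioning a Brownian motion with drift $\mu \to \infty$ to stay positive, equivalently conditioning a Brownian bridge with steep drift to stay positive, converges to a 3D Bessel bridge. I would cite that result (or reprove it via the standard $h$-transform / Doob transform identity for Bessel bridges, comparing transition densities as in \eqref{Eq.BesselDensity}), noting that the boundary values $2^{-1/2}(y^n_{2i-1} - y^n_{2i}) \to 2^{-1/2}(y_{2i-1} - y_{2i})$ and invoking continuity of the Bessel-bridge law in its endpoint.

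The second step handles the remaining conditioning. The event $E_{\operatorname{avoid}}$ for $\mathcal{L}^n$ decomposes into the ``internal'' constraints $\mathcal{L}^n_{2i-1}(r) > \mathcal{L}^n_{2i}(r)$ (i.e. $V^n_i(r) > 0$) for each $i$, and the ``external'' constraints $\mathcal{L}^n_{2i}(r) > \mathcal{L}^n_{2i+1}(r)$ for $1 \le i \le k$ (with $\mathcal{L}^n_{2k+1} = g_n$), which couple consecutive pairs and involve both $U^n$ and $V^n$. In the target law $\mathbb{P}^{b,\vec y,g}_{\mathrm{pin}}$ of Definition \ref{Def.PinnedBM}, the internal constraint has been ``absorbed'' into the definition of the $V_i$ as Bessel bridges, and only the external constraint $E^{\mathrm{pin}}_{\mathrm{avoid}}$ remains. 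So the plan is: (a) first pass to the limit in the \emph{free} ensembles with only the internal constraints imposed, using Step 1 pairwise and independence across $i$, to get that the law of $(\mathcal{L}^n_1, \dots, \mathcal{L}^n_{2k})$ conditioned only on $\cap_i \{V^n_i > 0 \text{ on } (0,b)\}$ converges to the law of $(B_1, \dots, B_{2k})$ from Definition \ref{Def.PinnedBM} before conditioning on $E^{\mathrm{pin}}_{\mathrm{avoid}}$; (b) then add the external conditioning on both sides. For (b) I would use that $E^{\mathrm{pin}}_{\mathrm{avoid}}$ is an open event in $C(\llbracket 1, 2k\rrbracket \times [0,b])$ with positive probability under the limit law (Remark \ref{Rem.WellDPinAvoid}), and that its boundary has limit-measure zero — this last fact follows from the no-touching Lemma \ref{Lem.NoTouchPinned} applied successively to the pairs $(\mathcal{L}_{2i}, \mathcal{L}_{2i+1})$, together with the Portmanteau theorem, exactly in the style used to condition weakly convergent sequences on positive-probability open events in \cite{CorHamA}. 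Combining with the convergence of the total conditioning probabilities $\pfbm^{b,\vec y^n,\vec\mu^n}(E_{\operatorname{avoid}} \mid \text{internal}) \to \mathbb{P}(E^{\mathrm{pin}}_{\mathrm{avoid}})$ yields $\mathcal{L}^n \Rightarrow \mathcal{L}$.

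I expect the main obstacle to be Step 1 — the convergence of the high-drift constrained one-dimensional process to the 3D Bessel bridge — handled carefully and with uniform (in $n$) control, since one needs not just finite-dimensional convergence but tightness in $C([0,b])$. Tightness should come from the fact that the difference process $V^n_i$, before conditioning, is a Brownian motion plus a deterministic (linear-in-drift) trend, and after conditioning on positivity its modulus of continuity is dominated using a Cameron–Martin / absolute-continuity argument against the free process on $[\varepsilon, b]$ plus a separate near-origin estimate; alternatively one invokes the tightness already proven in \cite{DasSerio25}. A secondary subtlety is bookkeeping the interaction between Steps (a) and (b): one must be sure that conditioning on the internal events first (which is legitimate since each has probability one in the limit after the Bessel-bridge reinterpretation, but probability bounded below in $n$) does not distort the external conditioning, which is why it is cleanest to factor $E_{\operatorname{avoid}} = E_{\mathrm{int}}^n \cap E_{\mathrm{ext}}^n$ and push the limit through in that order. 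Everything else — continuity of Bessel-bridge and Brownian laws in their boundary data, independence across pairs, and the Portmanteau/convergence-together bookkeeping — is routine.
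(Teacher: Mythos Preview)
Your plan matches the paper's proof almost exactly: the sum/difference decomposition $U^n_i,V^n_i$, the reduction to convergence of $V^n_i$ to a 3D Bessel bridge, and the handling of the external constraints $E^{\mathrm{pin}}_{\mathrm{avoid}}$ via Portmanteau plus the no-touching Lemma \ref{Lem.NoTouchPinned} are precisely what the paper does (its Steps 1 and 4).

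The one place where your concrete suggestion would fail is tightness for $V^n_i$. A Cameron--Martin/absolute-continuity argument against the driftless process cannot work uniformly in $n$: the drift $-2^{1/2}\varpi_n$ diverges, so the Radon--Nikodym derivative carries a factor $e^{-b\varpi_n^2}$ and is not bounded, and restricting to $[\varepsilon,b]$ does not help since the drift acts on the whole interval. The paper's trick (its Steps 2--3) sidesteps this entirely: first show $V^n(0)\Rightarrow 0$ by an explicit density calculation, then observe that \emph{conditional on $V^n(0)$} the process $V^n$ on $[0,b]$ is a Brownian bridge from $V^n(0)$ to $z^n=2^{-1/2}(y_1^n-y_2^n)$ conditioned to stay positive --- the drift has disappeared. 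Tightness then follows from continuity of this conditioned-bridge law in its endpoints (Lemma \ref{Lem.BridgeEnsemblesCty}) together with $V^n(0)\to 0$ and a reverse-Fatou argument. Finite-dimensional convergence of $V^n$ to the Bessel bridge is done separately by taking the pointwise limit of the explicit transition densities (the paper cites \cite{IO19}) and Scheff\'e's lemma.
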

\begin{proof} 

We will split the proof into four steps. In Steps 1 through 3, we establish the statement for the special case of two curves with no floor, and in Step 4 we extend this to the general setting.\\

\noindent\textbf{Step 1.} In this step we prove the lemma when $k=1$ and $g_n=-\infty$. Define the processes
\[
U^n = 2^{-1/2}(\mathcal{L}_1^n + \mathcal{L}_2^n), \qquad V^n = 2^{-1/2}(\mathcal{L}_1^n - \mathcal{L}_2^n).
\]
By Definition \ref{Def.AvoidBLE}, $(\mathcal{L}_1^n,\mathcal{L}_2^n)$ has the law of $\mathbb{P}_{\mathrm{free}}^{b,\vec{y},\vec{\mu}}$ (independent reverse Brownian motions), conditioned on the event $E_{\mathrm{avoid}} = \{V^n(r) > 0 \mbox{ for all }r\in[0,b]\}$. As the sum and difference of two independent standard Brownian motions are themselves independent Brownian motions with diffusion coefficient 2, we see that when $(\mathcal{L}_1^n,\mathcal{L}_2^n)$ has law $\mathbb{P}_{\mathrm{free}}^{b,\vec{y},\vec{\mu}}$, the processes $U^n$ and $V^n$ are distributed as independent reverse Brownian motions on $[0,b]$ from $w^n := 2^{-1/2}(y_1^n+y_2^n)$ and $z^n := 2^{-1/2}(y_1^n-y_2^n)$, respectively, and with drifts $0$ and $-2^{1/2}\varpi_n$, respectively. As the event $E_{\mathrm{avoid}}$ depends only on $V^n$, it is independent of $U^n$, and so we conclude that when $(\mathcal{L}_1^n,\mathcal{L}_2^n)$ has law $\mathbb{P}_{\mathrm{avoid}}^{b,\vec{y},\vec{\mu}}$, $U^n$ is still a reverse Brownian motion on $[0,b]$ from $w^n$ with drift $0$, while $V^n$ is an independent reverse Brownian motion from $z^n$ with drift $-2^{1/2}\varpi_n$, conditioned on $E_{\mathrm{avoid}}$.

As $n\to\infty$, we see that $U^n$ converges weakly to a reverse Brownian motion $U$ on $[0,b]$ from $w := 2^{-1/2}(y_1+y_2)$ with drift $0$. We claim that $V^n$ converges weakly to a 3D Bessel bridge $V$ on $[0,b]$ to $z := 2^{-1/2}(y_1-y_2)$, as in Definition \ref{Def.Bessel}. We will prove this claim in Steps 2 and 3. Assuming it, we see that $(\mathcal{L}_1^n, \mathcal{L}_2^n) \to \left(2^{-1/2}[U+V],2^{-1/2}[U-V]\right)$ as $n\to\infty$. By Definition \ref{Def.PinnedBM}, this is precisely the law $\mathbb{P}_{\mathrm{pin}}^{b,\vec{y}}$.\\

\noindent\textbf{Step 2.} In this step, we prove that the process $V^n$ from the previous step converges in the finite-dimensional sense to a 3D Bessel bridge $\{V_t\}_{t\in[0,b]}$ to $z$ as $n\to\infty$. Let us first show that $V^n(0)\Rightarrow 0$.

From \cite[Appendix 1.17]{borodin2015handbook}, we have that $V^n(0)$ has density 
$$f_n(y) \propto {\bf 1}\{y > 0\} \cdot \frac{1}{\sqrt{2\pi b}} \cdot e^{2^{1/2}\varpi_n(z^n-y) - b \varpi_n^2 } \cdot \left(\exp\left(- \frac{(z^n-y)^2}{2b}\right) - \exp\left(- \frac{(z^n+y)^2}{2b}\right) \right) .$$
Integrating the above, we get for $y > 0$ 
$$\mathbb{P}(V^n(0) > y) = \frac{\left[1 - \Phi\left(\frac{y - z^n + 2^{1/2}b \varpi_n}{\sqrt{b}} \right) \right] - e^{2^{3/2}\varpi_n z^n} \cdot \left[ 1- \Phi\left(\frac{y + z^n + 2^{1/2}b \varpi_n}{\sqrt{b}} \right) \right]}{\left[1 - \Phi\left(\frac{ - z^n + 2^{1/2}b \varpi_n}{\sqrt{b}} \right) \right] - e^{2^{3/2}\varpi_n z^n} \cdot \left[ 1- \Phi\left(\frac{ z^n + 2^{1/2}b \varpi_n}{\sqrt{b}} \right) \right]},$$
where $\Phi(x) = \int_{-\infty}^x \varphi(y)dy$ with $\varphi(y) = \frac{e^{-y^2/2}}{\sqrt{2\pi}}$. From \cite[7.1.23]{Stegun64} we have as $x \rightarrow \infty$ that $1- \Phi(x) = x^{-1} \varphi(x) + O(x^{-3} \varphi(x))$, and so if we set $\Delta_n = z^n + 2^{1/2}b \varpi_n$, we get as $n \rightarrow \infty$
$$\mathbb{P}(V^n(0) > y)  \sim \frac{\frac{1}{\sqrt{2b}\varpi_n} \cdot e^{2^{3/2}\varpi_n z^n-(\Delta_n+y)^2/2b} \cdot\left(e^{2z^ny/b} - 1 \right)}{\frac{z^n}{b^{3/2} \varpi_n^2}\cdot e^{2^{3/2}\varpi_n z^n - \Delta_n^2/2b } } \rightarrow 0.$$
As $\mathbb{P}(V^n(0) > y) \rightarrow 0$ for all $y > 0$, and $V^n(0) \geq 0$, we conclude $V^n(0) \Rightarrow 0$. \\

Now let us establish the finite-dimensional convergence of $V^n$ to the 3D Bessel bridge $V$ on the whole interval $[0,b]$. The transition densities of $V^n$ are written down explicitly in \cite[Theorem 3.1]{IO19}. For $0<t_1<\cdots<t_k<t_{k+1}=b$, $y_1,\dots,y_k>0$, and $y_{k+1}=z$, the joint density $f^n_{(t_1,\dots,t_k)}$ of $(V^n(t_1),\dots,V^n(t_k))$ is given by
\begin{align}
f^n_{(t_1,\dots,t_k)}(y_1,\dots,y_k) &= \prod_{i=1}^k \left[ \hk_{t_{i+1}-t_{i}}(y_i-y_{i+1}) - \hk_{t_{i+1}-t_{i}}(y_i+ y_{i+1}) \right] \cdot \frac{I_n(t_1,y_1)}{I_n(b,z)},\label{Eq.VJointDensity} \\
\mbox{where} \quad I_n(t,y) &= \int_0^\infty e^{-2^{1/2}\varpi_n u}(\hk_{t}(u-y)-\hk_{t}(u+y))\,du,\nonumber
\end{align}
and we have set $\hk_t(z) = (2\pi t)^{-1/2}e^{-z^2/2t}$. We change variables in the integrals to $u=\tfrac{v}{2^{1/2}\varpi_n}$ and introduce a constant factor of $2\varpi_n^2$ in both numerator and denominator to rewrite the ratio as
\[
\frac{I_n(t_1,y_1)}{I_n(b,z)} = \frac{J_n(t_1,y_1)}{J_n(b,z)}, \quad \mbox{where } J_n(t,y) = \int_0^\infty 2^{1/2}\varpi_n e^{-v}\left(\hk_{t}\left(\tfrac{v}{2^{1/2}\varpi_n}-y\right) - \hk_t\left(\tfrac{v}{2^{1/2}\varpi_n}+y\right) \right)\,dv.
\]

Set $\alpha = \frac{1}{2^{1/2}\varpi_n}$, so that $\alpha\to 0$ as $n\to\infty$. Then by L'H\^opital's rule, the pointwise limit of the integrand in $J_n(t,y)$ as $n\to\infty$ is equal to the limit as $\alpha\to 0$ of
\begin{align*}
    e^{-v}\frac{\hk_t(\alpha v - y) - \hk_t(\alpha v+y)}{\alpha} \to \frac{2y}{t} ve^{-v}\hk_t(y).
\end{align*}
Furthermore, using the inequality $|e^{-A}-e^{-B}|\le |A-B|$ for $A, B \geq 0$ (by the mean value theorem), the integrand of $J_n(t,y)$ is bounded for all $n$ by
\begin{align*}
    \frac{2}{\sqrt{2\pi t}} e^{-v}\varpi_n\left|\left(\frac{v}{2^{1/2}\varpi_n}-y\right)^2 - \left(\frac{v}{2^{1/2}\varpi_n}+y\right)^2\right| = \frac{4}{\sqrt{2\pi t}}\,yve^{-v},
\end{align*}
which is integrable in $v$. Therefore by dominated convergence, $J_n(t,y) \to \frac{2y}{t}\hk_t(y) \int_0^\infty ve^{-v}\,dv$. Returning to \eqref{Eq.VJointDensity}, and recalling the formula for the joint density $f_{(t_1,\dots,t_k)}$ of $(V(t_1),\dots,V(t_k))$ from \eqref{Eq.BesselDensity}, we find that
\[
f^n_{(t_1,\dots,t_k)}(y_1,\dots,y_k) \to f_{(t_1,\dots,t_k)}(y_1,\dots,y_k).
\]
Scheff\'e's lemma implies that $(V^n(t_1),\dots,V^n(t_k)) \to (V(t_1),\dots,V(t_k))$ in law, as desired.\\

\noindent\textbf{Step 3.} We claim that for each $\rho>0$,
\begin{equation}\label{Eq.Vmoc}
    \lim_{\delta \rightarrow 0+}\limsup_{n \rightarrow \infty}\mathbb{P}(w(V^n,\delta) \ge \rho) = 0,
\end{equation}
where for $f \in C([0,b])$, we define the usual modulus of continuity
\begin{equation}\label{Eq.MOCDef}
w(f,\delta) = \sup_{\substack{x,y \in [0,b] \\ |x-y| \leq \delta}} |f(x) - f(y)|.
\end{equation}
From \cite[Theorem 7.5]{Billing}, we see that (\ref{Eq.Vmoc}) upgrades the finite-dimensional convergence from the previous step to the topology of uniform convergence on compact sets. In the remainder of this step, we establish (\ref{Eq.Vmoc}).\\

Fix $\varepsilon > 0$. Let $B \in C([0,b])$ be a random continuous function with law $\mathbb{P}_{\mathrm{avoid};\mathrm{Br}}^{0,b,0,z, \infty,0}$ as in Lemma \ref{Lem.BridgeEnsemblesCty}. By the continuity of $B$, we can find $\delta > 0$, such that 
\begin{equation}\label{Eq.MOCLimitB}
\mathbb{P}(w(B,\delta) \geq \rho) \leq \varepsilon.
\end{equation}

Since $V^{n} \Rightarrow 0$ from Step 2, we have by Skorohod's representation theorem \cite[Theorem 6.7]{Billing}, that we may assume that $V^{n}(0)$ are defined on the same probability space and $V^{n}(0) \rightarrow 0$ a.s. From Lemma \ref{Lem.BridgeEnsemblesCty} and the Portmanteau theorem, we have 
\begin{equation}\label{Eq.Step2Limsup2}
\limsup_{n \rightarrow \infty} \mathbb{P}_{\mathrm{avoid};\mathrm{Br}}^{0,b, V^{n}(0), z^{n}, \infty, 0} (w(B, \delta) \geq \rho) \leq \mathbb{P}_{\mathrm{avoid};\mathrm{Br}}^{0,b, 0, z, \infty, 0} (w(B, \delta) \geq \rho) \leq \varepsilon,
\end{equation}
where in the last inequality we used (\ref{Eq.MOCLimitB}). 

Since $V^n$ is a reverse Brownian motion from $z^n$ with drift $-2^{1/2}\varpi_n$, conditioned on being positive, we know that the conditional law of $V^n$, given $V^n(0)$, is that of a Brownian bridge from $V^n(0)$ to $z^n$, conditioned on being positive. In other words, it is given by $\mathbb{P}_{\mathrm{avoid};\mathrm{Br}}^{0,b, V^n(0), z^n, \infty, 0}$ as in Section \ref{Section4.2}. We conclude 
\begin{equation*}
\begin{split}
&\limsup_{n \rightarrow \infty}  \mathbb{P}(w(V^n,\delta) \ge \rho) = \limsup_{n \rightarrow \infty} \mathbb{E} \left[ \mathbb{P}_{\mathrm{avoid};\mathrm{Br}}^{0,b, V^{n}(0), z^{n}, \infty, 0} (w(B, \delta) \geq \rho) \right] \\
&\leq \mathbb{E} \left[ \limsup_{n \rightarrow \infty} \mathbb{P}_{\mathrm{avoid};\mathrm{Br}}^{0,b, V^{n}(0), z^{n}, \infty, 0} (w(B, \delta) \geq \rho) \right]  \leq \varepsilon,
\end{split}
\end{equation*}
where in the first equality we used the tower property of conditional expectation. The inequality on the second line follows by the reverse Fatou's lemma, see \cite[page 10]{cairoli2011}, and the last inequality follows from (\ref{Eq.Step2Limsup2}). As $\varepsilon > 0$ was arbitrary, we see that the last inequality implies (\ref{Eq.Vmoc}).\\

\noindent\textbf{Step 4.} In this step we complete the proof of the lemma for all $k$ and $g_n$. We seek to show that for any bounded continuous function $F : C(\llbracket 1,2k\rrbracket\times[0,b])\to\mathbb{R}$
\begin{equation}\label{Eq.WeakConvToPinned}
\lim_{n\to\infty} \eabm^{b,\vec{y}\,^n,\vec{\mu}^n,g_n}[F(\mathcal{L}^n)] =  \mathbb{E}_{\mathrm{pin}}^{b,\vec{y},g}[F(\mathcal{L})].
\end{equation}

For $i\in\llbracket 1,k\rrbracket$, let $(B_{2i-1}^n,B_{2i}^n)$ be independent pairs with laws $\pabm^{b,\vec{y}^{n,i},\vec{\mu}}$, where $\vec{y}\,^{n,i} = (y_{2i-1}^n,y^n_{2i})$. Define the set $S_{\mathrm{avoid}} = \{ f_{2i}(r)>f_{2i+1}(r) \mbox{ for all }i\in\llbracket 1,k\rrbracket,\,r\in(0,b)\} \subset C(\llbracket 1,2k+1\rrbracket\times[0,b])$. Then by Definition \ref{Def.PinnedBM}, we may write
\begin{equation}\label{eq.Savoidfrac}
    \eabm^{b,\vec{y}^n,\vec{\mu}^n,g_n}[F(\mathcal{L}^n)] = \frac{\mathbb{E}[F(B^n)\mathbf{1}_{S_{\mathrm{avoid}}}(B^n,g^n)]}{\mathbb{E}[\mathbf{1}_{S_{\mathrm{avoid}}}(B^n,g^n)]},
\end{equation}
where we have written $(B^n,g^n)$ as shorthand for $(B_1^n,\dots,B_{2k}^n,g^n)$.

By Step 1, each independent pair $(B_{2i-1}^n,B_{2i}^n)$ converges weakly as $n\to\infty$ to $(B_{2i-1},B_{2i})$ with law $\mathbb{P}_{\mathrm{pin}}^{b,\vec{y}\,^i}$, where $\vec{y}\,^i = (y_{2i-1}, y_{2i})$. By Skorohod's Representation Theorem, \cite[Theorem 6.7]{Billing}, we may assume that $B^n = (B_1^n,\dots,B^n_{2k})$ and $B = (B_1,\dots,B_{2k})$ are defined on the same probability space and the convergence $B^n \rightarrow B$ (in $C(\llbracket 1, 2k \rrbracket \times [0,b])$) is almost sure.

Using that $B^n \rightarrow B$ and $g^n \rightarrow g$, we have
$$\mathbf{1}_{S_{\mathrm{avoid}}}(B^n,g^n) \rightarrow 1 \mbox{ a.s. on the event $E_1 = \{(B,g) \in S_{\mathrm{avoid}} \}$},$$
$$\mathbf{1}_{S_{\mathrm{avoid}}}(B^n,g^n) \rightarrow 0 \mbox{ a.s. on the event $E_2= \left\{(B,g) \in \overline{S_{\mathrm{avoid}}}^c \right\}$}.$$
In addition, by Lemma \ref{Lem.NoTouchPinned}, we have $\mathbb{P}(E_1 \cup E_2) = 1$. From the bounded convergence theorem, 
\begin{equation}\label{Eq.SavoidNumDen}
\begin{split}
&\lim_{n \rightarrow \infty}\mathbb{E}[F(B^n)\mathbf{1}_{S_{\mathrm{avoid}}}(B^n,g^n)] = \mathbb{E}[F(B)\mathbf{1}_{S_{\mathrm{avoid}}}(B,g)] \\
&\lim_{n \rightarrow \infty}\mathbb{E}[\mathbf{1}_{S_{\mathrm{avoid}}}(B^n,g^n)] = \mathbb{E}[\mathbf{1}_{S_{\mathrm{avoid}}}(B,g)] > 0,
\end{split}
\end{equation}
where in the last inequality we used $\mathbb{P}\left(E_{\mathrm{avoid}}^{\mathrm{pin}}\right) > 0$ as in Remark \ref{Rem.WellDAvoid}, which holds by Lemma \ref{Lem.StayInCorridor}.

Combining (\ref{eq.Savoidfrac}) and (\ref{Eq.SavoidNumDen}), we conclude
$$ \lim_{n \rightarrow \infty}\eabm^{b,\vec{y}\,^n,\vec{\mu}^n,g_n}[F(\mathcal{L}^n)] = \lim_{n \rightarrow \infty} \frac{\mathbb{E}[F(B^n)\mathbf{1}_{S_{\mathrm{avoid}}}(B^n,g^n)]}{\mathbb{E}[\mathbf{1}_{S_{\mathrm{avoid}}}(B^n,g^n)]} = \frac{\mathbb{E}[F(B)\mathbf{1}_{S_{\mathrm{avoid}}}(B,g)]}{\mathbb{E}[\mathbf{1}_{S_{\mathrm{avoid}}}(B,g)]},$$
which implies (\ref{Eq.WeakConvToPinned}) once we note that by Definition \ref{Def.PinnedBM} the right side is $\mathbb{E}_{\mathrm{pin}}^{b,\vec{y},g}[F(\mathcal{L})]$.
\end{proof}

The following result shows that if a $g$-avoiding reverse Brownian line ensemble with $2k$ curves is not too low at time $b > 0$, then it is not too low for each time $t \in [0,b]$.
\begin{lemma}\label{Lem.NoLowMin} Fix $k \in \mathbb{N}$, $b > 0$, and $\varepsilon \in (0,1)$. We can find $M > 0$ and $\varpi_0\in\mathbb{R}$, depending on $k, b, \varepsilon$, such that for each $\vec{\mu} \in \mathbb{R}^{2k}$, $\vec{y} \in \weyl_{2k}$, and continuous $g: [0, b] \rightarrow [-\infty, \infty)$ that satisfy
$$y_{2k} > g(b), \hspace{2mm} \mu_i = (-1)^i \varpi \mbox{ for some $\varpi \geq \varpi_0$ and $i \in \llbracket 1, 2k \rrbracket$},$$
we have
\begin{equation}\label{Eq.NoLowMin}
\pabm^{b, \vec{y}, \vec{\mu}, g} \left( \mathcal{Q}_{2k}(t) \geq y_{2k} - M \mbox{ for all } t\in[0,b] \right) \geq 1 - \varepsilon.
\end{equation}
\end{lemma}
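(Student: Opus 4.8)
The plan is to reduce \eqref{Eq.NoLowMin} to a statement about the lowest curve $\mathcal{Q}_{2k}$ alone, using the monotone coupling of Lemma \ref{Lem.MonotoneCoupling} to dispense with the curves above it. Since the drifts alternate, the even-indexed curves $\mathcal{Q}_{2i}$ (including $i=k$) have drift $-\varpi$, i.e. they tend to \emph{increase} as $\varpi\to\infty$ when run backwards; only the presence of the curve $\mathcal{Q}_{2k-1}$ above and the floor $g$ below can push $\mathcal{Q}_{2k}$ down. First I would argue that lowering the top $2k-1$ curves and lowering the floor only decreases $\mathcal{Q}_{2k}$ pointwise. Concretely, by Lemma \ref{Lem.MonotoneCoupling} (after noting $y_{2k-1}>y_{2k}$, so we may slide $y_1,\dots,y_{2k-1}$ and $g$ downward), it suffices to prove the bound in the worst case where the floor is $-\infty$ and where we only keep track of the pair $(\mathcal{Q}_{2k-1},\mathcal{Q}_{2k})$ with boundary data $(y_{2k-1},y_{2k})$ and drifts $(\varpi,-\varpi)$. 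Actually the cleanest reduction is to the single bottom curve: replacing the configuration by the two-curve avoiding ensemble $\pabm^{b,(y_{2k-1},y_{2k}),(\varpi,-\varpi)}$ with no floor.

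For that two-curve ensemble I would pass to sum and difference coordinates exactly as in Step 1 of the proof of Lemma \ref{Lem.BrownianEnsembleConv}: write $U = 2^{-1/2}(\mathcal{Q}_{2k-1}+\mathcal{Q}_{2k})$ and $V = 2^{-1/2}(\mathcal{Q}_{2k-1}-\mathcal{Q}_{2k})$, so that $U$ is a driftless reverse Brownian motion from $2^{-1/2}(y_{2k-1}+y_{2k})$, independent of $V$, which is a reverse Brownian motion from $2^{-1/2}(y_{2k-1}-y_{2k})$ with drift $-2^{1/2}\varpi$ conditioned to stay positive; and $\mathcal{Q}_{2k} = 2^{-1/2}(U-V)$. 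Then the event $\{\mathcal{Q}_{2k}(t)\ge y_{2k}-M \text{ for all }t\}$ contains $\{|U(t)-2^{-1/2}(y_{2k-1}+y_{2k})|\le M/2 \text{ for all }t\}\cap\{V(t)\le 2^{-1/2}(y_{2k-1}-y_{2k})+M/(2\cdot 2^{1/2})\cdot(\text{const}) \text{ for all }t\}$ — more precisely, I want $U$ to stay within $2^{-3/2}M$ of its starting level and $V$ to stay below $2^{-1/2}(y_{2k-1}-y_{2k}) + 2^{-3/2}M$; both of these can be arranged to have probability $\ge 1-\varepsilon/2$ (so the intersection has probability $\ge 1-\varepsilon$) by choosing $M$ large, \emph{uniformly in $\varpi\ge\varpi_0$}. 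The $U$-bound is a standard maximal inequality for Brownian motion on $[0,b]$ and has nothing to do with $\varpi$. The $V$-bound is the key point: a reverse Brownian motion from $z:=2^{-1/2}(y_{2k-1}-y_{2k})>0$ with drift $-2^{1/2}\varpi$ conditioned to stay positive on $[0,b]$, as $\varpi\to\infty$, converges to the $3$D Bessel bridge on $[0,b]$ to $z$ (this is established in Steps 2–3 of Lemma \ref{Lem.BrownianEnsembleConv}), and the supremum of that limiting process is a.s. finite; what is needed, though, is a bound uniform in $\varpi$, so I would either invoke a uniform tail estimate on $\sup_{[0,b]}V^n$ of the kind proved in \cite{DasSerio25} (their Lemma 7.8-type statements, applied to the conditioned Brownian bridge / Bessel bridge), or derive it directly from the explicit transition densities in \eqref{Eq.VJointDensity} as is done there.

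The main obstacle is making the $V$-bound uniform in $\varpi\ge\varpi_0$: one must control the supremum of a \emph{family} of conditioned Brownian motions whose drift blows up, not just the single limiting Bessel bridge. The natural route is to note that conditioning a reverse Brownian bridge (between fixed endpoints) on positivity is a drift-independent operation — once we condition on $V^n(0)$, the process is a Brownian bridge from $V^n(0)$ to $z$ conditioned positive, whose law does \emph{not} depend on $\varpi$ — so the only $\varpi$-dependence is through the law of the left endpoint $V^n(0)$, which by Step 2 of Lemma \ref{Lem.BrownianEnsembleConv} converges weakly to the constant $0$ and in particular is tight; combining a uniform-over-starting-point supremum bound for the positivity-conditioned Brownian bridge (again of the type in \cite{DasSerio25}) with this tightness gives the uniform bound. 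I would organize the write-up as: (i) monotone-coupling reduction to two curves with no floor; (ii) $U$-$V$ decomposition; (iii) the $U$ maximal bound; (iv) the uniform $V$ supremum bound via conditioning on $V^n(0)$, its tightness, and a drift-free Brownian-bridge estimate; (v) assembling (iii) and (iv) with a union bound to choose $M$ and $\varpi_0$. The choice of $\varpi_0$ enters only to guarantee $V^n(0)$ is concentrated near $0$ (say within distance $1$ with probability $\ge 1-\varepsilon/4$), which by Step 2 holds for all large $\varpi$.
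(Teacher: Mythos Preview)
Your reduction in step (i) is the gap. Lemma \ref{Lem.MonotoneCoupling} compares ensembles with the \emph{same} number of curves; it gives no mechanism to drop the top $2k-2$ curves. Worse, the comparison between the $2k$-curve ensemble and the two-curve ensemble with boundary $(y_{2k-1},y_{2k})$ goes the wrong way. Conditioning on the top $2k-2$ curves, the bottom pair $(\mathcal{Q}_{2k-1},\mathcal{Q}_{2k})$ is a two-curve avoiding ensemble with an additional ceiling $\mathcal{Q}_{2k-2}$; by the reflected version of Lemma \ref{Lem.MonotoneCoupling}, a finite ceiling pushes the pair \emph{down} relative to the no-ceiling ensemble. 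Thus $\mathcal{Q}_{2k}$ in the $2k$-curve ensemble is stochastically \emph{smaller} than in your two-curve ensemble, and a lower bound for the latter does not transfer. (Incidentally, the floor $g$ pushes $\mathcal{Q}_{2k}$ \emph{up}, not down; removing it does lower $\mathcal{Q}_{2k}$, which is the correct direction, but for the opposite reason from what you wrote.) A secondary issue: even granting the reduction, step (iv) invokes the tightness of $V^n(0)$ from Lemma \ref{Lem.BrownianEnsembleConv}, which is proved for \emph{fixed} $z=2^{-1/2}(y_{2k-1}-y_{2k})$, whereas the lemma demands uniformity over all $\vec{y}\in\weyl_{2k}$, so $z$ is unbounded.

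The paper avoids both problems with one move. After translating to $y_{2k}=0$, it uses Lemma \ref{Lem.MonotoneCoupling} to compare against a single \emph{fixed} $2k$-curve configuration $\vec{y}^{\,\mathrm{new}}$ (the $i$-th pair at heights $-2iC+1,-2iC$) with $g=-\infty$; since each $y_i^{\mathrm{new}}<0\le y_i$, the coupling applies and the new $\mathcal{Q}_{2k}$ is a genuine lower bound. Because the pairs in $\vec{y}^{\,\mathrm{new}}$ are $2C$-separated, one can lower-bound by the product of $k$ corridor events $E_i=\{\text{pair }i\text{ stays within }C\text{ of its level}\}$ under the \emph{unconditioned} product of two-curve ensembles --- on $\bigcap_i E_i$ the inter-pair avoidance is automatic, so the conditioning can be dropped. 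Each $E_i$ reduces by translation to the single fixed two-curve problem $\pabm^{b,(1,0),(-\varpi,\varpi)}(-C<\mathcal{Q}_2<\mathcal{Q}_1<C)$, which is controlled for large $\varpi$ via Lemma \ref{Lem.BrownianEnsembleConv}. This is where your $U$--$V$ analysis would legitimately enter, but now with fixed boundary data and no uniformity issues.
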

\begin{proof} We choose $C>1$ large enough such that 
\begin{equation}\label{eq.bebsrtwrtberefervervrvrvrgewrt}
\mathbb{P}_{\operatorname{pin}}^{b,(1,0)}\left(-C<\mathcal{Q}_2(t)<\mathcal{Q}_1(t)<C \mbox{ for all } t\in[0,b] \right)\geq1- \varepsilon/(2k).
\end{equation}
In view of Lemma \ref{Lem.BrownianEnsembleConv}, applied with $k=1$, $\vec{y}=(1,0)\in\weyl_2$, and $g_n=g=-\infty$, we know that, as $\varpi\rightarrow\infty$, the probability distribution $\mathbb{P}_{\operatorname{avoid}}^{b,(1,0),(-\varpi,\varpi)}$ converges weakly to $\mathbb{P}_{\operatorname{pin}}^{b,(1,0)}$. Using \eqref{eq.bebsrtwrtberefervervrvrvrgewrt}, we can choose $\varpi_0\in\mathbb{R}$ large enough such that for all $\varpi\geq\varpi_0$, we have
\begin{equation}\label{eq.bebsrtwrtbevrvrvrgewrt}
\mathbb{P}_{\operatorname{avoid}}^{b,(1,0),(-\varpi,\varpi)}\left(-C<\mathcal{Q}_2(t)<\mathcal{Q}_1(t)<C \mbox{ for all } t\in[0,b] \right)\geq1- \varepsilon/k.
\end{equation}
We proceed to prove the lemma with the above choice of $\varpi_0$ and $M=(2k+1)C$.

By translating the ensemble, we may assume that $y_{2k}=0$.   We define $\vec{y}\,^{\mathrm{new}}=(y^{\mathrm{new}}_1,\dots,y^{\mathrm{new}}_{2k})\in\weyl_{2k}$ by $y^{\mathrm{new}}_{2i-1}=-2iC+1$ and $y^{\mathrm{new}}_{2i}=-2iC$ for $i\in\llbracket1,k\rrbracket$. Since $C>1$, we have $0>-2C+1=y^{\mathrm{new}}_1>y^{\mathrm{new}}_2>\dots>y^{\mathrm{new}}_{2k}$. Using the monotone coupling Lemma \ref{Lem.MonotoneCoupling}, with $g^{\mathrm{t}} = g$, $g^{\mathrm{b}} = -\infty$, $\vec{y}\,^{\mathrm{t}} = \vec{y}$, and $\vec{y}\,^{\mathrm{b}} = \vec{y}\,^{\mathrm{new}}$, the left side of \eqref{Eq.NoLowMin} is lower bounded by the probability of the same event under $\pabm^{b, \vec{y}\,^{\mathrm{new}}, \vec{\mu}}$.

From Definition \ref{Def.PinnedBM}, the probability measure $\pabm^{b, \vec{y}\,^{\mathrm{new}}, \vec{\mu}}$ can be regarded as the product measure $\otimes_{i=1}^k \mathbb{P}_{\mathrm{avoid}}^{b, (y^{\mathrm{new}}_{2i-1},y^{\mathrm{new}}_{2i}),(-\varpi,\varpi)}$, conditioned on $E_{\mathrm{avoid}}$. Therefore, we have
\begin{equation}\label{eq.bebsrtwrtbwrt}
\begin{split}
&\mbox{[left side of \eqref{Eq.NoLowMin}]}\geq \\
&\otimes_{i=1}^k\mathbb{P}_{\mathrm{avoid}}^{b, (y^{\mathrm{new}}_{2i-1},y^{\mathrm{new}}_{2i}),(-\varpi,\varpi)} \left(E_{\operatorname{avoid}}\cap\{\mathcal{Q}_{2k}(t) \geq  - (2k+1)C \mbox{ for all } t\in[0,b]\}\right).
\end{split}
\end{equation}

We next consider the events 
\[
E_i:=\left\{-(2i+1)C<\mathcal{Q}_{2i}(t)<\mathcal{Q}_{2i-1}(t)<-(2i-1)C \mbox{ for all } t\in[0,b]\right\},\quad i\in\llbracket1,k\rrbracket,
\]
and let $E=E_1\cap\dots\cap E_k$. Note that on the event $E$ we have $\mathcal{Q}_{2k}(t)>-(2k+1)C$ and $\mathcal{Q}_1(t)>\dots>\mathcal{Q}_{2k}(t)$ for all $t\in[0,b]$. The latter and \eqref{eq.bebsrtwrtbwrt} give
\begin{multline*}
 \mbox{[left side of \eqref{Eq.NoLowMin}]}\geq\\\prod_{i=1}^k\mathbb{P}_{\mathrm{avoid}}^{b, (y^{\mathrm{new}}_{2i-1},y^{\mathrm{new}}_{2i}),(-\varpi,\varpi)} \left(-(2i+1)C<\mathcal{Q}_{2i}(t)<\mathcal{Q}_{2i-1}(t)<-(2i-1)C \mbox{ for all } t\in[0,b]\right).
\end{multline*}
For $i\in\llbracket1,k\rrbracket$, to lower bound the $i$-th term in the product above, we translate the ensemble upwards by $2iC$, use the definitions $y^{\mathrm{new}}_{2i-1}=-2iC+1$ and $y^{\mathrm{new}}_{2i}=-2iC$, and use \eqref{eq.bebsrtwrtbevrvrvrgewrt}. Hence the $i$-th term can be lower bounded by $1- \varepsilon/k$. The right side above is lower bounded by $(1- \varepsilon/k)^k\geq1-\varepsilon$. 
\end{proof}

The next lemma establishes control on the modulus of continuity of a $g$-avoiding reverse Brownian line ensemble uniformly in the drift parameters, assuming the boundary conditions are positively separated and lie within a compact window.

\begin{lemma}\label{Lem.MOC}
    Fix $k\in\mathbb{N}$, $b,M,M^{\mathrm{bot}},\delta^{\mathrm{sep}}>0$, and $\Delta^{\mathrm{sep}} \in (0, b]$. Fix $\varepsilon,\eta>0$, and suppose $\vec{\mu}\in\mathbb{R}^{2k}$, $\vec{y}\in \weyl_{2k}$, and $g : [0,b]\to[-\infty,\infty)$ is continuous, such that $\mu_i = (-1)^i\varpi$ for some $\varpi\in\mathbb{R}$, $|y_i|\le M$ for $i \in \llbracket 1, 2k\rrbracket$, $y_i - y_{i+1} \ge \delta^{\mathrm{sep}}$ for $i \in \llbracket 1,2k-1\rrbracket$, $\sup_{t\in[0,b]} g(t) \le M^{\mathrm{bot}}$, and $y_{2k} - \sup_{t\in[b-\Delta^{\mathrm{sep}},b]} g(t) \ge \delta^{\mathrm{sep}}$. There exist $\delta>0$ and $\varpi_0\in\mathbb{R}$, depending on $\varepsilon,\eta,k,b,M,M^{\mathrm{bot}},\delta^{\mathrm{sep}},\Delta^{\mathrm{sep}}$, such that for $\varpi\ge \varpi_0$,
    \begin{equation}\label{Eq.MOC}
        \mathbb{P}^{b,\vec{y},\vec{\mu},g}_{\mathrm{avoid}}\left( \max_{i \in \llbracket 1, 2k \rrbracket} w(\mathcal{Q}_i,\delta) > \eta\right) < \varepsilon,
    \end{equation}
where we recall that $w(f,\delta)$ is the modulus of continuity from (\ref{Eq.MOCDef}).
\end{lemma}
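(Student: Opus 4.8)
The plan is to reduce the modulus-of-continuity control for the full $2k$-curve avoiding ensemble to the two-curve pinned case already handled in the earlier lemmas, and then invoke tightness of pinned reverse Brownian line ensembles together with a compactness/contradiction argument in the drift parameter. First I would use the monotone coupling Lemma \ref{Lem.MonotoneCoupling} to sandwich $\mathcal{Q}$: replacing the floor $g$ by the worst-case constant $M^{\mathrm{bot}}$ on $[0,b-\Delta^{\mathrm{sep}}]$ (and keeping the separation hypothesis near $b$), and replacing the boundary vector $\vec y$ by extremal vectors in the compact window $[-M,M]^{2k}$ with the prescribed gaps $\delta^{\mathrm{sep}}$, I obtain upper and lower bounding ensembles whose boundary data range over a \emph{compact} set of configurations. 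Since a uniform modulus-of-continuity estimate for the top and bottom sandwiching curves forces one for every curve in between, it suffices to prove \eqref{Eq.MOC} for these boundary configurations, uniformly over $\varpi$ large.

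Next, I would fix a convergent sequence of boundary data $(\vec y^{\,n}, g_n)$ in the compact parameter set and a sequence $\varpi_n\to\infty$, and apply Lemma \ref{Lem.BrownianEnsembleConv}: the laws $\pabm^{b,\vec y^{\,n},\vec\mu^{\,n},g_n}$ with alternating drifts $\mu_i^n=(-1)^i\varpi_n$ converge weakly to the pinned ensemble law $\mathbb{P}_{\mathrm{pin}}^{b,\vec y,g}$, where the limiting floor $g$ still satisfies $y_{2k}>g(b)$ thanks to the separation hypothesis being preserved under the limit. The limiting pinned ensemble is, by Definition \ref{Def.PinnedBM}, built from finitely many reverse Brownian motions with no drift and finitely many $3$D Bessel bridges, conditioned on an avoidance event of positive probability (Remark \ref{Rem.WellDPinAvoid}); each such ingredient is a.s.\ continuous on $[0,b]$, so the pinned ensemble has a.s.\ continuous sample paths and hence satisfies $\lim_{\delta\to 0+}\mathbb{P}_{\mathrm{pin}}^{b,\vec y,g}(\max_i w(\mathcal{Q}_i,\delta)>\eta)=0$. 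By the Portmanteau theorem applied to the open set $\{\max_i w(\mathcal{Q}_i,\delta)>\eta\}$, one gets
\[
\limsup_{n\to\infty}\pabm^{b,\vec y^{\,n},\vec\mu^{\,n},g_n}\Big(\max_{i\in\llbracket 1,2k\rrbracket} w(\mathcal{Q}_i,\delta)>\eta\Big)\le \mathbb{P}_{\mathrm{pin}}^{b,\vec y,g}\Big(\max_{i} w(\mathcal{Q}_i,\delta)>\eta\Big),
\]
which can be made $<\varepsilon$ by choosing $\delta$ small. A standard subsequence/contradiction argument then upgrades this to a bound uniform over all $(\vec y,g)$ in the compact parameter set and all $\varpi\ge\varpi_0$: if \eqref{Eq.MOC} failed, one could extract boundary data converging in the compact set and drifts $\varpi_n\to\infty$ violating the displayed $\limsup$ inequality.

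I expect the main obstacle to be the bookkeeping in the monotone-coupling reduction — specifically, verifying that after replacing $g$ by a constant on $[0,b-\Delta^{\mathrm{sep}}]$ and choosing extremal boundary vectors one still stays within the hypotheses of Lemma \ref{Lem.MonotoneCoupling} (ordered boundary data, floor strictly below the bottom endpoint) while retaining the near-$b$ separation $y_{2k}-\sup_{[b-\Delta^{\mathrm{sep}},b]}g\ge\delta^{\mathrm{sep}}$ needed to keep the weak limit nondegenerate. A secondary technical point is ensuring the parameter set over which we need uniformity is genuinely compact: the gap and window constraints confine $\vec y$ to a compact set, but the floor $g$ ranges over an infinite-dimensional family, so one must argue that after the coupling reduction only the constant-floor case (parametrized by $M^{\mathrm{bot}}$ and the endpoint value near $b$) matters, restoring compactness. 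Once these reductions are in place, the probabilistic core is an immediate consequence of Lemma \ref{Lem.BrownianEnsembleConv}, the a.s.\ continuity of the pinned ensemble, and Portmanteau.
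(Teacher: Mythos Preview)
Your sandwiching step contains a genuine gap: the claim ``a uniform modulus-of-continuity estimate for the top and bottom sandwiching curves forces one for every curve in between'' is false. Pointwise inequalities $\mathcal{L}^{\mathrm{b}}_i(t)\le\mathcal{L}_i(t)\le\mathcal{L}^{\mathrm{t}}_i(t)$ say nothing about $w(\mathcal{L}_i,\delta)$ --- a function sandwiched between two constants can oscillate arbitrarily fast. So the monotone coupling of Lemma~\ref{Lem.MonotoneCoupling} cannot be used to transfer modulus-of-continuity bounds from extremal boundary data to the original ensemble, and the reduction to a compact parameter set fails at this first step.

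The paper avoids this by \emph{removing the floor entirely} rather than sandwiching. It writes
\[
\pabm^{b,\vec{y},\vec{\mu},g}\bigl(\text{bad MOC}\bigr)
\;\le\;
\frac{\pabm^{b,\vec{y},\vec{\mu}}\bigl(\text{bad MOC}\bigr)}{\pabm^{b,\vec{y},\vec{\mu}}\bigl(\mathcal{Q}_{2k}>g\text{ on }[0,b]\bigr)},
\]
and then shows two things uniformly over the admissible $(\vec{y},g,\varpi)$: the denominator is bounded below by some $c>0$, and the numerator can be made $<c\varepsilon$. The denominator bound is where the hypotheses on $g$ (namely $\sup g\le M^{\mathrm{bot}}$ and the near-$b$ separation) are actually used: one builds explicit linear corridors $\ell_i(t)=y_i+\frac{M^{\mathrm{bot}}+M+1}{\Delta^{\mathrm{sep}}}(b-t)$ lying above $g$, and then Lemma~\ref{Lem.StayInCorridor} (second part, with slope bound depending only on the fixed parameters) together with Lemma~\ref{Lem.BrownianEnsembleConv} gives a uniform lower bound on the probability of staying in these corridors. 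The numerator bound is then your contradiction argument, but crucially applied to the \emph{floorless} measure $\pabm^{b,\vec{y},\vec{\mu}}$ --- now only $\vec{y}$ varies, and it ranges over the compact set $\{\vec{y}\in\weylc_{2k}:|y_i|\le M,\ y_i-y_{i+1}\ge\delta^{\mathrm{sep}}\}$, so the compactness/Portmanteau/Lemma~\ref{Lem.BrownianEnsembleConv} argument goes through cleanly with $g=-\infty$. Your second paragraph is essentially this numerator bound; what you are missing is the conditioning trick that gets rid of the infinite-dimensional floor before you run it.
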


\begin{proof} We claim that there exist $\varpi_1\in\mathbb{R}$ and $c>0$, depending on $k$, $b$, $M$, $M^{\mathrm{bot}}$, $\delta^{\mathrm{sep}}$, and $\Delta^{\mathrm{sep}}$, such that for all $\varpi\ge\varpi_1$,
\begin{equation}\label{Eq.Floor>c}
\mathbb{P}_{\mathrm{avoid}}^{b,\vec{y},\vec{\mu}}(\mathcal{Q}_{2k}(t) > g(t) \mbox{ for all }t\in[0,b]) > c.
\end{equation}
We further claim that there exist $\varpi_2 \in \mathbb{R}$ and $\delta > 0$, depending on the same set of parameters and $\varepsilon, \eta, c$, such that for $\varpi \geq \varpi_2$
\begin{equation}\label{Eq.ModCtyGood}
\pabm^{b,\vec{y},\vec{\mu}}\left( \max_{i \in \llbracket 1, 2k \rrbracket} w(\mathcal{Q}_i,\delta)> \eta \right) < c \varepsilon.
\end{equation}
Assuming (\ref{Eq.Floor>c}) and (\ref{Eq.ModCtyGood}), we get for $\varpi \geq \varpi_0 := \max(\varpi_1, \varpi_2)$
\begin{align*}
\mathbb{P}^{b,\vec{y},\vec{\mu},g}_{\mathrm{avoid}}\left( \max_{i \in \llbracket 1, 2k \rrbracket} w(\mathcal{Q}_i,\delta) > \eta\right) &= \pabm^{b,\vec{y},\vec{\mu}}\left( \max_{i \in \llbracket 1, 2k \rrbracket} w(\mathcal{Q}_i,\delta) > \eta \big\vert \mathcal{Q}_{2k}(t) > g(t) \mbox{ for all }t\in[0,b]\right)\\
&\le c^{-1} \pabm^{b,\vec{y},\vec{\mu}}\left( \max_{i \in \llbracket 1, 2k \rrbracket} w(\mathcal{Q}_i,\delta) >\eta\right) < c^{-1} \cdot c\varepsilon = \varepsilon,
\end{align*}
which implies the statement of the lemma. \\

{\bf \raggedleft Proof of (\ref{Eq.Floor>c}).} For $i\in\llbracket 1,2k\rrbracket$, we define the deterministic linear functions $\ell_i : [0,b] \to \mathbb{R}$ by
\begin{align*}
\ell_i(t) &= y_i + \frac{M^{\mathrm{bot}}+ M + 1}{\Delta^{\mathrm{sep}}} \cdot (b-t).
\end{align*}
By our assumptions, we have $g(t) <  \ell_{2k}(t) < \ell_{2k-1}(t) <\cdots < \ell_2(t)<\ell_1(t)$ for all $t\in[0,b]$ and $\ell_i(b) = y_i$. Define the events
\[
E^j_{\mathrm{corr}} = \{\ell_{2j}(t) \leq \mathcal{Q}_{2j}(t) < \mathcal{Q}_{2j-1}(t) \leq \ell_{2j-1}(t) \mbox{ for all } t\in[0,b]\}, \qquad j\in\llbracket 1,k\rrbracket.
\]
From Definition \ref{Def.PinnedBM}, we have 
\begin{equation}\label{Eq.Floor>cRed}
\begin{split}
\mathbb{P}_{\mathrm{avoid}}^{b,\vec{y},\vec{\mu}}(\mathcal{Q}_{2k}(t) > g(t) \mbox{ for all }t\in[0,b]) &\ge \frac{\bigotimes_{j=1}^k\pabm^{b,(y_{2j-1},y_{2j}),(\mu_{2j-1},\mu_{2j})}(\bigcap_{j=1}^k E^j_{\mathrm{corr}})}{\bigotimes_{j=1}^k\pabm^{b,(y_{2j-1},y_{2j}),(\mu_{2j-1},\mu_{2j})}(E_{\mathrm{avoid}})}\\
&\ge \prod_{j=1}^k \pabm^{b,(y_{2j-1},y_{2j}),(\mu_{2j-1},\mu_{2j})}(E^j_{\mathrm{corr}}).
\end{split}
\end{equation}

Note that the slope of each $\ell_i$ is bounded above by $(M^{\mathrm{bot}}+M + 1)/\Delta^{\mathrm{sep}}$. Consequently, from Lemma \ref{Lem.StayInCorridor}, there exists $\epsilon_0 > 0$, depending on $b,M,M^{\mathrm{bot}},\delta^{\mathrm{sep}},\Delta^{\mathrm{sep}}$, such that 
$$\prod_{j=1}^k \mathbb{P}_{\mathrm{pin}}^{b,(y_{2j-1},y_{2j})}(E^j_{\mathrm{corr}}) \geq \epsilon_0^k.$$
From Lemma \ref{Lem.BrownianEnsembleConv}, applied to each term in the last product, we conclude that we can find $\varpi_1 \in \mathbb{R}$, such that for $\varpi \geq \varpi_1$   
$$\prod_{j=1}^k \pabm^{b,(y_{2j-1},y_{2j}),(\mu_{2j-1},\mu_{2j})}(E^j_{\mathrm{corr}}) \geq (\epsilon_0/2)^k.$$
The latter and (\ref{Eq.Floor>cRed}) imply (\ref{Eq.Floor>c}) with $c = (\epsilon_0/2)^k$.\\

{\bf \raggedleft Proof of (\ref{Eq.ModCtyGood}).} Suppose for the sake of contradiction that no such $\varpi_2$ and $\delta$ exist. Then, for each $\delta_n = 1/n$, we can find $\varpi^n \geq n$, as well as $\vec{y}\,^n \in \weyl_{2k}$ and $\vec{\mu}\,^n \in \mathbb{R}^{2k}$, satisfying the conditions of the lemma with $\varpi = \varpi^n$, such that 
\begin{equation}\label{Eq.ModCtyBad}
\pabm^{b,\vec{y}^n,\vec{\mu}^n}\left( \max_{i \in \llbracket 1, 2k \rrbracket} w(\mathcal{Q}_i,\delta_n)> \eta \right) \geq c \varepsilon.
\end{equation}

Since $|y_i^n|\le M$ and $y^n_i - y^n_{i+1} \ge \delta^{\mathrm{sep}}$ for $i \in \llbracket 1,2k-1\rrbracket$, by possibly passing to a subsequence, we may assume that $\vec{y}\,^n \rightarrow \vec{y} \in \weyl_{2k}$. Using the weak convergence from Lemma \ref{Lem.BrownianEnsembleConv} (it is applicable as $\vec{y}\,^n \rightarrow \vec{y} \in \weyl_{2k}$ and $\varpi^n \rightarrow \infty$ by construction), and the Portmanteau theorem, we conclude for each $\delta > 0$ that 
\begin{equation*}
\limsup_{n \rightarrow \infty}\pabm^{b,\vec{y}^n,\vec{\mu}^n}\left( \max_{i \in \llbracket 1, 2k \rrbracket}w(\mathcal{Q}_i,\delta)\geq \eta\right) \leq \mathbb{P}^{b, \vec{y}}_{\mathrm{pin}} \left(  \max_{i \in \llbracket 1, 2k \rrbracket} w(\mathcal{Q}_i,\delta)\geq \eta \right).
\end{equation*} 

Combining the latter with (\ref{Eq.ModCtyBad}), and the fact that $\delta_n \downarrow 0$, we conclude that for all $\delta > 0$
$$\mathbb{P}^{b, \vec{y}}_{\mathrm{pin}} \left(  \max_{i \in \llbracket 1, 2k \rrbracket} w(\mathcal{Q}_i,\delta)\geq \eta \right) \geq c \varepsilon.$$
The latter gives our desired contradiction, since $\mathbb{P}^{b, \vec{y}}_{\mathrm{pin}}$-a.s. $\lim_{\delta \rightarrow 0+}\max_{i \in \llbracket 1, 2k \rrbracket} w(\mathcal{Q}_i,\delta) = 0$.
\end{proof}

%
%
\section{Uniform convergence over compact sets}\label{Section5} The goal of this section is to prove Theorems \ref{Thm.Convergence}, \ref{Thm.PfaffianStructure}, \ref{Thm.GibbsProperty}, and \ref{Thm.ConvergenceToAiryLE}. The main technical result we establish is Proposition \ref{Prop.Tightness}, which shows that for any sequence $\varpi_n \rightarrow \infty$, the line ensembles $\mathcal{L}^{\mathrm{hs}; \varpi_n}$ from Proposition \ref{Prop.HSAiry} form a tight sequence. This statement is the content of Section \ref{Section5.1}. In Section \ref{Section5.2} we combine Proposition \ref{Prop.Tightness} with the finite-dimensional convergence result in Lemma \ref{Lem.FDConvVarpi} to prove Theorems \ref{Thm.Convergence} and \ref{Thm.GibbsProperty}. The same section also contains the proof of Theorem \ref{Thm.PfaffianStructure}, which is essentially a corollary of Lemma \ref{Lem.PointProcessConvergenceVarpi}. In Section \ref{Section5.3} we establish Theorem \ref{Thm.ConvergenceToAiryLE}, by combining the finite-dimensional convergence from Lemma \ref{Lem.FDConvT} with some results from \cite{CorHamA}. Lastly, Section \ref{SectionSpecial} presents the proof of the ``infinitely many atoms'' lemma, Lemma \ref{Lem.InfinitelyManyAtoms}.

%
%
\subsection{Tightness}\label{Section5.1} We continue with the notation from Sections \ref{Section1} and \ref{Section4}. The goal of this section is to establish the following statement.

\begin{proposition}\label{Prop.Tightness} Assume the same notation as in Proposition \ref{Prop.HSAiry}. Let $\varpi_n \in \mathbb{R}$ be a sequence such that $\lim_{n \rightarrow \infty }\varpi_{n} = \infty$. Then, $\mathcal{L}^{\mathrm{hs}; \varpi_n}$ forms a tight sequence in $C(\mathbb{N} \times [0, \infty))$. In addition, if $\mathcal{L}^\infty$ is any subsequential limit, then $\mathcal{L}^\infty$ satisfies the pinned half-space Brownian Gibbs property from Definition \ref{Def.PinnedBGP}, and the restriction of $\mathcal{L}^\infty$ to $\mathbb{N}\times(0,\infty)$ satisfies the Brownian Gibbs property from Definition \ref{Def.BGPVanilla} with $\Lambda = (0, \infty)$.
\end{proposition}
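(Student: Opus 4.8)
## Proof proposal for Proposition \ref{Prop.Tightness}

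\textbf{Overall strategy.} The plan is to combine the finite-dimensional convergence already established in Lemma \ref{Lem.FDConvVarpi} with a modulus-of-continuity estimate coming from the half-space Brownian Gibbs property (Proposition \ref{Prop.HSAiry}) and Lemma \ref{Lem.MOC}. Since each $\mathcal{L}^{\mathrm{hs};\varpi_n}$ takes values in the Polish space $C(\mathbb{N}\times[0,\infty))$, tightness is equivalent (by the standard criterion, e.g. \cite[Theorem 7.3]{Billing} applied coordinate- and interval-wise together with a diagonal argument) to: (i) tightness of the finite-dimensional marginals $\{\mathcal{L}^{\mathrm{hs};\varpi_n}_i(t): n\geq 1\}$ for each $i\geq 1$, $t>0$; and (ii) for every $k\in\mathbb{N}$, every $b>0$, and every $\rho>0$, control of the form $\lim_{\delta\to 0+}\limsup_{n\to\infty}\mathbb{P}(\max_{1\le i\le k} w(\mathcal{L}^{\mathrm{hs};\varpi_n}_i|_{[0,b]},\delta)>\rho)=0$. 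Part (i) is immediate from Lemma \ref{Lem.FDConvVarpi} (which gives actual convergence of the one-point marginals at any $s_j>0$), so the work is entirely in (ii), and then in identifying the Gibbs properties of the limit.

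\textbf{Modulus of continuity near the boundary.} Fix $k$ and $b>0$; without loss of generality replace $k$ by $2k$. The idea is to pick a small $a=a(\varepsilon)>0$ and first control the fluctuations on $[a,b]$, where the curves are non-intersecting and separated, then separately handle the short interval $[0,a]$. On $[a,b]$: using Lemma \ref{Lem.FDConvVarpi}, the vector $(\mathcal{L}^{\mathrm{hs};\varpi_n}_i(a): 1\le i\le 2k+1)$ converges in law to a limit whose coordinates are almost surely strictly ordered (the limiting point process $M^{\mathsf S;\infty}$ is simple by \cite[Proposition 5.8(1)]{DY25}); likewise at $b$. Moreover the top curve is tight from above by Lemma \ref{Lem.TightFromAbove}, and $\mathcal{L}^{\mathrm{hs};\varpi_n}_{2k+1}(a)$ is tight from below since (again by Lemma \ref{Lem.FDConvVarpi}, or Lemma \ref{Lem.NoLowMin} transferred from $b$) its limit is a.s.\ finite. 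Hence, for $\varepsilon>0$, there is a compact window $[-M,M]$, a separation $\delta^{\mathrm{sep}}>0$, and $\Delta^{\mathrm{sep}}\le b-a$ so that, with probability $\ge 1-\varepsilon$ for all large $n$, the event $G_n$ holds: $|\mathcal{L}^{\mathrm{hs};\varpi_n}_i(a)|, |\mathcal{L}^{\mathrm{hs};\varpi_n}_i(b)|\le M$ for $i\le 2k$, consecutive gaps at $a$ and at $b$ are $\ge\delta^{\mathrm{sep}}$, the bottom curve satisfies $\sup_{[a,b]}\mathcal{L}^{\mathrm{hs};\varpi_n}_{2k+1}\le M^{\mathrm{bot}}$ (from tightness from above of $\mathcal L_1$ and ordering), and $\mathcal{L}^{\mathrm{hs};\varpi_n}_{2k}(b)-\sup_{[b-\Delta^{\mathrm{sep}},b]}\mathcal{L}^{\mathrm{hs};\varpi_n}_{2k+1}\ge\delta^{\mathrm{sep}}$. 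The half-space Brownian Gibbs property says that, conditioned on $\mathcal F_{\mathrm{ext}}$, the curves $\{\mathcal{L}^{\mathrm{hs};\varpi_n}_i\}_{i=1}^{2k}$ on $[a,b]$ have law $\pabm^{b-a,\vec y,\vec\mu,g}$ with alternating drifts $\mu_i=(-1)^i\sqrt2\,\varpi_n$ and floor $g=\mathcal{L}^{\mathrm{hs};\varpi_n}_{2k+1}|_{[a,b]}$. On $G_n$ the hypotheses of Lemma \ref{Lem.MOC} are met with parameters not depending on $n$, so Lemma \ref{Lem.MOC} gives $\delta>0$ and $\varpi_0$ with $\mathbb{P}(\max_{i\le 2k} w(\mathcal{Q}_i,\delta)>\rho)<\varepsilon$ for $\varpi_n\ge\varpi_0$; averaging over $\mathcal F_{\mathrm{ext}}$ on $G_n$ and adding $\mathbb{P}(G_n^c)\le\varepsilon$ controls the modulus on $[a,b]$. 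For the short interval $[0,a]$, one uses the pinned-type behavior: applying the Gibbs property on $[0,b']$ for a slightly larger $b'$, or directly bounding $\sup_{[0,a]}|\mathcal{L}^{\mathrm{hs};\varpi_n}_i - \mathcal{L}^{\mathrm{hs};\varpi_n}_i(a)|$ via the one-sided estimates, shows that by taking $a$ small this contribution is $<\rho$ with probability $\ge 1-\varepsilon$, uniformly in $n$; combine with continuity at $a$ to glue. This yields (ii), hence tightness.

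\textbf{Identifying the Gibbs properties of the limit.} Let $\mathcal L^\infty$ be a subsequential limit, say along $n_j$; by Skorohod we may assume $\mathcal{L}^{\mathrm{hs};\varpi_{n_j}}\to\mathcal L^\infty$ a.s.\ in $C(\mathbb N\times[0,\infty))$. For the Brownian Gibbs property of the restriction to $\mathbb N\times(0,\infty)$: fix $[\alpha,\beta]\subset(0,\infty)$ and indices $K=\llbracket k_1,k_2\rrbracket$; since the half-space Brownian Gibbs property of $\mathcal{L}^{\mathrm{hs};\varpi_n}$ on a fixed interval bounded away from $0$ is just the ordinary Brownian Gibbs property (the alternating drifts get absorbed as linear shifts of Brownian bridges on $[\alpha,\beta]$, which are still avoiding Brownian bridges), one passes to the limit in \eqref{BGPTower} using Lemma \ref{Lem.BridgeEnsemblesCty} for the weak continuity of the $(f,g)$-avoiding bridge measures in the boundary data, together with the fact that the limiting boundary data are a.s.\ strictly ordered and the bottom/top curves are a.s.\ continuous and not touching (this last non-touching input is where one needs that $\mathcal L^\infty$ is non-intersecting on $(0,\infty)$, which follows from simplicity of the limiting point processes via Lemma \ref{Lem.FDConvVarpi} and \cite[Proposition 5.8(1)]{DY25}). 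A standard monotone-class argument promotes the test functions from bounded continuous to bounded Borel. For the pinned half-space Brownian Gibbs property on $[0,\infty)$: fix $b>0$ and $2k$ curves; the half-space Brownian Gibbs property gives that, conditioned on $\mathcal F_{\mathrm{ext}}(\llbracket 1,2k\rrbracket\times[0,b))$, the top $2k$ curves of $\mathcal{L}^{\mathrm{hs};\varpi_{n_j}}$ have law $\pabm^{b,\vec y^{\,j},\vec\mu^{\,j},g^j}$ with $\mu_i^{\,j}=(-1)^i\sqrt2\,\varpi_{n_j}$, and the boundary/floor data converge a.s.\ to those of $\mathcal L^\infty$; Lemma \ref{Lem.BrownianEnsembleConv} then identifies the conditional limit as $\mathbb P_{\mathrm{pin}}^{b,\vec y,g}$, so passing to the limit in \eqref{Eq.HSBGP} yields \eqref{Eq.HSPinnedBGP} for bounded continuous $F$, and again a monotone-class argument finishes. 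One must also record that $\mathcal L^\infty$ is ordered, which is preserved under the a.s.\ limit.

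\textbf{Main obstacle.} The crux is step (ii): showing the modulus of continuity does not degenerate as $\varpi_n\to\infty$, where the drifts blow up. Everything there is engineered to reduce, on a high-probability event whose defining parameters are $n$-independent, to Lemma \ref{Lem.MOC}; the delicate points are (a) verifying the floor-separation hypothesis $y_{2k}-\sup_{[b-\Delta^{\mathrm{sep}},b]}g\ge\delta^{\mathrm{sep}}$ uniformly in $n$ — for which one needs Lemma \ref{Lem.FDConvVarpi} to guarantee a positive gap between the $2k$-th and $(2k+1)$-th curves near $b$, together with a near-$b$ continuity estimate for the floor that is itself uniform in $n$ (obtainable from the Gibbs property applied on a slightly larger interval, or from Lemma \ref{Lem.NoLowMin}) — and (b) handling the interval $[0,a]$, where the pinning forces pairs together and the naive drift bounds are useless; here the right tool is again the monotone coupling (Lemma \ref{Lem.MonotoneCoupling}) and the corridor estimate (Lemma \ref{Lem.StayInCorridor}), sandwiching the curves between deterministic linear functions to get an $n$-uniform fluctuation bound on $[0,a]$.
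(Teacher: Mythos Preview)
Your tightness argument has a structural problem and two substantive gaps. First, the conditional law you write on $[a,b]$ is wrong: on an interval bounded away from $0$, the half-space Brownian Gibbs property reduces to the ordinary Brownian Gibbs property (this is \cite[Lemma 2.9]{DY25}), so the curves are avoiding Brownian \emph{bridges} with both endpoints fixed, not reverse Brownian motions with alternating drifts $\pabm^{b-a,\vec y,\vec\mu,g}$. Lemma \ref{Lem.MOC} is stated for reverse Brownian motions on $[0,b]$ and does not apply on $[a,b]$. In fact the whole splitting into $[0,a]$ and $[a,b]$ is unnecessary: the paper applies the half-space Gibbs property on the full interval $[0,b]$ and invokes Lemma \ref{Lem.MOC} there directly, which is exactly what that lemma is designed for. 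Your separate handling of $[0,a]$ via ``monotone coupling and corridor estimates'' is too vague to be a proof, and the suggestion of ``applying the Gibbs property on $[0,b']$'' is circular.

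Second, and more seriously, two of the inputs you assert as immediate are in fact the main work. You claim $\sup_{t\in[0,b]}\mathcal{L}^{\mathrm{hs};\varpi_n}_{2k+1}(t)\le M^{\mathrm{bot}}$ follows ``from tightness from above of $\mathcal L_1$ and ordering'', but Lemma \ref{Lem.TightFromAbove} only controls $\mathcal{L}_1$ at \emph{fixed} times; passing to a supremum over $[0,b]$ uniformly in $n$ requires a genuine ``no big max'' argument. The paper proves this (Step 2) via the ordinary Brownian Gibbs property and a stopping-at-first-exceedance bridge estimate. Similarly, the floor-separation hypothesis $\mathcal{L}^{\mathrm{hs};\varpi_n}_{2k}(b)-\sup_{[b-\Delta^{\mathrm{sep}},b]}\mathcal{L}^{\mathrm{hs};\varpi_n}_{2k+1}\ge\delta^{\mathrm{sep}}$ is not ``obtainable from Lemma \ref{Lem.NoLowMin}'' (that lemma bounds curves from below, not the floor away from $y_{2k}$); the paper obtains it (Step 3) by verifying the Corwin--Hammond hypotheses $(H)_{2k+1,3}$ and invoking \cite[Proposition 3.6]{CorHamA} to get local uniform convergence of the top $2k+1$ curves to a non-intersecting limit near $b$. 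Once these two events are in hand, a single application of Lemma \ref{Lem.MOC} on $[0,b]$ finishes tightness. Your treatment of the Gibbs properties of the limit (Skorohod, Lemma \ref{Lem.BridgeEnsemblesCty} for the bridges, Lemma \ref{Lem.BrownianEnsembleConv} for the pinned law, monotone class) is correct and matches the paper's Step 4.
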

\begin{proof} We split the proof into four steps for clarity. Tightness is proven in the first three steps, and the Gibbs properties of subsequential limits are established in Step 4.\\

\noindent\textbf{Step 1.} In this step we prove tightness. From \cite[Lemma 2.4]{DEA21}, it suffices to show that for each $k \in \mathbb{N}$ the sequence $\{\mathcal{L}_k^{\mathrm{hs};\varpi_n}(1)\}_{n\ge 1}$ is tight, and for any $\varepsilon,\eta>0$ and $b > 3$ (this will be convenient later), we can find $\delta>0$ and $n_0\in\mathbb{N}$ so that for all $n\ge n_0$, 
\begin{equation}\label{Eq.MOCclaim}
\mathbb{P}\left(w(\mathcal{L}_k^{\mathrm{hs};\varpi_n}[0,b],\delta) > \eta \right) < \varepsilon,    
\end{equation}
where we recall that $w(f,\delta)$ is the modulus of continuity from (\ref{Eq.MOCDef}). In Lemma \ref{Lem.FDConvVarpi} we showed that $\mathcal{L}_k^{\mathrm{hs};\varpi_n}(1)$ converge weakly as $n \rightarrow \infty$, and thus form a tight sequence. Consequently, we only need to prove \eqref{Eq.MOCclaim}. For the remainder of the proof we fix $\varepsilon,\eta>0$, $b > 3$, and all the constants we encounter depend on $b,\varepsilon,\eta$, as well as the sequence $\{\varpi_n\}_{n \geq 1}$ --- we do not mention this further.\\

We claim that there exist $M^{\mathrm{bot}}>0$ and $n_1\in\mathbb{N}$, so that for all $n\ge n_1$,
\begin{equation}\label{Eq.NBMevent}
    \mathbb{P}(A^c) < \varepsilon/4, \mbox{ where } A:= \left\{\sup_{t\in[0,b]} \mathcal{L}_{2k+1}^{\mathrm{hs};\varpi_n}(t) \le M^{\mathrm{bot}}\right\}.
\end{equation}
In addition, we claim that we can find $\delta^{\mathrm{sep}}, M > 0$, $\Delta^{\mathrm{sep}} \in (0, b]$ and $n_2\in\mathbb{N}$, so that for all $n\ge n_2$,
\begin{align}
    \mathbb{P}(B^c) &< \varepsilon/4, \mbox{ where } \label{Eq.SepEvent} \\
    B &:= \left\{ \max_{1\le i\le 2k} |\mathcal{L}_i^{\mathrm{hs};\varpi_n}(b)| \le M\right\} \cap \left\{ \min_{1\le i\le 2k-1} \left(\mathcal{L}_i^{\mathrm{hs};\varpi_n}(b) - \mathcal{L}_{i+1}^{\mathrm{hs};\varpi_n}(b)\right) \ge \delta^{\mathrm{sep}}\right\} \nonumber\\ &\qquad\qquad\qquad\cap \left\{ \mathcal{L}_{2k}^{\mathrm{hs};\varpi_n}(b)-\sup_{t\in[b-\Delta^{\mathrm{sep}},b]} \mathcal{L}_{2k+1}^{\mathrm{hs};\varpi_n}(t) \ge \delta^{\mathrm{sep}}\right\}. \nonumber
\end{align}
We verify these two claims in Steps 2 and 3. Here, we assume their validity, and prove \eqref{Eq.MOCclaim}.\\

Let $C$ denote the event in \eqref{Eq.MOCclaim}. Using that $A,B \in \mathcal{F}_{\mathrm{ext}}(\llbracket 1,2k\rrbracket \times[0,b])$, we have by the half-space Brownian Gibbs property, see Definition \ref{Def.BGP}, that  
\begin{equation}\label{Eq.ABC}
\begin{split}
    \mathbb{P}(A\cap B\cap C) &= \mathbb{E} \left[\mathbf{1}_{A\cap B} \cdot \mathbb{E}\left[\mathbf{1}_C \mid \mathcal{F}_{\mathrm{ext}}(\llbracket 1,2k\rrbracket \times[0,b])\right]\right]\\
    &= \mathbb{E} \left[\mathbf{1}_{A\cap B} \cdot \pabm^{b,\vec{y}^n,\vec{\mu}^n,g^n}\left(w(\mathcal{Q}_k,\delta)>\eta\right)\right],
\end{split}
\end{equation}
where $\vec{y}^n = (\mathcal{L}_1^{\mathrm{hs};\varpi_n}(b),\dots,\mathcal{L}_{2k}^{\mathrm{hs};\varpi_n}(b))$, $\mu^n_i = (-1)^i \sqrt{2}\,\varpi_n$, and $g^n = \mathcal{L}_{2k+1}^{\mathrm{hs};\varpi_n}[0,b]$. 

On the event $A\cap B$, the conditions of Lemma \ref{Lem.MOC} are satisfied. Taking $n_0 \ge \max(n_1,n_2)$ large enough so that $\sqrt{2}\,\varpi_n \ge \varpi_0$ for all $n\ge n_0$, we may find $\delta>0$ so that the probability inside the expectation in the second line of \eqref{Eq.ABC} is at most $\varepsilon/2$ for $n\ge n_0$. It then follows from \eqref{Eq.NBMevent}, \eqref{Eq.SepEvent}, \eqref{Eq.ABC}, and a union bound that $\mathbb{P}(C) < \varepsilon$ for $n\ge n_0$, proving \eqref{Eq.MOCclaim}.\\

\noindent\textbf{Step 2.} In this step, we establish a ``no big max'' estimate for the sequence $\mathcal{L}^{\mathrm{hs};\varpi_n}$, which in particular implies \eqref{Eq.NBMevent}. We will show that for any $b,\varepsilon>0$, there exist $H>0$ and $n_1\in\mathbb{N}$, so that for all $n\ge n_1$,
\begin{equation}\label{Eq.nobigmax}
    \mathbb{P}\left(\sup_{t\in[0,b]} \mathcal{L}_1^{\mathrm{hs};\varpi_n}(t) > H \right) < \varepsilon.
\end{equation}
Since $\mathcal{L}_{2k+1}^{\mathrm{hs};\varpi_n}(t) \le \mathcal{L}_1^{\mathrm{hs};\varpi_n}(t)$ a.s., this implies \eqref{Eq.NBMevent} after replacing $\varepsilon$ with $\varepsilon/4$ and $H$ with $M^{\mathrm{bot}}$. 

By Lemma \ref{Lem.FDConvVarpi}, we can find $M_1,M_2>0$ and $n_1\in\mathbb{N}$, so that for all $n\ge n_1$,
\begin{equation}\label{Eq.NBMevents}
\begin{split}
\mathbb{P}(F^c) &< \varepsilon/4, \mbox{ where } F := \left\{\mathcal{L}_1^{\mathrm{hs};\varpi_n}(2b) \ge -M_1\right\},\\
\mathbb{P}(G) &< \varepsilon/8, \mbox{ where } G:= \left\{\mathcal{L}_1^{\mathrm{hs};\varpi_n}(b) \ge M_2 \right\}.
\end{split}
\end{equation}
We now set $H := 2M_2 + M_1$, and proceed to show (\ref{Eq.nobigmax}).\\

Fix a finite set $\mathsf{S} = \{s_1, \dots, s_m\} \subset (0,b)$ with $s_1 < s_2 < \cdots < s_m$, and define the events
$$E_i^{H; \mathsf{S}} = \{\mathcal{L}_1^{\mathrm{hs};\varpi_n}(s_i) > H \mbox{ and }\mathcal{L}_1^{\mathrm{hs};\varpi_n}(s_j) \leq H \mbox{ for }j \in \llbracket 1, i- 1 \rrbracket \}, \mbox{ and } E^{H; \mathsf{S}} = \bigsqcup_{i = 1}^mE_i^{H; \mathsf{S}}.$$
By \cite[Lemma 2.9]{DY25}, $\mathcal{L}^{\mathrm{hs};\varpi_n}$ satisfies the Brownian Gibbs property of Definition \ref{Def.BGPVanilla}. Using this property and stochastic monotonicity for Brownian bridge ensembles, \cite[Lemmas 2.6, 2.7]{CorHamA}, we get
\begin{equation}\label{Eq.EFG}
\begin{split}
&\mathbb{P}\left(E^{H;\mathsf{S}}_i\cap F\cap G\right) = \mathbb{E}\left[\mathbf{1}_{E^{H;\mathsf{S}}_i \cap F} \cdot \mathbb{E} \left[\mathbf{1}_G \mid \mathcal{F}_{\mathrm{ext}}(\{1\}\times (s_i,2b))\right]\right]\\
&= \mathbb{E}\left[ \mathbf{1}_{E^{H;\mathsf{S}}_i \cap F} \cdot \mathbb{P}^{s_i,2b,\mathcal{L}_1^{\mathrm{hs},\varpi_n}(s_i),\mathcal{L}_1^{\mathrm{hs};\varpi_n}(2b),\infty, \mathcal{L}_2^{\mathrm{hs};\varpi_n}[0,2b]}_{\mathrm{avoid}; \mathrm{Br}}\left(\mathcal{Q}_1(b)\ge M_2\right)\right]\\
    &\ge \mathbb{E}\left[ \mathbf{1}_{E^{H;\mathsf{S}}_i \cap F} \cdot \mathbb{P}_{\mathrm{free}; \mathrm{Br}}^{s_i,2b,H,-M_1}\left(\mathcal{Q}_1(b)\ge M_2\right)\right],
\end{split}
\end{equation}
where $\mathbb{P}_{\mathrm{free}; \mathrm{Br}}^{s_i,2b,H,-M_1}$ is the law of a Brownian bridge from $\mathcal{Q}_1(s_i) = H$ to $\mathcal{Q}_1(2b) = -M_1$.

Notice that since $\mathcal{Q}_1$ has law $\mathbb{P}_{\mathrm{free}; \mathrm{Br}}^{s_i,2b,H,-M_1}$, we know that $\mathcal{Q}_1(b)$ is a normal variable with mean
$$\frac{b}{2b - s_i} \cdot H + \frac{b-s_i}{2b -s_i} \cdot (-M_1) \geq \frac{1}{2} (2M_2 + M_1) - \frac{1}{2} M_1 \geq M_2,$$
where we used that $H = 2M_2 + M_1$ and $s_i \in (0,b)$. The latter implies that the probability on the last line of (\ref{Eq.EFG}) is at least $1/2$, and so
\begin{equation*}
\begin{split}
&\mathbb{P}(E^{H;\mathsf{S}}_i\cap F\cap G) \ge (1/2) \mathbb{P} \left(E^{H;\mathsf{S}}_i \cap F \right).
\end{split}
\end{equation*}
Summing over $i$, we conclude
\begin{equation*}\label{Eq.EFG3}
\begin{split}
&\mathbb{P}(G) \geq \mathbb{P}(E^{H;\mathsf{S}}\cap F\cap G) \ge (1/2) \mathbb{P} \left(E^{H;\mathsf{S}} \cap F \right) \geq (1/2) \mathbb{P} \left(E^{H;\mathsf{S}}\right) - (1/2) \mathbb{P}(F^c).
\end{split}
\end{equation*}

Combining the last inequality with (\ref{Eq.NBMevents}), we conclude for all finite $\mathsf{S} \subset (0,b)$ and $n \geq n_1$, that
\begin{equation*}
\varepsilon/2 > \mathbb{P} \left(E^{H;\mathsf{S}}\right).
\end{equation*}
Fixing finite sets $\mathsf{S}_1 \subset \mathsf{S}_2 \subset \cdots$ with $\mathbb{Q} \cap (0,b) = \cup_{m = 1} \mathsf{S}_m$, we have by continuity of the measure from below that for all $n \geq n_1$,
$$\mathbb{P}\left(\sup_{t\in (0,b) \cap \mathbb{Q}} \mathcal{L}_1^{\mathrm{hs};\varpi_n}(t) > H \right) = \lim_{m \rightarrow \infty}\mathbb{P} \left(E^{H;\mathsf{S}_m}\right) \leq \varepsilon/2.$$
The last equation implies (\ref{Eq.nobigmax}), since $\mathcal{L}_1^{\mathrm{hs};\varpi_n}$ is continuous and $\mathbb{Q} \cap (0,b)$ is dense in $[0,b]$. \\

\noindent\textbf{Step 3.} In this step, we prove \eqref{Eq.SepEvent}. 

From Lemma \ref{Lem.FDConvVarpi}, applied to $m = 1$ and $s_1 = b$, we know that $\{\mathcal{A}_i^{\mathrm{hs};\varpi_n}(b)\}_{i\ge 1}$ converges to a random vector $X = \{X_i\}_{i \geq 1}$, which almost surely satisfies $X_1 > X_2 > \cdots$. Since $\mathcal{L}_i^{\mathrm{hs};\varpi_n}(b) = 2^{-1/2}(\mathcal{A}_i^{\mathrm{hs};\varpi_n}(b) - b^2)$, we conclude that we can find $n_{2,1}, \delta_1^{\mathrm{sep}}, M > 0$, such that for $n \geq n_{2,1}$
\begin{equation}\label{Eq.Prop51Red1}
\mathbb{P}\left(\max_{1\le i\le 2k} |\mathcal{L}_i^{\mathrm{hs};\varpi_n}(b)| > M \right) < \varepsilon/12, \hspace{2mm} \mathbb{P}\left( \min_{1\le i\le 2k-1} \left(\mathcal{L}_i^{\mathrm{hs};\varpi_n}(b) - \mathcal{L}_{i+1}^{\mathrm{hs};\varpi_n}(b)\right) < \delta_1^{\mathrm{sep}} \right) < \varepsilon/12.
\end{equation}

We claim that we can find $\delta^{\mathrm{sep}}_2,\Delta^{\mathrm{sep}}\in (0, b]$ and $n_{2,2}\in\mathbb{N}$, such that for all $n\ge n_{2,2}$,
\begin{equation}\label{Eq.IntervalSep}
 \mathbb{P}\left( \mathcal{L}_{2k}^{\mathrm{hs};\varpi_n}(b)-\sup_{t\in[b-\Delta^{\mathrm{sep}},b]} \mathcal{L}_{2k+1}^{\mathrm{hs};\varpi_n}(t) < \delta_2^{\mathrm{sep}}\right) < \varepsilon/12.
\end{equation}
Note that (\ref{Eq.Prop51Red1}) and (\ref{Eq.IntervalSep}) imply (\ref{Eq.SepEvent}) with $n_2 = \max(n_{2,1}, n_{2,2})$ and $\delta^{\mathrm{sep}} = \min(\delta_1^{\mathrm{sep}}, \delta_2^{\mathrm{sep}})$.\\

We deduce (\ref{Eq.IntervalSep}) from Lemma \ref{Lem.FDConvVarpi} and \cite[Proposition 3.6]{CorHamA}. 

Let $\mathcal{K}^n$ denote the $\llbracket 1,2k+2\rrbracket$-indexed line ensemble on $[-3,3]$, given by $\mathcal{K}^n_i(t) = \mathcal{L}_i^{\mathrm{hs};\varpi_n}(t+b)$ (here is where we finally use that $b > 3$). We claim that $\{\mathcal{K}^n\}_{n\ge 1}$ satisfies the three conditions in Hypothesis $(H)_{2k+1,T+2}$ of \cite[Definition 3.3]{CorHamA} with $T = 1$. Indeed, Hypothesis $(H1)_{2k+1,3}$ says that $\mathcal{K}^n$ is a non-intersecting line ensemble with the Brownian Gibbs property, which is immediate from the Brownian Gibbs property of $\mathcal{L}^{\mathrm{hs};\varpi_n}$ from \cite[Lemma 2.9]{DY25}. Hypothesis $(H2)_{2k+1,3}$ says that for each finite $\mathsf{S}\subseteq[-3,3]$, the joint distribution of $\{\mathcal{K}_i^n(s) : i \in \llbracket 1, 2k + 1 \rrbracket, s\in \mathsf{S}\}$ converges weakly, which follows from Lemma \ref{Lem.FDConvVarpi}. Lastly, $(H3)_{2k+1,3}$ says that for each $\hat{\varepsilon}>0$ and $t\in[-3,3]$ we can find $\hat{\delta}>0$, so that for large $n$, 
$$\mathbb{P}(\min_{1\le i\le 2k} |\mathcal{K}_i^n(t)-\mathcal{K}_{i+1}^n(t)| > \hat{\delta})<\hat{\varepsilon},$$
which follows again from Lemma \ref{Lem.FDConvVarpi}, and especially the strict inequalities in (\ref{Eq.OrdX}). 

It follows from \cite[Proposition 3.6]{CorHamA} that the restriction of $\mathcal{K}^n$ to $\llbracket 1,2k+1\rrbracket \times [-1,1]$ converges weakly (in the uniform topology) as $n\to\infty$ to a {\em non-intersecting} line ensemble. In particular, shifting to the right by $b$, this implies that the restriction of $\mathcal{L}^{\mathrm{hs};\varpi_n}$ to $\llbracket 1,2k+1\rrbracket \times[b-1,b+1]$ converges weakly as $n\to\infty$ to a non-intersecting line ensemble, which we denote by $\mathcal{L}^{\infty}$. Consequently, we can find $\delta^{\mathrm{sep}}_2 > 0$ and $\Delta^{\mathrm{sep}} \in (0, 1)$, such that
\begin{equation*}
 \mathbb{P}\left( \inf_{t\in[b-1,b+1]} \left(\mathcal{L}^{\infty}_{2k}(t)-\mathcal{L}^{\infty}_{2k+1}(t)\right) < 2\delta_2^{\mathrm{sep}}\right) < \frac{\varepsilon}{24}, \quad \mathbb{P}\left(\sup_{|t-b| \le \Delta^{\mathrm{sep}}} |\mathcal{L}^{\infty}_{2k}(t) - \mathcal{L}^{\infty}_{2k}(b)| > \delta^{\mathrm{sep}}_2\right) < \frac{\varepsilon}{24}.
\end{equation*}
In view of the weak convergence, by a union bound this implies \eqref{Eq.IntervalSep} for large $n$.\\

\noindent\textbf{Step 4.} In this final step, we show that any subsequential limit $\mathcal{L}^{\infty}$ of $\mathcal{L}^{\mathrm{hs};\varpi_n}$ satisfies the pinned half-space Brownian Gibbs property and the Brownian Gibbs property on $\Lambda = (0,\infty)$. Let $\mathcal{L}^{\mathrm{hs};\varpi_{n_v}}$ be a subsequence weakly converging to $\mathcal{L}^{\infty}$. By Skorohod's representation theorem \cite[Theorem 6.7]{Billing}, we may assume that $\mathcal{L}^{\mathrm{hs};\varpi_{n_v}}$, $\mathcal{L}^{\infty}$ are defined on the same space and the convergence is uniform on compact sets $\mathbb{P}$-a.s.~as $v\to\infty$. 

Recalling Definitions \ref{Def.PinnedBGP} and \ref{Def.BGPVanilla}, we see that we need to establish the following statements. Firstly, for each $k \in \mathbb{N}$
\begin{equation}\label{Eq.NonIntRed}
\mathbb{P}\left(\mathcal{L}^{\infty}_i(t) >\mathcal{L}^{\infty}_{i+1}(t) \mbox{ for all }  t\in (0,\infty) , i \in \llbracket 1, 2k \rrbracket \right) = 1.
\end{equation}
Secondly, for each $[a,b] \subset (0, \infty)$, $k \in \mathbb{N}$, $S = \llbracket s_1, s_2 \rrbracket \subseteq \llbracket 1, 2k \rrbracket$, bounded Borel-measurable $F: C(S \times [a,b]) \rightarrow \mathbb{R}$ and $U \in \mathcal{F}_{\operatorname{ext}}(S \times (a,b))$, we have
\begin{equation}\label{Eq.BGPRed}
\mathbb{E}\left[ F\left( \mathcal{L}^{\infty} \vert_{S \times [a,b]} \right) \cdot {\bf 1}_U \right] = \mathbb{E}\left[ \mathbb{E}_{\operatorname{avoid}; \mathrm{Br}}^{a,b,\vec{x},\vec{y},f,g} \left[ F(\mathcal{Q}) \right]\cdot {\bf 1}_U \right], 
\end{equation}
where $\vec{x} = (\mathcal{L}^{\infty}_{s_1} (a), \dots, \mathcal{L}^{\infty}_{s_2} (a))$, $\vec{y} = (\mathcal{L}^{\infty}_{s_1} (b), \dots, \mathcal{L}^{\infty}_{s_2} (b))$, $g = \mathcal{L}_{s_2+1}[a,b]$, $f = \mathcal{L}_{s_1-1}^{\infty}[a,b]$ for $s_1 \geq 2$, and $f = \infty$ if $s_1 = 1$. In addition, $\mathcal{Q}$ has distribution $\mathbb{P}_{\operatorname{avoid}; \mathrm{Br}}^{a,b,\vec{x},\vec{y},f,g}$ as in Definition \ref{Def.fgAvoidingBE} with curves indexed by $S$ rather than $\llbracket 1, \dots, s_2 - s_1 + 1\rrbracket$.

Lastly, for each $b > 0$,  $k \in \mathbb{N}$, bounded Borel-measurable $G: C(\llbracket 1, 2k \rrbracket \times [0,b]) \rightarrow \mathbb{R}$ and $V \in \mathcal{F}_{\operatorname{ext}}(\llbracket 1, 2k \rrbracket \times [0,b))$, we have
\begin{equation}\label{Eq.PBGPRed}
\mathbb{E}\left[ G\left( \mathcal{L}^{\infty} \vert_{\llbracket 1, 2k \rrbracket \times [0,b]} \right) \cdot {\bf 1}_V \right] = \mathbb{E}\left[ \mathbb{E}_{\mathrm{pin}}^{b,\vec{y},g} \left[ G(\mathcal{Q}) \right]\cdot {\bf 1}_V \right],
\end{equation}
where $\vec{y} = (\mathcal{L}^{\infty}_{1} (b), \dots, \mathcal{L}^{\infty}_{2k} (b))$, $g = \mathcal{L}_{2k+1}[0,b]$, and $\mathcal{Q}$ has distribution $\mathbb{P}_{\mathrm{pin}}^{b,\vec{y},g} $ as in Definition \ref{Def.PinnedBM}.

By the defining properties of conditional expectation, we have that (\ref{Eq.NonIntRed}) and (\ref{Eq.BGPRed}) verify the conditions of Definition \ref{Def.BGPVanilla}, while (\ref{Eq.NonIntRed}) and (\ref{Eq.PBGPRed}) verify those of Definition \ref{Def.PinnedBGP}. We also mention that by Lemma \ref{Lem.FDConvVarpi}, we have that $\mathcal{L}^{\infty}_i(a) > \mathcal{L}^{\infty}_{i+1}(a)$ and $\mathcal{L}^{\infty}_i(b) > \mathcal{L}^{\infty}_{i+1}(b)$ almost surely for all $i \geq 1$ and $b>a> 0$, so that the expectations on the right sides of (\ref{Eq.BGPRed}) and (\ref{Eq.PBGPRed}) are well-defined.\\

Establishing statements like (\ref{Eq.NonIntRed}), (\ref{Eq.BGPRed}) and (\ref{Eq.PBGPRed}) is by now quite standard, see e.g. the proofs of \cite[Theorem 2.26(ii)]{DEA21} and \cite[Theorem 5.1]{dimitrov2024tightness}, and so we will be brief. Firstly, we can verbatim repeat the argument in Step 1 of the proof of \cite[Theorem 5.1]{dimitrov2024tightness} to conclude that (\ref{Eq.NonIntRed}) follows from (\ref{Eq.BGPRed}). We are thus left with proving (\ref{Eq.BGPRed}) and (\ref{Eq.PBGPRed}).\\

{\bf \raggedleft Proof of (\ref{Eq.BGPRed}).} Fix $m \in \mathbb{N}$, $k_1, \dots, k_m \in \mathbb{N}$, $t_1, \dots, t_m \in (0,\infty)$ and bounded continuous $h_1, \dots, h_m : \mathbb{R} \rightarrow \mathbb{R}$. Define $R = \{i \in \llbracket 1, m \rrbracket: k_i \in S, t_i \in [a,b]\}$. We claim that
\begin{equation}\label{S54E7}
\mathbb{E}\left[ \prod_{i = 1}^m h_i(\mathcal{L}^{\infty}_{k_i}(t_i)) \right] = \mathbb{E}\left[ \prod_{i \not \in R} h_i(\mathcal{L}^{\infty}_{k_i}(t_i))  \cdot \mathbb{E}_{\operatorname{avoid}; \mathrm{Br}}^{a,b,\vec{x},\vec{y},f,g} \left[ \prod_{i  \in R} h_i(\mathcal{Q}_{k_i}(t_i))   \right] \right]. 
\end{equation}

From \cite[Lemma 2.9]{DY25}, we know that $\mathcal{L}^{\mathrm{hs};\varpi_{n_v}}$ satisfies the Brownian Gibbs property, and so
\begin{equation}\label{S54E8}
\mathbb{E}\left[ \prod_{i = 1}^m h_i(\mathcal{L}^{\mathrm{hs};\varpi_{n_v}}_{k_i}(t_i)) \right] = \mathbb{E}\left[ \prod_{i \not \in R} h_i(\mathcal{L}^{\mathrm{hs};\varpi_{n_v}}_{k_i}(t_i))  \cdot \mathbb{E}_{\operatorname{avoid}; \mathrm{Br}}^{a,b,\vec{x}^v,\vec{y}^v,f^v,g^v} \left[ \prod_{i  \in R} h_i(\mathcal{Q}_{k_i}(t_i))   \right] \right], 
\end{equation}
where $\vec{x}^v = (\mathcal{L}^{\mathrm{hs};\varpi_{n_v}}_{s_1} (a), \dots, \mathcal{L}^{\mathrm{hs};\varpi_{n_v}}_{s_2} (a))$, $\vec{y}^v = (\mathcal{L}^{\mathrm{hs};\varpi_{n_v}}_{s_1} (b), \dots, \mathcal{L}^{\mathrm{hs};\varpi_{n_v}}_{s_2} (b))$, $g^v = \mathcal{L}_{s_2+1}[a,b]$, $f^v = \mathcal{L}_{s_1-1}^{\mathrm{hs};\varpi_{n_v}}[a,b]$ for $s_1 \geq 2$, and $f^v = \infty$ if $s_1 = 1$. Using that $\mathcal{L}^{\mathrm{hs};\varpi_{n_v}}$ converges uniformly over compact sets to $\mathcal{L}^{\infty}$, and the continuity of $h_i$, we see that the random variables on the left and right side of (\ref{S54E8}) converge to those in (\ref{S54E7}). We mention that the convergence 
$$\lim_{v \rightarrow \infty} \mathbb{E}_{\operatorname{avoid}; \mathrm{Br}}^{a,b,\vec{x}^v,\vec{y}^v,f^v,g^v} \left[ \prod_{i  \in R} h_i(\mathcal{Q}_{k_i}(t_i))   \right] \overset{a.s.}{=} \mathbb{E}_{\operatorname{avoid}; \mathrm{Br}}^{a,b,\vec{x},\vec{y},f,g} \left[ \prod_{i  \in R} h_i(\mathcal{Q}_{k_i}(t_i))   \right]$$
follows from Lemma \ref{Lem.BridgeEnsemblesCty}.

From the last paragraph, we see that (\ref{S54E7}) follows by taking the $v \rightarrow \infty$ limit in (\ref{S54E8}) and applying the bounded convergence theorem. The fact that (\ref{S54E7}) implies (\ref{Eq.BGPRed}) now follows by the monotone class argument from Step 2 in the proof of \cite[Theorem 5.1]{dimitrov2024tightness}, which carries over verbatim.\\

{\bf \raggedleft Proof of (\ref{Eq.PBGPRed}).} Fix $m \in \mathbb{N}$, $k_1, \dots, k_m \in \mathbb{N}$, $t_1, \dots, t_m \in [0,\infty)$ and bounded continuous $h_1, \dots, h_m : \mathbb{R} \rightarrow \mathbb{R}$. Define $R = \{i \in \llbracket 1, m \rrbracket: k_i \in \llbracket 1, 2k \rrbracket, t_i \in [0,b]\}$. We claim that
\begin{equation}\label{S54E7V2}
\mathbb{E}\left[ \prod_{i = 1}^m h_i(\mathcal{L}^{\infty}_{k_i}(t_i)) \right] = \mathbb{E}\left[ \prod_{i \not \in R} h_i(\mathcal{L}^{\infty}_{k_i}(t_i))  \cdot \mathbb{E}_{\mathrm{pin}}^{b,\vec{y},g} \left[ \prod_{i  \in R} h_i(\mathcal{Q}_{k_i}(t_i))   \right] \right]. 
\end{equation}

From Proposition \ref{Prop.HSAiry}, $\mathcal{L}^{\mathrm{hs};\varpi_{n_v}}$ satisfies the half-space Brownian Gibbs property, and so
\begin{equation}\label{S54E8V2}
\mathbb{E}\left[ \prod_{i = 1}^m h_i(\mathcal{L}^{\mathrm{hs};\varpi_{n_v}}_{k_i}(t_i)) \right] = \mathbb{E}\left[ \prod_{i \not \in R} h_i(\mathcal{L}^{\mathrm{hs};\varpi_{n_v}}_{k_i}(t_i))  \cdot \mathbb{E}_{\operatorname{avoid}}^{b,\vec{y}^v,\vec{\mu}_{2k}^v,g^v} \left[ \prod_{i  \in R} h_i(\mathcal{Q}_{k_i}(t_i))   \right] \right], 
\end{equation}
where $\vec{y}^v = (\mathcal{L}^{\mathrm{hs};\varpi_{n_v}}_{1} (b), \dots, \mathcal{L}^{\mathrm{hs};\varpi_{n_v}}_{2k} (b))$, $g^v = \mathcal{L}_{2k+1}[0,b]$, and $\vec{\mu}_{2k}^v \in \mathbb{R}^{2k}$ has odd coordinates $-\sqrt{2}\varpi_{n_v}$ and even ones $\sqrt{2}\varpi_{n_v}$.

As before, (\ref{S54E7V2}) follows by taking the $v \rightarrow \infty$ limit in (\ref{S54E8V2}) and applying the bounded convergence theorem. Here, we used that
$$\lim_{v \rightarrow \infty} \mathbb{E}_{\operatorname{avoid}}^{b,\vec{y}^v,\vec{\mu}_{2k}^v,g^v} \left[ \prod_{i  \in R} h_i(\mathcal{Q}_{k_i}(t_i))   \right] \overset{a.s.}{=} \mathbb{E}_{\mathrm{pin}}^{b,\vec{y},g} \left[ \prod_{i  \in R} h_i(\mathcal{Q}_{k_i}(t_i))   \right],$$
which follows from Lemma \ref{Lem.BrownianEnsembleConv}. The fact that (\ref{S54E7V2}) implies (\ref{Eq.PBGPRed}) follows by verbatim repeating the monotone class argument from Step 2 in the proof of \cite[Theorem 5.1]{dimitrov2024tightness}.
\end{proof}

%
%
\subsection{Proof of Theorems \ref{Thm.Convergence}, \ref{Thm.PfaffianStructure} and \ref{Thm.GibbsProperty}}\label{Section5.2} 

\begin{proof}[Proof of Theorems \ref{Thm.Convergence} and \ref{Thm.GibbsProperty}]
Let $\{\varpi_n\}_{n\ge 1}$ be any sequence converging to infinity. From Proposition \ref{Prop.Tightness}, we know that $\mathcal{L}^{\mathrm{hs};\varpi_n}$ forms a tight sequence. Suppose that $\mathcal{L}^{\infty,1}$ and $\mathcal{L}^{\infty,2}$ are two subsequential limits. By Lemma \ref{Lem.FDConvVarpi}, and the continuous mapping theorem \cite[Theorem 2.7]{Billing}, we know that for any finite set $\mathsf{S} = \{s_1, \dots, s_m\} \subset (0, \infty)$, we have 
\begin{equation*}
\begin{split}
&\left(\mathcal{L}^{\infty,1}_i(s_j): i \geq 1, j \in \llbracket 1, m \rrbracket \right) \overset{f.d.}{=} \left( 2^{-1/2}(X^{j,\mathsf{S}}_i - s_j^2): i \geq 1, j \in \llbracket 1, m \rrbracket \right), \mbox{ and }\\
&\left(\mathcal{L}^{\infty,2}_i(s_j): i \geq 1, j \in \llbracket 1, m \rrbracket \right) \overset{f.d.}{=} \left( 2^{-1/2}(X^{j,\mathsf{S}}_i - s_j^2): i \geq 1, j \in \llbracket 1, m \rrbracket \right).
\end{split}
\end{equation*}

As finite-dimensional sets form a separating class, see \cite[Lemma 3.1]{DimMat}, we conclude that $\mathcal{L}^{\infty,1} \overset{d}{=}\mathcal{L}^{\infty,2}$. The latter shows that $\mathcal{L}^{\mathrm{hs};\varpi_n}$ has at most one subsequential limit, and by tightness it has exactly one. We call it $\mathcal{L}^{\mathrm{hs};\infty}$ and it satisfies $\mathcal{L}^{\mathrm{hs};\varpi_n} \Rightarrow \mathcal{L}^{\mathrm{hs};\infty}$.

Notice that if we picked a different sequence $\tilde{\varpi}_n \rightarrow \infty$ and $\mathcal{L}^{\mathrm{hs};\tilde{\varpi}_n} \Rightarrow \tilde{\mathcal{L}}^{\mathrm{hs};\infty}$, we have $\tilde{\mathcal{L}}^{\mathrm{hs};\infty} \overset{d}{=} \mathcal{L}^{\mathrm{hs};\infty}$, so that the limit $\mathcal{L}^{\mathrm{hs};\infty}$ does not depend on the particular sequence $\{\varpi_n\}_{n\ge 1}$ we chose. To see the latter, we can take the alternating sequence $\hat{\varpi}_{2i} = \varpi_{i}$ and $\hat{\varpi}_{2i-1} = \tilde{\varpi}_i$, for which we have by the above argument that $\mathcal{L}^{\mathrm{hs};\hat{\varpi}_n}$ is weakly convergent. As $\tilde{\mathcal{L}}^{\mathrm{hs};\infty}, \mathcal{L}^{\mathrm{hs};\infty}$ are both weak subsequential limits of $\mathcal{L}^{\mathrm{hs};\hat{\varpi}_n}$, the uniqueness of the weak limit gives $\tilde{\mathcal{L}}^{\mathrm{hs};\infty} \overset{d}{=} \mathcal{L}^{\mathrm{hs};\infty}$.  \\

Our work so far shows that $\mathcal{L}^{\mathrm{hs};\varpi_n} \Rightarrow \mathcal{L}^{\mathrm{hs};\infty}$ and the limit is independent of the sequence $\{\varpi_n\}_{n \geq 1}$. By (\ref{eq:Parabolic HSA}) and the continuous mapping theorem \cite[Theorem 2.7]{Billing}, we conclude that $\mathcal{A}^{\mathrm{hs};\varpi_n} \Rightarrow \mathcal{A}^{\mathrm{hs};\infty}$, where the latter satisfies (\ref{Eq.ParabolicHSALE}). This proves the weak convergence in Theorem \ref{Thm.Convergence}.

By Proposition \ref{Prop.Tightness}, we know that $\mathcal{L}^{\mathrm{hs};\infty}$ satisfies the Brownian Gibbs property on $(0, \infty)$, and is thus non-intersecting there. By continuity, we conclude $\mathcal{L}^{\mathrm{hs};\infty}$ is ordered on $[0, \infty)$. From (\ref{Eq.ParabolicHSALE}), we conclude the same for $\mathcal{A}^{\mathrm{hs};\infty}$, completing the proof of Theorem \ref{Thm.Convergence}.

The fact that $\mathcal{L}^{\mathrm{hs};\infty}$ satisfies the conditions of Theorem \ref{Thm.GibbsProperty} follows from Proposition \ref{Prop.Tightness}.
\end{proof}

\begin{proof}[Proof of Theorem \ref{Thm.PfaffianStructure} ]
By Theorem \ref{Thm.Convergence} and Lemma \ref{Lem.FDConvVarpi}, we know that 
$$\left( \hsa_i(s_j): i \geq 1, j \in \llbracket 1, m \rrbracket \right) \overset{f.d.}{=} \left( X^{j,\mathsf{S}}_i: i \geq 1, j \in \llbracket 1, m \rrbracket \right).$$
From \cite[Corollary 2.20]{dimitrov2024airy}, we conclude $M^{\mathsf{S};X} \overset{d}{=}M^{\mathsf{S};\mathrm{hs};\infty}$ as in (\ref{eq:HSA point process}). From Lemma \ref{Lem.FDConvVarpi} we conclude $M^{\mathsf{S};\mathrm{hs};\infty} \overset{d}{=} M^{\mathsf{S}; \infty}$ as in Lemma \ref{Lem.PointProcessConvergenceVarpi}(a). The last distributional equality and Lemma \ref{Lem.PointProcessConvergenceVarpi}(a) imply the statement of the theorem.
\end{proof}

%
%
\subsection{Proof of Theorem \ref{Thm.ConvergenceToAiryLE}}\label{Section5.3} 
Define the line ensembles $\mathcal{L}^n$ by $\sqrt{2}\cdot\mathcal{L}_i^n(t) + t^2 = \mathcal{A}_i^n(t)$, and observe that by definition
\begin{equation}\label{Eq.TranslatedLE}
\mathcal{L}_i^n(t) = \hsli_i(t + t_n) + 2^{-1/2} t_n^2 - 2^{1/2}t_n \cdot t\mbox{ for } i\geq 1, t \in [-t_n, t_n].
\end{equation}

We first check that $\mathcal{L}^n$ satisfy Hypothesis $(H)_{k,T}$ from \cite[Definition 3.3]{CorHamA} for every $k\in\mathbb{N}$ and $T>0$. Indeed, from Theorem \ref{Thm.GibbsProperty}(a) we know that $\hsli$ satisfies the Brownian Gibbs property on $(0, \infty)$. As the latter is preserved under translations and affine shifts, we conclude from (\ref{Eq.TranslatedLE}) that $\mathcal{L}^n$ satisfies the Brownian Gibbs property on $(-t_n, t_n)$, verifying $(H1)_{k,T}$. From Lemma \ref{Lem.FDConvT}, and the continuous mapping theorem, we conclude $\mathcal{L}_i^n \overset{f.d.}{\rightarrow} \mathcal{L}^{\mathrm{Airy}}$ from (\ref{PALE}). As $\mathcal{L}^{\mathrm{Airy}}$ is non-intersecting, we see that $(H2)_{k,T}$ and $(H3)_{k,T}$ are also satisfied.

Now that we verified $(H)_{k,T}$, we can apply \cite[Proposition 3.6]{CorHamA} to conclude $\mathcal{L}^n\Rightarrow \mathcal{L}^{\mathrm{Airy}}$. By the continuous mapping theorem, the latter implies $\mathcal{A}^n \Rightarrow \mathcal{A}$. 

%
%
\subsection{Infinitely many atoms}\label{SectionSpecial} The goal of this section is to establish Lemma \ref{Lem.InfinitelyManyAtoms}. We require the following auxiliary result.

\begin{lemma}\label{Lem.CouplingToInfinity}
Suppose that $\{M^N\}_{N\geq 1}$ is a sequence of point processes on $\mathbb{R}$ that converge weakly to a point process $M^{\infty}$. Assume that for any fixed $a \in 2\mathbb{Z}_{\geq 0}$, the sequence $p_N^a:=\mathbb{P}\left(M^N(\mathbb{R})\geq a \right)$ is decreasing in $N$, and $\mathbb{P}(M^{\infty}(\mathbb{R}) = \infty) = 1$. Then, $\mathbb{P}(M^{N}(\mathbb{R}) = \infty) = 1$ for all $N \in \mathbb{N}$.
\end{lemma}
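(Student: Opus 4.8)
The statement is a soft, purely measure-theoretic consequence of weak convergence together with the monotonicity assumption. The plan is to fix $N\in\mathbb{N}$ and show $\mathbb{P}(M^N(\mathbb{R})\geq a)=1$ for every $a\in 2\mathbb{Z}_{\geq 0}$; letting $a\to\infty$ and using continuity of measure then gives $\mathbb{P}(M^N(\mathbb{R})=\infty)=1$. Fix such an $a$. By the monotonicity hypothesis, $p_N^a$ is decreasing in $N$, hence $p_N^a\geq \lim_{M\to\infty}p_M^a=:p_\infty^a$. So it suffices to show $p_\infty^a=1$, i.e. that the ``tail'' probabilities do not lose mass in the limit.

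The key step is to compare $p_\infty^a$ with $\mathbb{P}(M^\infty(\mathbb{R})\geq a)$, which equals $1$ by hypothesis since $\{M^\infty(\mathbb{R})=\infty\}\subseteq\{M^\infty(\mathbb{R})\geq a\}$. The natural tool is the Portmanteau theorem for weak convergence of point processes (in the vague topology on the space of locally finite measures, or equivalently via convergence of the joint laws of $(M^N(B_1),\dots,M^N(B_k))$ for bounded Borel sets with boundary of measure zero). The subtle point is that $\{\mu:\mu(\mathbb{R})\geq a\}$ is not an open or closed set in the vague topology — mass can escape to infinity — so one cannot directly apply Portmanteau to all of $\mathbb{R}$. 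Instead I would first pass to a bounded window: for any $R>0$, the set $\{\mu : \mu([-R,R])\geq a\}$ is closed in the vague topology \emph{provided} $\mu(\{-R\})=\mu(\{R\})=0$, which holds for $M^\infty$-a.e.\ realization for all but countably many $R$ (choose such an $R$). Actually the cleaner route: the map $\mu\mapsto\mu((-R,R))$ is lower semicontinuous on the space of locally finite measures, so $\{\mu:\mu((-R,R))\geq a\}$ is open when $a$ is... hmm, more carefully, $\mu\mapsto\mu(U)$ is lower semicontinuous for open $U$, hence $\{\mu(U)>a-1\}=\{\mu(U)\geq a\}$ (for integer-valued $\mu$) is open. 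Thus by Portmanteau,
\[
\liminf_{N\to\infty}\mathbb{P}\big(M^N((-R,R))\geq a\big)\;\geq\;\mathbb{P}\big(M^\infty((-R,R))\geq a\big).
\]
Since $M^\infty(\mathbb{R})=\infty$ a.s., we have $\mathbb{P}(M^\infty((-R,R))\geq a)\to 1$ as $R\to\infty$, so given $\varepsilon>0$ we can fix $R$ with $\mathbb{P}(M^\infty((-R,R))\geq a)\geq 1-\varepsilon$. Then $\liminf_N p_N^a\geq\liminf_N\mathbb{P}(M^N((-R,R))\geq a)\geq 1-\varepsilon$. Hence $p_\infty^a=\lim_N p_N^a\geq 1-\varepsilon$, and since $\varepsilon$ was arbitrary, $p_\infty^a=1$.

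Combining: $p_N^a\geq p_\infty^a=1$ for every $N$ and every $a\in 2\mathbb{Z}_{\geq 0}$. Finally, $\{M^N(\mathbb{R})=\infty\}=\bigcap_{a\in 2\mathbb{Z}_{\geq 0}}\{M^N(\mathbb{R})\geq a\}$ is a decreasing intersection of events each of probability $1$, so $\mathbb{P}(M^N(\mathbb{R})=\infty)=1$. The only real obstacle is the topological care needed in applying Portmanteau — one must work with the \emph{open} window $(-R,R)$ and the lower semicontinuity of $\mu\mapsto\mu((-R,R))$ (so that the $\liminf$ inequality points the right way), rather than naively with $\mathbb{R}$ or with closed windows; everything else is routine. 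One should also remark at the outset why the restriction $a\in 2\mathbb{Z}_{\geq 0}$ is harmless: it is only used to match the hypothesis on $p_N^a$, and since $\{M^N(\mathbb{R})=\infty\}=\bigcap_{a\in 2\mathbb{Z}_{\geq 0}}\{M^N(\mathbb{R})\geq a\}$ already, no odd values are needed.
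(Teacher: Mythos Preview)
Your argument is correct: using lower semicontinuity of $\mu\mapsto\mu((-R,R))$ to get the Portmanteau liminf inequality, then pushing $R\to\infty$ via $\mathbb{P}(M^\infty(\mathbb{R})=\infty)=1$, and finally invoking the monotonicity of $p_N^a$ to conclude $p_N^a=1$, is exactly the natural soft argument. The paper does not spell out its own proof here but defers to \cite[Lemma 5.4]{DY25} (with $\mathbb{Z}_{\geq 0}$ replaced by $2\mathbb{Z}_{\geq 0}$), and your approach is the standard route that proof takes.
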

\begin{proof} The proof follows verbatim from that of \cite[Lemma 5.4]{DY25}, except that $\mathbb{Z}_{\geq 0}$ is replaced everywhere by $2 \mathbb{Z}_{\geq 0}$.
\end{proof}

With the above result in place, we turn to the proof of Lemma \ref{Lem.InfinitelyManyAtoms}.
\begin{proof}[Proof of Lemma \ref{Lem.InfinitelyManyAtoms}] 
The proof is very similar to the proof of \cite[Lemma 6.11]{DY25}.
Fix $t \in (0, \infty)$, and let $s_n = t + n$ for $n \in \mathbb{Z}_{\geq 0}$. We seek to apply Lemma \ref{Lem.CouplingToInfinity} to the sequence $\{M^{s_n;\infty}  \}_{ n \geq 0}$ as in Lemma \ref{Lem.PointProcessConvergenceVarpi}, and for clarity we split the proof into three steps.\\

{\bf \raggedleft Step 1.} We claim that for any fixed $a \in 2\mathbb{Z}_{\geq 0}$ and $n \in \mathbb{Z}_{\geq 0}$, we have 
\begin{equation}\label{eq.UOP1}
p_n^a \geq p_{n+1}^a, \mbox{ where } p_n^a :=   \mathbb{P}(M^{s_n; \infty}(\mathbb{R}) \geq a).
\end{equation}
We will establish \eqref{eq.UOP1} in the steps below. Here, we assume its validity and prove the lemma.  

By Lemma \ref{Lem.PointProcessConvergenceT} (b), as $n\rightarrow\infty$,  $M^{s_n; \infty}$ converges weakly to the Airy point process, which has infinitely many atoms almost surely, see e.g. \cite[Equation (7.11)]{dimitrov2024airy}. 
Combining the above facts with \eqref{eq.UOP1}, we know that the sequence $M^{s_n; \infty}$ satisfy the conditions of Lemma \ref{Lem.CouplingToInfinity}, hence $\mathbb{P}(M^{t; \infty}(\mathbb{R}) = \infty) = \mathbb{P}(M^{s_0; \infty} (\mathbb{R}) = \infty) = 1$ as desired.\\

{\bf \raggedleft Step 2.} In the rest of the proof we fix $a \in 2\mathbb{Z}_{\geq 0}$, $n \in \mathbb{Z}_{\geq 0}$ and show \eqref{eq.UOP1}. We can assume $p^a_{n+1} > 0$ and $a \geq 2$, since $p_n^0 = 1$. We will also fix a sequence $\varpi_N\in\mathbb{R}$ satisfying $\varpi_N\rightarrow\infty$ as $N\rightarrow\infty$.

For each $u,v\in\mathbb{Z}_{\geq0}$ and $x\in\mathbb{R}$, we apply \cite[Lemma 5.3]{DY25} to the sequence $M^{s_v;\mathrm{hs};\varpi_N}$ and conclude  
\begin{equation}\label{eq.UOP2}
\lim_{N \rightarrow \infty} \mathbb{P}\left( M^{s_v;\mathrm{hs};\varpi_N}([x, \infty)) \leq u\right) = \mathbb{P}\left( M^{s_v; \infty}([x, \infty)) \leq u\right) .
\end{equation}
Indeed, condition (1) of the lemma is satisfied by Lemma \ref{Lem.TightFromAbove}, condition (2) by Lemma \ref{Lem.PointProcessConvergenceVarpi} (b) and condition (3) by the fact that for each $x \in \mathbb{R}$, we have 
$$\mathbb{E}\left[M^{s_v;\infty}(\{x\})\right] = \int_{\{x\}} K^{\mathrm{hs};\infty}_{12}(s_v,y;s_v,y) dy = 0.$$

Fix $\varepsilon\in(0,1)$. Using $p^a_{n+1} > 0$ and the monotone convergence theorem, we can find $x_0\in\mathbb{R}$ with 
\begin{equation}\label{eq.UOP3}
\mathbb{P}\left(M^{s_{n+1}; \infty}([x_0,\infty)) \geq a\right) \geq (1- \varepsilon)p^a_{n+1} .
\end{equation}
We then use \eqref{eq.UOP2} and conclude that, for all large enough $N$,
\begin{equation}\label{eq.UOP4}
\mathbb{P}\left(M^{s_{n+1};\mathrm{hs};\varpi_N}([x_0,\infty)) \geq a\right) \geq (1- 2\varepsilon)p^a_{n+1} .
\end{equation}
We now claim that there exist $R \geq 0$ and $\hat{\varpi}\in\mathbb{R}$ depending on $x_0, t, n, a, \varepsilon$, such that for all $\varpi\geq\hat{\varpi}$,  
\begin{equation}\label{eq.UOP5}
\mathbb{P}\left(M^{s_{n};\mathrm{hs};\varpi}([x_0 -R ,\infty)) \geq a \big{\vert} M^{s_{n+1};\mathrm{hs};\varpi}([x_0  ,\infty)) \geq a\right) \geq 1- \varepsilon .
\end{equation}
We will establish \eqref{eq.UOP5} in the next step. Here, we assume its validity and prove \eqref{eq.UOP1}.\\

Combining \eqref{eq.UOP4} and \eqref{eq.UOP5} (taking $\varpi=\varpi_N$) we conclude that for all large enough $N$,
\begin{equation*}
\begin{split}
\mathbb{P}\left(M^{s_{n};\mathrm{hs};\varpi_N}([x_0 -R ,\infty)) \geq a\right)  \geq (1- \varepsilon)  (1- 2\varepsilon)p^a_{n+1}.
\end{split}
\end{equation*}
Taking $N \rightarrow \infty$ above, and using \eqref{eq.UOP2} we conclude that
\begin{equation*}
\begin{split}
&p_n^a = \mathbb{P}\left( M^{s_{n}; \infty}(\mathbb{R}) \geq a\right) \geq \mathbb{P}\left( M^{s_{n}; \infty}([x_0 -R ,\infty)) \geq a\right) \\
&= \lim_{N \rightarrow \infty}  \mathbb{P}\left( M^{s_{n};\mathrm{hs}; \varpi_N}([x_0 -R ,\infty)) \geq a\right) \geq (1- \varepsilon) (1- 2\varepsilon)p^a_{n+1}.
\end{split}
\end{equation*}
Taking $\varepsilon \rightarrow 0$ above, we conclude the proof of \eqref{eq.UOP1}.\\

{\bf \raggedleft Step 3.} In this final step we show \eqref{eq.UOP5}. By the definition of $M^{s_{n};\mathrm{hs};\varpi}$, we only need to prove that for any fixed $t\in(0,\infty)$, $a\in2\mathbb{Z}_{\geq1}$, $n\in\mathbb{Z}_{\geq0}$, $x_0\in\mathbb{R}$, and $\varepsilon\in(0,1)$, there exist $R\geq 0$ and $\hat{\varpi}\in\mathbb{R}$, such that for all $\varpi\geq \hat{\varpi}$, we have
\begin{equation}\label{eq.UOP6}
\mathbb{P}\left( \mathcal{A}_a^{\mathrm{hs};\varpi}(s_n)\geq x_0-R \big{\vert }  \mathcal{A}_a^{\mathrm{hs};\varpi}(s_{n+1})\geq x_0 \right)  \geq 1- \varepsilon.
\end{equation}
In view of (\ref{eq:Parabolic HSA}), it suffices to show that for any $x_1\in\mathbb{R}$, there exist $M\geq 0$ and $\hat{\varpi}\in\mathbb{R}$, such that for all $\varpi\geq\hat{\varpi}$, we have
\begin{equation}\label{eq.UOP7}
\mathbb{P}\left( \mathcal{L}_a^{\mathrm{hs};\varpi}(s_n)\geq x_1-M \big{\vert }  \mathcal{L}_a^{\mathrm{hs};\varpi}(s_{n+1})\geq x_1 \right)  \geq 1- \varepsilon.
\end{equation}
Indeed, using \eqref{eq:Parabolic HSA}, \eqref{eq.UOP7} above is equivalent to (\ref{eq.UOP6}) upon setting $x_0 = \sqrt{2}x_1 + s_{n+1}^2$ and $R = \sqrt{2}M + s_{n+1}^2 - s_n^2$.\\

In the rest of the proof we show \eqref{eq.UOP7}. We choose $M>0$ and $\varpi_0\in\mathbb{R}$ according to Lemma \ref{Lem.NoLowMin}, where we let $k=a/2$ and $b=s_{n+1}$, and we prove \eqref{eq.UOP7} for this choice of $M$ and $\hat{\varpi} := \sqrt{2}\varpi_0$.

For simplicity, we will denote $\mathcal{L}_i^{\mathrm{hs};\varpi}$ as $L_i$ for $i\in\mathbb{N}$.
We also denote $\vec{\mu}\in\mathbb{R}^a$ with $\mu_i=(-1)^i\sqrt{2}\varpi$ for $i\in\llbracket1,a\rrbracket$, $\vec{y} = (y_1, \dots, y_a) = (L_1(s_{n+1}), \dots L_a(s_{n+1}))\in\weyl_a$ and $g(s) = L_{a+1}(s)$ for $s \in [0,s_{n+1}]$. 
Let $E=\{ L_a(s_{n+1})\geq x_1\}$ and 
\[
\mathcal{F}=\sigma\left(\left\{ L_i(s_{n+1})\mbox{ for }i\in\llbracket1,a\rrbracket \mbox{ and }  L_{a+1}(s)\mbox{ for } s\in[0,s_{n+1}]\right\}\right).
\]
We have the following tower of inequalities
\begin{equation}\label{eq.UOP9}
\begin{split}
&\mathbb{P} \left( \left\{L_a(s_n)\geq x_1-M \right\}\cap E  \right) =\mathbb{E} \left[ {\bf 1}_{E } \cdot \mathbb{E}  \left[ {\bf 1}\left\{L_a(s_n)\geq x_1-M \right\} \vert \mathcal{F} \right] \right] \\
& = \mathbb{E} \left[ {\bf 1}_{E } \cdot \mathbb{P}_{\operatorname{avoid}}^{b,\vec{y},\vec{\mu},g}  \left(\mathcal{Q}_{a}(s_n)\geq x_1-M  \right) \right] \\
& \geq  \mathbb{E} \left[ {\bf 1}_{E } \cdot \mathbb{P}_{\operatorname{avoid}}^{b,\vec{y},\vec{\mu},g}  \left(\mathcal{Q}_{a}(s_n)\geq L_a(s_{n+1})-M \right) \right] \\
& = \mathbb{E} \left[ {\bf 1}_{E } \cdot \mathbb{P}_{\operatorname{avoid}}^{b,\vec{y},\vec{\mu},g}  \left(\mathcal{Q}_{a}(s_n)\geq y_a-M   \right) \right] \\
&\geq (1-\varepsilon) \cdot \mathbb{P}(E ).
\end{split}
\end{equation} 
We mention that the first equality follows from the tower property for conditional expectation, and $E\in\mathcal{F}$; the second equality follows from the half-space Brownian Gibbs property with parameters $\vec{\mu}$ for the line ensemble $\mathcal{L}^{\mathrm{hs};\varpi}=\{L_i\}_{i\ge1}$ from Proposition \ref{Prop.HSAiry}; the third inequality follows from the fact that on $E$ we have $ L_a(s_{n+1})\geq x_1$; the fourth equality follows from the definition of $\vec{y}$, in particular $y_a=L_a(s_{n+1})$; and the fifth inequality follows from Lemma \ref{Lem.NoLowMin} with $t=s_n$. 

Equation \eqref{eq.UOP9} implies \eqref{eq.UOP7}, and we conclude the proof of the lemma.
\end{proof}

%
%
\section{Distribution at the origin}\label{Section6} 
The goal of this section is to establish Theorem \ref{Thm.OriginDescription}. The key result we require for this purpose is contained in the following lemma.

\begin{lemma}\label{Lem.OriginMeasure} Define the random measure $M$ on $\mathbb{R}$ through
\begin{equation}\label{Eq.OriginMeasure}
M(A) = \sum_{i \geq 1}{\bf 1}\{\hsai_i(0) \in A\}.
\end{equation}
Then, $M \overset{d}{=} 2 M^{\mathrm{GSE}}$, where $M^{\mathrm{GSE}}$ is defined in Lemma \ref{Lem.SAO}.
\end{lemma}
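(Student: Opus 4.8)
\textbf{Proof proposal for Lemma \ref{Lem.OriginMeasure}.}

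The plan is to realize the point process $M$ at the origin as a limit of the Pfaffian point processes $M^{t_N;\infty}$ formed by $\{\hsai_i(t_N)\}_{i\geq1}$ as $t_N\downarrow 0$, and identify the limit with $2M^{\mathrm{GSE}}$. First I would note that, by Theorem \ref{Thm.Convergence}, $\hsai$ is a bona fide ordered line ensemble with a.s.\ continuous curves, so that $\hsai_i(t_N)\to\hsai_i(0)$ a.s.\ for each $i$; combined with the ordering and the finiteness of $M^{t_N;\infty}([a,\infty))$ in expectation (which follows from Theorem \ref{Thm.PfaffianStructure} and the computation in the proof of Lemma \ref{Lem.TightFromAbove}, uniformly as $t_N$ stays bounded), this will give $M^{t_N;\infty}\Rightarrow M$ as $t_N\downarrow 0$. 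Thus it suffices to show $M^{t_N;\infty}\Rightarrow 2M^{\mathrm{GSE}}$.

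The main obstacle, as the authors flag in Section \ref{Section1.3}, is that pointwise convergence of correlation kernels is unavailable here: the limit is not Pfaffian but a \emph{doubling} of the Pfaffian process $M^{\mathrm{GSE}}$, and at the level of formulas the term $\hsri_{22}$ in (\ref{Eq.DefR22I}) is singular as $s+t\to 0$ — it converges only as a distribution (indeed, as $(s+t)\to 0$, the Gaussian factor $(2\pi^{1/2}(t+s)^{-3/2})\exp(-(\cdots)^2/(4(t+s)))$ times the linear prefactor behaves like a derivative of a delta function in the variable $y-t^2-x+s^2$). So instead of the kernels I would work with the joint factorial moments of the point processes. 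For a Pfaffian point process with kernel $K$ and reference measure $\mathrm{Leb}$, the $n$-th factorial moment over disjoint Borel sets $A_1,\dots,A_n$ is $\int_{A_1\times\cdots\times A_n}\mathrm{Pf}\big[K(x_i,x_j)\big]_{i,j=1}^n\,dx_1\cdots dx_n$; for $M^{t_N;\infty}$ this is $\int \mathrm{Pf}[\hski(t_N,x_i;t_N,x_j)]\,d\vec x$. The extra integration in the $x_i$ variables smooths out the singular $\hsri_{22}$ contribution, so I would show that each such integral converges as $t_N\downarrow0$, and identify the limit with the corresponding factorial moment of $2M^{\mathrm{GSE}}$.

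Concretely, I would: (i) expand the Pfaffian of the $2n\times 2n$ antisymmetric block matrix built from $\hski$ into its sum over perfect matchings, isolating the terms that contain one or more $\hski_{22}$ entries (hence the singular $\hsri_{22}$); (ii) for the regular blocks $\hski_{11},\hski_{12},\hski_{21}$, use dominated convergence — with bounds analogous to (\ref{Eq.HBound}) and the rational-function estimates in the proof of Lemma \ref{Lem.KernelLimit} — to pass $t_N\to0$, noting $\hski_{11}(0,x;0,y)=\kgse_{11}(x,y)$, $\hski_{12}(0,x;0,y)=\kgse_{12}(x,y)$ after setting $s=t=0$ in (\ref{Eq.DefII}) (the $R$-term $\hsri_{12}=R$ vanishes at $s=t=0$); (iii) for the singular $\hsri_{22}$ entries, pair each such entry with the adjacent integration variable and show, via the Gaussian-to-delta computation referenced in the proof of Lemma \ref{Lem.KernelLimit} (using the Fourier-transform identity there and integration by parts in the relevant $x_i$), that $\int \hsri_{22}(t_N,x;t_N,y)\,\varphi(x,y)\,dx\,dy$ converges as $t_N\to0$ to an expression one recognizes, after reorganizing, as producing the \emph{doubling}: each GSE atom at $-2^{2/3}\Lambda_k$ appears with multiplicity two, so the factorial moments of $M^{t_N;\infty}$ converge to those of $2M^{\mathrm{GSE}}$; (iv) finally, check that $M^{\mathrm{GSE}}$ (equivalently the stochastic Airy spectrum with $\beta=4$) has sufficiently light tails that its factorial moments determine its law — this follows from Lemma \ref{Lem.SAO}, the Pfaffian structure, and the uniqueness input \cite[Corollary 2.20]{dimitrov2024airy}, \cite[Proposition 5.8(3)]{DY25} — so that convergence of all factorial moments upgrades to weak convergence $M^{t_N;\infty}\Rightarrow 2M^{\mathrm{GSE}}$. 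Combining with step one yields $M\overset{d}{=}2M^{\mathrm{GSE}}$. The delicate point throughout is the bookkeeping in step (iii): controlling the matchings with several $\hsri_{22}$ entries simultaneously and verifying that the distributional limit assembles exactly into a doubled Pfaffian rather than something else.
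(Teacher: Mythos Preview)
Your proposal follows the same route as the paper: reduce to $M^{t_N;\infty}\Rightarrow 2M^{\mathrm{GSE}}$ and establish this via convergence of joint factorial moments, treating the singular $\hsri_{22}$ term distributionally. Two corrections and one real gap. First, in (ii) the regular entries do not land on $\kgse$ exactly: comparing (\ref{Eq.DefII}) at $s=t\to 0$ with (\ref{Def.Kgse}) gives $K^\infty_{11}=\kgse_{11}$ but $K^\infty_{12}=2\kgse_{12}$ and $K^\infty_{22}=4\kgse_{22}$ (this is (\ref{Eq.LimitKernelToGSE})); those extra factors of $2$ are precisely what produce the doubling, so getting them wrong would break the combinatorics. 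Second, your order in the reduction is reversed from the paper's: the paper first proves $M^N\Rightarrow 2M^{\mathrm{GSE}}$ and uses that to deduce local finiteness of $M$ at the origin (Step 1 of Section \ref{Section6.3}); your proposal to show $M^N\Rightarrow M$ first via uniform first-moment bounds can be made to work but needs more care than you indicate.

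The substantive gap is in (iii). ``Integration by parts'' is not the mechanism. The paper writes $\hat K^N = K^N + \Delta^N$ with $\Delta^N$ carrying only $\delta_N$ in the $22$ slot, expands $\mathrm{Pf}[\hat K^N]$ via the Pfaffian sum formula (Lemma \ref{Lem.SumPfaffian}), and then shows by Taylor expansion in the paired variables (Lemma \ref{Lem.LimitDerivatives}) that a product $\delta_N(x_{u_1},x_{v_1})\cdots\delta_N(x_{u_r},x_{v_r})$ acts in the limit as the differential operator $\prod_j(\partial_{x_{v_j}}-\partial_{x_{u_j}})$ on the remaining smooth Pfaffian, evaluated on the diagonal $x_{v_j}=x_{u_j}$. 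The step you are missing is the algebraic identity (\ref{Eq.KeyPfaffianEquality}): using the derivative relations (\ref{Eq.DerivativeRlations}) between $K^\infty_{11}$, $K^\infty_{12}$ and $\kgse_{12},\kgse_{21},\kgse_{22}$, this differentiated Pfaffian of $K^\infty$ equals, up to an explicit sign and a power of $2$, an ordinary Pfaffian of $\kgse$ in $n-r$ variables. Summing over all matchings with $r$ singular pairs then reproduces exactly the binomial identity (\ref{Eq.FactorialMomentsRelation}) for the factorial moments of a doubled point process. Without this Pfaffian-to-Pfaffian identity, your bookkeeping cannot close into the moments of $2M^{\mathrm{GSE}}$.
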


We prove Lemma \ref{Lem.OriginMeasure} in Section \ref{Section6.3}. We now use it to quickly deduce Theorem \ref{Thm.OriginDescription}.
\begin{proof}[Proof of Theorem \ref{Thm.OriginDescription}] By Lemma \ref{Lem.OriginMeasure} we have for any $k_1, \dots, k_n \in \mathbb{N}$ and $a_1, \dots, a_n \in \mathbb{R}$ that
\begin{equation}\label{Eq.CDF1}
\mathbb{P}\left( \cap_{i = 1}^n \{M(a_i, \infty) < k_i\} \right) =  \mathbb{P}\left( \cap_{i = 1}^n \{2M^{\mathrm{GSE}}(a_i, \infty) < k_i\} \right).
\end{equation}
If we now set $(X_1, X_2, \dots) = (-2^{2/3}\Lambda_0, -2^{2/3}\Lambda_0, -2^{2/3}\Lambda_1, -2^{2/3}\Lambda_1, \dots)$ as in Lemma \ref{Lem.SAO}, then we have $X_1 \geq X_2 \geq \cdots$, and 
$$2M^{\mathrm{GSE}}(A) = \sum_{i \geq 1}{\bf 1}\{X_i \in A\}.$$
Combining the latter with $\hsai_1(0) \geq \hsai_2(0) \geq \cdots$ (by Theorem \ref{Thm.Convergence}), we obtain that (\ref{Eq.CDF1}) is equivalent to
\begin{equation*}
\mathbb{P}\left( \cap_{i = 1}^n \{ \hsai_{k_i}(0) \leq a_i \} \right) =  \mathbb{P}\left( \cap_{i = 1}^n \{X_{k_i} \leq a_i\} \right),
\end{equation*}
which implies the statement of the theorem.
\end{proof}

Let $t_N \rightarrow 0+$ be any sequence, and define the random measures $M^N$ on $\mathbb{R}$ by
\begin{equation}\label{Eq.FormedHSA}
M^N(A) = \sum_{i \geq 1} {\bf 1}\{\hsai_i(t_N) \in A \}.
\end{equation}
The way we establish Lemma \ref{Lem.OriginMeasure} is by showing that 
\begin{enumerate}
  \item $M^N \Rightarrow 2M^{\mathrm{GSE}}$,
  \item $M^N \rightarrow M$ almost surely.
\end{enumerate}
As we will see in Section \ref{Section6.3}, the second point follows directly from the first and the continuity of $\hsai$ already established in Theorem \ref{Thm.Convergence}. Hence, the first point requires most of the effort.

From Theorem \ref{Thm.PfaffianStructure} and \cite[Lemma 5.13]{DY25}, we know that the random measures $M^N$ are Pfaffian point processes on $\mathbb{R}$ with reference measure $\mathrm{Leb}$ and correlation kernel given by $\hat{K}^N(x,y) = \hski(t_N,x; t_N,y)$, where $\hski$ is as in (\ref{Eq.DefK}). When $t_N \in (0,1/2]$, we can write 
\begin{equation}\label{Eq.SplitKernel}
\hat{K}^N(x,y) = K^N(x,y) + \Delta^N(x,y) ,
\end{equation}
where
\begin{equation}\label{Eq.DefKN}
\begin{split}
K^N_{11}(x,y) = &\frac{1}{(2\pi \im)^2} \int_{\mathcal{C}_{1}^{\pi/3}}dz \int_{\mathcal{C}_{1}^{\pi/3}} dw \frac{(z - w)H(z,x;w,y)}{4(z + w + 2t_N)(z+t_N)(w+ t_N)}, \\
K^N_{12}(x,y) = -K^N_{21}(y,x) = &\frac{1}{(2\pi \im)^2} \int_{\mathcal{C}_{1}^{\pi/3}}dz \int_{\mathcal{C}_{1}^{\pi/3}} dw \frac{(z - w) H(z,x;w,y)}{2(z+ t_N)(z + w)}, \\
K^N_{22}(x,y) = &\frac{1}{(2\pi \im)^2} \int_{\mathcal{C}_{1}^{\pi/3}}dz \int_{\mathcal{C}_{1}^{\pi/3}} dw \frac{(z - w )H(z,x;w,y)}{z + w - 2t_N},
\end{split}
\end{equation} 
and 
\begin{equation}\label{Eq.DefDeltaN}
\Delta^N(x,y) = \begin{bmatrix} 0 & 0 \\ 0 & \delta_N(x,y) \end{bmatrix} \mbox{ with } \delta_N(x,y) = e^{2t_N^3/3 - t_N(x+y)} \cdot \frac{ (y - x) }{2\pi^{1/2} (2t_N)^{3/2} } \cdot \exp \left(-\frac{(y - x)^2}{8t_N}\right).
\end{equation}
We mention that (\ref{Eq.SplitKernel}) is obtained directly from (\ref{Eq.DefK}) upon setting $s = t = t_N$ and deforming the contours in the definition of $\hsii_{ij}$ to $\mathcal{C}_1$. The reason we do not cross any poles is due to our assumption $t_N \in (0,1/2]$.\\
 
The standard approach to proving weak convergence of Pfaffian point processes is to establish pointwise convergence of their kernels. This approach cannot work here, as it would imply that the limiting point process is itself Pfaffian, and hence simple, which is not the case for $2M^{\mathrm{GSE}}$. At the level of formulas, this approach fails due to the singular nature of $\delta_N$, and so we need to modify it. 

The way we prove $M^N \Rightarrow 2M^{\mathrm{GSE}}$ is by showing that the {\em joint factorial moments} of $M^N$ converge to those of $2M^{\mathrm{GSE}}$. These can be expressed as finite sums of integrals involving products of $K_{ij}^N$ and $\delta_N$. By integrating $\delta_N$ against smooth functions rather than considering $\delta_N$ itself, we obtain smoother expressions that do converge.\\

The rest of the section is organized as follows. In Section \ref{Section6.1} we compute the limit of a sequence of integrals involving the product of several $\delta_N$'s with smooth functions, see Lemma \ref{Lem.LimitDerivatives}. Section \ref{Section6.2} uses this result to show that the factorial moments of $M^N$ converge to those of $2M^{\mathrm{GSE}}$. Finally, Section \ref{Section6.3} contains the proof of Lemma \ref{Lem.OriginMeasure}.

%
%
\subsection{The asymptotic action of $\delta_N$}\label{Section6.1} We begin by introducing some useful definitions and notation. Within this section we use the following {\em multi-index notation}:
\begin{itemize}
\item A multi-index is $\alpha = (\alpha_1, \dots, \alpha_n) \in \mathbb{Z}_{\geq 0}$,
\item $|\alpha| = \alpha_1 + \cdots + \alpha_n$,
\item $\alpha! = \alpha_1! \cdots \alpha_n!$,
\item for $x = (x_1, \dots, x_n)$ and $a = (a_1, \dots, a_n)$, we write $(x-a)^{\alpha} = (x_1-a_1)^{\alpha_1} \cdots (x_n - a_n)^{\alpha_n}$,
\item $D^{\alpha}f(a) = \frac{\partial^{|\alpha|}f}{\partial_{x_1}^{\alpha_1} \cdots \partial_{x_n}^{\alpha_n}} (a)$,
\item $C^{k}(\mathbb{R}^n;\mathbb{C})$ is the set of $k$-times continuously differentiable functions $f: \mathbb{R}^n \rightarrow \mathbb{C}$,
\item for a Borel set $A \subseteq \mathbb{R}^n$, we write $\int_Ady$ to mean $\int_A dy_1dy_2 \cdots dy_n$,
\item if $J_1, \dots, J_m$ are pairwise disjoint finite index sets with $J_k = \{j^k_1, \dots, j^k_{r_k}\}$ and $A_1, \dots, A_m \subseteq \mathbb{R}$ are Borel, we write $\int_{A_1^{J_1} \times \cdots \times A_m^{J_m}}dy$ to mean 
$$\int_{A_1} dy_{j^1_1} \int_{A_1} dy_{j^1_2} \cdots \int_{A_1} dy_{j^1_{r_1}} \int_{A_2} dy_{j^2_1} \int_{A_2} dy_{j^2_2} \cdots \int_{A_2} dy_{j^2_{r_2}} \cdots \int_{A_m} dy_{j^m_1} \int_{A_m} dy_{j^m_2} \cdots \int_{A_m} dy_{j^m_{r_m}} .$$  
\end{itemize}
We also summarize some notation in the following definition.
\begin{definition}\label{Def.LimitDerivatives} Fix $m \in \mathbb{N}$, and integers $n_1, \dots, n_m \in \mathbb{N}$. Define the sets
$$I_1 = \{1, \dots, n_1\} \mbox{ and } I_{i} = \{n_1 + \cdots + n_{i-1} + 1, \dots, n_{1} + \cdots + n_i\} \mbox{ for $i = 2, \dots, m$}.$$
We further set $\vec{n} = (n_1, \dots, n_m)$ and $n = n_1 + \cdots + n_m$. Suppose $r \in \mathbb{N}$ with $n \geq 2r$. For any pairwise distinct integers $u_1, \dots, u_r, v_1, \dots, v_r \in \{1, \dots, n\}$, we let $\#\mathrm{tra}$ denote the number of pairs of indices $1\leq i < j \leq 2r$, such that $w_i > w_j$ where $w$ is the word
\begin{equation}\label{Eq.Word}
 w = (w_1, \dots, w_{2r}) = (u_1,v_1, u_2, v_2, \dots, u_r,v_r).
 \end{equation}
We also define the {\em sign} of $w$, written $(-1)^w$, by setting it to $(-1)^{\#\mathrm{tra}}$. 

We say that $u_1, \dots, u_r, v_1, \dots, v_r$ are {\em $\vec{n}$-compatible} if we can find pairwise disjoint index sets $J_1, \dots, J_{m} \subseteq \{1, \dots, n\}$, such that
$$|J_i| = r_i, \hspace{2mm}  u_j, v_j \in I_i \mbox{ for } i = 1, \dots, m \mbox{ and } j \in J_i, \mbox{ and } r_1 + \cdots + r_m = r.$$
Notice that if $u_1, \dots, u_r, v_1, \dots, v_r$ are $\vec{n}$-compatible, then $n_i \geq 2r_i$ for all $i = 1, \dots, m$. In addition, we note that $\vec{r} = (r_1, \dots, r_m)$ is uniquely determined by $u_1, \dots, u_r, v_1, \dots, v_r$, and we refer to it as their {\em profile}. We also define $I^u_k = \{u_j: j \in J_k\}$, $I^{v}_k = \{v_j: j \in J_k\}$ and $I^{vc}_k = I_k \setminus I^{v}_k$ for $k = 1, \dots, m$.  
\end{definition}

With the above notation in place, we now turn to the main result of this section.
\begin{lemma}\label{Lem.LimitDerivatives} Assume the same notation as in Definition \ref{Def.LimitDerivatives}. Suppose $t_N \in (0,1/2]$ is a sequence such that $\lim_{N \rightarrow \infty} t_N = 0$ and let  $\delta_N$ be as in (\ref{Eq.DefDeltaN}).
Suppose that $f_N \in C^{r+1}(\mathbb{R}^{n}; \mathbb{C})$ is a sequence such that for some $f_{\infty} \in C^{r+1}(\mathbb{R}^{n}; \mathbb{C})$
\begin{equation}\label{Eq.LimitSmoothFunc}
\lim_{N \rightarrow \infty} D^{\alpha}f_N(x) = D^{\alpha}f_{\infty}(x),
\end{equation}
for all $x \in \mathbb{R}^{n}$ and $|\alpha| \leq r+1$. Suppose in addition that we have $m$ pairwise disjoint intervals $[a_i, b_i] \subset \mathbb{R}$ and set $A = \min_{i = 1, \dots, m} a_i$, $B = \max_{i = 1, \dots, m} b_i$. We assume that there is a constant $C(n,r, A, B) > 0$, such that for all $x\in [A,B]^{n}$, $|\alpha| \leq r+1$ and $N \in \mathbb{N}$
\begin{equation}\label{Eq.SmoothFuncBound}
\left| D^{\alpha} f_N(x) \right| \leq C(n,r,A,B).
\end{equation}
If $u_1, \dots, u_r, v_1, \dots, v_r$ are not $\vec{n}$-compatible, then     
\begin{equation}\label{Eq.SmoothFuncVanish}
 \int_{[a_1, b_1]^{n_1} \times \cdots \times [a_m, b_m]^{n_m}}dx \left| \delta_N(x_{u_1}, x_{v_1}) \cdots \delta_N(x_{u_r}, x_{v_r}) f_N(x) \right| \leq C_1e^{-c_1/t_N},
\end{equation}
for some positive $C_1, c_1 > 0$, depending on $n, r, m, \{a_i\}_{i = 1}^m, \{b_i\}_{i = 1}^m$ and $C(n,r,A,B)$.

If $u_1, \dots, u_r, v_1, \dots, v_r$ are $\vec{n}$-compatible, then     
\begin{equation}\label{Eq.IntLim}
\begin{split}
&\lim_{N \rightarrow \infty} \int_{[a_1, b_1]^{n_1} \times \cdots \times [a_m, b_m]^{n_m}}dx \delta_N(x_{u_1}, x_{v_1}) \cdots \delta_N(x_{u_r}, x_{v_r}) f_N(x)  \\
& = \int_{[a_{1}, b_{1}]^{I_1^{vc}} \times \cdots  \times [a_m, b_m]^{I_m^{vc}}} dy \left[(\partial_{x_{v_1}} - \partial_{x_{u_1}}) \cdots (\partial_{x_{v_r}} - \partial_{x_{u_r}})  f_\infty \right](\bar{y}) ,
\end{split}
\end{equation}
where $\bar{y} = (\bar{y}_1, \dots, \bar{y}_{n}) \in \mathbb{R}^{n}$ is obtained from $y$ by setting $\bar{y}_{j} = y_j$ for $j \not \in \{v_1, \dots, v_r\}$ and $\bar{y}_{v_i} = y_{u_i}$ for $i = 1, \dots, r$.
\end{lemma}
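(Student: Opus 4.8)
\textbf{Proof plan for Lemma \ref{Lem.LimitDerivatives}.}

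The plan is to reduce everything to a single scalar fact about the kernel $\delta_N$: after changing variables to put one argument as a ``center'' and the other as a small ``difference'', $\delta_N$ acts like (a constant times) the derivative of a Gaussian, and hence against a smooth test function it converges to a derivative evaluated on the diagonal. Concretely, write $\delta_N(u,v) = e^{2t_N^3/3 - t_N(u+v)} \cdot \tfrac{v-u}{2\pi^{1/2}(2t_N)^{3/2}}\exp(-(v-u)^2/(8t_N))$, and note $\tfrac{v-u}{2\pi^{1/2}(2t_N)^{3/2}}\exp(-(v-u)^2/(8t_N)) = -2 \partial_v \left[ \tfrac{1}{\sqrt{2\pi (2t_N)}}\exp(-(v-u)^2/(8t_N))\right]$, i.e.\ $\delta_N(u,v)$ is, up to the harmless prefactor $e^{2t_N^3/3 - t_N(u+v)} \to 1$, minus twice the $v$-derivative of a heat kernel $p_{2t_N}(v-u)$ in the variable $v$. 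Integrating by parts in $v_i$ and letting $t_N \to 0$ turns each factor $\delta_N(x_{u_i},x_{v_i})$ into the operator $-2\cdot(-\tfrac12)(\partial_{v_i} - \partial_{u_i})$ acting on $f_\infty$ restricted to the diagonal $x_{v_i} = x_{u_i}$; the sign bookkeeping is such that the net operator is exactly $(\partial_{x_{v_i}} - \partial_{x_{u_i}})$ as in \eqref{Eq.IntLim}. (Here one uses that $p_{2t_N}$ is an approximate identity and the $C^{r+1}$ regularity with the uniform bound \eqref{Eq.SmoothFuncBound} to justify passing the limit inside.)

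The first step is the \emph{combinatorial/geometric reduction}: the product $\delta_N(x_{u_1},x_{v_1})\cdots\delta_N(x_{u_r},x_{v_r})$ only couples the coordinates appearing among $u_1,\dots,u_r,v_1,\dots,v_r$, and each factor concentrates the pair $(x_{u_i},x_{v_i})$ near the diagonal. If some pair $(u_i,v_i)$ has its two indices in \emph{different} blocks $I_j, I_{j'}$ (i.e.\ the $u$'s and $v$'s are not $\vec n$-compatible), then $x_{u_i}\in[a_j,b_j]$ and $x_{v_i}\in[a_{j'},b_{j'}]$ with $[a_j,b_j]\cap[a_{j'},b_{j'}]=\emptyset$, so $|x_{u_i}-x_{v_i}|\geq c>0$; then $|\delta_N(x_{u_i},x_{v_i})| \leq C e^{-c^2/(8t_N)}$ uniformly, and together with the uniform bound on $f_N$ and the trivial $L^1$ bound $\int |\delta_N(a,b)|\,db \leq C$ (from the Gaussian), this gives \eqref{Eq.SmoothFuncVanish} with any $c_1 < c^2/8$. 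The case where no single pair straddles two blocks but the index sets still fail to be $\vec n$-compatible cannot occur by the very definition of $\vec n$-compatibility, so this dichotomy is complete.

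The second step, the $\vec n$-compatible case, is where the analysis lives. Here each pair $(u_i,v_i)$ lies inside one block, the pairs are disjoint as sets of indices, and I would: (i) fix the coordinates not appearing in any pair and not equal to any $v_i$ — call these the ``free'' variables indexed by $\cup_k I_k^{vc}$ together with the $u_i$'s — and Fubini to isolate the $x_{v_1},\dots,x_{v_r}$ integrations; (ii) in each $x_{v_i}$ integral substitute $x_{v_i} = x_{u_i} + s_i$, extend the $s_i$-integral to all of $\mathbb{R}$ at a cost $O(e^{-c/t_N})$ (using that $[a_j,b_j]$ has positive length and the Gaussian tail), and use the heat-kernel-derivative identity above plus integration by parts in $s_i$ (boundary terms are exponentially small by the same Gaussian tail) to rewrite the integrand as $p_{2t_N}(s_i)$ times $\tfrac{\partial}{\partial s_i}$ of (the prefactor times) $f_N$; (iii) apply dominated convergence — legitimate because of \eqref{Eq.LimitSmoothFunc}, \eqref{Eq.SmoothFuncBound} with $|\alpha|\le r+1 \ge r$, and the fact that the $s_i$'s are coupled only through $f_N$ and its at most $r$-th order derivatives — to send $t_N\to0$, collapsing each $p_{2t_N}(s_i)$ to $\delta_0(s_i)$ and setting $x_{v_i}=x_{u_i}$. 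Collecting factors, each $\delta_N(x_{u_i},x_{v_i})$ contributes the operator $(\partial_{x_{v_i}}-\partial_{x_{u_i}})$ (the $-2$ from the kernel and the $-\tfrac12$ from writing the Gaussian-derivative cancel, and the $x_{v_i}$-derivative minus the $x_{u_i}$-derivative arises from differentiating $f_N(\dots,x_{u_i}+s_i,\dots)$ in $s_i$ and then restoring symmetric notation), and the remaining integral is over the free variables $y$ indexed by $\cup_k I_k^{vc}$ — exactly the right-hand side of \eqref{Eq.IntLim}.

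\textbf{Main obstacle.} The delicate point is the order of operations in step (ii)--(iii) when several pairs share a block: the variables $x_{u_1},\dots,x_{u_r}$ that serve as ``centers'' are themselves integration variables, so one must be careful that after the substitutions $x_{v_i}=x_{u_i}+s_i$ the domain of the $x_{u_i}$-integral and the cross-dependence through $f_N$ are handled consistently, and that the integration-by-parts in $s_i$ does not produce boundary contributions that fail to be exponentially small near the corners of the box $[a_j,b_j]^{n_j}$. The clean way around this is to do all $r$ substitutions and domain-extensions \emph{first} (each producing an $O(e^{-c/t_N})$ error), so that the $s_i$ integrals all run over $\mathbb{R}$ with no boundary, then integrate by parts and take the limit; the uniform $C^{r+1}$ bound \eqref{Eq.SmoothFuncBound} is exactly what makes the dominated-convergence step uniform in the remaining (compactly supported) variables. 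The bookkeeping of which derivative acts where, and checking that one never needs more than $r$ derivatives of $f_N$ at the limit (so $C^{r+1}$, not higher, suffices — the extra $+1$ being consumed by the single integration by parts per pair before passing to the limit), is the remaining routine-but-nontrivial part.
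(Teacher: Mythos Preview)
The non-compatible case is essentially the paper's argument, though your claimed ``trivial $L^1$ bound $\int|\delta_N(a,b)|\,db\le C$'' is false: the $L^1$ norm is $\sim t_N^{-1/2}$. Use the pointwise bound $|\delta_N|\le C t_N^{-3/2}$ instead (as the paper does); the single factor with $|x_{u_k}-x_{v_k}|\ge\epsilon$ gives $e^{-\epsilon^2/(8t_N)}$, which beats the $t_N^{-3r/2}$.

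The compatible case has a real gap. After substituting $x_{v_i}=x_{u_i}+s_i$, the $s_i$-domain is $[a_j-x_{u_i},\,b_j-x_{u_i}]$, and when $x_{u_i}$ is near the boundary of $[a_j,b_j]$ one endpoint is near $0$. Since $|\delta_N(x_{u_i},x_{u_i}+s)|\sim |s|\,t_N^{-3/2}e^{-s^2/(8t_N)}$ has $L^1$ mass $\sim t_N^{-1/2}$ on any interval containing $0$, the extension to $\mathbb{R}$ is \emph{not} $O(e^{-c/t_N})$; it is not even $o(1)$ after the $x_{u_i}$-integration. Equivalently, if you integrate by parts on the finite interval, the boundary terms (e.g.\ $p_{4t_N}(b_j-x_{u_i})\cdot f_N$) integrate over $x_{u_i}$ to something of order $1$. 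In the $r=1$ case these boundary contributions come out to $f_\infty(a,a)-f_\infty(b,b)$, and it is precisely this that corrects the main term $2\int\partial_{x_v}f_\infty$ into the desired $\int(\partial_{x_v}-\partial_{x_u})f_\infty$. Your sentence explaining how $\partial_{x_{v_i}}-\partial_{x_{u_i}}$ ``arises from differentiating $f_N(\dots,x_{u_i}+s_i,\dots)$ in $s_i$'' is not correct: that derivative is only $\partial_{x_{v_i}}f_N$. For $r\ge 2$, tracking the nested boundary contributions correctly becomes a nontrivial combinatorial bookkeeping exercise that your plan does not address.

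The paper sidesteps all of this by first \emph{antisymmetrizing}. Using $\delta_N(u,v)=-\delta_N(v,u)$, one restricts to $\{x_{u_i}\le x_{v_i}\}$ and replaces $f_N$ by $\sum_{\sigma\in S_2^r}(-1)^\sigma f_N^\sigma$ (sum over the $2^r$ swaps $u_i\leftrightarrow v_i$). Taylor expanding each $f_N^\sigma$ around $x_{v_i}=x_{u_i}$, the antisymmetric sum kills every Taylor term of order $\le r$ except the single one with $\alpha_{v_i}=1$ for all $i$; this surviving term carries the factor $\prod_i(x_{v_i}-x_{u_i})$, which multiplies $\prod_i\delta_N$ into (after rescaling $x_{v_i}-x_{u_i}=\sqrt{t_N}\,z_i$) the uniformly integrable density $\prod_i z_i^2 e^{-z_i^2/8}$. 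Dominated convergence on this, with the indicator of the true $z_i$-domain converging to $1$ a.e., gives the limit directly with no boundary analysis; the Taylor remainder is $O(t_N^{1/2})$. The identity $\sum_{\sigma}(-1)^\sigma[\partial_{x_{v_1}}\cdots\partial_{x_{v_r}}f_\infty^\sigma](\bar y)=[(\partial_{x_{v_1}}-\partial_{x_{u_1}})\cdots(\partial_{x_{v_r}}-\partial_{x_{u_r}})f_\infty](\bar y)$ then produces the stated formula.
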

\begin{proof} Throughout the proof we denote
$$A_N:=  \int_{[a_1, b_1]^{n_1} \times \cdots \times [a_m, b_m]^{n_m}}dx \delta_N(x_{u_1}, x_{v_1}) \cdots \delta_N(x_{u_r}, x_{v_r}) f_N(x).$$
For clarity, we split the proof into four steps. In the first step we establish (\ref{Eq.SmoothFuncVanish}). In the second step we partially symmetrize $A_N$ by swapping $u_i$'s with $v_i$'s for $i = 1, \dots, r$ and express $A_N = B_N + \mathrm{Err}_N$, see (\ref{Eq.TaylorExpandedFunc}). We further reduce the statement to showing that $B_N$ converges, see (\ref{Eq.BNConv}), and that $\mathrm{Err}_N$ decays with $t_N$, see (\ref{Eq.ErrNDecay}). Equation (\ref{Eq.BNConv}) is established in the third step, and equation (\ref{Eq.ErrNDecay}) is established in the fourth step. Throughout the proof all constants depend on $n, r, m, \{a_i\}_{i = 1}^m, \{b_i\}_{i = 1}^m$ and $C(n,r,A,B)$ -- we do not mention this further.\\

{\bf \raggedleft Step 1.} Suppose that $u_1, \dots, u_r, v_1, \dots, v_r$ are not $\vec{n}$-compatible. Then, for some $1 \leq i\neq j \leq m$ and $k \in \{1, \dots, r\}$ we have $u_k \in I_i$ and $v_k \in I_j$. As $\{[a_i,b_i]\}_{i = 1}^m$ are pairwise disjoint, we can find $\epsilon > 0$, such that 
\begin{equation*}
|x-y| \geq \epsilon \mbox{ for } x \in [a_i,b_i] \mbox{ and } y \in [a_j, b_j].
\end{equation*}
From the definition of $\delta_N$ in (\ref{Eq.DefDeltaN}) we can find $C_0 > 0$, such that for $N \geq 1$
\begin{equation}\label{Eq.DeltaBound}
\begin{split}
&|\delta_N(x,y)| \leq C_0 t_N^{-3/2} \mbox{ if $x,y \in [A,B]$, and } \\
&|\delta_N(x,y)| \leq C_0t_N^{-3/2} e^{- \epsilon^2/ 8 t_N} \mbox{ if } x \in [a_i,b_i], y \in [a_j,b_j].  
\end{split}
\end{equation}
The latter and (\ref{Eq.SmoothFuncBound}) show that for all $x \in [a_1,b_1]^{n_1} \times \cdots \times [a_m, b_m]^{n_m}$ and $N \geq 1$ we have 
$$\left|\delta_N(x_{u_1}, x_{v_1}) \cdots \delta_N(x_{u_r}, x_{v_r}) f_N(x)\right| \leq C_0^r t_N^{-3r/2} \cdot C(n,r,A,B) \cdot e^{- \epsilon^2/ 8t_N},$$
from which (\ref{Eq.SmoothFuncVanish}) immediately follows.\\

{\bf \raggedleft Step 2.} In the remainder we assume that $u_1, \dots, u_r, v_1, \dots, v_r$ are $\vec{n}$-compatible. For a permutation $\sigma \in S_{n}$ and a function $f$ on $\mathbb{R}^{n}$, we denote 
$$f^{\sigma}(x) = f^{\sigma}(x_1, \dots, x_{n}) = f(x_{\sigma(1)}, \dots, x_{\sigma(n)}).$$
Using that $\delta_N(x,y) = - \delta_N(y,x)$, we get upon splitting the integral over sets where $\{x_{u_i} \leq x_{v_i}\}$ or $\{x_{u_i} \geq x_{v_i}\}$ and changing variables that 
\begin{equation}\label{Eq.Symmetrize}
\begin{split}
&A_N= \sum_{\sigma \in S_{2}^r}(-1)^{\sigma} \int_{[a_1, b_1]^{n_1} \times \cdots \times [a_m, b_m]^{n_m}}dx  \prod_{i = 1}^r {\bf 1}\{x_{u_i} \leq x_{v_i}\} \delta_N(x_{u_i},x_{v_i}) \cdot f^{\sigma}_N(x),
\end{split}
\end{equation}
where $S_2^r$ is the subgroup of the permutation group $S_{n}$, generated by the transpositions $\{(u_i,v_i): i = 1, \dots ,r\}$.

From Taylor's formula, see e.g. \cite[Theorem 3.18]{callahan2010advanced}, we have for all $f \in C^{r+1}(\mathbb{R}^n; \mathbb{C})$ and $x,a \in \mathbb{R}^n$
\begin{equation}\label{Eq.TaylorExpansion} 
f(x) = \sum_{ |\alpha| \leq r} \frac{D^{\alpha}f(a) (x-a)^{\alpha}}{\alpha!} + R_{r}(f;a,x),
\end{equation}
where the remainder term $R_{r}(f;a,x)$ is given by
\begin{equation}\label{Eq.TaylorRemainder}
R_{r}(f;a,x)= \sum_{|\alpha| = r+1} \int_0^1 dt\frac{D^{\alpha}f(a + t (x-a)) (x-a)^{\alpha}}{\alpha!} (1-t)^{r}.
\end{equation}
Substituting (\ref{Eq.TaylorExpansion}) for $f = f_N^{\sigma}$ and $a = (a_1, \dots, a_n)$, such that
$$a_{v_i} = x_{u_i} \mbox{ for } i = 1, \dots, r \mbox{ and } a_{j} = x_j \mbox{ for } j \not \in \{v_1, \dots, v_r\},$$
into (\ref{Eq.Symmetrize}), we obtain 
\begin{equation}\label{Eq.TaylorExpandedFunc}
\begin{split}
&A_N =B_N + \mathrm{Err}_N,
\end{split}
\end{equation}
where 
$$B_N = \sum_{ |\alpha| \leq r} \int_{[a_1, b_1]^{n_1} \times \cdots \times [a_m, b_m]^{n_m}}dx  \prod_{i = 1}^r {\bf 1}\{x_{u_i} \leq x_{v_i}\} \delta_N(x_{u_i},x_{v_i}) \sum_{\sigma \in S_{2}^r}(-1)^{\sigma}  \frac{[D^{\alpha}f^{\sigma}_N](a) (x-a)^{\alpha}}{\alpha!},$$
$$\mathrm{Err}_N = \int_{[a_1, b_1]^{n_1} \times \cdots \times [a_m, b_m]^{n_m}}dx  \prod_{i = 1}^r {\bf 1}\{x_{u_i} \leq x_{v_i}\} \delta_N(x_{u_i},x_{v_i}) \sum_{\sigma \in S_{2}^r}(-1)^{\sigma} R_{r}(f^{\sigma}_N;a,x).$$

We claim that 
\begin{equation}\label{Eq.BNConv}
\begin{split}
&\lim_{N \rightarrow \infty} B_N = \int_{[a_{1}, b_{1}]^{I_1^{vc}} \times \cdots  \times [a_m, b_m]^{I_m^{vc}}} dy \sum_{\sigma \in S_{2}^r}(-1)^{\sigma} [\partial_{x_{v_1}} \cdots \partial_{x_{v_r}} f_{\infty}^{\sigma}](\bar{y}),
\end{split}
\end{equation}
where $\bar{y}$ is as below (\ref{Eq.IntLim}).
In addition, we claim that for some $\hat{C}_1 > 0$ and all $N \geq 1$
\begin{equation}\label{Eq.ErrNDecay}
\begin{split}
&|\mathrm{Err}_N| \leq \hat{C}_1 t_N^{1/2}.
\end{split}
\end{equation}
Using that for any $f\in C^{r}(\mathbb{R}^{n};\mathbb{C})$
\begin{equation}\label{Eq.SymmDeriv}
 \sum_{\sigma \in S_{2}^r}(-1)^{\sigma} [\partial_{x_{v_1}} \cdots \partial_{x_{v_r}} f^{\sigma}](\bar{y}) =  \left[(\partial_{x_{v_1}} - \partial_{x_{u_1}}) \cdots (\partial_{x_{v_r}} - \partial_{x_{u_r}})  f\right](\bar{y}),
 \end{equation}
we see that (\ref{Eq.TaylorExpandedFunc}), (\ref{Eq.BNConv}) and (\ref{Eq.ErrNDecay}) imply (\ref{Eq.IntLim}). Consequently, we have reduced the proof to showing (\ref{Eq.BNConv}) and (\ref{Eq.ErrNDecay}).\\

{\bf \raggedleft Step 3.} In this step we prove (\ref{Eq.BNConv}). We observe that for any $f \in C^r(\mathbb{R}^{n};\mathbb{C})$
\begin{equation}\label{Eq.SymmSum}
\begin{split}
& \sum_{ |\alpha| \leq r} \sum_{\sigma \in S_{2}^r}(-1)^{\sigma} \frac{[D^{\alpha}f^{\sigma}](a) (x-a)^{\alpha}}{\alpha!} =  \\
& = \sum_{\sigma \in S_{2}^r}(-1)^{\sigma} [\partial_{x_{v_1}} \partial_{x_{v_2}} \cdots \partial_{x_{v_{r}}} f^{\sigma}](a)(x_{v_1} - x_{u_1}) \cdots (x_{v_r} - x_{u_r}).
\end{split}
\end{equation}
Indeed, since $a_{j} = x_j$ for $j \not \in\{v_1, \dots, v_r\}$, we see that only terms with $\alpha_{j} = 0$ for $j \not \in\{v_1, \dots, v_r\}$ contribute to the first line in (\ref{Eq.SymmSum}). In addition, if $\alpha_{u_i} = \alpha_{v_i} = 0$ for some $i$, and $\tau_i = (u_i, v_i)$, we have $[D^{\alpha}f](a) = [D^{\alpha}f^{\tau_i}](a)$ since $a_{u_i} = a_{v_i}$. Combining this with $(f^{\sigma})^{\tau_i} = f^{\tau_i \sigma}$ shows that each summand 
$$\frac{[D^{\alpha}f_N^{\sigma}](y) (x-a)^{\alpha}}{\alpha!}$$
with $\alpha_{u_i} = \alpha_{v_i} = 0$ can be paired up with the analogous one with $\sigma$ replaced with $\tau_i \sigma$ and they cancel as they come with opposite signs. The above discussion shows that the only non-zero contribution comes from $\alpha$ such that $\alpha_{j} = 0$ for $j \not \in\{v_1, \dots, v_r\}$, and $\alpha_{v_i} \geq 1$ for $i = 1, \dots, r$. Since we have $|\alpha| \leq r$, we see that the only term that contributes is $\alpha_{j} = 0$ for $j \not \in\{v_1, \dots, v_r\}$ and $\alpha_{v_i} = 1$ for $i = 1, \dots, r$, implying (\ref{Eq.SymmSum}).

We substitute (\ref{Eq.SymmSum}) into the definition of $B_N$, as well as $\delta_N(x_{u_i},x_{v_i})$ from (\ref{Eq.DefDeltaN}) and change variables $x_{j} = y_j$ for $j \not \in\{v_1, \dots, v_r\}$, and  $x_{v_i} = y_{u_i} + t_N^{1/2}z_i$ to get
\begin{equation}\label{Eq.NewBN}
\begin{split}
&B_N =  \int_{[a_{1}, b_{1}]^{I_1^{vc}} \times \cdots  \times [a_m, b_m]^{I_m^{vc}}} dy \int_{[0,\infty)^r}dz \prod_{i = 1}^r{\bf 1}\{z_i \leq t_N^{-1/2} (b_{s(i)} - y_{u_i}) \}  \\
&\times \prod_{i = 1}^r e^{2t_N^3/3 - t_N(2y_{u_i}+t_N^{1/2}z_i)} \frac{ z_i^2e^{-z_i^2/8} }{4(2\pi)^{1/2}  }  \cdot \sum_{\sigma \in S_{2}^r}(-1)^{\sigma} [\partial_{x_{v_1}} \cdots \partial_{x_{v_r}} f_N^{\sigma}](\bar{y}),
\end{split}
\end{equation}
where $s(i)$ is such that $u_i \in I_{s(i)}$. Denoting the integrand in (\ref{Eq.NewBN}) by $g_N(y,z)$, we see from (\ref{Eq.LimitSmoothFunc}) that for a.e. $(y,z) \in [a_{1}, b_{1}]^{I_1^{vc}} \times \cdots  \times [a_m, b_m]^{I_m^{vc}} \times [0,\infty)^r$ 
\begin{equation}\label{Eq.PointwiseConv}
\lim_{N \rightarrow \infty}g_N(y,z) = \prod_{i =1}^r \frac{z_i^2e^{-z_i^2/8} }{4(2\pi)^{1/2}  }  \cdot \sum_{\sigma \in S_{2}^r}(-1)^{\sigma} [\partial_{x_{v_1}} \cdots \partial_{x_{v_r}} f_{\infty}^{\sigma}](\bar{y}).
\end{equation}
In addition, from (\ref{Eq.SmoothFuncBound}) and the fact that $t_N \in (0,1/2]$, $|y_i| \leq |A|+ |B|$, we get for all $N \geq 1$
\begin{equation}\label{Eq.PointwiseBound}
|g_N(y,z)| \leq \prod_{i =1}^r \frac{z_i^2e^{1+ |A|+|B| + |z_i| -z_i^2/8} }{4(2\pi)^{1/2}  }  \cdot 2^r C(n,r,A,B).
\end{equation}
By the dominated convergence theorem we conclude 
\begin{equation*}
\begin{split}
&\lim_{N \rightarrow \infty} B_N = \int_{[a_{1}, b_{1}]^{I_1^{vc}} \times \cdots  \times [a_m, b_m]^{I_m^{vc}}}dy \int_{[0,\infty)^r}dz \prod_{i = 1}^{r} \frac{z_i^2e^{-z_i^2/8} }{4(2\pi)^{1/2}  } \sum_{\sigma \in S_{2}^r}(-1)^{\sigma} [\partial_{x_{v_1}} \cdots \partial_{x_{v_r}} f_{\infty}^{\sigma}](\bar{y}).
\end{split}
\end{equation*}
Combining the latter with
$$\int_{0}^{\infty} \frac{z^2 e^{-z^2/8}}{4(2\pi)^{1/2}} dz =\frac{1}{4} \int_{-\infty}^{\infty} \frac{z^2 e^{-z^2/8}}{2(2\pi)^{1/2}}dz  = 1,$$
gives (\ref{Eq.BNConv}). \\

{\bf \raggedleft Step 4.} In this step we prove (\ref{Eq.ErrNDecay}). From (\ref{Eq.SmoothFuncBound}) and (\ref{Eq.TaylorRemainder}), we have for some $C_3 > 0$ and all $x \in [A,B]^{n}$, $\sigma \in S_2^r$, and $N \geq 1$ that
$$\left|R_{r}(f_N^{\sigma}; a,x)\right| \leq C_3 \left( \sum_{i = 1}^r|x_{u_i} - x_{v_i}| \right)^{r+1}.$$
We mention that in deriving the last inequality we used that the non-zero terms in (\ref{Eq.TaylorRemainder}) must have $\alpha_j = 0$ for $j \not \in \{v_1, \dots, v_r\}$ as $a_j = x_j$ by definition.

Applying the same change of variables as above (\ref{Eq.NewBN}) to $\mathrm{Err}_N$, substituting $\delta_N(x_{u_i}, x_{v_i})$ from (\ref{Eq.DefDeltaN}), and using the last remainder upper-bound gives
\begin{equation*}
\begin{split}
&|\mathrm{Err}_N| \leq t_N^{1/2} \int_{[a_{1}, b_{1}]^{I_1^{vc}} \times \cdots  \times [a_m, b_m]^{I_m^{vc}}}\hspace{-1mm}dy \int_{[0,\infty)^r} \hspace{-2mm} dz \prod_{i = 1}^r  e^{2t_N^3/3 - t_N(2y_i+t_N^{1/2}z_i)} \frac{z_ie^{-z_i^2/8} }{4(2\pi)^{1/2}  }  \cdot 2^r C_3 \left( \sum_{i = 1}^rz_i \right)^{r+1}.
 \end{split}
\end{equation*}
The last displayed equation implies (\ref{Eq.ErrNDecay}) once we use that $|y_i| \leq |A|+|B|$, $t_N \in (0,1/2]$.
\end{proof}

%
%
\subsection{Convergence of factorial moments}\label{Section6.2} We start by recalling some basic definitions and notation regarding Pfaffians, following \cite{S90}.

For $n \in \mathbb{N}$, we let $\mathrm{Mat}_n(\mathbb{C})$ denote the set of $n \times n$ matrices with complex entries. We also let $\mathrm{Skew}_{2n}(\mathbb{C}) \subset \mathrm{Mat}_{2n}(\mathbb{C})$ denote the set of skew-symmetric $2n \times 2n$ matrices. If $J \subseteq \{1, \dots, n\}$ and $M \in \mathrm{Mat}_{n}(\mathbb{C})$, we let $M_J$ denote the restriction of $M$ to rows and columns indexed by $J$.

If $A \in \mathrm{Skew}_{2n}(\mathbb{C})$, we define its {\em Pfaffian} by
\begin{equation}\label{Eq.DefPfaffian}
\mathrm{Pf}[A] =\frac{1}{2^{n}n!}\sum_{\sigma \in S_{2n}}\mathrm{sgn}(\sigma)A_{\sigma(1)\sigma(2)}A_{\sigma(3)\sigma(4)}\cdots A_{\sigma(2n-1)\sigma(2n)}.
\end{equation}
One also has the following alternative formula for the Pfaffian
\begin{equation}\label{Eq.DefPfaffian2}
\mathrm{Pf}[A] = \sum_{w = (i_1, j_1, \dots, i_n, j_n)} (-1)^{w}A_{i_1j_1}A_{i_2j_2}\cdots A_{i_nj_n},
\end{equation}
where the sum is over words $w = (i_1,j_1, i_2, j_2, \dots, i_n, j_n)$, such that $w$ is a permutation of $\{1, \dots, 2n\}$, $i_1 < i_2 < \cdots < i_n$, and $i_k < j_k$ for $k = 1, \dots, n$. The sign $(-1)^w$ of $w$ is the usual one for permutations, or equivalently the one in Definition \ref{Def.LimitDerivatives}.

If $K:\mathbb{R}^2 \rightarrow \mathrm{Skew}_{2}(\mathbb{C})$, we write 
$$K(x,y) = \begin{bmatrix} K_{11}(x,y) & K_{12}(x,y) \\ K_{21}(x,y) & K_{22}(x,y) \end{bmatrix},$$
and for $(x_1, \dots, x_n) \in \mathbb{R}^n$, we write $\mathrm{Pf}[K(x_i,x_j)]_{i,j = 1}^n$ for the Pfaffian of the $2n\times 2n$ skew-symmetric matrix formed by the $2\times 2$ blocks $K(x_i,x_j)$ for $1 \leq i,j \leq n$.

The next lemma gives a formula for the Pfaffian of the sum of two matrices.
\begin{lemma}\label{Lem.SumPfaffian} \cite[Lemma 4.2]{S90} For $A, B \in \mathrm{Skew}_{2n}(\mathbb{C})$ we have 
\begin{equation}\label{Eq.PfaffianSum}
\mathrm{Pf}[A+B] = \sum_{I \subseteq \{1, \dots, 2n\}: |I| \text{\hspace{0.5mm}\scriptsize even}} (-1)^{\Sigma(I)- |I|/2} \mathrm{Pf}[A_I] \mathrm{Pf}[B_{I^c}],
\end{equation}
where $\Sigma(I) = \sum_{i \in I}i$, $I^c = \{1, \dots, 2n\} \setminus I$, and $\mathrm{Pf}[A_{\emptyset}] = \mathrm{Pf}[B_{\emptyset}] = 1$.
\end{lemma}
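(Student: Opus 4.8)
\medskip
\noindent\textbf{Proof proposal.} The plan is to expand both sides over perfect matchings and reconcile the signs term by term. First I would rephrase \eqref{Eq.DefPfaffian2} in matching language: for any $C\in\mathrm{Skew}_{2n}(\mathbb{C})$ one has $\mathrm{Pf}[C]=\sum_{M}\varepsilon(M)\prod_{\{i,j\}\in M}C_{ij}$, where $M$ runs over the perfect matchings of $\{1,\dots,2n\}$ (each recorded by a word $w$ as in \eqref{Eq.DefPfaffian2}), $\varepsilon(M)=(-1)^{w}$, and we take $i<j$ in every factor $C_{ij}$. More generally, for an index set $S$ of even size, $\mathrm{Pf}[C_S]=\sum_{M}\varepsilon_S(M)\prod_{\{i,j\}\in M}C_{ij}$, the sum being over perfect matchings of $S$ and $\varepsilon_S(M)$ the sign of $M$ relative to the order that $S$ inherits from $\{1,\dots,2n\}$.

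Next I would apply this with $C=A+B$ and expand each factor $(A+B)_{ij}=A_{ij}+B_{ij}$, obtaining $\mathrm{Pf}[A+B]=\sum_{M}\varepsilon(M)\sum_{M=M_A\sqcup M_B}\bigl(\prod_{M_A}A_{ij}\bigr)\bigl(\prod_{M_B}B_{ij}\bigr)$, where the inner sum ranges over the $2^{n}$ ways to split the pairs of $M$ between $A$ and $B$. Reindexing by the (automatically even-sized) set $I:=\bigcup_{\{i,j\}\in M_A}\{i,j\}$ --- a choice of $(M_A,M_B)$ being the same as a choice of $I$ together with a matching $M_A$ of $I$ and a matching $M_B$ of $I^c$ --- turns this into $\sum_{I:|I|\text{ even}}\sum_{M_A\text{ of }I}\sum_{M_B\text{ of }I^c}\varepsilon(M_A\sqcup M_B)\bigl(\prod_{M_A}A_{ij}\bigr)\bigl(\prod_{M_B}B_{ij}\bigr)$. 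Granting the sign factorization $\varepsilon(M_A\sqcup M_B)=(-1)^{\Sigma(I)-|I|/2}\,\varepsilon_I(M_A)\,\varepsilon_{I^c}(M_B)$, the $A$- and $B$-sums decouple and collapse to $\mathrm{Pf}[A_I]$ and $\mathrm{Pf}[B_{I^c}]$ respectively (the empty minor having Pfaffian $1$, which handles $I=\emptyset$ and $I=\{1,\dots,2n\}$), yielding exactly \eqref{Eq.PfaffianSum}.

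The real content, and the step I expect to be the main obstacle, is that sign factorization. I would establish it by decomposing the permutation that carries $(1,\dots,2n)$ to the matching word of $M_A\sqcup M_B$ into three stages: (i) a reordering of the $n$ matched pairs so that all $M_A$-pairs precede all $M_B$-pairs --- since the pairs are consecutive blocks of even length $2$, a permutation of blocks of even length has sign $+1$, so this stage is free; (ii) within the $M_A$-segment (the indices $I$) and the $M_B$-segment (the indices $I^c$), passing from increasing order to the respective matching words, which contributes precisely $\varepsilon_I(M_A)\,\varepsilon_{I^c}(M_B)$ because the two segments are disjoint; and (iii) the shuffle permutation bringing the elements of $I$ to the front in increasing order, leaving $I^c$ behind in increasing order. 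A direct inversion count gives the sign of the shuffle in (iii) as $(-1)^{\sum_{i\in I}(i-\mathrm{rk}_I(i))}=(-1)^{\Sigma(I)-|I|(|I|+1)/2}$, where $\mathrm{rk}_I(i)$ denotes the position of $i$ within $I$; since $|I|$ is even, $|I|(|I|+1)/2\equiv|I|/2\pmod 2$, so this equals $(-1)^{\Sigma(I)-|I|/2}$. Multiplying stages (i)--(iii) gives the factorization, and the remainder of the argument is the routine reindexing described above; full details of the sign bookkeeping can be found in \cite[Lemma 4.2]{S90}.
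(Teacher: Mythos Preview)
Your argument is correct: the perfect-matching expansion of $\mathrm{Pf}[A+B]$, the reindexing by the support set $I$ of the $A$-pairs, and the sign factorization $\varepsilon(M_A\sqcup M_B)=(-1)^{\Sigma(I)-|I|/2}\varepsilon_I(M_A)\varepsilon_{I^c}(M_B)$ all check out (the cross-inversion count $\Sigma(I)-|I|(|I|+1)/2$ reduces to $\Sigma(I)-|I|/2$ mod $2$ precisely because $|I|+1$ is odd). Note, however, that the paper does not supply its own proof of this lemma --- it simply quotes the result from \cite[Lemma~4.2]{S90} --- so there is no in-paper argument to compare against; your proposal is essentially a reconstruction of Stembridge's proof.
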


With the above notation in place, we can turn to the main result of this section.

\begin{lemma}\label{Lem.FactMomentConvergence} Fix $m \in \mathbb{N}$, $n_1, \dots, n_m \in \mathbb{N}$ and $m$ pairwise disjoint intervals $[a_i, b_i] \subset \mathbb{R}$. If $M^{\mathrm{GSE}}, K^{\mathrm{GSE}}$ are as in Lemma \ref{Lem.SAO}, then
\begin{equation}\label{Eq.FactorialMomentsDoubledFormula}
\begin{split}
&\mathbb{E} \left[ \prod_{i = 1}^m\frac{\left(2M^{\mathrm{GSE}}([a_i,b_i])\right)!}{\left(2M^{\mathrm{GSE}}([a_i,b_i]) - n_i\right)!}\right] = \sum_{k_1 = \lceil n_1/2\rceil}^{n_1} \cdots \sum_{k_m = \lceil n_m/2\rceil}^{n_m} B(k_1, \dots, k_m), \mbox{ where } \\
&B(k_1, \dots, k_m) = \prod_{i = 1}^m \frac{n_i! 2^{2k_i - n_i}}{(n_i-k_i)!(2k_i-n_i)!} \cdot \int_{[a_1,b_1]^{k_1} \times \cdots \times [a_m,b_m]^{k_m}} \mathrm{Pf}\left[\kgse(x_i,x_j) \right]_{i,j = 1}^kdx,
\end{split}
\end{equation}
and we have set $k = k_1 + \cdots + k_m$. If $t_N \rightarrow 0+$ and $M^N$ are as in (\ref{Eq.FormedHSA}), then
\begin{equation}\label{Eq.FactorialMomentsConvergence}
\begin{split}
&\lim_{N \rightarrow \infty}\mathbb{E} \left[ \prod_{i = 1}^m\frac{\left(M^{N}([a_i,b_i])\right)!}{\left(M^{N}([a_i,b_i]) - n_i\right)!}\right] = \mathbb{E} \left[ \prod_{i = 1}^m\frac{\left(2M^{\mathrm{GSE}}([a_i,b_i])\right)!}{\left(2M^{\mathrm{GSE}}([a_i,b_i]) - n_i\right)!}\right].
\end{split}
\end{equation}
\end{lemma}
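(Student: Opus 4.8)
\textbf{Proof proposal for Lemma \ref{Lem.FactMomentConvergence}.}

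The plan is to separate the lemma into two essentially independent parts: the exact formula (\ref{Eq.FactorialMomentsDoubledFormula}) for the factorial moments of the doubled Pfaffian point process $2M^{\mathrm{GSE}}$, and the limit statement (\ref{Eq.FactorialMomentsConvergence}). For the exact formula, I would start from the well-known expression for joint factorial moments of a simple point process in terms of its correlation functions: for a Pfaffian point process with kernel $\kgse$, the $n_i$-th factorial moment over $[a_i,b_i]$ is an integral of $\mathrm{Pf}[\kgse(x_i,x_j)]$ over $k$ points where $k = \sum k_i$ with $k_i = n_i$ in each slot. The subtlety is that $2M^{\mathrm{GSE}}$ is the \emph{doubling} of $M^{\mathrm{GSE}}$: each atom of $M^{\mathrm{GSE}}$ becomes an atom of multiplicity two. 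I would write $2M^{\mathrm{GSE}}([a_i,b_i]) = 2 M^{\mathrm{GSE}}([a_i,b_i])$ as a random variable, expand the descending factorial $\prod_i (2N_i)!/(2N_i - n_i)!$ where $N_i = M^{\mathrm{GSE}}([a_i,b_i])$, and carry out a combinatorial bookkeeping: a falling factorial of $2N_i$ with $n_i$ factors counts ordered $n_i$-tuples drawn from the $2N_i$ ``labeled'' points, which (since each underlying point carries exactly two labels) decomposes according to how many of the $N_i$ underlying points are used once versus twice. If $k_i$ underlying points are used in slot $i$ (with $2k_i - n_i$ of them used twice and $n_i - k_i$ used once), the multiplicity factor is $n_i!\, 2^{2k_i-n_i}/((n_i-k_i)!\,(2k_i-n_i)!)$, and summing over $k_i$ from $\lceil n_i/2\rceil$ to $n_i$ together with the factorial-moment formula for $M^{\mathrm{GSE}}$ itself gives exactly $\sum_{\vec k} B(k_1,\dots,k_m)$. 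This is a clean identity once the labeling combinatorics is set up carefully.

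For (\ref{Eq.FactorialMomentsConvergence}), the left side is the factorial moment of the genuine Pfaffian point process $M^N$ with kernel $\hat K^N = K^N + \Delta^N$ from (\ref{Eq.SplitKernel}), so it equals $\int \mathrm{Pf}[\hat K^N(x_i,x_j)]_{i,j=1}^n\, dx$ integrated over $[a_1,b_1]^{n_1}\times\cdots\times[a_m,b_m]^{n_m}$, with $n = \sum n_i$. Here I would invoke Lemma \ref{Lem.SumPfaffian} to expand $\mathrm{Pf}[\hat K^N] = \mathrm{Pf}[(K^N + \Delta^N)]$ as a sum over even subsets $I \subseteq \{1,\dots,2n\}$ of $\mathrm{Pf}[K^N_I]\,\mathrm{Pf}[\Delta^N_{I^c}]$ with appropriate signs. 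Because $\Delta^N$ has only a single nonzero entry $\delta_N(x,y)$ in the lower-right corner of each $2\times 2$ block, the Pfaffian $\mathrm{Pf}[\Delta^N_{I^c}]$ is nonzero only when $I^c$ consists of a collection of ``$22$-coordinates'' of distinct point-blocks, i.e. when $\Delta^N_{I^c}$ is (up to sign) a product of $r$ factors $\delta_N(x_{u_j}, x_{v_j})$ for pairwise distinct $u_1,v_1,\dots,u_r,v_r$, and the surviving $\mathrm{Pf}[K^N_I]$ is the Pfaffian of the corresponding principal submatrix of the $K^N$-blocks. Reindexing the sum this way, the integral becomes $\sum_r \sum_{\{u_j,v_j\}} \pm \int \big(\prod_j \delta_N(x_{u_j},x_{v_j})\big) f_N(x)\, dx$, where $f_N(x) = \mathrm{Pf}[K^N_I(x)]$ (the residual $K^N$-Pfaffian, depending smoothly on the remaining coordinates through the contour integrals in (\ref{Eq.DefKN})). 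Now I would apply Lemma \ref{Lem.LimitDerivatives}: the non-$\vec n$-compatible terms vanish exponentially by (\ref{Eq.SmoothFuncVanish}), and each $\vec n$-compatible term converges by (\ref{Eq.IntLim}) to an integral of $(\partial_{x_{v_1}}-\partial_{x_{u_1}})\cdots(\partial_{x_{v_r}}-\partial_{x_{u_r}}) f_\infty$ evaluated on the diagonal $\bar y$, where $f_\infty$ is the corresponding $K^{\mathrm{GSE}}$-Pfaffian (one checks $K^N(x,y)\to \kgse(x,y)$ and that derivatives converge, using the explicit contour integrals and dominated convergence, exactly as in the proof of Lemma \ref{Lem.ConvToAiryKernel} — this supplies the hypotheses (\ref{Eq.LimitSmoothFunc}) and (\ref{Eq.SmoothFuncBound})).

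The remaining, and I expect the most delicate, step is to verify that the resulting limiting expression $\sum_r \sum_{\{u_j,v_j\}} \pm \int \big[(\partial_{x_{v_1}}-\partial_{x_{u_1}})\cdots f_\infty\big](\bar y)\, dy$ is \emph{exactly} $\sum_{\vec k} B(k_1,\dots,k_m)$ from (\ref{Eq.FactorialMomentsDoubledFormula}). Morally this is because the ``diagonal'' operation $(\partial_{x_{v_j}}-\partial_{x_{u_j}})$ acting on the $K^N$-Pfaffian and then restricting $x_{v_j}=x_{u_j}$ reproduces precisely the confluent limit of a Pfaffian of $\kgse$ at coincident points (this is the standard mechanism by which a Pfaffian point process develops double points), and the sign $(-1)^w$ bookkeeping in Definition \ref{Def.LimitDerivatives} matches the Pfaffian expansion sign in (\ref{Eq.DefPfaffian2}); the combinatorial multiplicities $n_i!\,2^{2k_i-n_i}/((n_i-k_i)!\,(2k_i-n_i)!)$ emerge from counting how many pairs $(u_j,v_j)$ land in slot $i$ and how the associated index choices can be made. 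I would carry this out by matching, term by term over the profile $\vec r = (r_1,\dots,r_m)$ and with $k_i = n_i - r_i$, the derivative-on-the-diagonal expression against the confluent Pfaffian $\mathrm{Pf}[\kgse(x_i,x_j)]_{i,j=1}^k$ with $k = \sum k_i$; the key algebraic input is that differentiating the kernel block and setting arguments equal is what produces, at the level of the Pfaffian, exactly the entries of a smaller Pfaffian with the required symmetrization, and that summing over the $\binom{n_i}{2r_i}(2r_i-1)!!$ ways of choosing and pairing the ``doubled'' coordinates reproduces the stated multiplicity. Once this identification is done, combining it with Step 1's exact formula closes the proof.
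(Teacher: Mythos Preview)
Your overall architecture is the same as the paper's: prove the exact formula for the doubled process, expand $\mathrm{Pf}[\hat K^N]=\mathrm{Pf}[K^N+\Delta^N]$ via Lemma \ref{Lem.SumPfaffian}, observe only the ``22--coordinates'' survive in the $\Delta^N$ piece, feed the resulting $\delta_N$--products into Lemma \ref{Lem.LimitDerivatives}, and then match the limiting derivative--on--diagonal expressions against $B(k_1,\dots,k_m)$ with $k_i=n_i-r_i$. Your counting of $\vec n$--compatible terms, $\prod_i \binom{n_i}{2r_i}(2r_i-1)!!$, is exactly what the paper uses in its final step. For part one, the paper obtains the identity $Q_n(2x)=\sum_k \tfrac{n!\,2^{2k-n}}{(n-k)!(2k-n)!}\,Q_k(x)$ by equating coefficients in $(1+t)^{2x}=(1+2t+t^2)^x$ rather than by your labeling combinatorics, but the two arguments are equivalent.

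There is, however, a real gap in your limit identification. You write ``one checks $K^N(x,y)\to K^{\mathrm{GSE}}(x,y)$'' and take $f_\infty$ to be a $K^{\mathrm{GSE}}$--Pfaffian. This is false: compare the rational factors in (\ref{Eq.DefKN}) and (\ref{Def.Kgse}). One has $K^N_{11}\to K^{\mathrm{GSE}}_{11}$, but $K^N_{12}\to 2K^{\mathrm{GSE}}_{12}$ and $K^N_{22}\to 4K^{\mathrm{GSE}}_{22}$; the limit $K^\infty$ is the \emph{diagonal conjugate} $\mathrm{diag}(1,2)\,K^{\mathrm{GSE}}\,\mathrm{diag}(1,2)$, not $K^{\mathrm{GSE}}$ itself. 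This is not cosmetic: the powers of $2$ in $B(k_1,\dots,k_m)$ come precisely from this discrepancy. The paper's Step 4 uses the specific identities
\[
\partial_y K^\infty_{11}=-K^{\mathrm{GSE}}_{12},\qquad \partial_x K^\infty_{11}=-K^{\mathrm{GSE}}_{21},\qquad \partial_{xy}K^\infty_{11}=K^{\mathrm{GSE}}_{22},\qquad \partial_x K^\infty_{12}=-2K^{\mathrm{GSE}}_{22},
\]
to rewrite $[\partial_{x_{v_1}}\!\cdots\partial_{x_{v_r}}\mathrm{Pf}\,K^\infty_{I^c}](\bar y)$ as $(-1)^w(-1)^r 2^{\,n-2r}\,\mathrm{Pf}[K^{\mathrm{GSE}}(y_i,y_j)]$ over the $n-r$ surviving indices. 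Combined with the $2^r$ from (\ref{Eq.IntLim}) and the $2^{-r_i}$ implicit in the pairing count, this is exactly what produces $\prod_i 2^{\,2k_i-n_i}$. Your sentence ``differentiating the kernel block and setting arguments equal is what produces \dots\ exactly the entries of a smaller Pfaffian'' hides this: it is not a general mechanism but a concrete computation that only closes because $K^\infty$ sits at this particular scaling relative to $K^{\mathrm{GSE}}$. If you had genuinely used $K^{\mathrm{GSE}}$ as $f_\infty$, the factor $2^{n-2r}$ would be missing and the match to $B$ would fail.
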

\begin{proof} For clarity, we split the proof into five steps. In the first step we prove (\ref{Eq.FactorialMomentsDoubledFormula}). In the second step we analyze the pointwise limit of $K^N$ from (\ref{Eq.DefKN}) and its derivatives. In the same step we relate the limiting kernel $K^{\infty}$ to $\kgse$, see (\ref{Eq.LimitKernelToGSE}). In the third step we express the left side of (\ref{Eq.FactorialMomentsConvergence}) for finite $N$ as a sum of integrals, see (\ref{Eq.OriginMasterFactDecomposition}). The expansion in (\ref{Eq.OriginMasterFactDecomposition}) is based on the joint factorial moment formula for Pfaffian processes from \cite[(B.2)]{DY25} and Lemma \ref{Lem.SumPfaffian}. In addition, each summand in (\ref{Eq.OriginMasterFactDecomposition}) is of a form suitable for the application of Lemma \ref{Lem.LimitDerivatives}. In the same step we apply Lemma \ref{Lem.LimitDerivatives} and find the asymptotic contribution of each summand in (\ref{Eq.OriginMasterFactDecomposition}), see (\ref{Eq.LimitPNZero}) and (\ref{Eq.LimitPNNonZero}). The limiting expression in (\ref{Eq.LimitPNNonZero}) involves a Pfaffian that depends on the limiting kernel $K^{\infty}$ from Step 2, and in Step 4 we find a different formula for it in terms of $\kgse$, see (\ref{Eq.KeyPfaffianEquality}). In the fifth and final step we combine our expansion from (\ref{Eq.OriginMasterFactDecomposition}) in Step 3 and the alternative formula for the summand limits from (\ref{Eq.KeyPfaffianEquality}) in Step 4 to conclude the proof of (\ref{Eq.FactorialMomentsConvergence}).\\ 

{\bf \raggedleft Step 1.} For $n \in \mathbb{N}$ we set $P_n(x) = \binom{x}{n} =  \frac{1}{n!} \prod_{i = 1}^{n}(x-i + 1)$. From multiple applications of the binomial theorem, we have for $ x \in \mathbb{N}$ 
$$(1+t)^{2x} = \sum_{k = 0}^{\infty} \binom{2x}{k}t^k \mbox{ and } (1+t)^{2x} = (1+2t+t^2)^x = \sum_{k = 0}^\infty\binom{x}{k} (2t+t^2)^k = \sum_{k = 0}^\infty\binom{x}{k} t^k \sum_{j = 0}^k \binom{k}{j} 2^{k-j} t^j. $$
Comparing the coefficients of $t^n$ on both sides gives
\begin{equation*}
P_n(2x) = \sum_{k = \lceil n/2 \rceil}^n  \binom{k}{n-k} 2^{2k - n} \cdot P_k(x),
\end{equation*}
which as both sides are degree $n$ polynomials extends to all $x \in \mathbb{C}$. For $Q_n(x) = n! P_n(x)$ we get
\begin{equation}\label{Eq.FactorialMomentsRelation}
Q_n(2x) = n!\sum_{k = \lceil n/2 \rceil}^n  \binom{k}{n-k} 2^{2k - n} \cdot P_k(x) = \sum_{k = \lceil n/2 \rceil}^n  \frac{n! 2^{2k - n}}{(n-k)!(2k-n)!} \cdot Q_k(x).
\end{equation}

From \cite[(B.2)]{DY25} we have for any $k_1, \dots, k_m \in \mathbb{N}$
$$\mathbb{E} \left[ \prod_{i = 1}^m Q_{k_i}(M^{\mathrm{GSE}}([a_i,b_i]))\right] = \int_{[a_1,b_1]^{k_1} \times \cdots \times [a_m,b_m]^{k_m}} \mathrm{Pf}[K^{\mathrm{GSE}}(x_i,x_j)]_{i,j = 1}^k dx.$$
In addition, from (\ref{Eq.FactorialMomentsRelation}) we have
\begin{equation*}
\begin{split}
&\mathbb{E} \left[ \prod_{i = 1}^m\frac{\left(2M^{\mathrm{GSE}}([a_i,b_i])\right)!}{\left(2M^{\mathrm{GSE}}([a_i,b_i]) - n_i\right)!}\right] = \mathbb{E} \left[ \prod_{i = 1}^m Q_{n_i}(2M^{\mathrm{GSE}}([a_i,b_i]))\right] \\
& = \sum_{k_1 = \lceil n_1/2\rceil}^{n_1} \cdots \sum_{k_m = \lceil n_m/2\rceil}^{n_m}  \prod_{i = 1}^m \frac{n_i! 2^{2k_i - n_i}}{(n_i-k_i)!(2k_i-n_i)!} \cdot \mathbb{E} \left[ \prod_{i = 1}^m Q_{k_i}(M^{\mathrm{GSE}}([a_i,b_i]))\right].
\end{split}
\end{equation*}
The last two displayed equations give (\ref{Eq.FactorialMomentsDoubledFormula}).\\

{\bf \raggedleft Step 2.} In the remainder of the proof we focus on establishing (\ref{Eq.FactorialMomentsConvergence}), and by possibly passing to a subsequence, assume throughout that $t_N \in (0,1/2]$. In this step we analyze the kernels $K^N$ from (\ref{Eq.DefKN}). Note that for any $L > 0$, we can find a constant $c_1>0$, depending on $L$ alone, such that for $z \in \mathcal{C}_1^{\pi/3}$ and $x \in [-L,L]$
\begin{equation}\label{Eq.EstimateExpon}
|\exp(z^3/3 - zx)| \leq \exp(-|z|^3/3 + c_1 + c_1|z|^2).
\end{equation}
In addition, using that $t_N \in (0,1/2]$, we have for $z,w \in \mathcal{C}_1^{\pi/3}$
\begin{equation}\label{Eq.EstimateRat}
\left|\frac{z-w}{(z+w+2t_N)(z+t_N)(w+t_N)}\right|, \left|\frac{z-w}{(z+t_N)(z+w)}\right|,  \left| \frac{z-w}{z+w -t_N} \right| \leq |z| + |w|.
\end{equation}
A straightforward application of the dominated convergence theorem, using the bounds in (\ref{Eq.EstimateExpon}) and (\ref{Eq.EstimateRat}), shows that $K_{ij}^N$ are smooth functions on $\mathbb{R}^2$ for each $N$, and their derivatives are given by differentiating under the integral. The same statement also holds for $K_{ij}^{\mathrm{GSE}}$. From (\ref{Def.Kgse}) and (\ref{Eq.DefKN}) we explicitly have
\begin{equation}\label{Eq.DerivativesKN}
\begin{split}
&\partial_x^n \partial_y^m K^N_{11}(x,y) = \frac{1}{(2\pi \im)^2} \int_{\mathcal{C}_{1}^{\pi/3}}dz \int_{\mathcal{C}_{1}^{\pi/3}} dw \frac{(z - w)(-z)^n(-w)^mH(z,x;w,y)}{4(z + w + 2t_N)(z+t_N)(w+ t_N)}, \\
&\partial_x^n \partial_y^m K^N_{12}(x,y) = - \partial_x^m \partial_y^n K^N_{21}(y,x)\\
& = \frac{1}{(2\pi \im)^2} \int_{\mathcal{C}_{1}^{\pi/3}}dz \int_{\mathcal{C}_{1}^{\pi/3}} dw \frac{(z - w) (-z)^n (-w)^mH(z,x;w,y)}{2(z+ t_N)(z + w)}, \\
&\partial_x^n \partial_y^m K^N_{22}(x,y) = \frac{1}{(2\pi \im)^2} \int_{\mathcal{C}_{1}^{\pi/3}}dz \int_{\mathcal{C}_{1}^{\pi/3}} dw \frac{(z - w ) (-z)^n (-w)^mH(z,x;w,y)}{z + w - 2t_N},
\end{split}
\end{equation}
\begin{equation}\label{Eq.DerivativesKGUE}
\begin{split}
&\partial_x^n \partial_y^m K^{\mathrm{GSE}}_{11}(x,y) = \frac{1}{(2\pi \im)^2} \int_{\mathcal{C}_{1}^{\pi/3}}dz \int_{\mathcal{C}_{1}^{\pi/3}} dw \frac{(z - w)(-z)^n(-w)^mH(z,x;w,y)}{4(z + w)zw}, \\
&\partial_x^n \partial_y^m K^{\mathrm{GSE}}_{12}(x,y) = - \partial_x^m \partial_y^n K^{\mathrm{GSE}}_{21}(y,x)\\
& = \frac{1}{(2\pi \im)^2} \int_{\mathcal{C}_{1}^{\pi/3}}dz \int_{\mathcal{C}_{1}^{\pi/3}} dw \frac{(z - w) (-z)^n (-w)^mH(z,x;w,y)}{4z(z + w)}, \\
&\partial_x^n \partial_y^m K^{\mathrm{GSE}}_{22}(x,y) = \frac{1}{(2\pi \im)^2} \int_{\mathcal{C}_{1}^{\pi/3}}dz \int_{\mathcal{C}_{1}^{\pi/3}} dw \frac{(z - w ) (-z)^n (-w)^mH(z,x;w,y)}{z + w}.
\end{split}
\end{equation}

Using the bounds in (\ref{Eq.EstimateExpon}) and (\ref{Eq.EstimateRat}), and the dominated convergence theorem we conclude that 
\begin{equation}\label{Eq.KernelConvergenceOrigin}
\begin{split}
&\lim_{N \rightarrow \infty} \partial_x^n \partial_y^m \begin{bmatrix} K_{11}^N(x,y) & K_{12}^N(x,y) \\ K_{21}^N(x,y) & K_{22}^N(x,y) \end{bmatrix} = \partial_x^n \partial_y^m \begin{bmatrix} K_{11}^{\infty}(x,y) & K_{12}^{\infty}(x,y) \\ K_{21}^{\infty}(x,y) & K_{22}^{\infty}(x,y) \end{bmatrix},
\end{split}
\end{equation}
where
\begin{equation}\label{Eq.LimitKernelToGSE}
\begin{split}
& \begin{bmatrix} K_{11}^{\infty}(x,y) & K_{12}^{\infty}(x,y) \\ K_{21}^{\infty}(x,y) & K_{22}^{\infty}(x,y) \end{bmatrix} =  \begin{bmatrix} K_{11}^{\mathrm{GSE}}(x,y) & 2K_{12}^{\mathrm{GSE}}(x,y) \\ K_{21}^{\mathrm{GSE}}(x,y) & 4K_{22}^{\mathrm{GSE}}(x,y) \end{bmatrix}. 
\end{split}
\end{equation}
In addition, from (\ref{Eq.EstimateExpon}), (\ref{Eq.EstimateRat}) and (\ref{Eq.DerivativesKN}), we conclude that for any $L > 0$, we can find $c_2 > 0$, depending on $L$ alone, such that for $x, y \in [-L, L]$, $m+n \leq L$, $N \geq 1$, and $i,j \in \{1,2\}$
\begin{equation}\label{Eq.KernelBoundsOrigin}
\begin{split}
\left| \partial_x^n \partial_y^m K_{ij}^{N}(x,y)\right| \leq c_2.
\end{split}
\end{equation}

We end this step by observing from (\ref{Eq.DerivativesKGUE}) and (\ref{Eq.LimitKernelToGSE}) that 
\begin{equation}\label{Eq.DerivativeRlations}
\begin{split}
&\partial_{y} K^{\infty}_{11}(x,y) = - \kgse_{12}(x,y) \mbox{, } \partial_{x} K^{\infty}_{11}(x,y) = -\kgse_{21} (x,y), \\
&\partial_{xy} K^{\infty}_{11}(x,y) = \kgse_{22}(x,y), \hspace{2mm} \partial_x K^{\infty}_{12}(x,y) = -2 \kgse_{22}(x,y).
\end{split}
\end{equation}

{\bf \raggedleft Step 3.} In this step we seek to apply Lemma \ref{Lem.LimitDerivatives}. From the kernel decomposition formula (\ref{Eq.SplitKernel}) and Lemma \ref{Lem.SumPfaffian}, we get 
$$\mathrm{Pf}[\hat{K}^N(x_i,x_j)]_{i,j = 1}^n = \sum_{r = 0}^{\lfloor n/2 \rfloor }\sum_{I \subseteq \{1,2, \dots, 2n\}: |I| = 2r} (-1)^{\Sigma(I) - r } \cdot \mathrm{Pf}[\Delta^N_I(x)] \cdot \mathrm{Pf}[K^N_{I^c}(x)].$$
Notice that if $2s+1 \in I$, then $\Delta^N_I(x)$ has a row/column of all zeros, and so $\mathrm{Pf}[\Delta^N_I(x)] = 0$. Consequently, we may restrict the above sum to $I \subseteq \{2,4, \dots, 2n\}$ in which case $\Sigma(I)$ is even. This simplifies the formula to
$$\mathrm{Pf}[\hat{K}^N(x_i,x_j)]_{i,j = 1}^n = \sum_{r = 0}^{\lfloor n/2 \rfloor}\sum_{I \subseteq \{2,4, \dots, 2n\}: |I| = 2r} (-1)^{r} \cdot \mathrm{Pf}[\Delta^N_I(x)] \cdot \mathrm{Pf}[K^N_{I^c}(x)].$$
Combining the last equation with the joint factorial moment formula \cite[(B.2)]{DY25} and the Pfaffian expansion formula (\ref{Eq.DefPfaffian2}) applied to $\Delta^N_I(x)$, we obtain 
\begin{equation}\label{Eq.OriginMasterFactDecomposition}
\begin{split}
\mathbb{E} \left[ \prod_{i = 1}^m\frac{\left(M^{N}([a_i,b_i])\right)!}{\left(M^{N}([a_i,b_i]) - n_i\right)!}\right] = \sum_{r = 0}^{\lfloor n/2 \rfloor}\sum_{I \subseteq \{2,4, \dots, 2n\}: |I| = 2r} (-1)^{r} \sum_{w } (-1)^wP^N(w),
\end{split}
\end{equation}
where the rightmost sum is over permutations $w = (u_1,v_1, \dots, u_r,v_r)$ of $I$ with $u_1 < \cdots < u_r$, $u_i < v_i$ for $i = 1, \dots, r$, the sign $(-1)^w$ is as in Definition \ref{Def.LimitDerivatives} and 
\begin{equation}\label{Eq.OriginBasicTerm}
\begin{split}
P^N(w) = \int_{[a_1, b_1]^{n_1} \times \cdots \times [a_m, b_m]^{n_m}}dx \delta_N(x_{u_1}, x_{v_1}) \cdots \delta_N(x_{u_r}, x_{v_r}) \mathrm{Pf}[K^N_{I^c}(x)].
\end{split}
\end{equation}

We now seek to apply Lemma \ref{Lem.LimitDerivatives} with $f_N(x) = \mathrm{Pf}[K^N_{I^c}(x)]$ and take the limit of $P^N(w)$. Using either of the Pfaffian expansions (\ref{Eq.DefPfaffian}) or (\ref{Eq.DefPfaffian2}), and (\ref{Eq.DerivativesKN}), we see that $\mathrm{Pf}[K^N_{I^c}(x)]$ are smooth functions. In addition, from (\ref{Eq.KernelConvergenceOrigin}) all its derivatives converge to those of $\mathrm{Pf}[K^{\infty}_{I^c}(x)]$, which verifies (\ref{Eq.LimitSmoothFunc}). Lastly, (\ref{Eq.KernelBoundsOrigin}) ensures the derivative bounds in (\ref{Eq.SmoothFuncBound}). Overall, we conclude that all the conditions of Lemma \ref{Lem.LimitDerivatives} are satisfied. In what follows we adopt the notation of Definition \ref{Def.LimitDerivatives}.

If $u_1,\dots, u_r, v_1, \dots, v_r$ are not $\vec{n}$-compatible, we have from Lemma \ref{Lem.LimitDerivatives} that 
\begin{equation}\label{Eq.LimitPNZero}
\lim_{N \rightarrow \infty} P^N(w) = 0.
\end{equation}
On the other hand, if they are $\vec{n}$-compatible, then
\begin{equation*}\label{Eq.CompatibleMasterLimit}
\lim_{N \rightarrow \infty} P^N(w) = \int_{[a_{1}, b_{1}]^{I_1^{vc}} \times \cdots  \times [a_m, b_m]^{I_m^{vc}}} dy \left[(\partial_{x_{v_1}} - \partial_{x_{u_1}}) \cdots (\partial_{x_{v_r}} - \partial_{x_{u_r}})  \mathrm{Pf}[K_{I^c}^{\infty}(x)] \right]\vert_{x = \bar{y}}.
\end{equation*}
Using the symmetrization identity (\ref{Eq.SymmDeriv}) and the fact that if we permute the rows/columns of a skew-symmetric matrix by a permutation $\sigma$, its Pfaffian changes by $(-1)^{\sigma}$, we see that 
$$\left[(\partial_{x_{v_1}} - \partial_{x_{u_1}}) \cdots (\partial_{x_{v_r}} - \partial_{x_{u_r}})  \mathrm{Pf}[K_{I^c}^{\infty}(x)] \right]\vert_{x = \bar{y}} = 2^r \left[\partial_{x_{v_1}} \cdots \partial_{x_{v_r}} \mathrm{Pf}[K_{I^c}^{\infty}(x)] \right]\vert_{x = \bar{y}},$$
and consequently
\begin{equation}\label{Eq.LimitPNNonZero}
\lim_{N \rightarrow \infty} P^N(w) =  2^r\int_{[a_{1}, b_{1}]^{I_1^{vc}} \times \cdots  \times [a_m, b_m]^{I_m^{vc}}} dy \left[\partial_{x_{v_1}} \cdots \partial_{x_{v_r}} \mathrm{Pf}[K_{I^c}^{\infty}(x)] \right]\vert_{x = \bar{y}}.
\end{equation}

{\bf \raggedleft Step 4.} In this step we assume that $u_1,\dots, u_r, v_1, \dots, v_r$ are $\vec{n}$-compatible, and we seek to show that for each $y \in [a_{1}, b_{1}]^{I_1^{vc}} \times \cdots  \times [a_m, b_m]^{I_m^{vc}}$
\begin{equation}\label{Eq.KeyPfaffianEquality}
\left[\partial_{x_{v_1}} \cdots \partial_{x_{v_r}} \mathrm{Pf}[K_{I^c}^{\infty}(x)] \right]\vert_{x = \bar{y}} = (-1)^{w} \cdot (-1)^r 2^{n-2r} \cdot  \mathrm{Pf}[K^{\mathrm{GSE}}(y_i,y_j)]_{i,j \in I_1^{vc} \sqcup \cdots \sqcup I_m^{vc}}.
\end{equation}

We first observe that 
\begin{equation}\label{Eq.TransformKOnce}
\partial_{x_{v_1}} \cdots \partial_{x_{v_r}} \mathrm{Pf}[K_{I^c}^{\infty}(x)] = (-1)^w\mathrm{Pf}[K'(x)],
\end{equation}
where $K'(x)$ is the $(2n-2r) \times (2n-2r)$ matrix, given by
\begin{equation}\label{Eq.BlockOnce}
\begin{split}
&K'(x) = \begin{bmatrix} A' & B'\\ -(B')^T & D' \end{bmatrix}, \mbox{ with } A' = \left( \begin{matrix} K_{11}^{\infty}(x_{u_i}, x_{u_j}) & [\partial_y K^{\infty}_{11}](x_{u_i}, x_{v_j}) \\ [\partial_x K^{\infty}_{11}](x_{v_i}, x_{u_j}) & [\partial_{x} \partial_y K^{\infty}_{11}](x_{v_i}, x_{v_j})  \end{matrix} \right)_{i,j = 1, \dots, r}, \\
& B' = \left( \begin{matrix} K_{11}^{\infty}(x_{u_i}, x_{p_j}) & K^{\infty}_{12}(x_{u_i}, x_{p_j}) \\ [\partial_x K^{\infty}_{11}](x_{v_i}, x_{p_j}) & [\partial_{x}K^{\infty}_{12}](x_{v_i}, x_{p_j})  \end{matrix} \right)_{i = 1, \dots, r; j = 2r+1, \dots, n}, \\
&D' = \left( \begin{matrix} K_{11}^{\infty}(x_{p_i}, x_{p_j}) & K^{\infty}_{12}(x_{p_i}, x_{p_j}) \\ K^{\infty}_{21}(x_{p_i}, x_{p_j}) & K^{\infty}_{22}(x_{p_i}, x_{p_j})  \end{matrix} \right)_{i,j = 2r+1, \dots, n},
\end{split}
\end{equation}
where $p_{2r+1}, \dots, p_{n}$ are the elements of $\{1,\dots, n\} \setminus \{u_1,\dots, u_r, v_1, \dots, v_r\}$ in ascending order. Equation (\ref{Eq.TransformKOnce}) is directly verified by expanding both Pfaffians using (\ref{Eq.DefPfaffian}) and comparing the resulting terms. We mention that the sign $(-1)^w$ can be traced to the fact that the rows/columns $K_{I^c}^{\infty}(x)$ of index $u_1,v_1, u_2, v_2,\dots, u_r,v_r$ have been moved to the top/left $2r$ rows/columns within $K'(x)$.

Setting $x = \bar{y}$ and applying (\ref{Eq.LimitKernelToGSE}) and (\ref{Eq.DerivativeRlations}) gives
\begin{equation}\label{Eq.BlockTwice}
\begin{split}
&K'(\bar{y}) = \begin{bmatrix} A'' & B''\\ -(B'')^T & D'' \end{bmatrix}, \mbox{ where } A'' = \left( \begin{matrix}\kgse_{11}(y_{u_i}, y_{u_j}) &  - \kgse_{12}(y_{u_i}, y_{u_j}) \\ -\kgse_{21}(y_{u_i}, y_{u_j}) & \kgse_{22}(y_{u_i}, y_{u_j})  \end{matrix} \right)_{i,j = 1, \dots, r}, \\
& B'' = \left( \begin{matrix} \kgse_{11}(y_{u_i}, y_{p_j}) & \kgse_{12}(y_{u_i}, y_{p_j}) \\ -\kgse_{21}(y_{u_i}, y_{p_j}) & -2\kgse_{22}(y_{u_i}, y_{p_j})  \end{matrix} \right)_{i = 1, \dots, r; j = 2r+1, \dots, n}, \\
& D'' = \left( \begin{matrix} \kgse_{11}(y_{p_i}, y_{p_j}) & 2\kgse_{12}(y_{p_i}, y_{p_j}) \\ 2\kgse_{21}(y_{p_i}, y_{p_j}) & 4 \kgse_{22}(y_{p_i}, y_{p_j})  \end{matrix} \right)_{i,j = 2r+1, \dots, n}.
\end{split}
\end{equation}

We finally observe that 
\begin{equation}\label{Eq.TransformKTwice}
\begin{split}
\mathrm{Pf}[K'(\bar{y})] = (-1)^r 2^{n-2r} \cdot \mathrm{Pf}[K^{\mathrm{GSE}}(y_i,y_j)]_{i,j \in I_1^{vc} \sqcup \cdots \sqcup I_m^{vc}}.
\end{split}
\end{equation} 
As before, one verifies (\ref{Eq.TransformKTwice}) by expanding both sides using (\ref{Eq.DefPfaffian}) and comparing the resulting terms. From the block form in (\ref{Eq.BlockTwice}) each term in the expansion of $\mathrm{Pf}[K'(\bar{y})]$ appears in the expansion of $\mathrm{Pf}[K^{\mathrm{GSE}}(y_i,y_j)]_{i,j \in I_1^{vc} \sqcup \cdots \sqcup I_m^{vc}}$ but is multiplied by a constant. Specifically, each term is multiplied by an additional factor of $2$ for each row/column $2r+2,2r+4, \dots, 2n-2r$ as well as a factor of $-1$ for each row/column $1,2,\dots, r$, overall producing the $(-1)^r 2^{n-2r}$.

Equation (\ref{Eq.KeyPfaffianEquality}) now follows from (\ref{Eq.TransformKOnce}) and (\ref{Eq.TransformKTwice}).\\

{\bf \raggedleft Step 5.} In this step we finish the proof of (\ref{Eq.FactorialMomentsConvergence}). From (\ref{Eq.LimitPNZero}) we know that if $u_1, \dots, u_r, v_1, \dots, v_r$ are not $\vec{n}$-compatible, then they do not asymptotically contribute to the right side of (\ref{Eq.OriginMasterFactDecomposition}). On the other hand, if $u_1, \dots, u_r, v_1, \dots, v_r$ are $\vec{n}$-compatible and have profile $\vec{r} = (r_1, \dots, r_m)$ (as in Definition \ref{Def.LimitDerivatives}), then from (\ref{Eq.LimitPNNonZero}) and (\ref{Eq.KeyPfaffianEquality}) we get
\begin{equation}\label{Eq.LimitPNNonZero2}
\lim_{N \rightarrow \infty} P^N(w) = (-1)^w \cdot (-1)^r2^{n-r} \int_{[a_1,b_1]^{n_1-r_1} \times \cdots \times [a_m,b_m]^{n_m-r_m}} \mathrm{Pf}\left[\kgse(x_i,x_j) \right]_{i,j = 1}^kdx.
\end{equation}
We mention that in deriving (\ref{Eq.LimitPNNonZero2}) we also applied a simple change of variables to (\ref{Eq.LimitPNNonZero}), which does not affect the sign of the Pfaffian. 

Note that the sign of (\ref{Eq.LimitPNNonZero2}) exactly cancels with the one in front of $P_N(w)$ in (\ref{Eq.OriginMasterFactDecomposition}). Consequently, each $\vec{n}$-compatible $u_1, \dots, u_r, v_1, \dots, v_r$ of profile $\vec{r}$ contributes the same to the right side of (\ref{Eq.OriginMasterFactDecomposition}). As there are precisely $\prod_{i = 1}^m \binom{n_i}{2r_i} \cdot \frac{2r_i!}{2^{r_i}r_i!} = \prod_{i = 1}^m \frac{n_i!}{(n_i-2r_i)! r_i! 2^{r_i}}$ such terms, we conclude from (\ref{Eq.OriginMasterFactDecomposition}) and (\ref{Eq.LimitPNNonZero2}) that 
\begin{equation*}
\begin{split}
&\lim_{N \rightarrow \infty}\mathbb{E} \left[ \prod_{i = 1}^m\frac{\left(M^{N}([a_i,b_i])\right)!}{\left(M^{N}([a_i,b_i]) - n_i\right)!}\right] = \sum_{r_1 = 0}^{\lfloor n_1/2 \rfloor} \cdots \sum_{r_m = 0}^{\lfloor n_m/2 \rfloor} \prod_{i = 1}^m \frac{n_i!2^{n_i}}{(n-2r_i)! r_i! 2^{2r_i}}\\
&\times \int_{[a_1,b_1]^{n_1-r_1} \times \cdots \times [a_m,b_m]^{n_m-r_m}} \mathrm{Pf}\left[\kgse(x_i,x_j) \right]_{i,j = 1}^kdx.
\end{split}
\end{equation*}
The last displayed equation and (\ref{Eq.FactorialMomentsDoubledFormula}) imply (\ref{Eq.FactorialMomentsConvergence}) once we set $k_i = n_i - r_i$ for $i = 1, \dots, m$.
\end{proof}

%
%
\subsection{Proof of Lemma \ref{Lem.OriginMeasure}}\label{Section6.3} In this section we present the proof of Lemma \ref{Lem.OriginMeasure}. We continue with the same notation as above Section \ref{Section6.1}, and split the proof into three steps. In the first step we assume that $M^N \Rightarrow 2M^{\mathrm{GSE}}$ and conclude the statement of the lemma. In the second step we reduce the proof of $M^N \Rightarrow 2M^{\mathrm{GSE}}$ to establishing three claims. These are required for the application of \cite[Corollary 2.4]{dimitrov2024airy}. In the same step we also verify the third of these claims, which shows that the factorial moments of $2M^{\mathrm{GSE}}$ do not grow too quickly, see (\ref{Eq.SumPowerSeriesMoment}). In the third step we verify the first two claims from Step 2, by adapting the arguments in the proof of \cite[Proposition 2.2]{dimitrov2024airy}.\\

{\bf \raggedleft Step 1.} We claim that for any sequence $t_N \rightarrow 0+$
\begin{equation}\label{Eq.MeasConv}
M^N \Rightarrow 2M^{\mathrm{GSE}}.
\end{equation}
We prove (\ref{Eq.MeasConv}) in the second step. Here, we assume its validity and conclude the proof of the lemma.\\

From Theorem \ref{Thm.Convergence} we know that $\hsai$ is ordered, and so a.s. $\hsai_i(t) \geq \hsai_{i+1}(t)$ for $i \geq 1$ and $t \in [0, \infty)$. In particular, $\lim_{k \rightarrow \infty} \hsai_k(t)$ exists in $[-\infty, \infty)$. We define for $L \in \mathbb{N}$ the events
$$E_L = \left\{\lim_{k \rightarrow \infty} \hsai_k(0) \geq - L \right\}.$$
Our first task is to show for each $L \in \mathbb{N}$
\begin{equation}\label{Eq.LocFinite0}
\mathbb{P}(E_L) = 0.
\end{equation}

We argue by contradiction and assume $\mathbb{P}(E_L) \geq \varepsilon$ for some $\varepsilon > 0$. Pick a large $L_1 \geq L$ so that 
$$\mathbb{P}(\hsai_1(0) > L_1) \leq \varepsilon/2.$$
Let $f$ be a non-negative continuous function that is supported on $[-L_1-2, L_1+2]$ and $f(x) = 1$ for $x \in [-L_1 - 1, L_1+ 1]$. Fixing any $n \in \mathbb{N}$, we observe by the continuity of $\hsai$ that 
\begin{equation}\label{Eq.LimitPairing}
\lim_{N \rightarrow \infty}\sum_{i = 1}^n f(\hsai_i(t_N)) =\sum_{i = 1}^n f(\hsai_i(0)).
\end{equation} 
Fix $\omega \in E_L \cap \{\hsai_1(0) \leq L_1\}$, and note by the continuity of $\hsai_1$ and $\hsai_n$ that we can find $N_0 = N_0(\omega)$ such that for $N \geq N_0$
$$-L_1 - 1 \leq \hsai_n(0) - 1 \leq \hsai_n(t_N) \leq \hsai_{n-1}(t_N) \leq \cdots \leq \hsai_1(t_N) \leq \hsai_1(0) + 1 \leq L_1 + 1.$$  
The above set of inequalities and the definition of $f$ show that for all $N \geq N_0$
$$M^{N}f = \sum_{i \geq 1} f(\hsai_i(t_N)) \geq \sum_{i = 1}^n f(\hsai_i(t_N)) = n.$$ 
As $n$ was arbitrary, we conclude that a.s. on the event $E_L \cap \{\hsai_1(0) \leq L_1\}$, which has probability at least $\varepsilon/2$ by construction, we have 
$M^N f \rightarrow \infty$. But from (\ref{Eq.MeasConv}), we have $M^N f \Rightarrow 2M^{\mathrm{GSE}}f$ and the latter is a finite real-valued random variable. This gives our desired contradiction, which proves (\ref{Eq.LocFinite0}).\\

From (\ref{Eq.LocFinite0}) we conclude $M$ is a.s. locally finite. Fix $L > 0$ and a continuous function $g$ on $\mathbb{R}$, supported on $[-L,L]$. Our second task is to show that a.s.
\begin{equation}\label{Eq.ConvToMInfinity}
\lim_{N \rightarrow \infty} M^Ng = Mg.
\end{equation}
From (\ref{Eq.LocFinite0}) we can find $K = K(\omega)$, such that $\hsai_{k}(0) \leq - L - 1$ for $k \geq K+1$. In addition, by the continuity of $\hsai_{K+1}$ we can find $N_0 = N_0(\omega)$, such that for $N \geq N_0$ and $k \geq K+1$
$$ \hsai_k(t_N) \leq \hsai_{K+1}(t_N) \leq \hsai_{K+1}(0) + 1 \leq -L.$$
Using the above inequalities we conclude for all $N \geq N_0$ 
$$M^Ng = \sum_{i = 1}^{\infty} g(\hsai_i(t_N)) = \sum_{i = 1}^K g(\hsai_i(t_N)) \mbox{ and } M^{\infty}g = \sum_{i = 1}^{\infty} g(\hsai_i(0)) = \sum_{i = 1}^K g(\hsai_i(0)).$$
The last displayed equation and the continuity of $g$ and $\hsai$ gives (\ref{Eq.ConvToMInfinity}).\\
  
From (\ref{Eq.ConvToMInfinity}) we conclude that $M^N$ converge in the vague topology to $M$ almost surely, and hence also weakly. As weak limits are unique, we conclude from (\ref{Eq.MeasConv}) that $M \overset{d}{=}2M^{\mathrm{GSE}}$. \\

{\bf \raggedleft Step 2.} In this step we prove (\ref{Eq.MeasConv}). Fix $m \in \mathbb{N}$, $n_1, \dots, n_m \in \mathbb{N}$, $m$ pairwise disjoint intervals $[a_i,b_i] \subset \mathbb{R}$ with $b_i > a_i$, and set $\vec{n} = (n_1, \dots, n_m)$, $\vec{a} = (a_1, \dots, a_m)$ and $\vec{b} = (b_1, \dots, b_m)$. From equation (\ref{Eq.FactorialMomentsConvergence}) in Lemma \ref{Lem.FactMomentConvergence} we have
\begin{equation}\label{Eq.XY1}
\begin{split}
&\lim_{N \rightarrow \infty}\mathbb{E} \left[ \prod_{i = 1}^m\frac{\left(M^{N}([a_i,b_i])\right)!}{\left(M^{N}([a_i,b_i]) - n_i\right)!}\right] = \mathbb{E} \left[ \prod_{i = 1}^m\frac{\left(2M^{\mathrm{GSE}}([a_i,b_i])\right)!}{\left(2M^{\mathrm{GSE}}([a_i,b_i]) - n_i\right)!}\right].
\end{split}
\end{equation}
Note that the right side in (\ref{Eq.XY1}) is finite because of (\ref{Eq.FactorialMomentsDoubledFormula}). In addition, from (\ref{Eq.OriginMasterFactDecomposition}) the left side is finite for finite $N$. In particular, we conclude that we can find $C(\vec{n},\vec{a},\vec{b}) > 0$, such that 
\begin{equation}\label{Eq.XY2}
\begin{split}
&\sup_{N \geq 1}\mathbb{E} \left[ \prod_{i = 1}^m\frac{\left(M^{N}([a_i,b_i])\right)!}{\left(M^{N}([a_i,b_i]) - n_i\right)!}\right] \leq C(\vec{n},\vec{a},\vec{b}).
\end{split}
\end{equation}
In the remainder we use (\ref{Eq.XY1}) and (\ref{Eq.XY2}), and adapt the argument from \cite[Proposition 2.2]{dimitrov2024airy} to prove (\ref{Eq.MeasConv}). We mention that \cite[Proposition 2.2]{dimitrov2024airy} is not directly applicable here as the intervals in that result are not assumed to be well-separated, as they are in our setting.\\

We make the following claims:
\begin{enumerate}
\item $\{M^N\}_{N \geq 1}$ is a tight sequence of locally bounded measures.
\item Any subsequential limit $M^{\infty}$ of $\{M^N\}_{N \geq 1}$ is a point process and satisfies
\begin{equation}\label{Eq.XY3A}
\begin{split}
&\mathbb{E} \left[ \prod_{i = 1}^m\frac{\left(2M^{\mathrm{GSE}}((A_i,B_i])\right)!}{\left(2M^{\mathrm{GSE}}((A_i,B_i]) - n_i\right)!}\right] = \mathbb{E} \left[ \prod_{i = 1}^m\frac{\left(M^{\infty}((A_i,B_i])\right)!}{\left(M^{\infty}((A_i, B_i]) - n_i\right)!}\right],
\end{split}
\end{equation}
where $(A_i,B_i]$ are pairwise disjoint intervals that are allowed to be empty.
\item For any $b > a$, we can find $\epsilon > 0$, such that 
\begin{equation}\label{Eq.SumPowerSeriesMoment}
\sum_{n = 1}^{\infty} \frac{\epsilon^n}{n!} \cdot \mathbb{E} \left[ \frac{\left(2M^{\mathrm{GSE}}((a,b])\right)!}{\left(2M^{\mathrm{GSE}}((a,b]) - n\right)!} \right] < \infty.
\end{equation}
\end{enumerate}
From (\ref{Eq.XY3A}) and (\ref{Eq.SumPowerSeriesMoment}) we see that the conditions of \cite[Corollary 2.4]{dimitrov2024airy} are satisfied, which proves that $M^{\infty} \overset{d}{=} 2M^{\mathrm{GSE}}$. As $M^{\infty}$ was an arbitrary subsequential limit of $M^N$, which is tight by the first claim above, we conclude $M^N \Rightarrow 2M^{\mathrm{GSE}}$ as desired.

Our work so far reduces the proof of the lemma to establishing the above three claims. The first two are shown in the next step. In the remainder of this step we verify (\ref{Eq.SumPowerSeriesMoment}) using the factorial moment formula from (\ref{Eq.FactorialMomentsDoubledFormula}). \\

Recall from Step 2 of the proof of Lemma \ref{Lem.FactMomentConvergence} that $\kgse_{ij}(x,y)$ are smooth functions and hence bounded for $x,y \in [a,b]$. From (\ref{Eq.FactorialMomentsDoubledFormula}), and the Pfaffian expansion formula (\ref{Eq.DefPfaffian}) we can find a constant $C>0$ (depending on $a,b$), such that 
$$\mathbb{E} \left[ \frac{\left(2M^{\mathrm{GSE}}((a,b])\right)!}{\left(2M^{\mathrm{GSE}}((a,b]) - n\right)!} \right] \leq \sum_{k = \lceil n/2 \rceil}^n \frac{n!2^{2k-n}}{(n-k)!(2k-n)!} \cdot C^k \cdot \frac{(2k)!}{k!2^k}.$$
If $\epsilon > 0$ is small enough so that $4C\epsilon (1 + \epsilon/2) < 1$, we obtain
\begin{equation*}
\begin{split}
&\sum_{n = 1}^{\infty} \frac{\epsilon^n}{n!} \cdot \mathbb{E} \left[ \frac{\left(2M^{\mathrm{GSE}}((a,b])\right)!}{\left(2M^{\mathrm{GSE}}((a,b]) - n\right)!} \right] \leq \sum_{n = 1}^{\infty} \frac{\epsilon^n}{n!}\sum_{k = \lceil n/2 \rceil}^n \frac{n!2^{2k-n}}{(n-k)!(2k-n)!} \cdot C^k \cdot \frac{(2k)!}{k!2^k} \\
& = \sum_{k = 1}^{\infty} \frac{(2k)!}{k!} [2C]^k \sum_{n = k}^{2k} \frac{[\epsilon/2]^{n}}{(n-k)!(2k-n)!} = \sum_{k = 1}^{\infty} \frac{(2k)!}{k!k!} [C\epsilon]^k  \sum_{m = 0}^{k} \frac{k![\epsilon/2]^{m}}{m!(k-m)!} \\
&= \sum_{k = 1}^{\infty} \frac{(2k)!}{k!k!} [C\epsilon]^k (1+\epsilon/2)^k = \sum_{k = 1}^{\infty} \binom{2k}{k} [C\epsilon]^k (1+\epsilon/2)^k \leq \sum_{k \geq 1} [4 C \epsilon (1 + \epsilon/2)]^k < \infty.
\end{split}
\end{equation*}
We mention that in going from the second to the third line we used the binomial theorem, and in the next to last inequality we used that $\binom{2k}{k} \leq 2^{2k}$. The last inequality implies (\ref{Eq.SumPowerSeriesMoment}).\\

{\bf \raggedleft Step 3.} In this final step we establish the first two claims from Step 2. Let $F$ be a bounded Borel set, and let $[a,b]$ be such that $F \subseteq [a,b]$. From Chebyshev's inequality we conclude
\begin{equation*}
\begin{split}
&\lim_{r \rightarrow \infty} \sup_{N \geq 1} \mathbb{P}(M^N(F) \geq r) \leq \lim_{r \rightarrow \infty} \sup_{N \geq 1} \mathbb{P}(M^N([a,b]) \geq r) \leq \lim_{r \rightarrow \infty} r^{-1} C(1,a,b) = 0,  
\end{split}
\end{equation*}
where in the last inequality we used (\ref{Eq.XY2}). From \cite[Theorem 4.10]{kallenberg2017random} we conclude that $\{M^N\}_{N \geq 1}$ is a tight sequence of locally finite measures. By Skorohod's representation theorem \cite[Theorem 6.7]{Billing} we may assume that $\{M^{N_v}\}_{v \geq 1}, M^{\infty}$ are all defined on the same probability space $(\Omega, \mathcal{F}, \mathbb{P})$ and, for each $\omega \in \Omega$,
$$M^{N_v}(\omega) \overset{v}{\rightarrow} M^{\infty}(\omega).$$
One consequence of the last equation is that since $M^{N_v}$ are integer-valued measures, the same is true for $M^{\infty}$. In other words, $M^{\infty}$ is a point process. \\

What remains is to prove (\ref{Eq.XY3A}). Fix $k_1, \dots, k_m \in \mathbb{N}$ and $m$ pairwise disjoint intervals $[c_i,d_i] \subset \mathbb{R}$ with $d_i > c_i$. From \cite[Lemma 4.1]{kallenberg2017random} we have $\liminf_{v \rightarrow \infty} M^{N_v}([c_i,d_i]) \geq M^{\infty}((c_i,d_i)) $, which implies 
$$\liminf_{v \rightarrow \infty} \prod_{i = 1}^m\frac{\left(M^{N_v}([c_i,d_i])\right)!}{\left(M^{N_v}([c_i,d_i]) - k_i\right)!} \geq  \prod_{i = 1}^m\frac{\left(M^{\infty}((c_i,d_i))\right)!}{\left(M^{\infty}((c_i, d_i)) - k_i\right)!}.$$
Combining the latter with Fatou's lemma and (\ref{Eq.XY1}), gives
\begin{equation}\label{Eq.MomentUpperBound}
\mathbb{E} \left[ \prod_{i = 1}^m\frac{\left(2M^{\mathrm{GSE}}([c_i,d_i])\right)!}{\left(2M^{\mathrm{GSE}}([c_i,d_i]) - n_i\right)!}\right] \geq \mathbb{E} \left[ \prod_{i = 1}^m\frac{\left(M^{\infty}((c_i,d_i))\right)!}{\left(M^{\infty}((c_i, d_i)) - k_i\right)!}\right].
\end{equation}

From \cite[Lemma 4.1]{kallenberg2017random} we have $\limsup_{v \rightarrow \infty} M^{N_v}([c_i,d_i]) \leq M^{\infty}([c_i,d_i])$, which implies
$$\limsup_{v \rightarrow \infty} \prod_{i = 1}^m\frac{\left(M^{N_v}([c_i,d_i])\right)!}{\left(M^{N_v}([c_i,d_i]) - k_i\right)!} \leq  \prod_{i = 1}^m\frac{\left(M^{\infty}([c_i,d_i])\right)!}{\left(M^{\infty}([c_i, d_i]) - k_i\right)!}.$$
Combining the latter with the reverse Fatou's lemma, see \cite[page 10]{cairoli2011}, and (\ref{Eq.XY1}) gives 
\begin{equation}\label{Eq.MomentLowerBound}
\mathbb{E} \left[ \prod_{i = 1}^m\frac{\left(2M^{\mathrm{GSE}}([c_i,d_i])\right)!}{\left(2M^{\mathrm{GSE}}([c_i,d_i]) - k_i\right)!}\right] \leq \mathbb{E} \left[ \prod_{i = 1}^m\frac{\left(M^{\infty}([c_i,d_i])\right)!}{\left(M^{\infty}([c_i, d_i]) - k_i\right)!}\right].
\end{equation}
We mention that in using the reverse Fatou's lemma from \cite{cairoli2011}, we implicitly used that the random variables 
$$X_{N} = \prod_{i = 1}^m\frac{\left(M^{N}([c_i,d_i])\right)!}{\left(M^{N}([c_i,d_i]) - k_i\right)!}$$
are uniformly integrable. The latter can be deduced from the fact that $\sup_{N \geq 1} \mathbb{E}[X_N^2] < \infty$ (see \cite[(3.18)]{Billing}), which in turn follows from (\ref{Eq.XY2}).

Suppose now that $a_i,b_i,n_i$ are as in the beginning of Step 2. Let $\varepsilon > 0$ be small enough so that $[a_i - \varepsilon, b_i + \varepsilon]$ are still pairwise disjoint. Combining (\ref{Eq.MomentLowerBound}) with $c_i = a_i$, $d_i = b_i$, $k_i = n_i$ and (\ref{Eq.MomentUpperBound}) with $c_i = a_i - \varepsilon$, $d_i = b_i + \varepsilon$, $k_i = n_i$, we obtain
\begin{equation*}
\begin{split}
&\mathbb{E} \left[ \prod_{i = 1}^m\frac{\left(2M^{\mathrm{GSE}}([a_i,b_i])\right)!}{\left(2M^{\mathrm{GSE}}([a_i,b_i]) - n_i\right)!}\right] \leq \mathbb{E} \left[ \prod_{i = 1}^m\frac{\left(M^{\infty}([a_i,b_i])\right)!}{\left(M^{\infty}([a_i, b_i]) - n_i\right)!}\right] \\
&\leq  \mathbb{E} \left[ \prod_{i = 1}^m\frac{\left(2M^{\mathrm{GSE}}([a_i - \varepsilon,b_i + \varepsilon])\right)!}{\left(2M^{\mathrm{GSE}}([a_i-\varepsilon,b_i + \varepsilon]) - n_i\right)!}\right].
\end{split}
\end{equation*}
From (\ref{Eq.FactorialMomentsDoubledFormula}) and the bounded convergence theorem we have that the right side above converges to the left as $\varepsilon \rightarrow 0+$. Consequently, the last equation implies
\begin{equation}\label{Eq.XY3}
\begin{split}
&\mathbb{E} \left[ \prod_{i = 1}^m\frac{\left(2M^{\mathrm{GSE}}([a_i,b_i])\right)!}{\left(2M^{\mathrm{GSE}}([a_i,b_i]) - n_i\right)!}\right] = \mathbb{E} \left[ \prod_{i = 1}^m\frac{\left(M^{\infty}([a_i,b_i])\right)!}{\left(M^{\infty}([a_i, b_i]) - n_i\right)!}\right].
\end{split}
\end{equation}
If $(A_i,B_i]$ are pairwise disjoint and non-empty, then we can find a decreasing sequence $a_i^s \downarrow A_i$ with $a_i^s \in (A_i,B_i)$. Putting $a_i = a_i^s$ and $b_i = B_i$ in (\ref{Eq.XY3}) and taking the $s \rightarrow \infty$ limit gives by the monotone convergence theorem (\ref{Eq.XY3A}). If $A_i = B_i$ for some $i$, then (\ref{Eq.XY3A}) holds trivially as both sides are equal to zero. This concludes the proof of (\ref{Eq.XY3A}), and hence the lemma.

\begin{appendix}
%
%
\section{Edge asymptotics of the GSE}\label{SectionA}
The goal of this section is to prove Lemma \ref{Lem.SAO}. In Section \ref{SectionA.1} we show that the point processes formed by the eigenvalues of the Gaussian Symplectic Ensemble (GSE), converge under edge scaling to a Pfaffian point process with correlation kernel $\kgse$ as in (\ref{Def.Kgse}) and Lebesgue reference measure, see Lemma \ref{Lem.EdgeWeakConvergenceGSE}. The edge asymptotics of the GSE have been extensively studied, see, e.g. \cite{TW05} and \cite[Section 7.6.5]{F10}. As we could not locate the precise statement of Lemma \ref{Lem.EdgeWeakConvergenceGSE} in the literature, we include a short proof of it. In Section \ref{SectionA.2} we give the proof of Lemma \ref{Lem.SAO} by combining Lemma \ref{Lem.EdgeWeakConvergenceGSE} with \cite[Theorem 1.1]{RRV11}.

%
%
\subsection{Point process convergence}\label{SectionA.1} Throughout this section we use freely the definitions and notation regarding point processes from \cite[Section 5]{DY25}.

Suppose that $X^N = (X^N_1, \dots, X^N_N)$ is a random vector with density
\begin{equation}\label{Eq.SAEigenvalueDensity}
f_N(x_1,\dots, x_N) = \frac{{\bf 1}\{x_1 > x_2 > \cdots > x_N \}}{Z_N} \prod_{1 \leq i < j \leq N} (x_i - x_j)^4 \prod_{i = 1}^N e^{-2x_i^2},
\end{equation}
where $Z_N$ is a normalization constant. It is known that $f_N$ is the joint eigenvalue density of the Gaussian Symplectic Ensemble, see \cite[Proposition 1.3.4]{F10}. We also introduce the point processes $M^{\mathrm{GSE};N}$ through
\begin{equation}\label{Eq.GSEPointProcess}
M^{\mathrm{GSE};N}(A) = \sum_{i = 1}^N {\bf 1}\{\tilde{X}_i^N \in A\}, \mbox{ where } \tilde{X}_i^N = 2^{7/6}N^{1/6}\left(X_i^N - (2N)^{1/2}\right).
\end{equation}

With the above notation in place, we can state the main result of the section.
\begin{lemma}\label{Lem.EdgeWeakConvergenceGSE} The measures $M^{\mathrm{GSE};N}$ converge weakly to a Pfaffian point process $M^{\mathrm{GSE};\infty}$, which has correlation kernel $\kgse$ as in (\ref{Def.Kgse}) and Lebesgue reference measure. 
\end{lemma}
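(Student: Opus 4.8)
The plan is to establish Lemma~\ref{Lem.EdgeWeakConvergenceGSE} by combining the exact Pfaffian point process structure of the GSE at finite $N$ with a steepest-descent analysis of its correlation kernel under the stated edge scaling. First I would recall (from \cite[Section 6.3 or 7.6]{F10}) that the GSE eigenvalue point process with density (\ref{Eq.SAEigenvalueDensity}) is a Pfaffian point process whose correlation kernel can be written explicitly in terms of Hermite polynomials (or, more conveniently, their integral representations). In fact it is cleaner to use a contour-integral representation of the correlation kernel: for the GSE, all three entries $K^{\mathrm{GSE};N}_{ij}$ admit double-contour-integral formulas with Gaussian weights, analogous to the ones appearing in (\ref{Eq.DefKN}). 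I would write these down carefully, keeping track of the Christoffel--Darboux-type sum and the ``correction'' terms coming from the symplectic structure (the skew-orthogonal polynomial machinery). Then the scaling (\ref{Eq.GSEPointProcess}), which centers at the soft edge $\sqrt{2N}$ and rescales by $2^{7/6}N^{1/6}$, is the standard Airy-type edge scaling for the weight $e^{-2x^2}$, so the critical point of the relevant cubic phase lands at the prescribed location.

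The key computation is then the asymptotic analysis: substitute $x = \sqrt{2N} + 2^{-7/6}N^{-1/6} \xi$ (and similarly for $y$) into the finite-$N$ kernel, conjugate by an appropriate factor (which does not change the Pfaffian), and deform the contours to steepest-descent paths passing through the double critical point of the cubic exponent. The Gaussian factor $e^{-2x^2}$ combined with the polynomial part produces, after rescaling, exactly the phase $z^3/3 - \xi z$ on contours $\mathcal C_1^{\pi/3}$, matching the function $H$ from (\ref{Eq.DefH}); the rational prefactors $1/(z+w)$, $1/(z(z+w))$, $1/((z+w)zw)$ arise from the Christoffel--Darboux denominators and the extra integrations present in the symplectic kernel. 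I would show the integrands converge pointwise to those in (\ref{Def.Kgse}) and are dominated by integrable functions uniformly in $N$ (using estimates of the type (\ref{Eq.HBound}) and (\ref{Eq.EstimateExpon})), so that dominated convergence gives entrywise convergence $K^{\mathrm{GSE};N}_{ij}(\xi,\eta) \to \kgse_{ij}(\xi,\eta)$, uniformly on compact sets. One must be slightly careful that the ``correction'' term in $R^{\mathrm{GSE}}_{22}$ (the counterpart of $\hsr_{22}$) either vanishes in the limit or is absorbed; unlike the pinned half-space case there is no singular surviving term here, so the limit kernel is a genuine $2\times 2$ matrix kernel and the limiting process is honestly Pfaffian.

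Having established kernel convergence, I would invoke the standard criterion for weak convergence of Pfaffian point processes --- for instance \cite[Proposition 5.10]{DY25} --- whose hypotheses are precisely (i) that $M^{\mathrm{GSE};N}$ are Pfaffian with kernels $K^{\mathrm{GSE};N}$, (ii) that these kernels converge uniformly on compacts to a limiting kernel $\kgse$ that is continuous, and (iii) a mild local-uniform integrability/tail bound. Points (i) and (ii) are what the previous paragraphs supply; for (iii) the Gaussian decay of the rescaled weight and the explicit decay of $\kgse$ (visible from the cubic phase) give the needed bounds on $\int_A K^{\mathrm{GSE};N}_{12}(\xi,\xi)\,d\xi$ over bounded sets. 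This yields $M^{\mathrm{GSE};N} \Rightarrow M^{\mathrm{GSE};\infty}$ with $M^{\mathrm{GSE};\infty}$ Pfaffian of kernel $\kgse$ and Lebesgue reference measure, which is the claim.

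The main obstacle I anticipate is purely bookkeeping rather than conceptual: writing the finite-$N$ GSE correlation kernel in a contour-integral form whose $N\to\infty$ steepest-descent limit manifestly reproduces all three formulas in (\ref{Def.Kgse}) with the correct constants and the correct contours $\mathcal C_1^{\pi/3}$, and in particular tracking the factors of $2$ and signs through the skew-orthogonalization so that the limit matches $\kgse$ exactly (rather than up to a conjugation or an overall rescaling of the $2\times 2$ block, which would still give the same point process but would muddy the identification with the kernel used elsewhere in the paper). The analytic estimates needed to justify dominated convergence along the deformed contours are routine given the cubic decay, and are of exactly the same flavor as those already carried out in Section~\ref{Section2}.
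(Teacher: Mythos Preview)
Your approach is correct and would work, but it takes a longer route than the paper. The paper does not carry out a steepest-descent analysis of the finite-$N$ kernel at all: instead it quotes the known edge asymptotics of the GSE kernel directly from \cite[(7.99)]{F10}, which gives the limiting $2\times 2$ kernel in the classical form
\[
K^{\mathrm{GSE};\infty}(x,y)=\begin{bmatrix}\int_x^y S_4(x,u)\,du & S_4(x,y)\\ -S_4(y,x) & \partial_x S_4(x,y)\end{bmatrix},\qquad S_4(x,y)=\tfrac12 K_{\mathrm{Ai}}(x,y)-\tfrac14\mathrm{Ai}(y)\!\int_x^\infty\!\mathrm{Ai}(v)\,dv,
\]
and then invokes \cite[Proposition 5.10]{DY25} for weak convergence, just as you propose. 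The paper's remaining work is purely algebraic: it rewrites $S_4$, $\int S_4$, and $\partial_x S_4$ as double contour integrals over $\mathcal{C}_1^{\pi/3}$ (using the standard contour formulas for $\mathrm{Ai}$ and $K_{\mathrm{Ai}}$) and checks that the resulting matrix equals $\kgse$ up to a sign pattern that leaves the Pfaffian unchanged.

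So the difference is where the analytic work goes. You propose to do the Hermite-to-Airy asymptotics yourself via steepest descent on contour integrals, landing directly on (\ref{Def.Kgse}); the paper outsources that step to Forrester and instead spends its effort on the identification $K^{\mathrm{GSE};\infty}\leftrightarrow\kgse$. Your route is more self-contained and would indeed face the bookkeeping issue you flag (tracking the skew-orthogonal correction term $B_4^N$ through the scaling), whereas the paper's route is shorter but relies on an external reference for the hardest step. Your worry about a surviving singular ``$R_{22}$'' term is unfounded here: the finite-$N$ GSE kernel already has the clean $S_4^N$ structure with no separate residue piece, so nothing like $\hsr_{22}$ appears.
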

\begin{proof}Let $M^N$ be the point process formed by $X_1^N, \dots, X_N^N$. From \cite[(6.55)]{F10} we have that $M^N$ is a Pfaffian point process on $\mathbb{R}$ with Lebesgue reference measure and correlation kernel
\begin{equation}\label{Eq.SAKernelGSEN}
K^N(x,y) = \begin{bmatrix} \int_{x}^{y} S^N_4(x,u) du & S_4^N(x,y) \\ -S_4^N(y,x) & \partial_x S^N_4(x,y) \end{bmatrix}. 
\end{equation}
We mention that in \cite[(6.55)]{F10} the correlation functions are written as quaternion determinants, and to get the above form one needs to use \cite[Proposition 6.1.5]{F10}.

Using \cite[(7.91) and (6.59)]{F10}, we have
\begin{equation}\label{Eq.SASInTermsOfHermites}
\begin{split}
&S_4^N(x,y) = A_4^N(x,y) + B_4^{N}(x,y) \mbox{, where }\\
&A_4^N(x,y) = \frac{e^{-x^2} e^{-y^2}}{\pi^{1/2} 2^{2N+2} (2N)! }  \cdot \frac{H_{2N}(\sqrt{2} x)H_{2N}'( \sqrt{2} y) - H_{2N}'(\sqrt{2}x)H_{2N}(\sqrt{2} y)}{x- y}, \\
&B_4^N(x,y) =  - \frac{e^{-y^2} H_{2N}(\sqrt{2}y) }{\pi^{1/2} 2^{2N+1/2}  (2N-1)!} \int_{\sqrt{2}x}^{\infty} e^{-t^2/2} H_{2N-1}(t)dt.
\end{split}
\end{equation}
In the last equation $H_n(x)$ are the classical Hermite polynomials $H_n(x) = (-1)^n e^{x^{2}}\frac{d^n}{dx^n} e^{-x^2}$. We mention that to obtain (\ref{Eq.SASInTermsOfHermites}) from \cite[(6.59)]{F10} one needs to set $V(x) = \tilde{V}_4(x) = x^2/2$, $p_n(x) = 2^{-n}H_n(x)$ (see \cite[(5.46)]{F10}), $(p_n,p_n)_2 = \pi^{1/2}2^{-n}n!$ (see \cite[(5.48)]{F10}), $c_{2N-1} = (p_{2N}, p_{2N})_2$ (see \cite[(5.65)]{F10}), use the Christoffel-Darboux formula \cite[(5.10)]{F10} 
$$K_{2N}(x,y) = \frac{e^{-x^2/2} e^{-y^2/2}}{(p_{2N-1}, p_{2N-1})_2} \cdot \frac{p_{2N}(x) p_{2N-1}(y) - p_{2N-1}(x) p_{2N}(y)}{x-y},$$
and that $H_n'(x) = 2n H_{n-1}(x)$ (see \cite[(8.952)]{Gradshteyn}).\\

Let $\phi_N(x) = 2^{7/6}N^{1/6}[x - (2N)^{1/2}]$, and observe $M^{\mathrm{GSE};N} = M^N \phi_N^{-1}$. Combining the latter with \cite[Proposition 5.8(5)]{DY25} (for $\phi = \phi_N$) and \cite[Proposition 5.8(6)]{DY25} (for $c_1 = c_2 = 2^{-7/12}N^{-1/12}$), we conclude, that $M^{\mathrm{GSE};N}$ is a Pfaffian point process on $\mathbb{R}$ with Lebesgue reference measure and correlation kernel
$$K^{\mathrm{GSE};N}(x,y) = 2^{-7/6} N^{-1/6} \cdot K^N\left( (2N)^{1/2} + \frac{x}{2^{7/6} N^{1/6}} , (2N)^{1/2} + \frac{y}{2^{7/6}N^{1/6}} \right).$$ 
From \cite[(7.99)]{F10} we get uniformly over bounded $x,y \in \mathbb{R}$ that
\begin{equation}\label{Eq.KernelLimitGSE1}
\lim_{N \rightarrow \infty} K^{\mathrm{GSE};N}(x,y) = K^{\mathrm{GSE};\infty}(x,y) := \begin{bmatrix} \int_x^y S_4(x,u)du & S_4(x,y) \\ -S_4(y,x) & \partial_x S_4(x,y)\end{bmatrix},
\end{equation}
where 
\begin{equation}\label{Eq.LimitS}
S_4(x,y) = \frac{1}{2} \cdot K_{\mathrm{Ai}}(x,y) - \frac{1}{4} \cdot \mathrm{Ai}(y) \int_x^{\infty} \mathrm{Ai}(v)dv.
\end{equation}
Here, $\mathrm{Ai}$ denotes the Airy function
$$\mathrm{Ai}(x) = \frac{1}{2\pi \im}\int_{\mathcal{C}_1^{\pi/3}} e^{z^3/3 - xz},$$
where the contour $\mathcal{C}_1^{\pi/3}$ is as in Definition \ref{Def.S1Contours}, and $K_{\mathrm{Ai}}$ is the Airy kernel
$$K_{\mathrm{Ai}}(x,y) = \frac{\mathrm{Ai}(x)\mathrm{Ai}'(y) - \mathrm{Ai}'(x) \mathrm{Ai}(y)}{x-y}.$$
Equation (\ref{Eq.KernelLimitGSE1}) verifies the conditions of \cite[Proposition 5.10]{DY25} with $\lambda_N = \mathrm{Leb}$ for all $N$, which implies that there exists a Pfaffian point process $M^{\mathrm{GSE};{\infty}}$ that has Lebesgue reference measure and correlation kernel $K^{\mathrm{GSE};\infty}$, and moreover $M^{\mathrm{GSE};N}\Rightarrow M^{\mathrm{GSE};\infty}$. To conclude the statement of the lemma, it remains to show for all $n \geq 1$ and $x_1, \dots, x_n \in \mathbb{R}$ that
\begin{equation}\label{Eq.PfaffianEqalityGSE}
\mathrm{Pf}[K^{\mathrm{GSE};\infty}(x_i,x_j)]_{i,j = 1}^n = \mathrm{Pf}[K^{\mathrm{GSE}}(x_i,x_j)]_{i,j = 1}^n.
\end{equation}

Using the contour integral formula for $\mathrm{Ai}(x)$ from above and the double-contour integral formula for the Airy kernel, see e.g. \cite[(2.30)]{AFM} or just set $s = t$ in (\ref{Eq.S1AiryKer}), 
$$K_{\mathrm{Ai}}(x,y) = \frac{1}{(2\pi\im)^2} \int_{\mathcal{C}_1^{\pi/3}}dw \int_{\mathcal{C}_1^{\pi/3}}dz \frac{e^{z^3/3 + w^3/3 - xz - yw}}{z+w},$$
we obtain that $S_4(x,y)$ equals
\begin{equation*}
\begin{split}
&\frac{1}{2(2\pi\im)^2} \int_{\mathcal{C}_1^{\pi/3}}dw \int_{\mathcal{C}_1^{\pi/3}}dz \frac{e^{z^3/3 + w^3/3 - xz - yw}}{z+w} - \frac{1}{4 (2\pi \im)^2} \int_{\mathcal{C}_1^{\pi/3}}dw e^{w^3/3 - yw} \int_{\mathcal{C}_1^{\pi/3}}dz \int_{x}^{\infty}dv e^{z^3/3 - vz} \\
&=  \frac{1}{2(2\pi\im)^2} \int_{\mathcal{C}_1^{\pi/3}}dw \int_{\mathcal{C}_1^{\pi/3}}dz e^{z^3/3 + w^3/3 - xz - yw} \cdot \left[\frac{1}{z+w} - \frac{1}{2z} \right].
\end{split}
\end{equation*}
We mention that in writing the first line we have implicitly used Fubini's theorem to rearrange the order of the integrals.

The last equation and (\ref{Def.Kgse}) give
\begin{equation}\label{Eq.S4EqualsKgse1}
S_4(x,y) = \kgse_{12}(x,y).
\end{equation}
Using (\ref{Def.Kgse}) and (\ref{Eq.S4EqualsKgse1}) we get
\begin{equation}\label{Eq.S4EqualsKgse2}
\int_x^y S_4(x,u)du = \frac{1}{(2\pi \im)^2}\int_{\mathcal{C}_{1}^{\pi/3}} \hspace{-1.5mm} dz \int_{\mathcal{C}_{1}^{\pi/3}} \hspace{-1.5mm} dw \frac{(z - w ) e^{z^3/3 - xz - w^3/3} (e^{-xw} - e^{-yw})}{4zw(z + w)} = - \kgse_{11}(x,y).
\end{equation}
In the last equality we used that
$$\frac{1}{(2\pi \im)^2} \int_{\mathcal{C}_{1}^{\pi/3}}dz \int_{\mathcal{C}_{1}^{\pi/3}} dw \frac{(z - w ) e^{z^3/3 - xz - w^3/3-xw} }{4zw(z + w)} = 0,$$
which one observes by swapping the roles of $z$ and $w$. Similarly, from (\ref{Def.Kgse}) and (\ref{Eq.S4EqualsKgse1}) we get
\begin{equation}\label{Eq.S4EqualsKgse3}
\partial_x S_4(x,y) = \frac{1}{(2\pi\im)^2} \int_{\mathcal{C}_1^{\pi/3}}dw \int_{\mathcal{C}_1^{\pi/3}}dz \frac{-ze^{z^3/3 + w^3/3 - xz - yw}}{4z(z+w)} = - \kgse_{22}(x,y).
\end{equation}

From (\ref{Eq.S4EqualsKgse1}), (\ref{Eq.S4EqualsKgse2}) and (\ref{Eq.S4EqualsKgse3}) we conclude that 
$$K^{\mathrm{GSE};\infty}(x,y) = \begin{bmatrix} -\kgse_{11}(x,y) & \kgse_{12}(x,y) \\ \kgse_{21}(x,y) & -\kgse_{22}(x,y) \end{bmatrix},$$
which by the definition of the Pfaffian (\ref{Eq.DefPfaffian}) implies (\ref{Eq.PfaffianEqalityGSE}).
\end{proof}

We end this section with the following simple lemma, which is an immediate consequence of \cite[Proposition 2.19]{dimitrov2024airy}. 
\begin{lemma}\label{Lem.FDConvFromPPConvergence} Let $X^N = \{X^N_i\}_{i \geq 1}$ be a sequence of random vectors in $\mathbb{R}^{\infty}$, such that a.s. 
\begin{equation}\label{Eq.OrderedX}
X_1^N \geq X_2^N \geq \cdots.
\end{equation} 
Suppose that $\{X_i^N\}_{N \geq 1}$ is a tight sequence of random variables for each $i \in \mathbb{N}$. In addition, suppose that the random measures $M^N$ on $\mathbb{R}$, defined by
\begin{equation}\label{Eq.PointProcessFromX}
M^N(A)= \sum_{i \geq 1} {\bf 1}\{X_i^N \in A\},
\end{equation}
are point processes, and weakly converge to a point process $M$. Then, $X^N$ converge weakly to some $X^{\infty}$ that satisfies (\ref{Eq.OrderedX}). Moreover, the random measure $M^{\infty}$ as in (\ref{Eq.PointProcessFromX}) with $N = \infty$ is a point process that has the same law as $M$.
\end{lemma}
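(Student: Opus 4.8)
The plan is to obtain the lemma as a direct application of \cite[Proposition 2.19]{dimitrov2024airy}. That proposition is a general device that upgrades weak convergence of atom-counting measures to weak convergence of the corresponding ordered vectors of atoms: given a sequence of $\mathbb{R}^{\infty}$-valued random vectors with almost surely weakly decreasing coordinates, a coordinate-wise tightness hypothesis, and weak convergence of the associated point processes to a point process, it produces weak convergence of the vectors together with an identification of the limiting point process. Our task is therefore just to check that the hypotheses of \cite[Proposition 2.19]{dimitrov2024airy} hold in the present setting.

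First I would record the three required inputs and verify them. The ordering hypothesis (\ref{Eq.OrderedX}) is assumed, so each $X^N$ lies in the space of weakly decreasing sequences and the random measure $M^N$ in (\ref{Eq.PointProcessFromX}) is precisely the point process of the coordinates of $X^N$, counted with multiplicity; since $M^N$ is assumed to be a (locally finite) point process, the coordinates $X^N_i$ tend to $-\infty$ as $i\to\infty$ almost surely, which is the form in which the data is supplied to the proposition. The tightness of $\{X^N_i\}_{N\ge 1}$ for each fixed $i\in\mathbb{N}$ is assumed, and the weak convergence $M^N\Rightarrow M$ to a point process $M$ is assumed. Thus all hypotheses are in place.

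Then I would invoke \cite[Proposition 2.19]{dimitrov2024airy} to conclude that there exists an $\mathbb{R}^{\infty}$-valued random vector $X^{\infty}$, almost surely satisfying (\ref{Eq.OrderedX}), with $X^N\Rightarrow X^{\infty}$, and such that the random measure $M^{\infty}$ defined as in (\ref{Eq.PointProcessFromX}) with $N=\infty$ is a point process of the same law as $M$; this is exactly the statement of the lemma. I do not expect any real obstacle here: the only point that deserves attention is confirming that \cite[Proposition 2.19]{dimitrov2024airy} is applied with the matching ordering convention (descending coordinates), and that its conclusion is read off to include both the weak convergence of the vectors and the distributional identity $M^{\infty}\overset{d}{=}M$, after which everything reduces to bookkeeping.
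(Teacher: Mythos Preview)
Your proposal is correct and follows the same approach as the paper: both reduce the lemma to an application of \cite[Proposition 2.19]{dimitrov2024airy}. The one piece of bookkeeping you should be aware of is that Proposition~2.19 in \cite{dimitrov2024airy} is stated for multi-time point processes on $\mathbb{R}^2$ (indexed by a finite set $\{t_1,\dots,t_r\}$), so the paper first embeds $M^N$ into $\mathbb{R}^2$ via $\tilde{M}^N(A)=\sum_{i\ge 1}\mathbf{1}\{(0,X_i^N)\in A\}$ and then applies the proposition with $r=1$, $t_1=0$; this is a triviality, but it is exactly the ``matching convention'' issue you flagged.
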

\begin{proof} Define the random point processes $\tilde{M}^N$ on $\mathbb{R}^2$ by
\begin{equation}\label{Eq.TildeM}
\tilde{M}^N(A) = \sum_{i \geq 1} {\bf 1}\{(0,X_i^N) \in A\}.
\end{equation}
Since $M^N \Rightarrow M$, we conclude $\tilde{M}^N \Rightarrow \tilde{M}$, where $\tilde{M}(A) = M(A_0)$ with $A_0 = \{y: (0,y) \in A\}$. From \cite[Proposition 2.19]{dimitrov2024airy} (applied to $r = 1$ and $t_1 = 0$), we conclude that we can find $X^{\infty} $ satisfying (\ref{Eq.OrderedX}), such that $X^N \Rightarrow X^{\infty}$, and moreover $\tilde{M}^{\infty}$ as in (\ref{Eq.TildeM}) with $N = \infty$ has the same law as $\tilde{M}$. The latter implies $M^{\infty} \overset{d}{=} M$ as desired.
\end{proof}

%
%
\subsection{Proof of Lemma \ref{Lem.SAO}}\label{SectionA.2} Let $\{\tilde{X}_i^N\}_{i = 1}^N$ and $M^{\mathrm{GSE};N}$ be as in (\ref{Eq.GSEPointProcess}). Define $\hat{X}^N = \{\hat{X}_i^N\}_{i \geq 1}$ by setting 
$$\hat{X}_i^N = \tilde{X}_i^N \mbox{ for } i =1,\dots,N, \mbox{ and } \hat{X}_i^N = \min(\tilde{X}_N^N, - i) \mbox{ for } i \geq N+1.$$
We also define the point processes $\hat{M}^N$ on $\mathbb{R}$ through 
$$\hat{M}^N(A) = \sum_{i \geq 1} {\bf 1}\{\hat{X}_i^N \in A\} =M^{\mathrm{GSE};N}(A) + M^{N}_{>N}(A), \mbox{ where } M^N_{>N}(A) = \sum_{ i \geq N+1 }{\bf 1}\{ \hat{X}_i^N \in A \}.$$
From \cite[Theorem 1.1]{RRV11}, we have
\begin{equation}\label{Eq.FDConvSAO}
\hat{X}^N \Rightarrow (-2^{2/3} \Lambda_0, -2^{2/3} \Lambda_1, -2^{2/3} \Lambda_2, \dots ),
\end{equation}
as random vectors in $\mathbb{R}^{\infty}$, or equivalently in the finite-dimensional sense. We mention that the constant $2^{2/3}$ comes from our particular scaling for $\tilde{X}^N_i$ in (\ref{Eq.GSEPointProcess}) as well as the fact that in \cite{RRV11} the authors work with a density that replaces $2x^2$ with $x^2$ in (\ref{Eq.SAEigenvalueDensity}).

From Lemma \ref{Lem.EdgeWeakConvergenceGSE}, we know $M^{\mathrm{GSE};N}\Rightarrow M^{\mathrm{GSE};\infty}$, where the latter is a Pfaffian point process on $\mathbb{R}$ with correlation kernel $\kgse$ as in (\ref{Def.Kgse}) and Lebesgue reference measure. Since $M^{N}_{>N} \Rightarrow 0$ by construction, we conclude $\hat{M}^N \Rightarrow M^{\mathrm{GSE};\infty}$. \\

We are now in a position to apply Lemma \ref{Lem.FDConvFromPPConvergence} to $\hat{X}^N$. Indeed, (\ref{Eq.OrderedX}) is satisfied by the way we defined $\hat{X}^N$, and the previous paragraph shows $\hat{M}^N \Rightarrow M^{\mathrm{GSE};\infty}$. Lastly, the finite-dimensional convergence in (\ref{Eq.FDConvSAO}) in particular implies that $\{\hat{X}^N_i \}_{N \geq 1}$ is a tight sequence for each $i \geq 1$.

From Lemma \ref{Lem.FDConvFromPPConvergence} we conclude that $\hat{X}^N \Rightarrow \hat{X}^{\infty}$ for some $\hat{X}^{\infty}$ and 
$\hat{M}^{\infty} \overset{d}{=} M^{\mathrm{GSE};\infty}$, where
$$\hat{M}^{\infty}(A) = \sum_{i \geq 1} {\bf 1}\{\hat{X}^{\infty}_i \in A\}.$$
As weak limits are unique, we have from (\ref{Eq.FDConvSAO}) that 
$$\hat{X}^{\infty} \overset{d}{=} (-2^{2/3} \Lambda_0, -2^{2/3} \Lambda_1, -2^{2/3} \Lambda_2, \dots ),$$
and so $\hat{M}^{\infty} \overset{d}{=} M^{\mathrm{GSE}}$. Combining the last few statements, we conclude that $M^{\mathrm{GSE}} \overset{d}{=}M^{\mathrm{GSE};\infty}$. This completes the proof.\end{appendix}

\bibliographystyle{amsplain} 
\bibliography{PD}

\providecommand{\bysame}{\leavevmode\hbox to3em{\hrulefill}\thinspace}
\providecommand{\MR}{\relax\ifhmode\unskip\space\fi MR }
\providecommand{\MRhref}[2]{%
  \href{http://www.ams.org/mathscinet-getitem?mr=#1}{#2}
}
\providecommand{\href}[2]{#2}
\begin{thebibliography}{10}

\bibitem{Stegun64}
M.~Abramowitz and I.~Stegun, \emph{Handbook of mathematical functions with
  formulas, graphs, and mathematical tables}, 10th printing ed., National
  Bureau of Standards, Washington, D.C., 1964, Applied Mathematics Series 55.

\bibitem{AFM}
M.~Adler, P.~Ferrari, and P.~Van~Moerbeke, \emph{Airy processes with wanderers
  and new universality classes}, Ann. Probab. \textbf{38} (2010), 714--769.

\bibitem{AH21}
A.~Aggarwal and J.~Huang, \emph{Edge statistics for lozenge tilings of
  polygons, {I}{I}: {A}iry line ensemble}, arXiv:2021:12874v3 (2021).

\bibitem{AH23}
\bysame, \emph{Strong characterization for the {A}iry line ensemble}, Invent.
  Math. (2025), 1--313, https://doi.org/10.1007/s00222-025-01381-6.

\bibitem{BBCSArxiv}
J.~Baik, G.~Barraquand, I.~Corwin, and T.~Suidan, \emph{Pfaffian {S}chur
  processes and last passage percolation in a half-quadrant}, arXiv:1606.00525
  (2016).

\bibitem{BBCS2Arxiv}
\bysame, \emph{Facilitated exclusion process}, arXiv:1707.01923 (2017).

\bibitem{BBCS2}
\bysame, \emph{Facilitated exclusion process}, In: The Abel Symposium (2018),
  1--35.

\bibitem{BBCS}
\bysame, \emph{Pfaffian {S}chur processes and last passage percolation in a
  half-quadrant}, Ann. Probab. \textbf{46} (2018), no.~6, 3015--3089.

\bibitem{BR01a}
J.~Baik and E.M. Rains, \emph{Algebraic aspects of increasing subsequences},
  Duke Math. J. \textbf{109} (2001), no.~1, 1--65.

\bibitem{BR01b}
\bysame, \emph{The asymptotics of monotone subsequences of involutions}, Duke
  Math. J. \textbf{109} (2001), no.~2, 205--281.

\bibitem{BR01c}
\bysame, \emph{Symmetrized random permutations}, Random matrix models and their
  applications, Math. Sci. Res. Inst. Publ. \textbf{40} (2001), 1--19.

\bibitem{BBC20}
G.~Barraquand, A.~Borodin, and I.~Corwin, \emph{Half-space {M}acdonald
  processes}, Forum Math. Pi \textbf{8} (2020), e11.

\bibitem{BBCW18}
G.~Barraquand, A.~Borodin, I.~Corwin, and M.~Wheeler, \emph{Stochastic
  six-vertex model in a half-quadrant and half-line open asymmetric simple
  exclusion process}, Duke Math. J. \textbf{167} (2018), no.~13, 2457--2529.

\bibitem{BC23}
G.~Barraquand and I.~Corwin, \emph{Stationary measures for the log-gamma
  polymer and {K}{P}{Z} equation in half-space}, Ann. Probab. \textbf{51}
  (2023), no.~5, 1830--1869.

\bibitem{BCD24}
G.~Barraquand, I.~Corwin, and S.~Das, \emph{{K}{P}{Z} exponents for the
  half-space log-gamma polymer}, Probab. Theory Relat. Fields (2024), 1--131.

\bibitem{BBNV}
D.~Betea, J.~Bouttier, P.~Nejjar, and M.~Vuleti{\'c}, \emph{The free boundary
  {S}chur process and applications {I}}, Ann. Inst. Henri Poincar{\' e} Probab.
  Stat. \textbf{19} (2018), 2106--2150.

\bibitem{Billing}
P.~Billingsley, \emph{Convergence of {P}robability {M}easures, 2nd ed}, John
  Wiley and Sons, New York, 1999.

\bibitem{BK08}
A.~Borodin and J.~Kuan, \emph{Asymptotics of {P}lancherel measures for the
  infinite-dimensional unitary group}, Adv. Math. \textbf{219} (2008),
  894--931.

\bibitem{BR05}
A.~Borodin and E.M. Rains, \emph{Eynard--{M}ehta theorem, {S}chur process, and
  their {P}faffian analogs}, J. Stat. Phys. \textbf{121} (2005), 291--317.

\bibitem{borodin2015handbook}
A.~Borodin and P.~Salminen, \emph{Handbook of {B}rownian {M}otion -- {F}acts
  and {F}ormulae}, 2nd (corrected reprint) ed., Probability and Its
  Applications, Birkh{\"a}user Basel, Basel, 2015, Corrected reprint of the
  second edition.

\bibitem{cairoli2011}
R.~Cairoli and R.~C. Dalang, \emph{Sequential stochastic optimization}, John
  Wiley \& Sons, 2011.

\bibitem{callahan2010advanced}
J.J. Callahan, \emph{Advanced calculus: a geometric view}, vol.~1, Springer,
  2010.

\bibitem{CU2}
I.~Corwin, \emph{The {K}ardar-{P}arisi-{Z}hang equation and universality
  class}, Random Matrices: Theory Appl. \textbf{1} (2012).

\bibitem{CorHamA}
I.~Corwin and A.~Hammond, \emph{Brownian {G}ibbs property for {A}iry line
  ensembles}, Invent. Math. \textbf{195} (2014), 441--508.

\bibitem{CS18}
I.~Corwin and H.~Shen, \emph{Open {A}{S}{E}{P} in the weakly asymmetric
  regime}, Comm. Pure Appl. Math. \textbf{71} (2018), no.~10, 2065--2128.

\bibitem{DDY26}
S.~Das, E.~Dimitrov, and Z.~Yang, \emph{Pinnning in non-critical half-space
  geometric last passage percolation}, In preparation (2026+).

\bibitem{DasSerio25}
S.~Das and C.~Serio, \emph{The half-space {K}{P}{Z} line ensemble and its
  scaling limit}, Preprint: arXiv:2506.07939 (2025).

\bibitem{DNV19}
D.~Dauvergne, M.~Nica, and B.~Vir{\' a}g, \emph{Uniform convergence to the
  {A}iry line ensemble}, Ann. Inst. Henri Poincare (B) \textbf{59} (2023),
  no.~4, 2220--2256.

\bibitem{DOV22}
D.~Dauvergne, J.~Ortmann, and B.~Vir{\'a}g, \emph{The directed landscape}, Acta
  Math. \textbf{229} (2022), no.~2, 201--285.

\bibitem{dimitrov2024airy}
E.~Dimitrov, \emph{Airy wanderer line ensembles}, arXiv:2408.08445 (2024).

\bibitem{dimitrov2024tightness}
\bysame, \emph{Tightness for interlacing geometric random walk bridges},
  arXiv:2410.23899 (2024).

\bibitem{DEA21}
E.~Dimitrov, X.~Fang, L.~Fesser, C.~Serio, C.~Teitler, A.~Wang, and W.~Zhu,
  \emph{Tightness of {B}ernoulli {G}ibbsian line ensembles}, Electron. J.
  Probab. \textbf{26} (2021), 1--93, DOI: 10.1214/21-EJP698.

\bibitem{DimMat}
E.~Dimitrov and K.~Matetski, \emph{Characterization of {B}rownian {G}ibbsian
  line ensembles}, Ann. Probab. \textbf{49} (2021), no.~5, 2477--2529.

\bibitem{DY25}
E.~Dimitrov and Z.~Yang, \emph{Half-space {A}iry line ensembles},  (2025),
  Preprint: arXiv:2505.01798.

\bibitem{DY25b}
\bysame, \emph{A note on last passage percolation and {S}chur processes},
  (2025), Preprint: arXiv:2510.04713.

\bibitem{FNH99}
P.~J. Forrester, T.~Nagao, and G.~Honner, \emph{Correlations for the
  orthogonalunitary and symplectic-unitary transitions at the hard and soft
  edges}, Nuclear Phys. B \textbf{53} (1999), no.~3, 601--643.

\bibitem{F10}
P.J. Forrester, \emph{Log-{G}ases and {R}andom {M}atrices}, London Math. Soc.
  Monographs, Princeton University Press, 2010.

\bibitem{Gradshteyn}
I.S. Gradshteyn and I.M. Ryzhik, \emph{Table of integrals, series, and
  products}, Academic Press, 2014.

\bibitem{IO19}
F.~Iafrate and E.~Orsingher, \emph{Some results on the {B}rownian meander with
  drift}, J. Theor. Probab. \textbf{33} (2019), 1034--1060.

\bibitem{IMS22}
T.~Imamura, M.~Mucciconi, and T.~Sasamoto, \emph{Solvable models in the
  {K}{P}{Z} class: approach through periodic and free boundary {S}chur
  measures}, arXiv:2204.08420 (2022).

\bibitem{kallenberg2017random}
O.~Kallenberg, \emph{Random measures, theory and applications}, Springer
  International Publishing Switzerland, 2017, 10.1007/978-3-319-41598-7.

\bibitem{Mac94}
A.M.S. Mac{\^e}do, \emph{Universal parametric correlations at the soft edge of
  the spectrum of random matrix ensembles}, Europhys. Lett. \textbf{26} (1994),
  641.

\bibitem{Munkres}
J.~Munkres, \emph{Topology, 2nd ed}, Prentice Hall, Inc., Upper Saddle River,
  NJ, 2003.

\bibitem{Par19}
S.~Parekh, \emph{The {K}{P}{Z} limit of {A}{S}{E}{P} with boundary}, Comm.
  Math. Phys. \textbf{365} (2019), 569--649.

\bibitem{Spohn}
M.~Pr{\" a}hofer and H.~Spohn, \emph{Scale invariance of the {P}{N}{G}
  {D}roplet and the {A}iry process}, J. Stat. Phys. \textbf{108} (2002),
  1071--1106.

\bibitem{RRV11}
J.~Ramirez, B.~Rider, and B.~Vir{\'a}g, \emph{Beta ensembles, stochastic {A}iry
  spectrum, and a diffusion}, J. Amer. Math. Soc. \textbf{24} (2011), no.~4,
  919--944.

\bibitem{RY}
D.~Revuz and M.~Yor, \emph{Continuous {M}artingales and {B}rownian {M}otion},
  third ed., Springer, 1999.

\bibitem{SI04}
T.~Sasamoto and T.~Imamura, \emph{Fluctuations of the one-dimensional
  polynuclear growth model in half-space}, J. Stat. Phys. \textbf{115} (2004),
  749--803.

\bibitem{Sod15}
S.~Sodin, \emph{A limit theorem at the spectral edge for corners of
  time-dependent {W}igner matrices}, Int. Math. Res. Not. \textbf{2015} (2015),
  7575--7607.

\bibitem{SteinFourier}
E.M. Stein and R.~Shakarchi, \emph{Fourier analysis: an introduction}, vol.~1,
  Princeton University Press, 2011.

\bibitem{S90}
J.R. Stembridge, \emph{Nonintersecting paths, {P}faffians, and plane
  partitions}, Adv. Math. \textbf{83} (1990), 96--131.

\bibitem{TW05}
C.~Tracy and H.~Widom, \emph{Matrix kernels for the {G}aussian orthogonal and
  symplectic ensembles}, Annales de l'institut Fourier, vol.~55, 2005,
  pp.~2197--2207.

\bibitem{Wu20}
X.~Wu, \emph{Intermediate disorder regime for half-space directed polymers}, J.
  Stat. Phys. \textbf{181} (2020), no.~6, 2372--2403.

\end{thebibliography}

\end{document}